\NeedsTeXFormat{LaTeX2e}
\documentclass[a4paper,12pt,reqno]{amsart}

\usepackage[symbol]{footmisc}
\usepackage{url}
\usepackage[utf8]{inputenc}
\usepackage{amsmath,amsfonts,amsthm,amssymb,amscd,enumerate,mathtools}
\usepackage{array}
\usepackage{graphicx}
\usepackage{tikz-cd}
\usepackage{adjustbox}
\usepackage[OT2,T1]{fontenc}
\setcounter{tocdepth}{1}

\usepackage[margin=1.3in]{geometry}
\setlength{\belowcaptionskip}{-0.3em}

\numberwithin{equation}{section} \numberwithin{figure}{section}

\DeclareMathOperator{\Prym}{Prym} 
\DeclareMathOperator{\Tr}{Tr}
\DeclareMathOperator{\A}{A}

\DeclareMathOperator{\Gen}{Gen}
\DeclareMathOperator{\Sing}{Sing}
\DeclareMathOperator{\Ext}{Ext}
\DeclareMathOperator{\dd}{d}
\DeclareMathOperator{\N}{N}
\DeclareMathOperator{\Proj}{Proj} 
\DeclareMathOperator{\Sym}{Sym} 
\DeclareMathOperator{\Ker}{Ker} 
\DeclareMathOperator{\Alb}{Alb} 
\DeclareMathOperator{\Corr}{Corr} 
\DeclareMathOperator{\Ab}{Ab}
\DeclareMathOperator{\Ch}{Ch} 
\DeclareMathOperator{\mult}{mult} 
\DeclareMathOperator{\Pic}{Pic} \DeclareMathOperator{\Div}{Div}
 
\DeclareMathOperator{\Gal}{Gal} 
\DeclareMathOperator{\Aut}{Aut} 
\DeclareMathOperator{\Spec}{Spec}
 \DeclareMathOperator{\rank}{rank}
\DeclareMathOperator{\Hom}{Hom} 
\DeclareMathOperator{\im}{Im}

\DeclareMathOperator{\ord}{ord}

\DeclareMathOperator{\Bl}{Bl} \DeclareMathOperator{\res}{res}
\DeclareMathOperator{\Nm}{Nm} 
 
\DeclareMathOperator{\HH}{H}

\newcommand{\rom}[1]
    {\MakeUppercase{\romannumeral #1}}

\linespread{1.2}

\theoremstyle{definition}
\newtheorem{definition}{Definition}[section]
\newtheorem{remark}[definition]{Remark}
\newtheorem{example}[definition]{Example}
\theoremstyle{plain}
\newtheorem{proposition}[definition]{Proposition}
\newtheorem{corollary}[definition]{Corollary}

\newtheorem{lemma}[definition]{Lemma}

\newenvironment{customthm}[1]
  {\innercustomthm}
  {\endinnercustomthm}

\theoremstyle{definition}
  
\newenvironment{customproof}[1]
  {\innercustomproof }
  {\hfill$\square$}

\title{Prym varieties and cubic threefolds over $\mathbb{Z}$}
\author{Tudor Ciurca}
  \address{
	Department of Mathematical Sciences\\
	University of Bath\\
	Claverton Down\\
	Bath\\
	BA2 7AY\\
	UK}
\email{tc703@bath.ac.uk}

\begin{document}

\begin{abstract}
We develop a theory of Prym varieties and cubic threefolds over fields of characteristic $2$. As an application, we prove that smooth cubic threefolds are non-rational over an arbitrary field and solve a conjecture of Deligne regarding arithmetic Torelli maps. We also prove the Torelli theorem for cubic threefolds over arbitrary fields.
\end{abstract}

\maketitle
\thispagestyle{empty}
\tableofcontents

\section{Introduction}

Smooth cubic threefolds were shown to be non-rational in \cite{clem} over the complex field, and later in \cite{murre} and \cite{beau} over fields of characteristic not $2$, using the theory of Prym varieties. We complete this program by extending the result to fields of characteristic $2$.

\begin{customthm}{A}
A smooth cubic threefold over an arbitrary field is not rational.
\end{customthm}

\noindent To achieve this, we develop a theory of Prym varieties in characteristic $2$. In order to apply this theory to a cubic threefold $X$, we need to transform $X$ into a conic bundle over $\mathbb{P}^2$. This is done by projecting away from a line on $X$. We say that such a line $l \subset X$ is \emph{good} if the resulting conic bundle has a smooth discriminant curve, whose fibers are all given by $2$ distinct lines. In the case that $X$ contains a good line, then a Prym variety can be constructed and the proof in \cite{beau} can be adapted to characteristic $2$ with some extra work.

However it turns out that for the Fermat cubic threefold in characteristic $2$, there are no good lines, so we cannot construct a Prym variety. We address this case separately by considering its automorphism group as in \cite{Beauville}. It turns out, over algebraically closed fields, that the Fermat cubic threefold is the only cubic threefold with no good lines (Proposition \ref{nogoodline}). In fact, the cubic threefolds $X$ over an arbitrary field $k$ which have no good lines turn out to be the Hermitian cubic threefolds (Proposition \ref{2bic}). A cubic threefold is Hermitian if it is defined by an equation whose monomials are all non-squarefree, over a field of characteristic $2$. The theory of Hermitian hypersurfaces (also called $q$-bic hypersurfaces) has been studied more broadly in \cite{qbic, cheng}.

The Prym varieties we construct are principally polarised abelian fivefolds, which are intermediate Jacobians of the cubic threefolds. The intermediate Jacobian of a cubic threefold behaves like the Jacobian of a curve, and contains information regarding the rationality of the threefold. We are able to construct intermediate Jacobians of families of cubic threefolds given some constraints on the base. This solves a conjecture of Deligne posed in \cite[3.3]{deligne}, which is as follows. Let $U \subset \mathbb{P}^{34}_{\mathbb{Z}}$ be the locus of smooth cubic forms in $5$ variables. There is a universal family $\mathcal{X} \to U$ of smooth cubic threefolds with the following property. For any scheme $T$ and any closed subscheme $X \subset \mathbb{P}^4_T$ which is a family of smooth cubic threefolds over $T$, there is a (non-unique) morphism $f:T \to U$ so that $X \cong f^*\mathcal{X}$. Then the following result holds.

\begin{customthm}{B}
There exists a principally polarised abelian scheme $(A,\Theta)$ over $U$ which extends the principally polarised abelian scheme $J_{\mathbb{Q}}$ over $U_{\mathbb{Q}}$ having the defining property that for all points $s \in U_{\mathbb{Q}}$, the fibre $(J_{\mathbb{Q}})_s$ is the intermediate Jacobian of the cubic threefold $\mathcal{X}_s$. 
\end{customthm}

\noindent Previously, the best known result in this direction was due to Achter \cite[Theorem B]{achter}, who constructs $(A,\Theta)_{\mathbb{Z}[\frac{1}{2}]}$. We stress that the theory of Prym varieties in characteristic $2$ is a crucial ingredient to this result, since it increases the codimension in $U$ of the locus of cubic threefolds which have no known intermediate Jacobian to at least $2$. We are then in a position to exploit results about extending abelian schemes over a closed subset of codimension at least $2$, in particular \cite[Corollary 6.8]{Faltings} and \cite[Theorem 1.3]{Vasiu}.

Finally we proceed to prove a Torelli theorem for smooth cubic threefolds over arbitrary fields. This was achieved for algebraically closed fields of characteristic not $2$ in \cite{beau}. In \cite[Theorem 1.3]{loughran} the Torelli theorem for arbitrary fields of characteristic $0$ is claimed, however the proof is incomplete. The result \cite[Proposition 3.2]{loughran} shows that the arithmetic Torelli map is universally injective, but a morphism of stacks being universally injective only means that it is injective on geometric points, which is weaker than what the authors require. We remedy this with the following result.

\begin{customthm}{C} 
Let $X,Y$ be smooth cubic threefolds over a field $k$ such that their intermediate Jacobians $JX$ and $JY$ are isomorphic. Then $X \cong Y$.
\end{customthm}

\noindent We do this by proving, for cubic threefolds $X$ over arbitrary fields, that the intermediate Jacobian $J(X)$ is isomorphic to the Albanese of the Fano surface of lines $\Alb(F(X))$. We use this to extend the proof of \cite[Proposition 6]{beau} stating that the projectivisation of the tangent cone to the theta divisor of $J(X)$ at its unique singularity is the cubic threefold $X$, to aritrary base fields. This allows one to recover a cubic threefold from its intermediate Jacobian.

 For both of these results one has to consider the case of the Fermat cubic threefold separately from other cubic threefolds. The intermediate Jacobian of the Fermat cubic is essentially constructed by lifting to characteristic $0$, and \cite[Theorem 5.3]{achter2022functorial} ensures that this is the correct interpretation.

\subsection{Key differences in characteristic $2$}
The overall strategy for non-Hermitian cubic threefolds is similar across all characteristics, but there are some important new features in characteristic $2$. When $\textrm{char}(k)\neq 2$, there is an explicit correspondence between \'etale double covers of a curve $C$ and $2$-torsion points of its Jacobian $JC$ over the algebraic closure $\overline{k}$, as described in \cite[Exercise \rom{4}.2.7]{hartshorne}. This is no longer the case when $\textrm{char}(k) = 2$. This can be attributed to the fact that an \'etale double cover $\pi: \widetilde{C} \to C$ attached to a smooth cubic threefold equiped with a good line is an Artin-Schreier extension. This means that we have a non-split short exact sequence
$$0 \to \mathcal{O}_C \to \pi_* \mathcal{O}_{\widetilde{C}} \to  \mathcal{O}_C \to 0.$$
This is contrary to the case $\textrm{char}(k)\neq 2$, where we obtain a split short exact sequence
$$0 \to \mathcal{O}_C \to \pi_* \mathcal{O}_{\widetilde{C}} \to \eta \to 0$$
with $\eta \in JC[2]$ the torsion point corresponding to $\pi$. 

Despite these differences, if we let $H$ be the hyperplane divisor on $C$, we can still compute $h^0(\pi^*H) = 4$, and that $[\pi^*H]$ is the sole singularity of the theta divisor on the Prym variety, as is done in \cite[Proposition 2]{beau}, although the methods used are different.

 Consider the two connected components $P = P_0 \cup P_1$ of $P = \Ker(\pi_*)+ [\pi^*H]$, the translate of the kernel of $\pi_*: J\widetilde{C} \to JC$ by a theta characteristic. In characteristic $2$ we encounter another issue when proving the following description 
$$P_0(k) = \{[D] \in J_{10}\widetilde{C}(k) : \pi_*D \cong K_C \textrm{ and } h^0(D) \textrm{ even} \},$$
$$P_1(k) = \{[D] \in J_{10}\widetilde{C}(k) : \pi_*D \cong K_C \textrm{ and } h^0(D) \textrm{ odd} \}.$$
In \cite{m-tc}, Mumford proves this assuming that the characteristic of the base field is not $2$. We rework his proof to suit fields of all characteristics. The main obstacle is that there is no longer an equivalence between quadratic forms and symmetric bilinear forms. However we remedy this by realizing that the correct object we need is a quadratic form whose associated quadric hypersurface is smooth. With this slight modification, Mumford's proof follows through. 

For Hermitian cubic threefolds the entire theory breaks down as we can no longer make use of Prym varieties. For the Klein cubic threefold in characteristic $0$, there is an alternative proof of non-rationality by Beauville which makes use of its large automorphism group acting on its intermediate Jacobian. We are able to adapt this proof to all characteristics using $\ell$-adic cohomology. It turns out that this is sufficient, because all Hermitian cubic threefolds are geometrically isomorphic to the Klein cubic.

Nevertheless, it is difficult to understand what the intermediate Jacobian of a Hermitian cubic threefold looks like. To construct it, we lift to characteristic $0$ and use results from \cite{achter2020decomposition}. In order to prove the Torelli theorem in the case $\textrm{char}(k) \neq 2$, a key input is the fact that the Albanese morphism attached to the Fano variety of lines on the cubic threefold is injective. For the case $\textrm{char}(k) = 2$, we introduce a new proof of this fact that relies on work of Raymond Cheng. In \cite[Theorem 6.14]{cheng} he shows that the intermediate Jacobian is purely inseparably isogenous to the product of $5$ elliptic curves, and it is this result which we exploit in our proof.

\subsection*{Outline of the paper}
In \S 2 we construct Prym varieties in all characteristics and define their natural principal polarisation. In \S 3 we describe the Prym variety as a moduli space of line bundles. In \S 4 we study cubic threefolds and show that Hermitian cubic threefolds are exactly the cubic threefolds with no good lines. We also compute the singularities of the theta divisor on the Prym variety. In \S 5 we consider the problem of constructing intermediate Jacobians for families of cubic threefolds over arbitrary bases, and whether they satisfy base change. Over abritrary fields, we show that intermediate Jacobians of cubic threefolds exist and are isomorphic to the Albanese of their Fano surface of lines.  In \S 6 we study applications of our results to Deligne's question on arithmetic Torelli maps, and prove the Torelli theorem for cubic threefolds over an arbitrary field. We also prove that cubic threefolds over arbitrary fields are non-rational.

\subsection{Conventions}
Except for section $4.1$, a curve will always be smooth and projective. Cubic threefolds will always be smooth.

\subsection*{Acknowledgements}
I would like to thank my supervisor Daniel Loughran for many useful comments and the guidance he has provided me over the last three years. I would like to thank Klaus Hulek for suggesting the problem. I would like to thank Will Sawin for outlining the proof of Lemma \ref{upper}, and discussions regarding Artin-Schreier theory. I would also like to thank Raymond Cheng and Martin Gebhard for many useful discussions regarding the Fermat cubic threefold. I would like to thank Olivier Wittenberg for suggesting the construction of the intermediate Jacobian of the Fermat cubic threefold. The author is supported by EPSRC studentship EB-MA1320.

\section{Prym varieties in all characteristics}
In this section we give a construction of Prym varieties in all characteristics.

Whilst preparing this paper, we learnt of the preprint \cite{pprym} which also considers Prym varieties in characteristic $2$. The results obtained there are more general, but for our purposes the results in this chapter suffice. Moreover, the methods used here differ from the methods in \cite{pprym}.

\subsection{Smoothness of the norm map on Jacobians}
In this subsection we are going to consider how a morphism of curves induces a norm map between the respective Jacobians. We will show that in the case of an \'etale cover of curves, the norm map is smooth. As a result we get that the Prym variety exists as an abelian variety.

Let $\pi:\widetilde{C} \to C$ be a finite \'etale morphism of curves of degree $d$ over a field $k$. We have a pullback map $\pi^* : JC \to J\widetilde{C}$ as well as a norm map $\pi_* : J\widetilde{C} \to JC$ between Jacobians, and we have the relation $\pi_* \circ \pi^* = [d]_{JC}$. 

\begin{definition}
The \emph{Prym variety} $\Prym(\widetilde{C}/C)$ is defined as the connected component of the kernel of $\pi_*: J\widetilde{C} \to JC$. Sometimes we will use the notation $P^+ := \Ker(\pi_*)^0$.
\end{definition}

We will show that this is an abelian variety. For this the only technical difficulty is showing that $P^+$ is smooth. We will demonstrate this by showing that $\pi_*$ is a smooth morphism..

\begin{proposition}\label{tspace}
Let $C$ be a curve over a field $k$, and let $JC$ be its Jacobian. Let $C_{\epsilon} := C \times_{k} \Spec(k[\epsilon]/(\epsilon^2))$ be the base change to the dual numbers. Then we have natural isomorphisms
$$\HH^1(C_{\epsilon}, \mathcal{O}^{\times}_{C_{\epsilon}}) \cong \HH^1(C, (\mathcal{O}_C \oplus \epsilon\mathcal{O}_C)^{\times}) \cong  \HH^1(C, \mathcal{O}_C^{\times}) \oplus \epsilon \HH^1(C, \mathcal{O}_C)$$
and
$$ \HH^1(C, \mathcal{O}_C) \cong \Ker(\HH^1(C_{\epsilon}, \mathcal{O}^{\times}_{C_{\epsilon}})  \to  \HH^1(C, \mathcal{O}_C^{\times}) ) \cong T_0JC $$
where the map $\HH^1(C,\mathcal{O}_C) \to \Ker(\HH^1(C_{\epsilon}, \mathcal{O}^{\times}_{C_{\epsilon}}) \to \HH^1(C, \mathcal{O}_C^{\times}) )$ sends $a\mapsto 1 + a\epsilon$.
\end{proposition}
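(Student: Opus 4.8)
The plan is to reduce everything to a computation with the unit sheaf on the fixed topological space underlying $C$, exploiting that the projection $f \colon C_\epsilon \to C$ is a homeomorphism of Zariski spaces (it is a nilpotent thickening). Since $f$ is a homeomorphism, $R^q f_* = 0$ for $q>0$ on arbitrary abelian sheaves, so the Leray spectral sequence degenerates to give $\HH^i(C_\epsilon, \mathcal{F}) \cong \HH^i(C, f_* \mathcal{F})$; crucially this applies to the non-quasicoherent sheaf $\mathcal{F} = \mathcal{O}^\times_{C_\epsilon}$. Noting that $f_* \mathcal{O}_{C_\epsilon} = \mathcal{O}_C \oplus \epsilon \mathcal{O}_C$ as sheaves of $\mathcal{O}_C$-algebras, with $f_*(\mathcal{O}^\times_{C_\epsilon}) = (\mathcal{O}_C \oplus \epsilon\mathcal{O}_C)^\times$ (being a unit is a stalkwise condition), this yields the first isomorphism $\HH^1(C_\epsilon, \mathcal{O}^\times_{C_\epsilon}) \cong \HH^1(C, (\mathcal{O}_C \oplus \epsilon\mathcal{O}_C)^\times)$.

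Next I would analyse the sheaf of units $(\mathcal{O}_C \oplus \epsilon\mathcal{O}_C)^\times$. Because $\epsilon$ is nilpotent, a local section $u + \epsilon v$ is a unit if and only if $u \in \mathcal{O}_C^\times$, and it then factors uniquely as $u + \epsilon v = u \cdot (1 + \epsilon u^{-1} v)$. This produces a splitting of sheaves of abelian groups
$$(\mathcal{O}_C \oplus \epsilon\mathcal{O}_C)^\times \cong \mathcal{O}_C^\times \times (1 + \epsilon\mathcal{O}_C),$$
with the projection onto $\mathcal{O}_C^\times$ being reduction modulo $\epsilon$. The key point, valid in \emph{all} characteristics precisely because $\epsilon^2 = 0$ removes any need to divide by factorials, is the truncated logarithm $1 + \epsilon\mathcal{O}_C \cong \mathcal{O}_C$, $1 + \epsilon a \mapsto a$, which is a sheaf isomorphism since $(1 + \epsilon a)(1 + \epsilon b) = 1 + \epsilon(a+b)$. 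As $\HH^1$ commutes with finite products of sheaves, taking cohomology gives
$$\HH^1(C, (\mathcal{O}_C \oplus \epsilon\mathcal{O}_C)^\times) \cong \HH^1(C, \mathcal{O}_C^\times) \oplus \epsilon\HH^1(C, \mathcal{O}_C),$$
completing the first displayed chain. Under this decomposition the map induced by reduction modulo $\epsilon$ is exactly the projection onto $\HH^1(C, \mathcal{O}_C^\times)$, so its kernel is the complementary summand; tracing through the truncated logarithm shows the identifying isomorphism $\HH^1(C, \mathcal{O}_C) \cong \Ker(\dots)$ is induced by $a \mapsto 1 + a\epsilon$, as asserted.

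It remains to identify this kernel with $T_0 JC$. By the functor-of-points description of the tangent space, $T_0 JC = \Ker(JC(k[\epsilon]/\epsilon^2) \to JC(k))$, the map being reduction $\epsilon \mapsto 0$. Here I would invoke representability of the Jacobian as $\Pic^0_{C/k}$ together with the relation $\Pic_{C/k}(R) = \Pic(C_R)/\Pic(R)$: since $k[\epsilon]/\epsilon^2$ is local we have $\Pic(k[\epsilon]/\epsilon^2) = 0$, so $\Pic(C_\epsilon) \hookrightarrow \Pic_{C/k}(k[\epsilon])$, and a line bundle in the kernel of reduction to $C$ restricts to the trivial bundle on the topologically unchanged reduction, hence lies in the identity component $\Pic^0$. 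This pins down $T_0 JC = \Ker(\Pic(C_\epsilon) \to \Pic(C)) = \Ker(\HH^1(C_\epsilon, \mathcal{O}^\times_{C_\epsilon}) \to \HH^1(C, \mathcal{O}_C^\times))$, giving the final isomorphism.

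I expect the genuinely delicate step to be this last identification rather than the cohomological splitting, which is formal. The subtlety is the comparison between the naive group $\HH^1(C_\epsilon, \mathcal{O}^\times_{C_\epsilon}) = \Pic(C_\epsilon)$ and the functor of points $JC(k[\epsilon])$: one must ensure the relative Picard functor agrees with $R \mapsto \Pic(C_R)/\Pic(R)$ on $R = k[\epsilon]$ (which is where $\Pic(k[\epsilon]) = 0$ and, if needed, a rational point or the sheaf property of the functor enters) and that an infinitesimal deformation of the trivial bundle cannot leave the identity component. Everything else is an elementary and characteristic-free manipulation of the unit sheaf of the dual numbers.
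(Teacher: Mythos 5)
Your proof is correct and is essentially the argument behind the paper's one-line proof, which simply cites Milne's Proposition 2.1 on Jacobian varieties: your splitting $(\mathcal{O}_C \oplus \epsilon\mathcal{O}_C)^\times \cong \mathcal{O}_C^\times \times (1+\epsilon\mathcal{O}_C)$ together with the truncated logarithm is exactly the split short exact sequence $0 \to \mathcal{O}_C \xrightarrow{a \mapsto 1+a\epsilon} \mathcal{O}_{C_\epsilon}^\times \to \mathcal{O}_C^\times \to 0$ of sheaves on the common topological space used in that reference. The one delicate point you correctly flag — identifying $\Ker(\Pic(C_\epsilon) \to \Pic(C))$ with $\Ker(JC(k[\epsilon]/\epsilon^2) \to JC(k))$ when $C$ need not have a rational point — is handled there by standard means (e.g.\ reducing to algebraically closed $k$, since tangent-space formation and $\HH^1(C,\mathcal{O}_C)$ both commute with base field extension), so there is no genuine gap.
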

\begin{proof}
This follows from \cite[Proposition 2.1]{MilneJV}.
\end{proof}

\begin{definition}[Trace map on cohomology] \label{cohomologyTrace}
Let $\pi:\widetilde{C} \to C$ be a finite etale cover of curves. Let $k(C)$ be the function field of $C$ and $\underline{k(C)}$ the corresponding constant sheaf. Let $S(C) := \underline{k(C)}/\mathcal{O}_C$. Define $k(\widetilde{C}),\underline{k(\widetilde{C})}, S(\widetilde{C})$ similarly. Consider the following commutative diagram of short exact sequences
    $$
\begin{tikzcd}
0 \arrow{r} &  \pi_*\mathcal{O}_{\widetilde{C}} \arrow{d}{\Tr} \arrow{r} & \pi_*\underline{k(\widetilde{C})} \arrow{d}{\Tr} \arrow{r}  &  \pi_*S(\widetilde{C}) \arrow{d}{\overline{\Tr}}  \arrow{r} & 0 \\
0 \arrow{r} & \mathcal{O}_C  \arrow{r} & \underline{k(C)}  \arrow{r}  &  S(C)  \arrow{r} & 0
\end{tikzcd}
$$
where the first two vertical maps are just the trace maps, inducing the third map $\overline{\Tr}$. The \emph{trace map on cohomology} $$\Tr: \HH^1(\widetilde{C},\mathcal{O}_{\widetilde{C}}) \to \HH^1(C,\mathcal{O}_{C})$$ is then given from the induced long exact sequence.
\end{definition}

\begin{definition}[Group of repartitions]
Let $C$ be a curve. We define the \emph{group of repartitions} as follows
\begin{align*}
R(C) & := \{(f_p)_{p \in C} \in \oplus_{p \in C}k(C) : f_p \in \mathcal{O}_{C,p} \textrm{ for all but finitely many } p \}, \\
R(\mathcal{O}_C) & := \{(f_p)_{p \in C} \in \oplus_{p \in C}k(C) : f_p \in \mathcal{O}_{C,p} \textrm{ for all  } p \}.
\end{align*}
\end{definition}

\begin{proposition}\label{serre}
Let $C$ be a curve over an algebraically closed field $k$. From Definition \ref{cohomologyTrace}, since $\underline{k(C)}$ is flasque, we have $$\HH^1(C,\mathcal{O}_C) \cong \Gamma(S(C))/\im(\underline{k(C)} \to S(C)).$$ Then we have $$\HH^1(C,\mathcal{O}_C) \cong R(C)/(R(\mathcal{O}_C) + k(C))$$ and this isomorphism is induced by the map $\Gamma(S(C)) \to R(C)/R(\mathcal{O}_C)$ sending a section $s$ to $\{s_p\}_{p \in C}$ where $s_p \in S(C)_p \cong k(C)/\mathcal{O}_{C,p}$ at each point $p$. 
\end{proposition}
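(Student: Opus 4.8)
The first isomorphism is handed to us by the flasqueness of $\underline{k(C)}$, so the entire content lies in identifying $\Gamma(S(C))$ with the repartition quotient and checking that the two descriptions are compatible. The plan is to produce the germ map $\phi : \Gamma(S(C)) \to R(C)/R(\mathcal{O}_C)$, $s \mapsto \{s_p\}_{p}$, prove it is an isomorphism, and then pass to the quotient by the image of $k(C)$ on both sides.

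Before anything I would record two structural facts about $C$ as an irreducible Noetherian space: every nonempty open $U \subseteq C$ is connected, so that $\Gamma(U, \underline{k(C)}) = k(C)$; and $C$ is quasi-compact. The crucial lemma is then that \emph{every global section of $S(C)$ has finite support}. Indeed, writing $s$ locally as the image of sections $f_i \in \Gamma(U_i, \underline{k(C)}) = k(C)$ on a finite subcover $\{U_i\}$, the germ $s_p$ is nonzero only where the rational function $f_i$ has a pole, and each $f_i$ has finitely many poles. This is precisely what makes $\phi$ well-defined: lifting each germ $s_p \in k(C)/\mathcal{O}_{C,p}$ to some $f_p \in k(C)$ yields a repartition, since $f_p \in \mathcal{O}_{C,p}$ for all but finitely many $p$, and changing the lifts only moves the class inside $R(\mathcal{O}_C)$.

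Next I would check $\phi$ is bijective. Injectivity is immediate, since a section of a sheaf vanishes as soon as all of its germs vanish. For surjectivity, since $R(C)/R(\mathcal{O}_C) \cong \bigoplus_p k(C)/\mathcal{O}_{C,p}$ it suffices to hit each single-point class supported at a point $p$ by some $\bar g \in k(C)/\mathcal{O}_{C,p}$. Choosing a representative $g \in k(C)$ and letting $T$ be its (finite) pole set away from $p$, I would define a section by assigning the germ $\bar g$ at $p$ and $0$ elsewhere, and verify local liftability: on $C \setminus T$ the function $g$ represents it, while on $C \setminus \{p\}$ the zero function does. \textbf{This surjectivity step is the main subtlety.} One might worry that representing the section near $p$ requires a rational function with a pole \emph{only} at $p$, which need not exist; the point is that local liftability demands merely a representative that is regular on a punctured neighbourhood of $p$, and this is arranged by excising the extraneous poles of $g$ from the open set. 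This identifies $\Gamma(S(C)) \cong R(C)/R(\mathcal{O}_C)$.

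Finally I would track the image of $k(C) = \Gamma(\underline{k(C)})$. An element $f \in k(C)$ maps to the global section of $S(C)$ with germ $f \bmod \mathcal{O}_{C,p}$ at every $p$, so $\phi$ carries $\im(\underline{k(C)} \to S(C))$ onto the image of the diagonal $k(C) \hookrightarrow R(C)$, namely $(R(\mathcal{O}_C) + k(C))/R(\mathcal{O}_C)$. Combining this with the first isomorphism and quotienting by this subgroup yields
$$\HH^1(C,\mathcal{O}_C) \cong \Gamma(S(C))/\im(\underline{k(C)} \to S(C)) \cong R(C)/(R(\mathcal{O}_C) + k(C)),$$
as claimed, with the isomorphism visibly induced by $\phi$.
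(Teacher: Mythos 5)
Your proof is correct. Note that the paper does not actually prove this proposition---its ``proof'' is a one-line citation to Serre (Ch.~\rom{2}, Proposition~3 of \emph{Algebraic Groups and Class Fields})---so there is no internal argument to compare against; what you have written is in substance the standard argument that the citation points to. The three points where such a proof could go wrong are all handled properly: finiteness of the support of a global section of $S(C)$ (via local liftability to elements of $k(C)$ on a finite subcover, plus finiteness of pole sets), which is exactly what makes the germ map $\phi$ land in $R(C)/R(\mathcal{O}_C)$; injectivity, since a section all of whose germs vanish is zero; and surjectivity, where you correctly isolate the genuine subtlety---on a curve of positive genus there need not exist a rational function with a pole \emph{only} at $p$ (Weierstrass gaps), so one cannot lift a single-point class by a single global representative, but local liftability only requires a representative regular on a punctured neighbourhood of $p$, which you arrange by excising the other poles of $g$ from the open set and gluing with the zero section on $C\setminus\{p\}$. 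Tracking the image of $k(C)$ onto the diagonal $(k(C)+R(\mathcal{O}_C))/R(\mathcal{O}_C)$ and passing to quotients then yields the stated isomorphism, induced by $\phi$ as the proposition requires.
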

\begin{proof}
 This is \cite[\rom{2}, Proposition $3$]{serre}.
\end{proof}

\begin{lemma}\label{repartitionsTrace}
Let $\pi:\widetilde{C} \to C$ be a Galois cover of curves with Galois group $G$ over an algebraically closed field $k$. For any $f\in k(\widetilde{C})$ and $q \in \widetilde{C}$, we have $(f_q + \mathcal{O}_{\widetilde{C},q}) \cap k(C) \neq \emptyset$.
\end{lemma}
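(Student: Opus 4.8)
The plan is to produce the required element of $k(C)$ as the Galois trace of a rational function on $\widetilde{C}$ that has been engineered to carry exactly the pole of $f$ at $q$. Write $p = \pi(q)$ and let $\pi^{-1}(p) = \{q = q_1, q_2, \dots, q_d\}$ be the fibre. Since the cover is \'etale and $k$ is algebraically closed, the residue fields are all $k$ and the inertia and decomposition groups at $q$ are trivial, so $G$ acts freely, hence simply transitively, on $\pi^{-1}(p)$; in particular the $q_i$ are pairwise distinct and $\sigma q \neq q$ for every $\sigma \in G \setminus \{1\}$. This transitivity is precisely what will let the trace isolate the contribution of $f$ at $q$.

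First I would solve an approximation problem on $\widetilde{C}$: find $h \in k(\widetilde{C})$ with the same principal part as $f$ at $q$ while being regular at $q_2, \dots, q_d$ (poles elsewhere being harmless). Concretely, set $n = \max(0, -\ord_q f)$ and choose an effective divisor $E$ with support disjoint from $\pi^{-1}(p)$ and $\deg E \gg 0$. The map sending a function to its principal part at $q$,
$$L(nq + E) \longrightarrow \Bigl\{ \textstyle\sum_{j=1}^n a_j t^{-j} \Bigr\} \cong k^n ,$$
where $t$ is a uniformizer at $q$, has kernel $L(E)$, so a Riemann--Roch dimension count gives image of dimension $\dim L(nq+E) - \dim L(E) = n$ once $nq+E$ and $E$ are nonspecial. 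Hence the map is surjective, and the principal part of $f$ is realised by some $h \in L(nq+E)$; by construction $h$ is regular at each $q_i$ with $i \geq 2$.

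Finally I would set $g := \Tr_{k(\widetilde{C})/k(C)}(h) = \sum_{\sigma \in G} \sigma h$, which lies in $k(\widetilde{C})^G = k(C)$. For $\sigma \neq 1$ the translate $\sigma h$ can be singular at $q$ only where $h$ is singular at $\sigma^{-1}q$, which is a point $q_i$ with $i \geq 2$ and hence excluded; so every $\sigma h$ with $\sigma \neq 1$ is regular at $q$. Therefore $g - f = (h - f) + \sum_{\sigma \neq 1} \sigma h$ is regular at $q$, i.e. $g - f \in \mathcal{O}_{\widetilde{C},q}$, giving $g \in (f_q + \mathcal{O}_{\widetilde{C},q}) \cap k(C)$. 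The main obstacle is the approximation step: one must guarantee that a single prescribed principal part at $q$ is attainable by a rational function that is \emph{simultaneously} regular at the remaining points of the fibre, which is exactly what the Riemann--Roch count delivers; the verification that the nontrivial Galois translates do not interfere at $q$ then rests entirely on the simple transitivity established at the outset.
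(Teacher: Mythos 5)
Your proof is correct, and its endgame coincides with the paper's: both produce the required element of $k(C)$ as a Galois trace $\Tr(h)=\sum_{\sigma\in G}\sigma h$ of an auxiliary function $h$, using that $G$ acts freely on the fibre (this is where \'etaleness enters; the statement omits the word, but the lemma fails for ramified covers) so that only the identity term of the trace can contribute a pole at $q$. Where you genuinely differ is in how $h$ is obtained, and this is the step carrying the real content. The paper works locally: it pulls back a uniformizer $t$ of $\mathcal{O}_{C,\N(q)}$, which remains a uniformizer at every point of the fibre, writes each class of $k(\widetilde{C})/\mathcal{O}_{\widetilde{C},q}$ as $(\sum_{i<n}b_it^i)/t^n$, and applies the trace identity to the terms $b/t^i$. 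As printed this is defective: if $b\in\mathcal{O}_{C,\N(q)}$, then $b/t^i$ is $G$-invariant, so the trace identity is vacuous and the claim that $g(b/t^i)\in\mathcal{O}_{\widetilde{C},q}$ for $g\neq 1$ is false; if one instead reads $b\in\mathcal{O}_{\widetilde{C},q}$, that claim still fails unless $b$ is additionally chosen, by a Chinese remainder argument in the semi-local ring of the fibre, to vanish to order at least $i$ at the other fibre points --- a normalization the paper never performs. Your Riemann--Roch step is exactly this missing normalization, carried out globally: $h\in L(nq+E)$ has poles only in $\{q\}\cup\mathrm{supp}(E)$, hence is regular at the remaining fibre points, so each $\sigma h$ with $\sigma\neq 1$ is regular at $q$ and the trace argument is legitimate. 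In short, your route is a complete and rigorous version of what the paper sketches, at the mild cost of invoking Riemann--Roch and non-speciality of large-degree divisors; it is worth noting that \'etaleness also gives an even shorter purely local proof, since $\widehat{\mathcal{O}}_{\widetilde{C},q}\cong k[[t]]$ with $t$ pulled back from $C$ and residue field $k$, so the principal part $\sum_{j=1}^n a_jt^{-j}$, $a_j\in k$, of $f$ at $q$ is itself already an element of $k(C)$ congruent to $f$ modulo $\mathcal{O}_{\widetilde{C},q}$, with no trace needed at all.
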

\begin{proof}
Since $\pi$ is Galois and $k$ is algebraically closed, then fixing any $p \in C$,
\begin{enumerate}
\item For all $q \in \pi^{-1}(p)$ the local rings $\mathcal{O}_{\widetilde{C},q}$ are isomorphic via the Galois action.
\item Any uniformizer for $\mathcal{O}_{C,p}$ becomes a uniformizer for each $\mathcal{O}_{\widetilde{C},q}$.
\end{enumerate}
It suffices to show that $k(C) + \mathcal{O}_{\widetilde{C},q} = k(\widetilde{C})$. Since
$$k(\widetilde{C}) / \mathcal{O}_{\widetilde{C},q} = \left \{ \frac{\sum_{i=0}^{n-1}b_it^i}{t^n}: b_i \in  \mathcal{O}_{C,\N(q)}, n \in \mathbb{Z}_{\geq 1}\right \}$$
it remains to show that $\frac{b}{t^i} \in k(C) + \mathcal{O}_{\widetilde{C},q}$ for all $i$ and $b \in \mathcal{O}_{C,\N(q)}$. However
$$\frac{b}{t^i} = \Tr \left (\frac{b}{t^i} \right ) - \sum_{g \in G: g\neq 1} g \left (\frac{b}{t^i} \right )$$
and $g(\frac{b}{t^i}) \in \mathcal{O}_{\widetilde{C},q}$ for all $g \in G$ so this is indeed true.
\end{proof}

\begin{definition}[Trace map on repartitions]
Let $\pi:\widetilde{C} \to C$ be a Galois cover of curves with Galois group $G$ over an algebraically closed field $k$. We define the \emph{trace map on repartitions} $\Tr_R: R(\widetilde{C})/R(\mathcal{O}_{\widetilde{C}}) \to R(C)/R(\mathcal{O}_C)$ as
$$\Tr_R((f_q)_{q \in \widetilde{C}}) = \left (\sum_{q \in \pi^{-1}(p)} f_q' \right )_{p \in C} + R(\mathcal{O}_C)$$
where arbitrary $f_q' \in (f_q + \mathcal{O}_{\widetilde{C},q}) \cap k(C)$ are chosen for all $q \in \widetilde{C}$. This is well-defined by Lemma \ref{repartitionsTrace}.
\end{definition}

\begin{lemma}\label{traceDualSerreDuality}
Let $\pi:\widetilde{C} \to C$ be a Galois cover of curves with Galois group $G$ over an algebraically closed field $k$. Then the trace map on cohomology
 $$\Tr: \HH^1(\widetilde{C},\mathcal{O}_{\widetilde{C}}) \to \HH^1(C,\mathcal{O}_{C}) $$
\noindent is dual under Serre duality to the pullback map 
$$\pi^*: \HH^0(C, \Omega_C^1) \to \HH^0(\widetilde{C}, \Omega_{\widetilde{C}}^1).$$
\end{lemma}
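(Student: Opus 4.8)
The plan is to realise both cohomology groups via repartitions and to express the Serre duality pairing as a sum of local residues, reducing the lemma to a purely local computation. By Proposition~\ref{serre} I identify $\HH^1(C,\mathcal{O}_C) \cong R(C)/(R(\mathcal{O}_C)+k(C))$ and likewise $\HH^1(\widetilde{C},\mathcal{O}_{\widetilde{C}}) \cong R(\widetilde{C})/(R(\mathcal{O}_{\widetilde{C}})+k(\widetilde{C}))$. Under these identifications, Serre duality is the residue pairing: for a repartition $(f_p)_p$ and a global differential $\omega$,
$$\langle (f_p)_p, \omega \rangle = \sum_{p \in C} \res_p(f_p\, \omega),$$
which is well defined because $\omega$ is holomorphic (so terms with $f_p \in \mathcal{O}_{C,p}$ vanish) and because the residue theorem kills the contribution of $k(C)$. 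With this model in place, the lemma amounts to checking that $\Tr_R$ is the transpose of $\pi^*$, i.e. that $\langle \Tr_R(\alpha), \omega\rangle = \langle \alpha, \pi^*\omega\rangle$ for every $\alpha \in R(\widetilde{C})$ and every $\omega \in \HH^0(C,\Omega^1_C)$.

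Next I would verify that the cohomological trace of Definition~\ref{cohomologyTrace} coincides, under Serre's isomorphism, with the trace on repartitions $\Tr_R$. This is a diagram chase: the induced map $\overline{\Tr}$ on $S(\widetilde{C}) = \underline{k(\widetilde{C})}/\mathcal{O}_{\widetilde{C}}$ is computed stalkwise at $p \in C$ by the field trace $\Tr_{k(\widetilde{C})/k(C)}$ on $\bigoplus_{q \in \pi^{-1}(p)} k(\widetilde{C})/\mathcal{O}_{\widetilde{C},q}$, and choosing representatives $f_q' \in (f_q + \mathcal{O}_{\widetilde{C},q}) \cap k(C)$ as in Lemma~\ref{repartitionsTrace} realises this field trace as the assignment $(f_q)_q \mapsto \big(\sum_{q \in \pi^{-1}(p)} f_q'\big)_p$, which is exactly $\Tr_R$. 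Functoriality of the flasque resolution then shows the long exact sequence map $\Tr$ agrees with $\Tr_R$ on $\HH^1$.

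The heart of the proof is then a local residue computation carried out one point $p \in C$ at a time. Since $\pi$ is \'etale over an algebraically closed field, for each $q \in \pi^{-1}(p)$ the induced map on completions $\widehat{\mathcal{O}}_{C,p} \to \widehat{\mathcal{O}}_{\widetilde{C},q}$ is an isomorphism, a uniformizer $t$ at $p$ remains a uniformizer at $q$, and $\pi^*\omega$ is represented by the same expansion $g(t)\,dt$ at $q$ as $\omega$ at $p$, with $g \in \widehat{\mathcal{O}}_{C,p}$ because $\omega$ is holomorphic. I would then prove that for each $q$,
$$\res_q(f_q\, \pi^*\omega) = \res_p(f_q'\,\omega),$$
using that $f_q' - f_q \in \mathcal{O}_{\widetilde{C},q}$ and that $g$ has no pole, so $(f_q'-f_q)g$ contributes nothing to the coefficient of $t^{-1}$. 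Summing over $q \in \pi^{-1}(p)$ and then over $p$ yields
$$\langle \alpha, \pi^*\omega\rangle = \sum_{q \in \widetilde{C}} \res_q(f_q\,\pi^*\omega) = \sum_{p \in C}\sum_{q \in \pi^{-1}(p)} \res_p(f_q'\,\omega),$$
which by linearity of the residue equals
$$\sum_{p \in C}\res_p\Big(\big(\sum_{q \in \pi^{-1}(p)} f_q'\big)\,\omega\Big) = \langle \Tr_R(\alpha), \omega\rangle,$$
giving the desired duality.

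The step I expect to be the main obstacle is the identification of the abstractly defined cohomological trace with $\Tr_R$: one must track the field trace $\Tr_{k(\widetilde{C})/k(C)}$ through the flasque resolution and Serre's isomorphism, and confirm via Lemma~\ref{repartitionsTrace} that the choice of representatives $f_q'$ does not affect the resulting class in $R(C)/(R(\mathcal{O}_C)+k(C))$. By contrast, once this identification and the residue model are in hand, the local computation is essentially formal, relying only on $\pi$ being \'etale and $\omega$ being holomorphic.
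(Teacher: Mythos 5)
Your proposal is correct and follows essentially the same route as the paper's proof: both identify $\HH^1$ via repartitions (Proposition~\ref{serre}), check that the cohomological trace of Definition~\ref{cohomologyTrace} agrees with $\Tr_R$ by a diagram chase, and then conclude via the residue pairing using the local computation $\res_q(f_q\,\pi^*\omega) = \res_p(f_q'\,\omega)$, which rests on $\pi$ being \'etale and $f_q - f_q' \in \mathcal{O}_{\widetilde{C},q}$. If anything, your per-point treatment of the residue comparison through completions is spelled out more carefully than the paper's rather terse chain of equalities.
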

\begin{proof}
We will use explicit Serre duality described in the language of repartitions \cite[\rom{2}, Proposition $3$]{serre}. The trace map on repartitions 
$$\Tr_R:R(\widetilde{C})/R(\mathcal{O}_{\widetilde{C}})  \to R(C)/R(\mathcal{O}_C) $$
sends $k(\widetilde{C}) + R(\mathcal{O}_{\widetilde{C}})$ to $k(C) + R(\mathcal{O}_{C})$ and so by Proposition \ref{serre} it induces a map 
$$\Tr':  \HH^1(\widetilde{C},\mathcal{O}_{\widetilde{C}}) \to \HH^1(C,\mathcal{O}_{C}).$$
To show that $\Tr = \Tr'$, it suffices to show that the diagram
   $$
\begin{tikzcd}
 \Gamma(\pi_*(S(\widetilde{C}))) \arrow{d} \arrow{r}{\Tr} &\Gamma(S(C)) \arrow{d} \\
R(\widetilde{C})/R(\mathcal{O}_{\widetilde{C}})  \arrow{r}{\Tr} & R(C)/R(\mathcal{O}_C)  
\end{tikzcd}
$$
commutes, which is easy to verify.

 Now we consider explicit Serre duality for curves as in \cite[\rom{2}, Theorem $2$]{serre}. There is a pairing
$$\langle, \rangle : R(C) \times \HH^0(C,\Omega_C^1) \to k$$
sending $((f_p)_{p \in C}, \omega)$ to $\sum_{p \in C} \res_p(f_p\omega)$. This induces a perfect pairing between $\HH^1(C,\mathcal{O}_{C})$ and $\HH^0(C, \Omega_C^1)$, and therefore $\HH^1(C,\mathcal{O}_{C}) \cong \HH^0(C, \Omega_C^1)^{\vee}$. Now
\begin{align*}
\langle \Tr((f_q)_{q \in \widetilde{C}}), \omega \rangle & =\left  \langle \left  (\sum_{q \in \pi^{-1}(p)}f_q' \right )_{p \in C}, \omega  \right \rangle = \\
 &=  \sum_{p \in C}\res_p\left ( \left (\sum_{q \in \pi^{-1}(p)}f_q' \right )\omega \right ) =\\
 & =  \sum_{p \in C}\res_{q}\left ( \left (\sum_{q \in \pi^{-1}(p)}f_q' \right )\pi^*\omega \right )=  \\
& = \sum_{p \in C}\sum_{q \in \pi^{-1}(p)}\res_q(f_q\pi^*\omega) = \langle (f_q)_{q \in \widetilde{C}}, \pi^*\omega \rangle
\end{align*}
so the trace map corresponds under duality to the pullback map on differential forms $\pi^*: \HH^0(C, \Omega_C^1) \to \HH^0(\widetilde{C}, \Omega_{\widetilde{C}}^1)$.
\end{proof}

\begin{proposition}\label{pushforwardsmooth}
Let $\pi:\widetilde{C} \to C$ be a finite \'etale cover of curves. Then $\pi_*: J\widetilde{C} \to JC$ is a smooth morphism.
\end{proposition}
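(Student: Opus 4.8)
The plan is to prove smoothness by reducing to a statement about the differential of $\pi_*$ at the origin, and then identifying that differential with the cohomological trace map of Definition \ref{cohomologyTrace}. Since smoothness of a morphism may be checked after the faithfully flat base change $\Spec\overline{k} \to \Spec k$, and since $\pi_{\overline{k}}$ is again finite \'etale, I would first reduce to the case $k = \overline{k}$, where the repartition machinery behind Lemma \ref{traceDualSerreDuality} is available. Now $\pi_*$ is a homomorphism between the smooth group varieties $J\widetilde{C}$ and $JC$: if its differential at $0$ is surjective, then translation makes the differential surjective at every point, and a morphism of smooth $k$-varieties with everywhere-surjective differential is smooth. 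Hence it suffices to show that
$$d(\pi_*)_0 : T_0 J\widetilde{C} \to T_0 JC$$
is surjective.

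Next I would identify $d(\pi_*)_0$ with the trace map $\Tr : \HH^1(\widetilde{C}, \mathcal{O}_{\widetilde{C}}) \to \HH^1(C, \mathcal{O}_C)$ under the canonical isomorphisms $T_0 JC \cong \HH^1(C, \mathcal{O}_C)$ and $T_0 J\widetilde{C} \cong \HH^1(\widetilde{C}, \mathcal{O}_{\widetilde{C}})$ of Proposition \ref{tspace}. Working over the dual numbers as in that proposition, a tangent vector $a \in \HH^1(\widetilde{C}, \mathcal{O}_{\widetilde{C}})$ corresponds to the class $1 + a\epsilon \in \HH^1(\widetilde{C}_{\epsilon}, \mathcal{O}^{\times}_{\widetilde{C}_{\epsilon}})$, while $\pi_*$ on Jacobians is induced by the multiplicative norm $\pi_*\mathcal{O}^{\times}_{\widetilde{C}} \to \mathcal{O}^{\times}_C$. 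The key point is the linearisation identity $\Nm(1 + a\epsilon) = 1 + \Tr(a)\epsilon$, valid because $\epsilon^2 = 0$ forces the multiplicative norm of a unit infinitesimally close to $1$ to be computed by the additive trace. This shows that $d(\pi_*)_0$ agrees, via Proposition \ref{tspace}, with the cohomological trace of Definition \ref{cohomologyTrace}.

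It then remains to prove that $\Tr$ is surjective. I would first treat the case where $\pi$ is Galois, so that Lemma \ref{traceDualSerreDuality} applies and identifies $\Tr$ as the Serre dual of the pullback $\pi^* : \HH^0(C, \Omega^1_C) \to \HH^0(\widetilde{C}, \Omega^1_{\widetilde{C}})$; thus $\Tr$ is surjective precisely when $\pi^*$ is injective. Injectivity of $\pi^*$ on global $1$-forms is immediate since $\pi$ is finite, separable, and dominant: the function field extension $k(C) \hookrightarrow k(\widetilde{C})$ is separable, so $\Omega^1_{k(C)/k} \otimes_{k(C)} k(\widetilde{C}) \cong \Omega^1_{k(\widetilde{C})/k}$, and a nonzero rational differential on $C$ pulls back to a nonzero one on $\widetilde{C}$. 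To remove the Galois hypothesis, I would pass to a Galois closure, choosing a finite \'etale $\rho : \widetilde{C}' \to \widetilde{C}$ with $\phi := \pi \circ \rho$ Galois over $C$. Functoriality of the norm gives $\phi_* = \pi_* \circ \rho_*$, hence $d(\phi_*)_0 = d(\pi_*)_0 \circ d(\rho_*)_0$; since $d(\phi_*)_0 = \Tr_{\phi}$ is surjective by the Galois case, so is $d(\pi_*)_0$, completing the argument.

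I expect the main obstacle to be the identification in the second paragraph: making precise, functorially and compatibly with the dual-numbers description of Proposition \ref{tspace}, that the differential of the multiplicative norm on Jacobians is exactly the additive trace $\Tr$ of Definition \ref{cohomologyTrace}, and then checking that the Galois-closure reduction is compatible with these identifications so that the factorisation $\Tr_{\phi} = \Tr \circ \Tr_{\rho}$ of the differentials is legitimate.
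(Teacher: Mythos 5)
Your proposal is correct and follows essentially the same route as the paper: reduce to an algebraically closed base field, identify $d(\pi_*)_0$ with the cohomological trace map of Definition \ref{cohomologyTrace} via the linearisation $\Nm(1+\epsilon a)=1+\epsilon\Tr(a)$ over the dual numbers, and deduce surjectivity from Lemma \ref{traceDualSerreDuality} together with injectivity of $\pi^*$ on global $1$-forms. The only difference is organisational: the paper disposes of non-Galois covers at the level of Jacobians, applying the Galois case to a Galois closure and descending smoothness along the smooth surjection $g_*$, whereas you run the same Galois-closure reduction at the level of tangent spaces, using that surjectivity of $d(\phi_*)_0=d(\pi_*)_0\circ d(\rho_*)_0$ forces surjectivity of $d(\pi_*)_0$; both mechanisms are valid.
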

\begin{proof}
We begin by reducing to the case where $\pi$ is a Galois cover of curves. By \cite[0BY1]{sp}, the category of finitely generated field extensions $K/k$ of transcendence degree $1$ is equivalent to the category of regular projective curves and nonconstant morphisms. Therefore $\pi$ corresponds to a separable field extension $\pi': k(C) \to k(\widetilde{C})$, and we can take the Galois closure $g': k(\widetilde{C}) \to L$ of this field extension. There exists a corresponding Galois cover of curves $g:D \to \widetilde{C}$ such that $L \cong k(D)$. Moreover, $\pi \circ g : D \to C$ is also a Galois covering of curves. Suppose that the result is true for Galois covers of curves and consider the following commutative diagram
$$\begin{tikzcd}
 J(D) \arrow{r}{g_*} \arrow{rd}{(\pi\circ g)_*}  & J(\widetilde{C}) \arrow{d}{\pi_*} \\
  & J(C).
\end{tikzcd}
$$
Then we know that $g_*$ and $(\pi \circ g)_*$ are smooth surjective morphisms. Then due to \cite[02K5]{sp}, it follows that $\pi_*$ is also smooth, as required.

It suffices to show that $\pi \times_k \overline{k}$ is a smooth morphism, so we may assume that the base field $k$ is algebraically closed. Let $d$ be the degree of $\pi$. The map $\pi_*$ induces a map 
$$\mathrm{d}\pi_* : T_0J\widetilde{C} \to T_0JC$$
on tangent spaces at the identity. By \cite[Proposition \rom{3}.10.4]{hartshorne}, it suffices to show that $\mathrm{d}\pi_*$ is surjective. A tangent vector $v \in T_0J\widetilde{C}$ corresponds to a morphism 
$$v: \Spec(k[\epsilon]/(\epsilon^2)) \to J\widetilde{C}$$
with image $0$. This in turn corresponds to a line bundle $L_v$ on $\widetilde{C}_{\epsilon} := \widetilde{C}\times_{k}\Spec(k[\epsilon]/(\epsilon^2))$ so that the natural restriction $L_v|_{\widetilde{C}} \cong \mathcal{O}_{\widetilde{C}}$ is trivial. Let $\pi_{\epsilon}: \widetilde{C}_{\epsilon} \to C_{\epsilon}$ be the base change of $\pi$ by the dual numbers $\Spec(k[\epsilon]/(\epsilon^2))$, which is still finite \'etale.  We may write  $L_v = \mathcal{O}(D^v)$ for some relative Cartier divisor $D^v$. Then $\mathrm{d}\pi_*(v) = \pi_* \circ v$ by the universal property of tangent spaces, and $\mathrm{d}\pi_*(L_v) = \mathcal{O}(\pi_{\epsilon}(D^v))$ by the universal property of Picard varieties. 

Lets us examine the cocycle data of $\mathcal{O}(\pi_{\epsilon}(D^v))$ as it relates to the cocycle data of $\mathcal{O}(D^v)$. Let $\widetilde{C}_{\epsilon} = \cup_{i \in I} U_i$ be a trivializing open cover for $L_v$ and suppose that the cocycles are given by $\{g_{ij}\}_{i,j \in I}$ as an element of $\HH^1(\widetilde{C}_{\epsilon}, \mathcal{O}^{\times})$. We define the norm map
$$\Nm:  \HH^1(\widetilde{C}_{\epsilon}, \mathcal{O}^{\times}) \to \HH^1(C_{\epsilon}, \mathcal{O}^{\times})$$
by sending $\{g_{ij}\}_{i,j \in I}$ to $\{\N(g_{ij})\}_{i,j \in I}$, which is cocycle data for the open cover $\{\pi_{\epsilon}(U_i)\}_{i\in I}$ of $C_{\epsilon}$ defining the line bundle $\mathcal{O}(\pi_{\epsilon}(D^v))$. By considering Proposition \ref{tspace}, we have the following commutative diagram
   $$
\begin{tikzcd}
\HH^1(\widetilde{C}, \mathcal{O}_{\widetilde{C}}) \arrow{r}{\sim} \arrow{d}{t} & T_0J\widetilde{C} \arrow{r} \arrow{d}{\mathrm{d}\pi_*} & \HH^1(\widetilde{C}_{\epsilon}, \mathcal{O}_{\widetilde{C}_{\epsilon}}^{\times}) \arrow{d}{\Nm} \arrow{r}  &  \HH^1(\widetilde{C}, \mathcal{O}_{\widetilde{C}}^{\times}) \arrow{d}{\Nm} \\
\HH^1(C, \mathcal{O}_{C}) \arrow{r}{\sim}&  T_0JC  \arrow{r} & \HH^1(C_{\epsilon}, \mathcal{O}_{C_{\epsilon}}^{\times})  \arrow{r}  &  \HH^1(C, \mathcal{O}_{C}^{\times}).
\end{tikzcd}
$$
We claim that $t$ is the trace map on cohomology as defined in Definition \ref{cohomologyTrace}. An element $a \in \HH^1(\widetilde{C},\mathcal{O}_{\widetilde{C}})$ is sent to $1 + \epsilon a \in \HH^1(\widetilde{C}_{\epsilon},\mathcal{O}_{\widetilde{C}_{\epsilon}}^{\times})$ by the composition of the first two horizontal maps. Consider the computation
$$\Nm(1 + \epsilon a) = \prod_{\sigma \in \Gal(\widetilde{C}_{\epsilon}/C_{\epsilon})} \sigma(1 + \epsilon a) = 1 + \epsilon\sum_{\sigma \in \Gal(\widetilde{C}_{\epsilon}/C_{\epsilon})} \sigma(a) = 1 + \epsilon \Tr(a)$$
which shows that $t$ is indeed induced by the trace map $\Tr: \pi_*\mathcal{O}_{\widetilde{C}} \to \mathcal{O}_{C}$, and therefore $t$ is the trace map on cohomology.

By Lemma \ref{traceDualSerreDuality}, it suffices to show that the map $\pi^*:\HH^0(C, \Omega_C^1) \to \HH^0(\widetilde{C}, \Omega_{\widetilde{C}}^1)$ is injective. However, by \cite[Proposition \rom{4}.2.1]{hartshorne}, this follows from the fact that $\pi$ is finite \'etale. This completes the proof.
\end{proof}

\begin{corollary}
Let $\pi: \widetilde{C} \to C$ be a finite Galois cover of curves. The Prym variety $P^+ := \Ker(\pi_*)^0$ is an abelian variety.
\end{corollary}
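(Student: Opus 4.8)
The plan is to deduce everything from the smoothness of $\pi_*$ established in Proposition \ref{pushforwardsmooth}. Recall that an abelian variety over $k$ is a smooth, proper, geometrically connected group scheme over $k$ possessing a $k$-rational point; the origin of $J\widetilde{C}$ lies in $P^+$ and supplies the rational point, so the real content is to verify smoothness, properness and geometric connectedness of $P^+$. Since $P^+$ is by construction an open subgroup scheme of the kernel $\Ker(\pi_*)$, I would first analyse $\Ker(\pi_*)$ and then pass to its identity component.

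First I would note that $\pi_*\colon J\widetilde{C}\to JC$ is a homomorphism of abelian varieties which, by Proposition \ref{pushforwardsmooth}, is a smooth morphism. Smoothness of a morphism is stable under base change, so the fibre over the identity section $0\in JC(k)$, namely the scheme-theoretic kernel $\Ker(\pi_*)$, is smooth over $k$. This is exactly the step where positive characteristic could otherwise cause trouble: a priori the kernel of an isogeny in characteristic $p$ may fail to be reduced, but smoothness of $\pi_*$ forces $\Ker(\pi_*)$ to be geometrically regular, and in particular reduced. This is the reason the smoothness statement was needed in the first place.

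Next, $\Ker(\pi_*)$ is a closed subgroup scheme of $J\widetilde{C}$, being the preimage of the closed point $0$ under $\pi_*$; as a closed subscheme of the proper $k$-scheme $J\widetilde{C}$ it is itself proper over $k$. The identity component $P^+=\Ker(\pi_*)^0$ is then an open and closed subgroup scheme of $\Ker(\pi_*)$: openness shows that $P^+$ inherits smoothness, while closedness together with properness of $\Ker(\pi_*)$ shows that $P^+$ is proper over $k$. Finally, the identity component of a group scheme locally of finite type over a field is geometrically connected, so $P^+$ is geometrically connected. Assembling these observations, $P^+$ is a smooth, proper, geometrically connected group scheme over $k$ containing the origin, hence an abelian variety.

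I do not anticipate a genuine obstacle here, since all the difficulty has already been absorbed into Proposition \ref{pushforwardsmooth}. The only point deserving care is the implication from smoothness of the morphism $\pi_*$ to smoothness (hence reducedness) of its kernel, which is precisely what excludes the pathological non-reduced kernels that can arise for non-smooth isogenies in characteristic $p$; everything else is a formal property of identity components of group schemes over a field.
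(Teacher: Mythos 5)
Your proof is correct and takes essentially the same approach as the paper: the paper states this corollary without a separate proof, having already announced that the only technical difficulty is smoothness of $P^+$, which it obtains exactly as you do from the smoothness of $\pi_*$ established in Proposition \ref{pushforwardsmooth}. The formal steps you spell out (smoothness of $\Ker(\pi_*)$ by base change along the identity section, properness via closedness in $J\widetilde{C}$, and geometric connectedness of the identity component) are precisely what the paper leaves implicit.
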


\subsection{The principal polarisation on the Prym variety}
In this subsection we will equip the Prym variety $\Prym(\widetilde{C}/C)$ with a canonical principal polarisation which is half of the principal polarisation induced by the theta divisor on $J\widetilde{C}$. 

Let $\pi: \widetilde{C} \to C$ be a finite Galois cover of curves. Let $\theta_{\widetilde{C}}$ and $\theta_C$ be the canonical theta divisors on $J_{g(\widetilde{C})-1}\widetilde{C}$ and $J_{g(C)-1}C$ respectively. These induce principal polarisations $\lambda_{\theta_{\widetilde{C}}}: J\widetilde{C} \xrightarrow{\sim} \widehat{J\widetilde{C}}$ and $\lambda_{\theta_{C}}: JC \xrightarrow{\sim} \widehat{JC}$. 

\begin{lemma}\label{norm-pullback duality}
With the notation above, we have
$$\widehat{\pi_*} = \lambda_{\theta_{\widetilde{C}}} \circ \pi^* \circ \lambda_{\theta_C}^{-1},$$
$$\widehat{\pi^*} = \lambda_{\theta_C} \circ \pi_* \circ \lambda_{\theta_{\widetilde{C}}}^{-1}.$$
\end{lemma}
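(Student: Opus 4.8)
The plan is to prove the two identities by first observing that they are equivalent, and then deducing the first from the general duality between the Picard and Albanese functors together with the autoduality of the Jacobian.

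First I would reduce to a single identity. Applying $\widehat{\,\cdot\,}$ to the first identity and using biduality $\widehat{\widehat{\pi_*}} = \pi_*$ together with the symmetry of polarisations $\widehat{\lambda_{\theta_C}} = \lambda_{\theta_C}$ and $\widehat{\lambda_{\theta_{\widetilde{C}}}} = \lambda_{\theta_{\widetilde{C}}}$ (whence also $\widehat{\lambda_{\theta_C}^{-1}} = \lambda_{\theta_C}^{-1}$), one obtains $\pi_* = \lambda_{\theta_C}^{-1} \circ \widehat{\pi^*} \circ \lambda_{\theta_{\widetilde{C}}}$, which rearranges to the second identity $\widehat{\pi^*} = \lambda_{\theta_C} \circ \pi_* \circ \lambda_{\theta_{\widetilde{C}}}^{-1}$. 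Thus it suffices to establish the first identity.

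For the first identity, recall that $JX$ carries two functorial structures: as $\Pic^0(X)$ it is contravariant, with $\pi^* = \Pic^0(\pi) \colon JC \to J\widetilde{C}$, and as $\Alb(X)$ it is covariant, with $\Alb(\pi) \colon J\widetilde{C} \to JC$. Choosing compatible base points for the Abel--Jacobi maps $u_C$ and $u_{\widetilde{C}}$, one checks $\Alb(\pi) \circ u_{\widetilde{C}} = u_C \circ \pi$; since the norm map $\pi_*$ satisfies the same relation and $u_{\widetilde{C}}(\widetilde{C})$ generates $J\widetilde{C}$, we conclude $\Alb(\pi) = \pi_*$. The general duality of the two functors is the statement that $\Pic^0(X) = \widehat{\Alb(X)}$ canonically and that, for a morphism $f$, one has $\Pic^0(f) = \widehat{\Alb(f)}$ under these identifications; this holds over an arbitrary base. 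Finally, the autoduality theorem for Jacobians identifies the canonical isomorphism $c_X \colon \Pic^0(X) \cong \widehat{\Alb(X)} = \widehat{JX}$ with $\varepsilon\,\lambda_{\theta_X}$ for a universal sign $\varepsilon = \pm 1$. Putting $f = \pi$ and substituting yields $\widehat{\pi_*} = \widehat{\Alb(\pi)} = c_{\widetilde{C}} \circ \pi^* \circ c_C^{-1} = \lambda_{\theta_{\widetilde{C}}} \circ \pi^* \circ \lambda_{\theta_C}^{-1}$, where the sign $\varepsilon$ appears on both $c_{\widetilde{C}}$ and $c_C^{-1}$ and hence cancels.

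The main obstacle is to pin down the two classical inputs in all characteristics: the functorial duality $\Pic^0(f) = \widehat{\Alb(f)}$ and the identification of the theta polarisation with the canonical Picard--Albanese autoduality, including its sign convention. Both are characteristic-free, so the argument applies in characteristic $2$. I would deliberately avoid an infinitesimal proof here: although Lemma \ref{traceDualSerreDuality} and Proposition \ref{pushforwardsmooth} show that the differential $\dd\pi_* = \Tr$ is Serre-dual to $\pi^*$ on differentials, which is precisely the tangent-space shadow of the first identity, a homomorphism of abelian varieties is not determined by its differential at the origin in positive characteristic, so this computation alone cannot identify the two maps.
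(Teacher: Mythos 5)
Your argument is correct, and it is worth noting how it relates to the paper's proof, which contains essentially no argument at all: the paper simply cites Mumford's \emph{Prym varieties I}, Section 1, with the remark that the statement rests only on standard facts about abelian varieties and polarisations and is therefore independent of the characteristic. What you have written out is precisely that standard argument: the dualization trick reducing the two identities to one another, the identification of the norm map with $\Alb(\pi)$ via Abel--Jacobi maps, the functorial duality $\Pic^0(f) = \widehat{\Alb(f)}$, and the autoduality of the Jacobian identifying the canonical isomorphism $\Pic^0(C) \cong \widehat{JC}$ with $\pm\lambda_{\theta_C}$, the universal sign cancelling between $c_{\widetilde{C}}$ and $c_C^{-1}$. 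So your proposal is not so much a different route as a self-contained reconstruction of the classical proof the paper outsources; it buys the reader an actual verification that every input is characteristic-free, which is exactly the point the paper asserts without proof. Your closing caveat is also well taken: the infinitesimal statement of Lemma \ref{traceDualSerreDuality} is strictly weaker than the present lemma in positive characteristic (Frobenius and the zero map have the same differential at the origin), so the tangent-space computation used for Proposition \ref{pushforwardsmooth} genuinely cannot substitute for the duality argument here.

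Two small repairs. First, the Abel--Jacobi maps $u_C$ and $u_{\widetilde{C}}$, and hence your identification $\pi_* = \Alb(\pi)$, presuppose rational base points, which the arbitrary field $k$ of the lemma need not provide; since both sides of each identity are homomorphisms of abelian varieties defined over $k$, it suffices to check equality after base change to $k^{sep}$ (or $\overline{k}$), where base points exist, and then descend. Second, the claim that $\Pic^0(f) = \widehat{\Alb(f)}$ ``holds over an arbitrary base'' overstates what is standard (Albanese schemes over general bases are delicate); you only need it over a field, where it is classical, so the overreach is harmless but should be trimmed.
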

\begin{proof}
This is shown in \cite[Section 1]{m-pv}. Although \cite{m-pv} assumes that $\textrm{char}(k) \neq 2$ for most of the paper, this particular result is independent of the characteristic of $k$ and only relies on standard facts about abelian varieties and polarisations.
\end{proof}

\begin{proposition}\label{rosen}
Let $\pi : \widetilde{C} \to C$ be a Galois covering of curves with abelian Galois group $G$. Let $\pi_* : J\widetilde{C} \to JC$ be the norm map on Jacobians. Then the number of connected components $|\Ker(\pi_*)/\Ker(\pi_*)^0|$ of the kernel of $\pi_*$ is exactly $|G|$.
\end{proposition}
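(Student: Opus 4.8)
The plan is to compute the number of connected components of $\Ker(\pi_*)$ by analysing the relationship between $\pi_*$, $\pi^*$, and the relevant group-theoretic data coming from the Galois action of $G$. Since $\widetilde{C} \to C$ is Galois with group $G$, the abelian group $G$ acts on $J\widetilde{C}$, and the key algebraic fact is the relation $\pi_* \circ \pi^* = [d]_{JC}$ where $d = |G|$, together with the duality relations from Lemma \ref{norm-pullback duality}. The first thing I would do is reduce to the case where $k$ is algebraically closed, since counting connected components is insensitive to base change to $\overline{k}$; this lets me work with $\overline{k}$-points throughout.

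**Next I would** identify $\Ker(\pi_*)^0 = P^+$ and relate the component group to the image and cokernel of $\pi^*$. The standard approach (following Mumford's treatment of Prym varieties and Rosen's formula for the component group) is to observe that $\pi^*: JC \to J\widetilde{C}$ has finite kernel, and that the composition $\pi_* \circ \pi^* = [|G|]$ tells us that the image of $\pi^*$ lands in a controlled way relative to $\Ker(\pi_*)$. I would consider the decomposition of $J\widetilde{C}$ up to isogeny into the $G$-invariant part (isogenous to $JC$ via $\pi^*$) and the Prym part $P^+$, and track how $\Ker(\pi_*)$ decomposes. Concretely, one shows that $\Ker(\pi_*) = P^+ + \pi^*(JC[\,\cdot\,])$ or a similar sum, so that the component group is a quotient of a torsion subgroup of $JC$ detected by $\pi^*$. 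The duality statement $\widehat{\pi_*} = \lambda_{\theta_{\widetilde{C}}} \circ \pi^* \circ \lambda_{\theta_C}^{-1}$ from Lemma \ref{norm-pullback duality} is what converts information about $\Ker(\pi^*)$ into information about the component group of $\Ker(\pi_*)$, because the component group of $\Ker(\pi_*)$ is dual to $\Ker(\widehat{\pi_*}) = \Ker(\pi^*)$ (up to the identifications given by the principal polarisations).

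**The main obstacle** I expect is computing $\Ker(\pi^*)$ precisely, i.e. showing it has order exactly $|G|$, and being careful in characteristic $2$ where $\pi^*$ may fail to be injective in ways that differ from the classical picture. In the tame or characteristic-zero setting the kernel of $\pi^*$ for a Galois cover with group $G$ is well understood and its order is governed by $G$ (for $G$ abelian, essentially $\widehat{G}$ or the character group appears); but since $\pi$ is étale, $\pi^*$ has no inseparability issues of the usual kind, and the group structure of $\Ker(\pi^*)$ should be controlled by the Galois cohomology $\HH^1(G, \cdot)$ or equivalently by the dual group of $G$. I would argue that for an étale $G$-cover, $\Ker(\pi^*)$ is naturally isomorphic to the character group of $G$ (Pontryagin dual), which has order $|G|$ since $G$ is finite abelian. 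This is the step that most requires the abelian hypothesis on $G$: the counting $|\Ker(\pi^*)| = |G|$ uses that $\widehat{G} \cong G$ for finite abelian groups.

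**Finally I would** assemble the pieces: by Lemma \ref{norm-pullback duality}, the finite group scheme $\Ker(\pi^*)$ is, under the polarisation isomorphisms, Cartier dual to the component group $\Ker(\pi_*)/\Ker(\pi_*)^0$ of $\Ker(\pi_*)$. Since a finite commutative group scheme and its Cartier dual have the same order, we conclude
$$|\Ker(\pi_*)/\Ker(\pi_*)^0| = |\Ker(\pi^*)| = |G|,$$
as claimed. The one point demanding care in characteristic $2$ is that these identifications are at the level of finite group schemes rather than abstract groups, so I would phrase the duality and order-counting in a scheme-theoretic manner (using that Cartier duality preserves order) rather than merely counting $\overline{k}$-points, to avoid any discrepancy introduced by non-reduced group schemes that can arise in positive characteristic.
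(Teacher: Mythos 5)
Your proposal takes a genuinely different route from the paper: the paper offers no argument at all for Proposition \ref{rosen}, deferring entirely to the main theorem of \cite{rosen_1983}, whereas you give a self-contained proof by duality. Your skeleton is correct and rests on two inputs. First, for a surjection $f\colon A\to B$ of abelian varieties, the component group $\Ker(f)/\Ker(f)^0$ is Cartier dual to $\Ker(\widehat{f})$: factor $f$ through $A/\Ker(f)^0$, note that the induced map $A/\Ker(f)^0\to B$ is an isogeny with kernel $\Ker(f)/\Ker(f)^0$, and dualise, using that the dual of an isogeny has Cartier-dual kernel while the dual of $A\to A/\Ker(f)^0$ is a closed immersion. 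Together with Lemma \ref{norm-pullback duality} (which precedes Proposition \ref{rosen} in the paper, so there is no circularity) this gives $|\Ker(\pi_*)/\Ker(\pi_*)^0| = |\Ker(\widehat{\pi_*})| = |\Ker(\pi^*)|$. Second, you need $|\Ker(\pi^*)| = |G|$ as an order of finite group schemes.

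That second input is the one point you must sharpen, and it is exactly where characteristic $2$ bites. Your phrase ``naturally isomorphic to the character group of $G$ (Pontryagin dual)'' is wrong if read literally: the abstract character group $\Hom(G,\overline{k}^{\times})$ has order equal only to the prime-to-$p$ part of $|G|$, and for $G = \mathbb{Z}/2\mathbb{Z}$ in characteristic $2$ it is trivial, so that reading would give the answer $1$ rather than $2$ in the very case this paper needs. The correct statement is that $\Ker(\pi^*)$ is the Cartier dual \emph{group scheme} $\Hom(G,\mathbb{G}_m)$ --- for instance $\mu_2$ for an \'etale double cover in characteristic $2$, which has order $2$ but a single $\overline{k}$-point. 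Your closing paragraph (work scheme-theoretically; Cartier duality preserves order) is precisely the right fix, but it still requires proving the identification $\Ker(\pi^*)\cong\Hom(G,\mathbb{G}_m)$; this follows from descent along the $G$-torsor $\pi$: for any $k$-scheme $T$, a line bundle on $C_T$ pulling back trivially to $\widetilde{C}_T$ amounts to descent data on $\mathcal{O}_{\widetilde{C}_T}$, i.e.\ a homomorphism $G\to\Gamma(T,\mathcal{O}_T)^{\times}$, using that $\Gamma(\widetilde{C}_T,\mathcal{O}) = \Gamma(T,\mathcal{O}_T)$. With that supplied your argument is complete, and in fact yields the finer statement that the component group is isomorphic to $G$ itself, by double Cartier duality. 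As for what each approach buys: the paper's citation is short and inherits the full strength of Rosen's result, while your argument stays entirely within lemmas the paper has already established and makes transparent why the count survives when the characteristic divides $|G|$, where naive point counting fails.
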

\begin{proof}
This is the main theorem of \cite{rosen_1983}.
\end{proof}

It follows that $\Ker(\pi_*)$ has two connected components $P^+ \cup P^-$. We want to produce a principal polarisation on $P^+$. First, let us calculate the dimension of $P^+$. Let $g = g(C)$ be the genus of $C$. By Riemann-Hurwitz-Hasse, we have $\widetilde{g} := g(\widetilde{C}) = 2g - 1$. Since $\dim(P^+) + \dim(JC) = \dim(J\widetilde{C})$ this gives us
$$\dim(P^+) = \dim(J\widetilde{C}) - \dim(JC) =  \widetilde{g} - g = g - 1.$$

\begin{proposition}\label{bound}
Consider the setup in Proposition \ref{rosen}, and let $P^+$ be the Prym variety with its natural inclusion $i: P^+ \to J\widetilde{C}$. Let $g = g(C)$ be the genus of the ground curve. Then the map $\pi^* + i : JC \times P^+ \to J\widetilde{C}$ is an isogeny with kernel $H$, and $H$ is a finite group scheme whose degree satisfies
$$|G|^{2g-1} \leq |H| \leq |G|^{2g}.$$
\end{proposition}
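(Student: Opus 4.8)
The plan is to analyse the kernel $H = \Ker(\pi^* + i)$ directly as an explicit subgroup scheme of the $d$-torsion $JC[d]$, where I write $d = |G| = \deg\pi$ and $f := \pi^* + i$, and then to read off both bounds from the chain of closed subgroup schemes $P^+ \subseteq \Ker(\pi_*) \subseteq J\widetilde{C}$ together with the component-group count of Proposition \ref{rosen}.

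First I would verify that $f$ is an isogeny. Since $\pi_* \circ \pi^* = [d]$ is surjective, $\pi_*$ is surjective, so $\dim \Ker(\pi_*) = \widetilde{g} - g$ and hence $\dim(JC \times P^+) = g + (\widetilde{g}-g) = \widetilde{g} = \dim J\widetilde{C}$. It therefore suffices to prove that $H$ is finite. A pair $(x,y)$ lies in $H$ exactly when $\pi^* x = -i(y)$; as $i$ is a closed immersion, the first projection restricts to a monomorphism on $H$ and identifies $H$ with the scheme-theoretic preimage $(\pi^*)^{-1}(P^+) \subseteq JC$. Because $P^+ \subseteq \Ker(\pi_*)$, this preimage sits inside
$$(\pi^*)^{-1}(\Ker \pi_*) = \Ker(\pi_* \circ \pi^*) = \Ker([d]) = JC[d],$$
which is a finite group scheme of order $d^{2g}$, the degree of $[d]$ being $d^{2g}$ in every characteristic. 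Hence $H$ is finite and $f$ is an isogeny. The upper bound is then immediate, since $H \subseteq JC[d]$ gives $|H| \le |JC[d]| = d^{2g} = |G|^{2g}$.

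For the lower bound I would pass to the component group. By Proposition \ref{pushforwardsmooth} the map $\pi_*$ is smooth, so $\Ker(\pi_*)$ is a smooth group scheme whose identity component is the abelian variety $P^+$; by Proposition \ref{rosen} the component group $\pi_0(\Ker \pi_*) = \Ker(\pi_*)/P^+$ is finite \'etale of order $d$. Composing the restriction $\pi^* \colon JC[d] \to \Ker(\pi_*)$ with the quotient $\Ker(\pi_*) \to \pi_0(\Ker \pi_*)$ yields a homomorphism of finite group schemes whose kernel is precisely $(\pi^*)^{-1}(P^+) = H$. Thus $JC[d]/H$ embeds into $\pi_0(\Ker \pi_*)$, giving $d^{2g}/|H| = |JC[d]/H| \le d$, and therefore $|H| \ge d^{2g-1} = |G|^{2g-1}$.

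The delicate point throughout, and the step I expect to require the most care, is the bookkeeping with group schemes when $\characteristic(k)$ divides $d$: one must work with the order (rank) of $\Ker([d])$, which equals $d^{2g}$ in all characteristics rather than counting geometric points, and one must know that $\Ker(\pi_*)$ is reduced so that $P^+$ genuinely is its identity component and $\pi_0(\Ker \pi_*)$ is \'etale of the order supplied by Proposition \ref{rosen}. This is exactly the place where the smoothness of $\pi_*$ established in Proposition \ref{pushforwardsmooth} does the real work.
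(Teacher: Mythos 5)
Your proof is correct, and for the upper bound it coincides with the paper's argument: both identify $H$ with a subgroup scheme of $JC[|G|]$ via the first projection (using that $i$ is a monomorphism and $\pi_*\circ\pi^*=[|G|]$), which simultaneously gives finiteness, the isogeny claim, and $|H|\leq |G|^{2g}$. Where you genuinely diverge is the lower bound. The paper argues with $S$-valued points for connected test schemes $S$: it looks at which connected components $K_1,\dotsc,K_s$ of $\Ker(\pi_*)$ meet $\pi^*(JC[|G|](S))$, notes these intersections are identified by translation, and concludes $|H(S)|\geq |JC[|G|](S)|/s$ with $s\leq |G|$ by Proposition \ref{rosen}. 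You instead run the purely scheme-theoretic argument: the composite $JC[|G|]\xrightarrow{\pi^*}\Ker(\pi_*)\to\pi_0(\Ker\pi_*)$ has kernel exactly $H$, so $JC[|G|]/H$ embeds in the component group, and multiplicativity of orders in the exact sequence $0\to H\to JC[|G|]\to JC[|G|]/H\to 0$ gives $|H|\geq |G|^{2g}/|G|$. Your version is arguably the stronger write-up: in characteristic $p$ dividing $|G|$ (the case this paper cares about most), the order of a finite group scheme is not detected by its $S$-points, so a bound on $|H(S)|$ for connected $S$ does not by itself bound the degree of $H$; the paper's counting-and-translation step glosses over exactly this, while your quotient argument handles it by working with group-scheme orders throughout. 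You also correctly flag that smoothness of $\pi_*$ (Proposition \ref{pushforwardsmooth}) is what makes $P^+$ an abelian variety and the component group behave as expected, which is the same structural role it plays in the paper.
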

\begin{proof}

Let $S$ be a connected $k$-scheme. Let $ (x,y) \in H (S)$ be an $S$-valued point. Then we must have $i(y) = - \pi^*(x)$, so in particular $y$ is uniquely determined by $x$ since $i$ is a monomorphism. As a result, $|G|x = \pi_*\pi^*x = 0$ must hold. This establishes the upper bound, since the first projection $\pi_1 : H \to JC$ factors through $JC[|G|]$, and the degree of the group scheme $JC[|G|]$ is $|JC[|G|]| = |G|^{2g}$. This also proves that $H$ is finite and that $\pi^* + i$ is an isogeny. 

For the lower bound, consider any connected $k$-scheme $S$. The group scheme $JC[|G|]$ has degree $|G|^{2g}$ and $\pi^*(JC[|G|](S)) \subset \Ker(\pi_*)(S)$. By Proposition \ref{rosen}, $\Ker(\pi_*)$ has $|G|$ connected components. Let us label the connected components $K_1 , \dotsc , K_s$ which contain some point of $\pi^*(JC[|G|](S))$. Then $\pi^*(JC[|G|](S)) \cong \bigoplus_{i=1}^sK_i(S) \cap \pi^*(JC[|G|](S))$ and the components $K_i(S) \cap \pi^*(JC[|G|](S))$ are all isomorphic by translation. After changing labels we may assume $K_0 = P^+$. As a result 
$$H(S) = \{p \in JC[|G|](S) : \pi^*p \in P^+(S)\}$$
is bounded below by $|JC[|G|](S)|/s$, and $s$ is at most $|G|$. This establishes the lower bound of $|G|^{2g -1}$.
\end{proof}

\begin{proposition}\label{algequiv}
If $\pi:\widetilde{C} \to C$ is an \'etale double cover, then there exists a divisor $D$ on $P^+$ which induces a principal polarisation $\lambda_{D}:P^+  \xrightarrow{\sim} \widehat{P^+}$. Moreover, $D$ can be chosen such that $\lambda_{\widetilde{\theta}}|_{P^+} = 2\lambda_{D}$.
\end{proposition}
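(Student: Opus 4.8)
The plan is to pull back the principal polarisation $\lambda_{\widetilde\theta}$ along the isogeny $\nu := \pi^* + i : JC \times P^+ \to J\widetilde{C}$ of Proposition \ref{bound} and show that it becomes block diagonal. Write $\psi := \widehat i \circ \lambda_{\widetilde\theta}\circ i : P^+ \to \widehat{P^+}$ for the restriction $\lambda_{\widetilde\theta}|_{P^+}$. Using Lemma \ref{norm-pullback duality}, the relation $\pi_*\pi^* = [2]$ and $\pi_* \circ i = 0$, I would compute the blocks of $\nu^*\lambda_{\widetilde\theta} = \widehat\nu\circ\lambda_{\widetilde\theta}\circ\nu$: the diagonal entries are $\widehat{\pi^*}\lambda_{\widetilde\theta}\pi^* = \lambda_{\theta_C}\pi_*\pi^* = 2\lambda_{\theta_C}$ and $\widehat i\lambda_{\widetilde\theta}i = \psi$, the block $\widehat{\pi^*}\lambda_{\widetilde\theta}i = \lambda_{\theta_C}\pi_* i$ vanishes, and the remaining block $\widehat i\lambda_{\widetilde\theta}\pi^*$ is its dual (using $\widehat{\lambda_{\widetilde\theta}} = \lambda_{\widetilde\theta}$), hence also zero. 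Thus $\nu^*\lambda_{\widetilde\theta} = \diag(2\lambda_{\theta_C},\psi)$. Comparing degrees, the left side has degree $|H|^2$ and the right side $\deg(2\lambda_{\theta_C})\deg\psi = 2^{2g}\deg\psi$, so $\deg\psi = |H|^2/2^{2g}$; the lower bound $|H| \ge 2^{2g-1}$ from Proposition \ref{bound} (with $|G| = 2$, which relies on Proposition \ref{rosen}) then gives $\deg\psi \ge 2^{2g-2}$.

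Next I would show that $\psi$ is divisible by $2$. Let $\sigma$ be the involution of the \'etale double cover and $\sigma^*$ the induced automorphism of $J\widetilde{C}$; it preserves the canonical polarisation, so $\lambda_{\widetilde\theta}\sigma^* = \widehat{\sigma^*}\lambda_{\widetilde\theta}$, while $\pi^*\pi_* = 1 + \sigma^*$ and $\pi_*\sigma^* = \pi_*$. The endomorphism $\eta := 1 - \sigma^*$ satisfies $\eta^2 = 2\eta$, and since $\pi_*\eta = 0$ it factors as $\eta = i \circ r$ with $r : J\widetilde{C} \to P^+$ surjective; the relation $\sigma^*|_{P^+} = -1$ gives $r\circ i = [2]_{P^+}$. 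Computing $\widehat\eta\lambda_{\widetilde\theta}\eta$ in two ways, namely as $\lambda_{\widetilde\theta}\eta^2 = 2\lambda_{\widetilde\theta}\eta$ and as $\widehat r\,\psi\, r$, and cancelling the epimorphism $r$ on the right, yields the identity $\widehat r\circ\psi = 2\,\lambda_{\widetilde\theta}\circ i$. If $\psi(x) = 0$ then $2\lambda_{\widetilde\theta}(i(x)) = 0$, so $i(x)\in J\widetilde{C}[2]$ and $x\in P^+[2]$; thus $\Ker\psi \subseteq P^+[2]$ as group schemes, whence $\deg\psi = |\Ker\psi| \le |P^+[2]| = 2^{2g-2}$.

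Combining the two inequalities forces $\deg\psi = 2^{2g-2}$ and $\Ker\psi = P^+[2] = \Ker[2]_{P^+}$. Hence $\psi$ factors as $\psi = 2\lambda_D$ for an isogeny $\lambda_D : P^+ \to \widehat{P^+}$ with trivial kernel, i.e. an isomorphism; it is symmetric since $\Hom(P^+,\widehat{P^+})$ is torsion free and $\psi$ is, and it is a polarisation because $i^*\mathcal O(\theta_{\widetilde{C}})$ is ample and its class in the N\'eron--Severi group is twice that of the line bundle defining $\lambda_D$, which is therefore ample. Such a principal polarisation is induced by a divisor $D$, and by construction $\lambda_{\widetilde\theta}|_{P^+} = 2\lambda_D$. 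The main obstacle is the divisibility step: a priori a polarisation of degree $2^{2g-2}$ on $P^+$ could be of type $(1,\dots,1,2^{g-1})$ rather than $(2,\dots,2)$, and the identity $\widehat r\circ\psi = 2\,\lambda_{\widetilde\theta}\circ i$ is precisely what rules this out. I would emphasise that every step is formal manipulation together with the degree count, so the argument is uniform in the characteristic; the only care needed in characteristic $2$ is to work with the finite group schemes $P^+[2]$ and $\Ker\psi$ and their degrees rather than with $\overline{k}$-points.
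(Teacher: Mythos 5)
Your proof is correct, and its skeleton coincides with the paper's: pull back $\lambda_{\widetilde{\theta}}$ along the isogeny $\pi^*+i$, use Lemma \ref{norm-pullback duality} to kill the off-diagonal blocks, extract the lower bound $\deg\psi\geq 2^{2g-2}$ from Proposition \ref{bound}, prove $\Ker\psi\subseteq P^+[2]$, and factor through $[2]_{P^+}$. The one genuine divergence is the mechanism behind the kernel bound. The paper gets it from duality of the isogeny itself: since $H=\Ker(\pi^*+i)$ is killed by $2$, so is $\Ker(\widehat{\pi^*+i})$, and the vanishing of $d(s)=\widehat{i}(\lambda_{\widetilde{\theta}}(i(s)))$ forces $\lambda_{\widetilde{\theta}}(i(s))\in\Ker(\widehat{\pi^*+i})$, whence $2s=0$. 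You instead bring in the covering involution: from $\pi^*\pi_*=1+\sigma^*$ you factor $\eta=1-\sigma^*$ as $i\circ r$ with $r\circ i=[2]_{P^+}$, and computing $\widehat{\eta}\,\lambda_{\widetilde{\theta}}\,\eta$ two ways gives the identity $\widehat{r}\circ\psi=2\,\lambda_{\widetilde{\theta}}\circ i$, which yields the same containment. Both arguments are formal and characteristic-free; the paper's is shorter and needs nothing beyond what Proposition \ref{bound} already supplies, while yours uses the extra (standard, but nowhere stated in the paper) relations $\pi^*\pi_*=1+\sigma^*$ and $\sigma^*$-invariance of $\lambda_{\widetilde{\theta}}$, in exchange for an explicit identity exhibiting the divisibility of $\psi$ and, in passing, the fact that $\sigma^*=-1$ on $P^+$. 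Your closing treatment of symmetry (via torsion-freeness of $\Hom(P^+,\widehat{P^+})$) and ampleness is actually slightly more careful than the paper's one-line appeal to the universal property of kernels; note, however, that both you and the paper tacitly invoke the fact that a symmetric homomorphism $P^+\to\widehat{P^+}$ is induced by a divisor class, a point that merits a citation in characteristic $2$.
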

\begin{proof}
We define a polarisation on $JC \times P^+$ as follows
$$\lambda : JC \times P^+ \xrightarrow{\pi^* + i} J\widetilde{C} \xrightarrow{\lambda_{\widetilde{\theta}}} \widehat{J\widetilde{C}} \xrightarrow{(\widehat{\pi^*},\widehat{i})} \widehat{JC} \times \widehat{P^+}$$
which is the pullback of the principal polarisation $\lambda_{\widetilde{\theta}}$ across $\pi^* + i$. Recall that $\pi^* + i$ is an isogeny with kernel $H$. Therefore
$$\deg(\pi^* + i)\chi(\mathcal{O}(\widetilde{\theta})) = \chi(\mathcal{O}((\pi^* + i)^{-1}(\widetilde{\theta}))).$$
By taking squares of both sides and using the Riemann-Roch theorem for line bundles on abelian varieties (see \cite[Page 150]{m-av}), we get
$$|H|^2 = \deg(\pi^* + i)^2 = \deg(\lambda).$$
Now we prove that $\lambda$ splits into a product of polarisations. We have 
$$\lambda = \begin{pmatrix}
a & b \\
c & d
\end{pmatrix} := \begin{pmatrix}
\widehat{\pi^*} \circ \lambda_{\widetilde{\theta}} \circ \pi^* & \widehat{\pi^*} \circ \lambda_{\widetilde{\theta}} \circ i \\
\widehat{i} \circ \lambda_{\widetilde{\theta}} \circ \pi^* & \widehat{i} \circ \lambda_{\widetilde{\theta}} \circ i
\end{pmatrix}$$
and since polarisations are symmetric, we can compute
$$\widehat{\lambda} = \widehat{\begin{pmatrix}
a & b \\
c & d
\end{pmatrix}} = \begin{pmatrix}
\widehat{a} & \widehat{c} \\
\widehat{b} & \widehat{d}
\end{pmatrix} = \begin{pmatrix}
a & b \\
c & d
\end{pmatrix} = \lambda.$$
However, $b = \widehat{\pi^*} \circ \lambda_{\widetilde{\theta}} \circ i = \lambda_{\theta} \circ \pi_* \circ i = 0$ by Lemma \ref{norm-pullback duality}, and $c = \widehat{b} = 0$. We conclude that $\lambda$ is diagonal. As a result, we obtain 
$$\deg(\lambda) = \deg(\widehat{\pi^*} \circ \lambda_{\widetilde{\theta}} \circ \pi^*)\deg(d) = \deg(2\lambda_{\theta})\deg(d) = 2^{2g}\deg(d)$$
where $d = \lambda_{\widetilde{\theta}}|_{P^+}$ is the polarisation on $P^+$ which we are interested in. From Proposition \ref{bound} we obtain
$$2^{4g-2} \leq |H|^2 = 2^{2g}\deg(d) \leq 2^{4g}.$$
We claim that $\Ker(d) \leq P^+[2]$. Suppose $d(s) = \widehat{i} ( \lambda_{\widetilde{\theta}}( i(s))) = 0$. Then $\widehat{\pi^*} ( \lambda_{\widetilde{\theta}}( i(s))) =  \lambda_{\theta}(\pi_*(i(s))) = 0$ also and so $s \in \Ker(\widehat{\pi^* + i} \circ \lambda_{\widetilde{\theta}} \circ i)$. However $i$ and $\lambda_{\widetilde{\theta}}$ are injective homomorphisms whilst $\Ker(\widehat{\pi^* + i}) \leq \widehat{J\widetilde{C}}[2]$, so it follows that $2s = 0$ as required. Thus $\deg(d) \leq 2^{2(g-1)}$ and so $\Ker(d) = P^+[2]$. By the universal property of kernels we can factor $d = 2\lambda_{D}$ where $\lambda_{D}$ is a principal polarisation on $P^+$ for some divisor $D$, as required.
\end{proof}

\section{Quadric hypersurfaces and their generators}
As an abelian subvariety of the Jacobian $J\widetilde{C}$, the Prym variety $\Prym(\widetilde{C}/C)$ should parametrise certain line bundles on $\widetilde{C}$. In this section we will give such a description in all characteristics. This is necessary in order to construct birational invariants of cubic threefolds later in the paper.

\begin{definition}
Let $Q$ be a smooth quadric hypersurface in $\mathbb{P}_k^{2n-1}$. A \emph{generator} of $Q$ is a maximal isotropic subspace of $Q$. This is necessarily an $(n-1)$-plane.
\end{definition}

\begin{proposition}\label{componentsGenerators}
Let $Q$ be a smooth quadric hypersurface in $\mathbb{P}_k^{2n-1}$. Then $\Gen(Q)$, the scheme of generators of $Q$, has two disjoint open components $\Gen(Q)_0$, $\Gen(Q)_1$. Moreover, For generators $g \in \Gen(Q)_a$, $h \in \Gen(Q)_b$ we have
$$\dim(g \cap h) \equiv \dim(g)+a+b \pmod{2}.$$
\end{proposition}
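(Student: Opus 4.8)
The partition into connected components and the intersection dimensions $\dim(g\cap h)$ are insensitive to base change, so I would first pass to $\overline{k}$ and argue there; the resulting congruence is a numerical statement about geometric points and descends. Write $(V,q)$ for the $2n$-dimensional quadratic space with $Q=\{q=0\}\subset\mathbb{P}(V)$ and let $b$ be the polar form of $q$. A generator corresponds to a maximal isotropic subspace $U\subset V$ of linear dimension $n$, with $U=U^{\perp}$; under this dictionary $\dim(g)=n-1$ and $\dim(g\cap h)=\dim_{k}(U\cap W)-1$. Thus the proposition is equivalent to the linear congruence
\[
\dim_{k}(U\cap W)\equiv n+a+b\pmod 2,\qquad g\in\Gen(Q)_{a},\ h\in\Gen(Q)_{b}.
\]

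\textbf{The two components.} First I would compute the tangent space at a generator $[W]$: a nearby maximal isotropic is the graph of a map $\psi\colon W\to V/W\cong W^{*}$, i.e.\ a bilinear form on $W$; isotropy for $b$ forces $\psi$ to be anti-symmetric, and isotropy for $q$ forces its diagonal to vanish, so $T_{[W]}\Gen(Q)\cong\wedge^{2}W^{*}$ consists of \emph{alternating} forms. In particular $\Gen(Q)$ is smooth of dimension $\binom{n}{2}$. By Witt's theorem $O(q)(\overline{k})$ acts transitively on generators, so $\Gen(Q)\cong O(q)/P$, where the stabiliser $P$ of a fixed $U_{0}$ is a connected (parabolic) subgroup and therefore lies in the identity component $O(q)^{\circ}=\ker(D)$, with $D\colon O(q)\to\mathbb{Z}/2$ the Dickson invariant. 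Since $O(q)$ has exactly two connected components in every characteristic, $\Gen(Q)$ has exactly two, each isomorphic to $O(q)^{\circ}/P$; these are the open pieces $\Gen(Q)_{0},\Gen(Q)_{1}$.

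\textbf{Local constancy of the parity, and the two values.} The heart of the argument is to show that $\dim_{k}(U\cap W)\bmod 2$ is locally constant on $\Gen(Q)\times\Gen(Q)$, so that the left-hand side above depends only on the components of $g$ and $h$. Fixing $U$ and deforming $W$ along a tangent direction $\psi\in\wedge^{2}W^{*}$, a first-order computation shows that $\dim_{k}(U\cap W)$ drops exactly by the rank of the restriction $\psi|_{U\cap W}$; since $\psi$ is alternating, so is its restriction, and an alternating form has even rank in every characteristic, so $\dim_{k}(U\cap W)$ can only jump by even amounts along any path. (I would record this first-order computation, but it is routine.) With local constancy in hand it remains to evaluate the congruence in two configurations. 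Taking $g=h$ gives $\dim_{k}(U\cap W)=n$, so along the diagonal, where $a=b$, the left-hand side is $\equiv n$, matching $a+b=0$. For the opposite value, choose a hyperbolic splitting $V=\bigoplus_{i}\langle e_{i},f_{i}\rangle$ with $U_{0}=\langle e_{1},\dots,e_{n}\rangle$; the orthogonal transvection swapping $e_{1}\leftrightarrow f_{1}$ has Dickson invariant $1$ and carries $U_{0}$ to $W_{0}=\langle f_{1},e_{2},\dots,e_{n}\rangle$, which therefore lies in the other component and satisfies $\dim_{k}(U_{0}\cap W_{0})=n-1\equiv n+1$, matching $a+b=1$. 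Combining the two cases yields $\dim_{k}(U\cap W)\equiv n+a+b\pmod 2$, which is the claim.

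\textbf{Main obstacle.} The only place characteristic $2$ genuinely enters, and the crux of the proof, is the identification of $T_{[W]}\Gen(Q)$ with \emph{alternating} rather than merely symmetric forms: when $\mathrm{char}(k)\ne 2$ anti-symmetry already forces the diagonal to vanish, but in characteristic $2$ this vanishing is genuinely extra data extracted from the quadratic form $q$, and it is exactly this that makes the deformation form alternating, hence of even rank, and so guarantees that the parity is locally constant. By contrast, Witt's theorem, the Dickson invariant, and the even rank of alternating forms are all uniform across characteristics.
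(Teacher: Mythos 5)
Your first half is essentially right, and it is genuinely different from the paper's argument, which is a one-line citation to Deligne (SGA 7, Exp.\ XII, Prop.\ 1.12), where the $\mathbb{Z}/2$-valued invariant with exactly this property is constructed. Your identification of $T_{[W]}\Gen(Q)$ with \emph{alternating} forms (the diagonal-vanishing being genuinely extra data in characteristic $2$) is correct, and the component count via $\Gen(Q)\cong O(q)/P$, connectedness of $P\cong\GL(U_0)\ltimes\wedge^{2}U_0^{*}$, and the Dickson invariant is a valid, self-contained route. The genuine gap is the step you yourself call the heart of the proof and then dismiss as routine: local constancy of $\dim_{k}(U\cap W)\bmod 2$. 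The function $(U,W)\mapsto\dim_{k}(U\cap W)$ is only upper semicontinuous, and how far it drops along an actual path is \emph{not} controlled by the path's tangent vector: for the path $W_t=\Gamma_{t^{2}\psi}$ the tangent direction at $t=0$ vanishes, so your rule ``the dimension drops exactly by $\mathrm{rank}(\psi|_{U\cap W})$'' predicts no drop, yet the intersection drops by $\mathrm{rank}(\psi)$ for all $t\neq 0$. What the even-rank-of-alternating-forms argument proves exactly (not merely to first order) is only the statement at the centre of a chart: every $W$ in the big cell around $W_0$ is the graph of an alternating $\psi$ and $\dim_{k}(W_0\cap W)=\dim_{k}\ker\psi\equiv n\pmod{2}$. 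For a fixed third generator $U\neq W_0$, however, $U\cap\Gamma_{\psi}$ is the kernel of $\phi_0+\iota\circ\psi\colon W_0\to V/U$ (where $\ker\phi_0=U\cap W_0$), a map carrying no alternating self-pairing, and nothing in your argument forces its corank to have constant parity as $\psi$ varies. What is missing is precisely the triangle identity
$$\dim(A/A\cap B)+\dim(B/B\cap C)+\dim(C/C\cap A)\equiv 0\pmod{2}$$
for triples of generators, which is the real content of the result the paper cites.

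Fortunately, your group-theoretic frame can close the gap without any local-constancy statement. By Witt's extension theorem, the diagonal $O(q)$-orbits on $\Gen(Q)\times\Gen(Q)$ are exactly the loci $\{\dim_{k}(U\cap W)=m\}$ for $0\le m\le n$; since every element of Dickson invariant $1$ interchanges the two components of $\Gen(Q)$ while $\mathrm{SO}(q)$ preserves them, the class $a+b\bmod 2$ is constant on each orbit, hence is a function $\delta(m)$ of $m$ alone. Your two evaluations then finish the proof: the diagonal gives $\delta(n)=0$, and your transvection $e_{n-m}\leftrightarrow f_{n-m}$ applied to $W_m=\langle f_1,\dots,f_{n-m},e_{n-m+1},\dots,e_n\rangle$ gives $\delta(m+1)=\delta(m)+1$, so $\delta(m)\equiv n+m\pmod{2}$, which is the claim. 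One smaller point: your opening assertion that connected components are insensitive to base change is false as a general principle. If $Q$ has nontrivial discriminant (Arf invariant in characteristic $2$), then $\Gen(Q)$ is connected over $k$ and splits into two components only over a quadratic extension; this is harmless when $Q$ has a generator over $k$ (which forces $Q$ to be split) or over the separably closed fields where the paper applies the result, but the reduction should be stated with that caveat.
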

\begin{proof}
From \cite[Proposition 1.12]{deligneSGA}, there is a continuous morphism $e:\Gen(Q) \to \mathbb{Z}/2\mathbb{Z}$ with the property that given generators $g \in \Gen(Q)_a$, $h \in \Gen(Q)_b$ then $e(h) = e(g)$ if and only if $\dim(g/g\cap h)$ is even. This proves the statement.
\end{proof}

\subsection{Quadratic forms on vector bundles and Wirtinger's theorem}
In this section we want to extend the results of \cite[Section 1]{m-tc} to fields of characteristic $2$. Aside from Lemma \ref{char2duality}, all results in this subsection follow through as in \cite{m-tc} for fields of characteristic $2$. Nevertheless we choose to present these results to the reader in the detail which is missing from \cite{m-tc}. 

\begin{definition}
Let $V$ be a vector bundle on a scheme $X$. A \emph{quadratic form} on $V$ is a morphism of sheaves $q:V \to L$ into some line bundle $L$ which can be factored as $$V \xrightarrow{\Delta} V \otimes V \xrightarrow{b} L$$ for some linear map $b$ and the diagonal $\Delta$. We use the following notation for the total space of a vector bundle and its projectivization
$$\mathbb{A}(V) := \underline{\Spec}_X(\Sym(V^{\vee})),$$
$$\mathbb{P}(V) := \underline{\Proj}_X(\Sym(V^{\vee})).$$
The quadratic form $q$ induces a morphism $\mathbb{A}(V) \to \mathbb{A}(L)$ between total spaces over $X$. We say $q$ is \emph{smooth} if $\mathbb{V}_+(q) \subset \mathbb{P}(V)$ is smooth over $X$.

We say $q$ is \emph{symmetric} if $b$ is symmetric for some choice of $b$. We say $q$ is \emph{non-degenerate} if for some choice of $b$, we have that $b_t$ is non-degenerate at all points $t \in X$.
\end{definition}

\begin{remark}
Consider the norm $\N: \mathbb{F}_4 \to \mathbb{F}_2$. Writing $\mathbb{F}_4 = \mathbb{F}_2[t]/(t^2+t+1)$ gives us a natural basis $\{1,t\}$ over $\mathbb{F}_2$ upon which the norm acts via 
$$\N(a,b) := \N(a + bt) = (a+b+bt)(a+bt) = a^2 + ab + b^2.$$
One can verify that we cannot factor $\N$ as $\mathbb{F}_4 \xrightarrow{\Delta} \mathbb{F}_4 \otimes \mathbb{F}_4 \xrightarrow{b} \mathbb{F}_2$ such that $b$ is linear and symmetric. This shows that there are quadratic forms which are not symmetric. 

Consider the trace pairing $\mathrm{T}: \mathbb{F}_4 \xrightarrow{\Delta} \mathbb{F}_4 \otimes \mathbb{F}_4 \xrightarrow{\Tr \circ m} \mathbb{F}_2$. Then
$$\mathrm{T}(a,b) =\Tr(m((a+bt)\otimes(a + bt))) = \Tr(a^2 + b^2 + b^2t) = a^2.$$
This shows that non-degenerate quadratic forms can be non-smooth.
\end{remark}

Let $\pi: \widetilde{C} \to C$ be an \'etale double cover of curves over an algebraically closed field $k$. The natural inclusion $\Ker(\pi_*) \to J\widetilde{C}$ produces, via the universal property of Picard varieties, a family of line bundles $\mathcal{L}$ on $\Ker(\pi_*) \times \widetilde{C}$.  The pushforward $E:=(1,\pi)_*(\mathcal{L})$ is a rank $2$ vector bundle on $\Ker(\pi_*) \times C$.

\begin{lemma}\label{quadform}
We can equip the vector bundle $E$ on $\Ker(\pi_*) \times C$ with a smooth quadratic form $q: E \to \mathcal{O}_{\Ker(\pi_*) \times C}$.
\end{lemma}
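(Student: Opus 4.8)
The plan is to construct $q$ as the \emph{norm form} of the double cover, extending \cite[Section 1]{m-tc} to characteristic $2$, and to establish smoothness by a fibrewise computation.

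Write $\sigma$ for the involution of $\widetilde{C}$ over $C$, let $\tau := (1,\sigma)$ be the induced involution of $\Ker(\pi_*)\times\widetilde{C}$, and set $p := (1,\pi)$, so that $E = p_*\mathcal{L}$. For a local section $s$ of $E$, i.e. of $\mathcal{L}$ along the fibres of $p$, the product $s\cdot\tau^*s$ is a $\tau$-invariant section of $\mathcal{L}\otimes\tau^*\mathcal{L}$ and therefore descends along $p$ to a section of the norm bundle $\Nm(\mathcal{L})$, characterised by $p^*\Nm(\mathcal{L})\cong\mathcal{L}\otimes\tau^*\mathcal{L}$. This defines a morphism of sheaves $q\colon E\to\Nm(\mathcal{L})$, $s\mapsto s\cdot\tau^*s$. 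Over a geometric point $([L],x)$ the morphism $p$ is finite, so $p_*$ commutes with base change and the fibre of $E$ is $L_{q_1}\oplus L_{q_2}$, where $\{q_1,q_2\}=\pi^{-1}(x)$ are the two preimages, distinct since $\pi$ is \'etale; in these coordinates $q(a_1,a_2)=a_1a_2$.

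To match the statement I would first reduce the target to $\mathcal{O}$. Since $[L]\in\Ker(\pi_*)$ forces $L\otimes\sigma^*L\cong\mathcal{O}_{\widetilde{C}}$, the bundle $\Nm(\mathcal{L})$ is fibrewise trivial on $\widetilde{C}$, so by the seesaw principle $\Nm(\mathcal{L})\cong\mathrm{pr}^*N$ for a line bundle $N$ on $\Ker(\pi_*)$, which after rigidifying $\mathcal{L}$ lies in $\Pic^0$. The universal bundle is only defined up to a twist by $\mathrm{pr}^*M$, which changes $N$ by $M^{\otimes 2}$; as $\Pic^0$ of an abelian variety is divisible I may normalise $\mathcal{L}$ so that $\Nm(\mathcal{L})\cong\mathcal{O}_{\Ker(\pi_*)\times C}$ and hence $q\colon E\to\mathcal{O}_{\Ker(\pi_*)\times C}$. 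I then have to check that $q$ is a quadratic form in the sense of our definition, i.e. that it factors as $E\xrightarrow{\Delta}E\otimes E\xrightarrow{b}\mathcal{O}$ for a linear, not necessarily symmetric, $b$; here the subtlety of characteristic $2$ already appears, since the symmetric polar form $b(s,t)=s\cdot\tau^*t+t\cdot\tau^*s$ satisfies $b(s,s)=2\,q(s)=0$ and cannot recover $q$. The resolution is to use the non-symmetric bilinear form supplied by the canonical Artin--Schreier filtration of $E$ (equivalently, by the norm structure of \cite[Section 1]{m-tc}), which fibrewise is the upper-triangular form $\left(\begin{smallmatrix}0&1\\0&0\end{smallmatrix}\right)$ representing $a_1a_2$, consistent with the Remark that quadratic forms need not be symmetric.

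The main obstacle, and the crux of the lemma in characteristic $2$, is smoothness of $\mathbb{V}_+(q)$. Because the formation of $E$ and $q$ commutes with base change to geometric fibres, it suffices to check smoothness fibrewise, where by the computation above $q$ is the binary form $a_1a_2$ and $\mathbb{V}_+(a_1a_2)\subset\mathbb{P}^1$ is the reduced pair of points $\{[1:0],[0:1]\}$, which is smooth of relative dimension $0$. Hence $\mathbb{V}_+(q)\to\Ker(\pi_*)\times C$ is smooth. This is precisely where characteristic $2$ is delicate: the norm form $a_1a_2$ has non-degenerate alternating polar form $\left(\begin{smallmatrix}0&1\\1&0\end{smallmatrix}\right)$ and smooth associated quadric in every characteristic, whereas a form such as $a_1^2+a_2^2=(a_1+a_2)^2$ would be non-smooth. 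This is the reason, anticipated in the introduction, that the correct object is a quadratic form with smooth associated quadric rather than a symmetric bilinear form.
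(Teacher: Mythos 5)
Your proposal takes essentially the same route as the paper's proof: $q$ is the norm form of the double cover, and smoothness is established by a fibrewise computation in which $E_{([L],x)}\cong L_{q_1}\oplus L_{q_2}$, the form becomes $(a_1,a_2)\mapsto a_1a_2$, and $\mathbb{V}_+(a_1a_2)\subset\mathbb{P}^1$ is a reduced pair of points. One place where you are in fact more careful than the paper: the paper's proof treats $E$ as a sheaf of \'etale algebras and asserts outright that the norm lands in $\mathcal{O}_{\Ker(\pi_*)\times C}$, whereas the norm of a pushed-forward line bundle naturally lands in $\Nm(\mathcal{L})$. Your argument --- fibrewise triviality of $\Nm(\mathcal{L})$ because $[L]\in\Ker(\pi_*)$, seesaw to get $\Nm(\mathcal{L})\cong\mathrm{pr}^*N$, and then absorbing $N$ by a twist $\mathcal{L}\mapsto\mathcal{L}\otimes\mathrm{pr}^*M$ using divisibility of $\Pic^0$ over the algebraically closed ground field --- is exactly the justification this step needs; just note that $\Ker(\pi_*)$ has two components, so divisibility must be applied componentwise rather than quoted for ``an abelian variety''.

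There are, however, two criticisms. The genuine gap is in the smoothness step: base-change compatibility of $E$ and $q$ only identifies the fibres of $\mathbb{V}_+(q)$ with the $\mathbb{V}_+(q_t)$; it does \emph{not} reduce smoothness of $\mathbb{V}_+(q)\to\Ker(\pi_*)\times C$ to smoothness of fibres, since smoothness of a morphism also requires flatness, which no amount of fibrewise information supplies by itself. This is precisely the step the paper closes with miracle flatness. The repair is cheap given what you proved: since $q_t\neq 0$ at every point $t$, the subscheme $\mathbb{V}_+(q)$ is cut out in the $\mathbb{P}^1$-bundle $\mathbb{P}(E)$ by an equation whose restriction to each fibre is a nonzerodivisor, hence it is a relative effective Cartier divisor and in particular flat over the base; flatness together with your fibre computation then yields smoothness. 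The smaller criticism: the ``canonical Artin--Schreier filtration of $E$'' you invoke for the bilinear factorisation does not exist in general, since $E=(1,\pi)_*\mathcal{L}$ carries no canonical filtration unless $\mathcal{L}$ is equivariant for the involution; the paper instead factors $q$ through the explicit non-symmetric map $b(x\otimes y)=x\cdot\sigma(y)$, of which your fibrewise upper-triangular matrix is the local shadow. Neither point changes the verdict that your strategy is the paper's strategy, and it is sound once flatness is supplied.
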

\begin{proof}
Since $E$ is a sheaf of \'etale $\mathcal{O}_{\Ker(\pi_*) \times C}$-algebras, there is a norm map $q: E \to \mathcal{O}_{\Ker(\pi_*) \times C}$. On an affine patch $U \cong \Spec(A) \subset \Ker(\pi_*) \times C$, this is given by $q(x) = \N_{E(U)/A}(x)$ for all $x \in E(U)$. It is a quadratic form since we can write it as $E \xrightarrow{\Delta} E \otimes E \xrightarrow{b} \mathcal{O}_{\Ker(\pi_*) \times C}$ where $b$ sends $x \otimes y$ to $x \cdot \sigma(y)$, if $\sigma \in \Gal(k(\widetilde{C})/k(C))$ is the nontrivial element. 

We show that this quadratic form is smooth. Take any closed point $t \in \Ker(\pi_*) \times C$ and consider the quadratic form $q_t: E_t \cong k^2 \to k$ induced by pulling back to $t$. We are to show that $q_t$ is smooth outside $0$. However $E_t$ is the unique \'etale degree $2$ extension of $k$. The norm map $q_t$ is just the multiplication map $m: k \times k \to k$, which is smooth outside $0$. Finally, $q$ is a flat morphism due to miracle flatness, since all the fibres are one-dimensional. It follows that $\mathbb{V}_+(q)$ is flat over $\Ker(\pi_*) \times C$, since $q$ is flat. 
\end{proof}

\begin{definition}
A \emph{theta characteristic} on a curve $C$ is some line bundle $\mathcal{L}_0$ such that $\mathcal{L}_0^{2} \cong \Omega^1_C$. We say $\mathcal{L}_0$ is even (respectively, odd) if $\dim(\Gamma(\mathcal{L}_0))$ is even (respectively, odd).
\end{definition}

Fix some theta characteristic $\mathcal{L}_0$ on $C$, which is possible from the last paragraph of \cite[page 191]{m-tc}. Then $L_0 := \pi_2^*\mathcal{L}_0$ is a line bundle on $\Ker(\pi_*) \times C$, and there exists an isomorphism $\phi: L_0^2 \xrightarrow{\sim} \Omega^1_{\Ker(\pi_*) \times C/\Ker(\pi_*)}$. We can upgrade $q$ to a quadratic form $Q : E' :=  E \otimes L_0 \to \Omega^1_{\Ker(\pi_*) \times C/\Ker(\pi_*)}$ as follows
$$Q: E \otimes L_0\xrightarrow{\Delta} (E \otimes E) \otimes (L_0 \otimes L_0) \xrightarrow{b \otimes \phi}\Omega^1_{\Ker(\pi_*) \times C/\Ker(\pi_*)}.$$
Since tensoring $E$ by $L_0$ does not change the situation locally, it follows that $Q$ is smooth. 

\begin{lemma}\label{isotropicSubspaces}
Choose $N$ disjoint sections $s_1 , \dotsc , s_N : \Ker(\pi_*) \to \Ker(\pi_*)\times C$ of $\pi_1: \Ker(\pi_*)\times C \to \Ker(\pi_*)$, by choosing corresponding points $p_1 , \dotsc , p_N$ on $C$. Let $A$ be the effective relative Cartier divisor corresponding to $\sum_{i=1}^Ns_i(\Ker(\pi_*))$, and let $B = \sum_{i=1}^Np_i$. Consider
$$W_1 = (\pi_1)_*(E'(A)),$$
$$W_2 = (\pi_1)_*(E'/E'(-A)),$$
$$V = (\pi_1)_*(E'(A)/E'(-A)).$$
Then for $N$ sufficiently large, $W_1,W_2,V$ are locally free coherent sheaves, and $W_1,W_2 \subset V$ with $W_1 \cap W_2 = (\pi_1)_*E'$.
\end{lemma}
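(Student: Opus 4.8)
The plan is to exploit the three-step filtration $E'(-A) \subset E' \subset E'(A)$ on $\Ker(\pi_*) \times C$ and to read off everything from the fact that $A = \pi_2^{-1}(B)$ is finite étale of degree $N$ over $\Ker(\pi_*)$. Writing $\pi_1$ for the projection to $\Ker(\pi_*)$ and setting $\mathcal{F} := E'(A)/E'(-A)$, the sheaves $\mathcal{F}$ and $E'/E'(-A)$ are both supported on $A$ and flat over $\Ker(\pi_*)$, so their pushforwards $V = (\pi_1)_*\mathcal{F}$ and $W_2 = (\pi_1)_*(E'/E'(-A))$ are locally free (of ranks $4N$ and $2N$) and their formation commutes with base change, with no largeness of $N$ needed. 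The two inputs I will extract from taking $N$ large are the fibrewise vanishing statements $\HH^0(C, E'_s \otimes \mathcal{O}_C(-B)) = 0$ and $\HH^1(C, E'_s \otimes \mathcal{O}_C(B)) = 0$ for every $s \in \Ker(\pi_*)$. Since $\deg(E'_s)$ is independent of $s$ and the base $\Ker(\pi_*)$ is proper (indeed smooth, as $\pi_*$ is smooth), both hold uniformly in $s$ once $N \gg 0$: the first because then every sub-line-bundle of $E'_s \otimes \mathcal{O}_C(-B)$ has negative degree, and the second by Serre duality applied to the first.

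With these in hand, local freeness of the remaining sheaf $W_1 = (\pi_1)_* E'(A)$ follows from cohomology and base change (Grauert): the fibrewise vanishing $\HH^1(C, E'_s(B)) = 0$ together with constancy of $\chi$ forces $R^1(\pi_1)_* E'(A) = 0$, whence $W_1$ is locally free and commutes with base change.

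Next I would install the two inclusions into $V$. The surjection $E'(A) \twoheadrightarrow \mathcal{F}$ has kernel $E'(-A)$; applying $(\pi_1)_*$ and using $(\pi_1)_* E'(-A) = 0$ (exactly the first vanishing statement) gives an injection $W_1 \hookrightarrow V$. The subsheaf inclusion $E'/E'(-A) \hookrightarrow \mathcal{F}$ arising from $E'(-A) \subset E' \subset E'(A)$ pushes forward, by left exactness, to an injection $W_2 \hookrightarrow V$. The two natural maps $(\pi_1)_* E' \to W_1 \hookrightarrow V$ and $(\pi_1)_* E' \to W_2 \hookrightarrow V$ both arise from the composite $E' \to \mathcal{F}$, hence agree, and they are injective (again because $(\pi_1)_* E'(-A) = 0$). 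Functoriality therefore yields the easy inclusion $(\pi_1)_* E' \subseteq W_1 \cap W_2$ inside $V$.

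The hard part, and the step I expect to be the main obstacle, is the reverse inclusion $W_1 \cap W_2 \subseteq (\pi_1)_* E'$, which I would argue locally on $\Ker(\pi_*)$. Let $v$ be a section of $V$ over some $U \times C$ lying in both $W_1$ and $W_2$. Membership in $W_1$ means $v$ is the image of a section $\tilde{v} \in \HH^0(U \times C, E'(A))$, and this lift is unique since any two lifts differ by a section of $E'(-A)$, of which there are none. Membership in $W_2$ means that $v$, as a section of $\mathcal{F}$, lands in the subsheaf $E'/E'(-A)$; because $\mathcal{F}$ is supported on $A$ this can be tested at each point $a \in A$, where it says precisely that $\tilde{v}_a \in E'_a$ modulo $E'(-A)_a$, i.e. $\tilde{v}_a \in E'_a$. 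Away from $A$ one has $E'(A) = E'$, so $\tilde{v}$ already lies in $E'$ there; combining, $\tilde{v}$ is a section of $E'$ over all of $U \times C$, so $v$ lies in the image of $(\pi_1)_* E'$. The two facts doing the real work are the uniqueness of the lift $\tilde{v}$ (from the $\HH^0$-vanishing) and the pointwise reading of the $W_2$-condition along the divisor $A$.
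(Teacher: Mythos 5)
Your proposal is correct and takes essentially the same route as the paper, whose proof simply defers to the cohomological argument of Mumford \cite[Section 1, pp.~182--183]{m-tc}: the filtration $E'(-A)\subset E'\subset E'(A)$, fibrewise vanishing of $\HH^0(E'_s(-B))$ and $\HH^1(E'_s(B))$ for $N\gg 0$, cohomology and base change, and a stalkwise identification of $W_1\cap W_2$ with $(\pi_1)_*E'$. The only step you leave implicit is that ``Serre duality applied to the first'' requires identifying $(E'_s)^{\vee}\otimes\Omega^1_C$ with a member of the same family (it is $E'_{-s}$ by duality for the finite \'etale cover, or $E'_s$ itself via the trace pairing of Lemma \ref{char2duality}), which is available and characteristic-free.
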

\begin{proof}
This follows from \cite[Section 1, Pages 182-183]{m-tc}. The arguments used are cohomological in nature and do not depend on the characteristic of the base field.
\end{proof}

\noindent We want to show that $W_1,W_2$ are generators of some quadratic form on $V$.

\begin{lemma}\label{char2duality}
We have $E' \cong \mathcal{H}om(E',\Omega^1_{\Ker(\pi_*)\times C / \Ker(\pi_*)})$ as vector bundles.
\end{lemma}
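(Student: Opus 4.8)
The plan is to exploit the smooth quadratic form $Q\colon E'\to\Omega^1_{\Ker(\pi_*)\times C/\Ker(\pi_*)}$ constructed just before the statement and to pass to its associated symmetric bilinear \emph{polar} form. Write $\Omega:=\Omega^1_{\Ker(\pi_*)\times C/\Ker(\pi_*)}$, which is a line bundle (it is $\pi_2^*\Omega^1_{C/k}$). First I would define $b_Q\colon E'\otimes E'\to\Omega$ by $b_Q(x\otimes y)=Q(x+y)-Q(x)-Q(y)$; this is $\mathcal{O}$-bilinear and symmetric, and under tensor--hom adjunction it is exactly the datum of a morphism of sheaves $\beta\colon E'\to\mathcal{H}om(E',\Omega)$. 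Since $E'$ has rank $2$, both $E'$ and $\mathcal{H}om(E',\Omega)$ are locally free of rank $2$, so it suffices to prove $\beta$ is an isomorphism; and a morphism between locally free sheaves of equal rank is an isomorphism as soon as $\beta\otimes k(t)$ is an isomorphism of $k(t)$-vector spaces at every point $t\in\Ker(\pi_*)\times C$ (the nowhere-vanishing section $\det\beta$ then trivializes $\mathcal{H}om(\det E',\det\mathcal{H}om(E',\Omega))$).

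The heart of the argument, which I would isolate as a fibrewise claim, is the following local statement, and this is where characteristic $2$ genuinely intervenes. At a point $t$, after trivializing $\Omega_t$, the fibre $Q_t$ is a binary quadratic form $q=\alpha x^2+\beta xy+\gamma y^2$ on $E'_t\cong k(t)^2$, with polar form having Gram matrix $\left(\begin{smallmatrix}2\alpha&\beta\\\beta&2\gamma\end{smallmatrix}\right)$. I claim this Gram matrix is invertible precisely when the conic $\mathbb{V}_+(q)\subset\mathbb{P}(E'_t)\cong\mathbb{P}^1$ is smooth. When $\operatorname{char}k(t)\neq 2$ both conditions read $\beta^2-4\alpha\gamma\neq 0$. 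When $\operatorname{char}k(t)=2$ the Gram matrix degenerates to $\left(\begin{smallmatrix}0&\beta\\\beta&0\end{smallmatrix}\right)$, which is invertible iff $\beta$ is a unit; and $\{q=0\}$ is two distinct points of $\mathbb{P}^1$ (hence smooth) iff $q$ is separable iff $\beta\neq 0$, since $\beta=0$ forces $q=\alpha x^2+\gamma y^2=(\sqrt{\alpha}\,x+\sqrt{\gamma}\,y)^2$. So the two conditions coincide in every characteristic. The hard part is exactly this reconciliation: in characteristic $2$ the diagonal of the polar form vanishes (as the Remark's norm form $\N(a,b)=a^2+ab+b^2$ illustrates, with polar matrix $\left(\begin{smallmatrix}0&1\\1&0\end{smallmatrix}\right)$), so non-degeneracy of $b_Q$ is carried entirely by the off-diagonal coefficient, and one must check that this is the same unit whose non-vanishing records smoothness of the quadric.

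Finally I would assemble the pieces. The form $Q$ is smooth over $\Ker(\pi_*)\times C$, as recorded in the construction of $Q$ (inherited from smoothness of the norm form $q$ in Lemma \ref{quadform}), so each fibre $Q_t$ is a smooth binary quadratic form. By the fibrewise claim the polar form $b_{Q,t}$ is non-degenerate for every $t$, hence $\beta\otimes k(t)$ is an isomorphism for every $t$, and therefore $\beta\colon E'\xrightarrow{\sim}\mathcal{H}om(E',\Omega)$ is an isomorphism of vector bundles, as asserted. I would note in passing that a purely determinant-theoretic route (computing $\det E'$ via $\det f_*\mathcal{L}\cong\Nm(\mathcal{L})\otimes\det f_*\mathcal{O}$ and the Artin--Schreier sequence) does identify $\det E'$ but, in characteristic $\neq 2$, leaves an unwanted torsion twist, so the polar-form argument is the cleaner and uniform one.
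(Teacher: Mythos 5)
Your proof is correct, and it takes a genuinely different route from the paper's. The paper refuses to polarise $Q$ at all --- its proof opens by asserting that polarisation ``cannot be done'' in characteristic $2$ --- and instead invokes the trace pairing $\Tr\circ m\colon E\otimes E\to\mathcal{O}_{\Ker(\pi_*)\times C}$, symmetric and non-degenerate because $\pi$ is \'etale; it then tensors by $L_0^2$ and concludes by the same adjunction-plus-fibrewise-check that you use. Your argument shows that the polarisation route survives after all: what genuinely fails in characteristic $2$ is only the recovery of a symmetric bilinear form whose diagonal is $Q$ (your $b_Q$ becomes alternating there), but $b_Q$ itself exists in every characteristic, and in even rank the smoothness of $Q$ forces its fibrewise non-degeneracy --- your Gram-matrix dichotomy is exactly the required verification, and it is worth stressing that it is special to even rank (the smooth ternary form $xz-y^2$ has degenerate polar form in characteristic $2$, so the argument would not extend to odd-rank bundles). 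In fact the two pairings differ fibrewise only by the involution: on $E_t\cong k\times k$ the paper's pairing is $\Tr(xy)$, with identity Gram matrix, while yours is $\Tr(x\sigma(y))$, with antidiagonal Gram matrix, so the two isomorphisms $E'\cong\mathcal{H}om(E',\Omega^1_{\Ker(\pi_*)\times C/\Ker(\pi_*)})$ differ by composition with $\sigma$; both are legitimate. What the paper's choice buys is brevity, since non-degeneracy of the trace form of an \'etale algebra is standard and needs no case analysis; what yours buys is that it is intrinsic to the quadratic-form framework of the section --- $b_Q$ is attached to the globally constructed $Q$, whereas the multiplication pairing implicitly treats $E$ as an algebra when it is really an invertible module over the \'etale algebra $(1,\pi)_*\mathcal{O}$, so its global definition requires extra care about twists. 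Two cosmetic points: you use $\beta$ both for the morphism and for the middle coefficient of $q$, and the step $q=\alpha x^2+\gamma y^2=(\sqrt{\alpha}\,x+\sqrt{\gamma}\,y)^2$ should be read over $\overline{k(t)}$ (or simply at closed points, which suffice because the non-vanishing locus of $\det\beta$ is open).
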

\begin{proof}
In \cite[end of page 183]{m-tc}, Mumford achieves this by polarising the quadratic form $Q$, which we cannot do in characteristic $2$. Instead, consider the trace pairing 
$$\Tr \circ m: E \otimes E \to \mathcal{O}_{\Ker(\pi_*) \times C}$$ 
which is symmetric and non-degenerate, since $\pi$ is \'etale. We may upgrade this to a symmetric non-degenerate pairing $E' \otimes E' \to \Omega^1_{\Ker(\pi_*)\times C / \Ker(\pi_*)}$ by tensoring by $L_0^2$. By adjunction this produces a morphism of vector bundles
$$\psi: E' \to \mathcal{H}om(E', \Omega^1_{\Ker(\pi_*)\times C / \Ker(\pi_*)}).$$
Let $x \in \Ker(\pi_*)\times C$ be a closed point. Then 
$$\mathcal{H}om(E', \Omega^1_{\Ker(\pi_*)\times C / \Ker(\pi_*)})_x \cong \Hom(E'_x, (\Omega^1_{\Ker(\pi_*)\times C / \Ker(\pi_*)})_x)$$
so it follows by non-degeneracy that $\psi$ is an isomorphism at all closed points. It follows that $\psi$ is an isomorphism because it is a linear map of vector bundles.
\end{proof}

\begin{lemma}
We have $\dim(W_1) = \dim(W_2) = 2N$ and $\dim(V) = 4N$.
\end{lemma}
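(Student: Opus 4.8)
The plan is to compute each of the three ranks by restricting to the fibre over a closed point $t \in \Ker(\pi_*)$. Since $W_1, W_2$ and $V$ are locally free by Lemma \ref{isotropicSubspaces}, the rank of each equals the dimension of its fibre at $t$; and as each is a pushforward along $\pi_1$, that fibre dimension is computed, via cohomology and base change, by the space of global sections of the restriction of the relevant sheaf to $\{t\} \times C \cong C$. Write $\mathcal{E} := E'|_{\{t\} \times C}$, a rank $2$ vector bundle on $C$, and note that $A|_{\{t\} \times C} = B$, so the three restrictions are $\mathcal{E}(B)$, $\mathcal{E}/\mathcal{E}(-B)$ and $\mathcal{E}(B)/\mathcal{E}(-B)$ respectively.

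First I would pin down $\deg \mathcal{E}$. The point $t$ corresponds to a degree $0$ line bundle $\mathcal{L}_t$ on $\widetilde{C}$, and by finite base change $\mathcal{E} = (\pi_*\mathcal{L}_t) \otimes \mathcal{L}_0$. Since $\pi$ is finite we have $\chi(C, \pi_*\mathcal{L}_t) = \chi(\widetilde{C}, \mathcal{L}_t) = 1 - g(\widetilde{C}) = 2 - 2g$, using $g(\widetilde{C}) = 2g-1$. Riemann--Roch for the rank $2$ bundle $\pi_*\mathcal{L}_t$ on $C$ then forces $\deg(\pi_*\mathcal{L}_t) = 0$, and tensoring by the theta characteristic $\mathcal{L}_0$ of degree $g-1$ gives $\deg \mathcal{E} = 2g-2$.

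For $W_1$, the fibre is $\HH^0(C, \mathcal{E}(B))$. Riemann--Roch gives
$$\chi(\mathcal{E}(B)) = \deg\mathcal{E} + 2N + 2(1-g) = (2g - 2 + 2N) + (2 - 2g) = 2N,$$
and for $N$ sufficiently large $\HH^1(C,\mathcal{E}(B)) = 0$ by Serre vanishing, so $\dim W_1 = 2N$. For $W_2$ and $V$, the restrictions $\mathcal{E}/\mathcal{E}(-B)$ and $\mathcal{E}(B)/\mathcal{E}(-B)$ are torsion sheaves concentrated at the $N$ points $p_i$; since $\mathcal{E}$ has rank $2$ while $\mathcal{O}_C/\mathcal{O}_C(-B)$, respectively $\mathcal{O}_C(B)/\mathcal{O}_C(-B)$, has local length $1$, respectively $2$, at each $p_i$, these sheaves have total length $2N$, respectively $4N$. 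As torsion sheaves on a curve they have no higher cohomology, so $\dim W_2 = 2N$ and $\dim V = 4N$.

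The computation is essentially routine; the only point requiring care is the justification that the rank of each locally free pushforward is computed by the $\HH^0$ of the fibrewise restriction. For $W_2$ and $V$ this is automatic, since the sheaves being pushed forward are finite (indeed supported on the sections $s_i$) and flat over $\Ker(\pi_*)$, so their pushforwards are visibly locally free with the expected fibres. For $W_1$ the subtlety is genuine: one needs $N$ large enough that $R^1(\pi_1)_* E'(A) = 0$, which is exactly the Serre-vanishing input already used to guarantee local freeness in Lemma \ref{isotropicSubspaces}, and which ensures base change holds so the fibre of $W_1$ is $\HH^0(C,\mathcal{E}(B))$. I therefore expect this base-change bookkeeping, rather than any of the numerical computations, to be the main thing to get right.
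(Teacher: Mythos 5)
Your proposal is correct and takes essentially the same route as the paper, whose proof of this lemma is simply a citation to Mumford's computations at the end of page 183 and the beginning of page 184 of \cite{m-tc}: a Riemann--Roch plus Serre-vanishing count for $W_1$, a length count for the torsion sheaves defining $W_2$ and $V$, and the cohomology-and-base-change bookkeeping that you correctly identify as the only delicate point. The one cosmetic difference is that you obtain $\deg(E'|_{\{t\}\times C}) = 2g-2$ from $\chi(\pi_*\mathcal{L}_t)=\chi(\mathcal{L}_t)$ together with $\deg\mathcal{L}_0 = g-1$, whereas Mumford (and the paper, via Lemma \ref{char2duality}) would extract $\chi(E'_t)=0$ from the self-duality $E'\cong\mathcal{H}om(E',\Omega^1_{\Ker(\pi_*)\times C/\Ker(\pi_*)})$; the two computations are interchangeable.
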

\begin{proof}
This follows from the arguments at the end of \cite[page 183]{m-tc} and at the beginning of \cite[page 184]{m-tc}.
\end{proof}

We describe a quadratic form $\mathcal{Q} : V \to \mathcal{O}_{\Ker(\pi_*)}$ as follows. There is a smooth quadratic form  $Q(A): E'(A) \to \Omega^1_{\Ker(\pi_*) \times C/\Ker(\pi_*)}(2A)$ given by tensoring $Q$ by $\mathcal{O}_{\Ker(\pi_*) \times C}(A)$. Write
$$Q(A): E'(A) \xrightarrow{\Delta} E'(A) \otimes E'(A) \xrightarrow{b} \Omega^1_{\Ker(\pi_*) \times C/\Ker(\pi_*)}(2A).$$
Let $\eta_i$ be the generic point of the Weil divisor $s_i(\Ker(\pi_*))$. For an open subset $U$ of $\Ker(\pi_*)$, let $s$ be a section of $V(U) = (E'(A)/E'(-A))(\pi_1^{-1}(U))$. Choose representatives $s_{\eta_i} \in E'(A)(\eta_i)$ for the restriction of $s$ modulo $\eta_i$. Then we set 
$$\mathcal{Q}(s) = \sum_{i=1}^N\res_{\eta_i}(Q(A)(s_{\eta_i}))$$
which is a quadratic form since it factors as $V \xrightarrow{\Delta} V \otimes V \xrightarrow{\beta} \mathcal{O}_{\Ker(\pi_*)}$ where
$$\beta (x \otimes y) := \sum_{i=1}^N\res_{\eta_i}(b(x_{\eta_i}\otimes y_{\eta_i})).$$
This is well-defined because the residue maps $\res_{\eta_i}$ do not care about the choice of representatives $x_{\eta_i},y_{\eta_i}$.

\begin{lemma}
The quadratic form $\mathcal{Q}: V \to \mathcal{O}_{\Ker(\pi_*)}$ is smooth.
\end{lemma}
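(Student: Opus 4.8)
The plan is to check smoothness fibrewise over $\Ker(\pi_*)$. Since $\pi_*$ is smooth by Proposition \ref{pushforwardsmooth}, its kernel $\Ker(\pi_*)$ is a smooth, hence reduced, group scheme, and a finite-type morphism to it is smooth exactly when it is flat with smooth geometric fibres. So I would fix a closed point $t \in \Ker(\pi_*)$, corresponding to a line bundle $L_t$ on $\widetilde{C}$ with pushforward $E_t = \pi_* L_t$, and show that the fibre $\mathcal{Q}_t$ is a smooth quadratic form on $V_t$. For $N$ large, cohomology and base change identifies $V_t = \HH^0(C, E'_t(B)/E'_t(-B))$, a skyscraper sheaf supported on the $N$ disjoint points $p_i$, each contributing a $4$-dimensional stalk, so that $\dim V_t = 4N$ is even.

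The key observation, which is where characteristic $2$ forces a genuine departure from Mumford's argument, is that we must cut out a \emph{smooth} quadric rather than merely arrange non-degeneracy of the linearisation $b$. Over the algebraically closed field $k$ in even dimension $n = 4N$ the two notions coincide: the singular locus of $\{\mathcal{Q}_t = 0\} \subset \mathbb{P}(V_t)$ is $\mathbb{P}(\mathrm{rad}(b_t) \cap \{\mathcal{Q}_t = 0\})$, where $b_t$ is the polar alternating bilinear form; since $b_t$ has even rank its radical is even-dimensional, and as anisotropic forms over $k$ exist only in dimension $\leq 1$, the quadric is smooth if and only if $b_t$ is non-degenerate. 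I would therefore reduce the lemma to showing that the polar form of $\mathcal{Q}_t$ is non-degenerate.

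To compute this polar form, bilinearity of $\beta$ gives $\mathcal{Q}_t(x+y) + \mathcal{Q}_t(x) + \mathcal{Q}_t(y) = \sum_i \res_{\eta_i}\big((b + b^{\mathrm{op}})(x_{\eta_i} \otimes y_{\eta_i})\big)$, where $b^{\mathrm{op}}(x \otimes y) := b(y \otimes x)$, so that $b + b^{\mathrm{op}}$ is precisely the polar form of $Q(A)$, namely the symmetric trace pairing $E'(A) \times E'(A) \to \Omega^1_{\Ker(\pi_*) \times C/\Ker(\pi_*)}(2A)$ underlying Lemma \ref{char2duality}. Thus the polar form of $\mathcal{Q}_t$ is the residue pairing attached to the perfect symmetric pairing $E'_t \times E'_t \to \Omega^1_C$ of Lemma \ref{char2duality}. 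Its non-degeneracy is then local at each $p_i$: trivialising $E'_t$ and $\Omega^1_C$ near $p_i$ turns the trace pairing into a symmetric matrix in $\GL_2(\mathcal{O}_{C,p_i})$, while the residue pairing on $t_i^{-1}\mathcal{O}/t_i\mathcal{O}$ is the hyperbolic form $\left(\begin{smallmatrix}0&1\\1&0\end{smallmatrix}\right)$. Comparing associated graded pieces for the filtration by order of pole, the stalk pairing is the tensor product of these two non-degenerate forms, hence non-degenerate, and summing the orthogonal contributions over the $p_i$ yields non-degeneracy of $b_t$.

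Finally I would upgrade fibrewise smoothness to relative smoothness. Having shown each $\mathcal{Q}_t$ is a nonzero quadratic form, $\mathbb{V}_+(\mathcal{Q})$ is a relative effective Cartier divisor in the smooth $\mathbb{P}(V) \to \Ker(\pi_*)$ with fibres of constant dimension $4N - 2$, hence flat over $\Ker(\pi_*)$ by miracle flatness, as in Lemma \ref{quadform}; being flat with smooth geometric fibres, $\mathbb{V}_+(\mathcal{Q})$ is smooth over $\Ker(\pi_*)$, which is the claim. The main obstacle is the first conceptual step: recognising that the right replacement for Mumford's non-degeneracy is smoothness of the associated quadric, and that the two agree here because $\dim V_t$ is even so that the polar form governs everything. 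The subsequent local residue computation is then routine.
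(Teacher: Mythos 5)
Your proof is correct, and it shares the paper's overall skeleton --- reduce to geometric fibres at closed points of $\Ker(\pi_*)$, do a local computation at the points $p_i$, then deduce relative smoothness from miracle flatness exactly as in Lemma \ref{quadform} --- but the key step is genuinely different. The paper expands a representative of an element of $V_x$ as a Laurent series at each $p_i$, applies the polarization identity for the norm form, and reads off residues to exhibit $\mathcal{Q}_x$ in coordinates as the split form $\sum_j a_{1,j}a_{4,j}+a_{2,j}a_{3,j}$, which is visibly smooth. You never compute $\mathcal{Q}_t$ at all: you compute its polar form, identify it with the residue pairing attached to the perfect symmetric pairing of Lemma \ref{char2duality}, prove non-degeneracy by the isotropic-filtration argument, and invoke the criterion that a quadric whose polar form is non-degenerate is smooth (the parity discussion being needed only for the converse implication, which you never use). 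This nicely complements the paper's own remark that non-degeneracy in its sense does not imply smoothness in characteristic $2$ (the example $\Tr\circ m\circ\Delta$ there has vanishing polar form): the correct substitute is non-degeneracy of the polar form, and your observation that the polar form of the norm is the trace pairing twisted by $\sigma$, perfect because $\pi$ is \'etale, is exactly the structural reason the lemma holds. Two harmless imprecisions: the polar form is $\Tr\circ m\circ(1\otimes\sigma)$ rather than $\Tr\circ m$ itself (they differ by the automorphism $\sigma$, so perfectness transfers); and the stalk pairing is not literally a tensor product, since the trivialized matrix of the pairing has higher-order terms in $t_i$ contributing on the polar parts --- but your filtration argument (the regular part is isotropic and paired perfectly with the polar quotient) is precisely what absorbs those terms, whereas the explicit coordinate expansion removes them only after a further change of variables. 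What your route buys is independence of any special trivialization and a single clearly identified input, the perfectness of the trace pairing; what the paper's route buys is the stronger explicit conclusion that each fibre $\mathcal{Q}_x$ is the standard split quadric.
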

\begin{proof}
We get quadratic forms $Q(A)_x$ by pulling $Q(A)$ back to $\{x\}\times C$, and $(Q(A)_x)_{(p_i)}$ by taking the stalk at the point $p_i$. Then $(Q(A)_x)_{(p_i)} = (Q_x(B))_{(p_i)}$ which we can factor as
$$ (E'_x(B))_{(p_i)} \xrightarrow{\Delta}  (E'_x(B))_{(p_i)} \otimes  (E'_x(B))_{(p_i)} \xrightarrow{b_i} \Omega^1_{C/k}(2B)_{(p_i)}.$$
To prove the lemma it suffices to show that $\mathcal{Q}$ is smooth outside $0$ at every closed point $x \in \Ker(\pi_*)$. Flatness will follow autmatically by miracle flatness, as in the proof of Lemma \ref{quadform}. The quadratic form $\mathcal{Q}_x$ is
$$V_x \xrightarrow{\Delta} V_x \otimes V_x \xrightarrow{\beta_x} k.$$
Fix uniformizers $t_i$ for the curve $C$ at the points $p_i$.  Consider an arbitrary element $a \in E'_x(B)/E'_x(-B)$. For each point $p_i$ of $B$ we can take a representative $a_{(p_i)} \in (E'_x(B))_{(p_i)}$ for $a$ at the stalk at $p_i$, which has some expansion 
$$a_{(p_i)} = \sum_{j\geq -1} (x_{i,j},y_{i,j})t_i^j$$
for elements $(x_{i,j},y_{i,j}) \in E'_x(p_i)/p_iE'_x(p_i) \cong k^2$. Then we may write
$$\beta_x (u \otimes v) := \sum_{i=1}^N\res_{p_i}(b_i(u_{(p_i)}\otimes v_{(p_i)})).$$
For each $a \in V_x$, we obtain

\vspace{5pt}
\begin{tabular}{lrl}
 \( \Delta(a_{(p_i)}) = \) & \( ((x_{i,-1},y_{i,-1})\otimes(x_{i,-1},y_{i,-1}))  t^{-2} \) & \( +\) \\
 & \(+ ((x_{i,-1},y_{i,-1}) \otimes (x_{i,0},y_{i,0}) +  (x_{i,0},y_{i,0}) \otimes (x_{i,-1},y_{i,-1}))  t^{-1}\) & \( + \cdots \)
\end{tabular}

\noindent Write $b_{p_i} = (b_i)_{p_i}$. Since $\res_{p_i}\circ b_i$ only cares about the $t^{-1}$ coefficient, we get
\begin{align*}
 \res_{p_i}(b_i(\Delta(a_{(p_i)}))) = & b_{p_i}((x_{i,-1},y_{i,-1}) \otimes (x_{i,0},y_{i,0}) + (x_{i,0},y_{i,0}) \otimes (x_{i,-1},y_{i,-1})) \\
 = &b_{p_i}(((x_{i,-1},y_{i,-1}) +  (x_{i,0},y_{i,0})) \otimes ((x_{i,-1},y_{i,-1}) +  (x_{i,0},y_{i,0})) - \\
& - ((x_{i,-1},y_{i,-1}) \otimes (x_{i,-1},y_{i,-1}))  -  ((x_{i,0},y_{i,0}) \otimes (x_{i,0},y_{i,0}))  ) \\
= & \N((x_{i,-1} + x_{i,0},y_{i,-1} + y_{i,0} )  ) - \N((x_{i,-1},y_{i,-1})  ) - \N((x_{i,0},y_{i,0} )  )\\
= & x_{i,-1}y_{i,0} + x_{i,0}y_{i,-1}
\end{align*} 
because $(Q_x(B))_{p_i} : k^2 \to k$ is the norm map. Therefore we deduce that the quadratic form $\mathcal{Q}_x: k^{4N} \to k$ looks like
$$\mathcal{Q}_x((a_{i,j})_{1 \leq i \leq 4, 1 \leq j \leq N}) = \sum_{j=1}^Na_{1,j}a_{4,j} + a_{2,j}a_{3,j}$$
which is indeed smooth outside $0$.
\end{proof}

\begin{lemma}\label{int}
The subspaces $W_1, W_2 \subset V$ are generators for the quadratic form $\mathcal{Q}$. Moreover, for any point $x \in \Ker(\pi_*)$, we have $W_{1,x} \cap W_{2,x} = \Gamma(E'_x)$.
\end{lemma}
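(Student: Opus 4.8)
The plan is to verify everything fiberwise over closed points $x \in \Ker(\pi_*)$. Since $\Ker(\pi_*)$ is smooth by Proposition \ref{pushforwardsmooth}, hence reduced, and being a generator amounts to defining a section of $\Gen(\mathcal{Q}) \to \Ker(\pi_*)$, it is enough to show that for each closed point $x$ the subspaces $W_{1,x}, W_{2,x} \subset V_x$ are maximal isotropic for $\mathcal{Q}_x$. For our (large) choice of $N$, cohomology and base change identifies the fibre of $W_1$ at $x$ with $\Gamma(E'_x(B))$ and the fibre of $W_2$ at $x$ with $\Gamma(E'_x/E'_x(-B))$, so that $W_{1,x}$ and $W_{2,x}$ are the images of these spaces of sections in $V_x$. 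In the coordinates of the previous lemma, $V_x$ decomposes at each $p_i$ into an order-$(-1)$ part $(x_{i,-1},y_{i,-1})$ and an order-$0$ part $(x_{i,0},y_{i,0})$, and $\mathcal{Q}_x = \sum_{i=1}^N (x_{i,-1}y_{i,0} + x_{i,0}y_{i,-1})$. Since $\mathcal{Q}_x$ is smooth (nondegenerate) and $\dim V_x = 4N$, any $2N$-dimensional isotropic subspace is automatically maximal, i.e. a generator.

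The subspace $W_{2,x}$ is immediate: a section of $E'_x/E'_x(-B)$ records only the order-$0$ data $(x_{i,0},y_{i,0})$ and maps into $V_x$ with vanishing order-$(-1)$ part, so $W_{2,x} = \{x_{i,-1} = y_{i,-1} = 0 \text{ for all } i\}$. The form $\mathcal{Q}_x$ vanishes identically on this $2N$-dimensional subspace, so $W_{2,x}$ is a generator.

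For $W_{1,x}$ the key point --- and the step that replaces Mumford's use of a polarisation of $Q$, which is unavailable in characteristic $2$ --- is the residue theorem. Given $s \in \Gamma(E'_x(B))$, the value $Q(A)_x(s)$ is a global section of $\Omega^1_C(2B)$, i.e. a meromorphic differential on $C$ whose poles are supported on $B$; the residue theorem then gives $\mathcal{Q}_x(s) = \sum_{i} \res_{p_i} Q(A)_x(s) = 0$. Hence $W_{1,x}$ is isotropic. For its dimension, the kernel of $\Gamma(E'_x(B)) \to V_x$ consists of the sections landing in $E'_x(-B)$, namely $\Gamma(E'_x(-B))$, which vanishes for $N$ large by degree considerations; combined with $\dim \Gamma(E'_x(B)) = \rank W_1 = 2N$ from the previous lemma, this shows $\dim W_{1,x} = 2N$, so $W_{1,x}$ is a generator.

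Finally, for the moreover statement, an element of $W_{1,x}$ is the image of some $s \in \Gamma(E'_x(B))$, and it lies in $W_{2,x}$ exactly when all its order-$(-1)$ coefficients vanish, i.e. when $s$ has no poles along $B$, i.e. $s \in \Gamma(E'_x)$. As $\Gamma(E'_x) \hookrightarrow V_x$ injectively (again because $\Gamma(E'_x(-B)) = 0$), we obtain $W_{1,x} \cap W_{2,x} = \Gamma(E'_x)$; this is a genuine refinement of the sheaf-level identity $W_1 \cap W_2 = (\pi_1)_*E'$, since intersection need not commute with restriction to fibres. I expect the only real bookkeeping to be these base-change identifications of the fibres and the injectivity into $V_x$ for large $N$; the conceptual content is the residue-theorem isotropy of $W_1$, after which the dimension count and the intersection computation are formal.
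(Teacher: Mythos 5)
Your proof is correct and takes essentially the same route as the paper: the paper's entire argument for maximal isotropy is a citation to the beginning of \cite[page 184]{m-tc}, which is precisely the residue-theorem isotropy of $W_1$, the vanishing-polar-part isotropy of $W_2$, and the dimension count that you reconstruct. For the intersection statement, the paper deduces $W_{1,x}\cap W_{2,x} = \Gamma(E'_x)$ from the sheaf identity $W_1\cap W_2 = (\pi_1)_*E'$ of Lemma \ref{isotropicSubspaces} together with base change, whereas you verify it directly on fibres; your version is, if anything, the more careful one, since (as you note) intersection of subsheaves need not commute with passage to fibres without the injectivity $\Gamma(E'_x(-B)) = 0$ that you supply.
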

\begin{proof}
The beginning of \cite[page 184]{m-tc} explains why $W_1,W_2$ are maximal isotropic subspaces for $\mathcal{Q}$. From Lemma \ref{isotropicSubspaces}, we have $W_1 \cap W_2 = (\pi_1)_*E'$ and so 
 $$W_{1,x} \cap W_{2,x} = ((\pi_1)_*E')_x = \Gamma(E'_x). \qedhere$$
\end{proof}

With this at our disposal, we get the following key result.

\begin{proposition}\label{locconst}
The map $\Ker(\pi_*) \to \mathbb{Z}/2\mathbb{Z}$ given by 
$$s \mapsto \dim(\Gamma(E'_s)) \pmod{2}$$
is locally constant.
\end{proposition}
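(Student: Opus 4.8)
The plan is to deduce the local constancy of the parity of $\dim \Gamma(E'_s)$ from the machinery just developed: the quadratic form $\mathcal{Q}$ on $V$ together with its two generators $W_1, W_2$, and the component map $e$ from Proposition \ref{componentsGenerators}. The key observation is that $W_1$ and $W_2$ are \emph{families} of generators, i.e. sections of the scheme of generators $\Gen(\mathcal{Q})$ over $\Ker(\pi_*)$, and that the parity we wish to control is exactly the dimension of their fibrewise intersection, by Lemma \ref{int}.

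First I would make precise that $\mathcal{Q}: V \to \mathcal{O}_{\Ker(\pi_*)}$ is a smooth quadratic form on a rank-$4N$ bundle, so that $\mathbb{V}_+(\mathcal{Q}) \subset \mathbb{P}(V)$ is a smooth quadric bundle of fibre dimension $2N-1$ over $\Ker(\pi_*)$, and its generators are the $(2N-1)$-planes contained fibrewise. By Lemma \ref{int}, $W_1, W_2 \subset V$ are maximal isotropic subbundles, hence each defines a section $\Ker(\pi_*) \to \Gen(\mathcal{Q})$. Composing with the component map $e: \Gen(\mathcal{Q}) \to \mathbb{Z}/2\mathbb{Z}$ of Proposition \ref{componentsGenerators} produces two locally constant functions $e(W_1), e(W_2): \Ker(\pi_*) \to \mathbb{Z}/2\mathbb{Z}$, since $e$ is continuous and $\Ker(\pi_*)$ is connected on each component; in particular $e(W_1) - e(W_2)$ is locally constant.

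The crucial input is the congruence of Proposition \ref{componentsGenerators}: for generators $g \in \Gen(\mathcal{Q})_a$, $h \in \Gen(\mathcal{Q})_b$ in a fibre over a point $s$, we have $\dim(g \cap h) \equiv \dim(g) + a + b \pmod 2$. Applying this fibrewise to $g = W_{1,s}$ and $h = W_{2,s}$, and using Lemma \ref{int} which identifies $W_{1,s} \cap W_{2,s} = \Gamma(E'_s)$, gives
$$\dim \Gamma(E'_s) = \dim(W_{1,s} \cap W_{2,s}) \equiv (2N-1) + e(W_1)(s) + e(W_2)(s) \pmod 2.$$
Since $2N-1$ is a fixed integer and $e(W_1) + e(W_2)$ is locally constant in $s$, the right-hand side is locally constant, hence so is the left-hand side modulo $2$. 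This is exactly the assertion.

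The main obstacle I anticipate is not the congruence itself but verifying the hypotheses that let me apply Proposition \ref{componentsGenerators} in a \emph{relative} setting and read off a locally constant invariant. Specifically, I must confirm that $W_1, W_2$ are genuinely generators fibrewise of the correct dimension $\dim W_{i,s} = \tfrac{1}{2}\dim V_s = 2N$ (established in the preceding lemma, with the fibre of the quadric being a $(2N-1)$-plane so generators have projective dimension $2N-1$), and that the component map $e$ is well-behaved in families — i.e. that pulling back the continuous map $e$ along a section of $\Gen(\mathcal{Q})$ yields a locally constant $\mathbb{Z}/2\mathbb{Z}$-valued function. The congruence in Proposition \ref{componentsGenerators} is stated pointwise over a field, so I would apply it at each point $s$ to the fibre quadric $\mathcal{Q}_s$, where the smoothness of $\mathcal{Q}$ guarantees the fibres are smooth quadrics and the theory of two families of generators applies. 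Once this bookkeeping is in place the result is immediate.
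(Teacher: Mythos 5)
Your proposal is correct and follows essentially the same route as the paper: both arguments combine Proposition \ref{componentsGenerators} (the two components of $\Gen(\mathcal{Q})$ varying continuously, plus the parity congruence) with Lemma \ref{int} identifying $W_{1,s} \cap W_{2,s} = \Gamma(E'_s)$, the only difference being that you make the component functions $e(W_i)$ and the congruence explicit where the paper leaves them implicit. One trivial bookkeeping slip: your congruence mixes projective dimension (for the generators, $2N-1$) with linear dimension (for $\Gamma(E'_s)$), so the constant term should shift by one, but since it is constant this does not affect local constancy.
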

\begin{proof}
By Proposition \ref{componentsGenerators}, for each closed point $x \in \Ker(\pi_*)$, the scheme $\Gen(\mathcal{Q}_x)$ has two connected components which vary continuously in $x$. As a result, if the generators $W_{1,x}$, $W_{2,x}$ lie in the same connected component, then $W_{1,y}$, $W_{2,y}$ lie in the same connected component for all $y$ in the connected component of $x$ in $\Ker(\pi_*)$. The result then follows from Proposition \ref{componentsGenerators} and Lemma \ref{int}.
\end{proof}

\subsection{Applications to Prym varieties}
In this subsection we will say something about the dimensions of spaces of global sections of line bundles parametrized by the Prym variety. This lets us describe the singularities of the theta divisor on the Prym variety due to a theorem of Riemann-Kempf.

Let $\pi: \widetilde{C} \to C$ be an \'etale double cover of curves of genera $\widetilde{g},g$ respectively. It follows that $\widetilde{g} = 2g - 1$.
\begin{proposition}\label{parity}
Let $\sigma: \widetilde{C} \to \widetilde{C}$ be the involution associated to the double cover. Let $x \in \widetilde{C}$ be a closed point. Then
\begin{enumerate}
\item If  $D \in P^+(k)$ then $D + x - \sigma(x) \in P^-(k)$.
\item If $D \in P^-(k)$ then $D + x - \sigma(x) \in P^+(k)$.
\end{enumerate}
\end{proposition}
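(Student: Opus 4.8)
The plan is to prove both parts of the statement simultaneously by reducing everything to the single assertion that the class $[x-\sigma(x)]$ lies in the non-identity component $P^-$ of $\Ker(\pi_*)$. First I would record that $x-\sigma(x)$ indeed lies in $\Ker(\pi_*)$: since $\pi\circ\sigma = \pi$, we have $\pi_*[x-\sigma(x)] = [\pi(x)-\pi(\sigma(x))] = 0$, and this class is defined over $k$. Now $\Ker(\pi_*)$ is a group scheme whose group of components is $\mathbb{Z}/2\mathbb{Z}$ by Proposition \ref{rosen} (with $|G|=2$), with identity component $P^+$ and non-trivial coset $P^-$. Adding the fixed class $[x-\sigma(x)]$ is translation, which on $\pi_0(\Ker(\pi_*)) = \mathbb{Z}/2\mathbb{Z}$ is translation by the component of $[x-\sigma(x)]$: it fixes both cosets if $[x-\sigma(x)]\in P^+$, and swaps $P^+ \leftrightarrow P^-$ if $[x-\sigma(x)]\in P^-$. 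Since both $(1)$ and $(2)$ assert the swap, the proposition is equivalent to $[x-\sigma(x)]\in P^-$. As component membership is geometric, I may base change to $\overline{k}$ and apply the results of the preceding subsection.

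To detect the component I would use the locally constant parity function of Proposition \ref{locconst}. Recall that by the projection formula $\dim\Gamma(E'_s) = h^0(\widetilde{C}, L_s \otimes \pi^*\mathcal{L}_0)$, where $L_s$ is the line bundle on $\widetilde{C}$ parametrised by $s\in\Ker(\pi_*)$; this parity is constant on each of $P^+$ and $P^-$. The useful observation is that it now suffices to exhibit a single class $s_0 \in \Ker(\pi_*)(\overline{k})$ for which $h^0(\widetilde{C}, L_{s_0}\otimes\pi^*\mathcal{L}_0)$ and $h^0(\widetilde{C}, L_{s_0}(x-\sigma(x))\otimes\pi^*\mathcal{L}_0)$ have opposite parity. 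Indeed, such a flip forces $s_0$ and $s_0 + [x-\sigma(x)]$ into different components, whence translation by $[x-\sigma(x)]$ swaps components, i.e. $[x-\sigma(x)]\in P^-$; local constancy and the group structure then propagate the conclusion to every $s$ and to both parts of the statement. The advantage is that I am free to choose $s_0$ and the auxiliary points $p_1,\dots,p_N$ of Lemma \ref{isotropicSubspaces} conveniently, in particular so that $p := \pi(x)$ is among them.

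The heart of the matter is this parity flip, and it is the step I expect to be the main obstacle. Passing from $L_{s_0}$ to $L_{s_0}(x-\sigma(x))$ is an elementary (Hecke) modification of $E' = \pi_*\mathcal{L}\otimes L_0$ concentrated at the single point $p$, asymmetric in the two points of $\pi^{-1}(p) = \{x,\sigma(x)\}$ since it imposes a pole at $x$ and a zero at $\sigma(x)$. I would analyse its effect through the Wirtinger dictionary of Proposition \ref{componentsGenerators}: writing $h^0(\widetilde{C}, L_s\otimes\pi^*\mathcal{L}_0) = \dim(W_{1,s}\cap W_{2,s}) \equiv \dim(W_{1,s}) + a(s) + b(s) \pmod 2$, where $W_{1,s}\in\Gen(\mathcal{Q}_s)_{a(s)}$ and $W_{2,s}\in\Gen(\mathcal{Q}_s)_{b(s)}$, the flip amounts to showing that the modification at $p$ changes exactly one of the family indices $a,b$. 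Using the explicit local form $\sum_j a_{1,j}a_{4,j} + a_{2,j}a_{3,j}$ of $\mathcal{Q}$ computed above, the modification alters only the local contribution at $p$, moving the corresponding local isotropic plane in the split quadric $\{a_1a_4 + a_2a_3 = 0\}$ from one ruling to the other while leaving the $N-1$ remaining local planes untouched; by Proposition \ref{componentsGenerators} this shifts the global family index by one, yielding the desired flip.

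Carrying out this local computation carefully, and in particular checking that the one-sidedness of the modification (pole versus zero) genuinely moves a single ruling rather than both, is the delicate point. I expect an independent cross-check via Serre duality on $\widetilde{C}$ together with the $\sigma$-invariance of $\pi^*\mathcal{L}_0$, adapting Mumford's parity count in \cite{m-tc}, to be worthwhile for confirming the sign of the flip.
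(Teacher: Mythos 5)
Your strategy is, in substance, the one the paper relies on: the paper's proof of Proposition \ref{parity} is a bare citation of Mumford's Step 3 together with the assertion that it survives in characteristic $2$, and what you outline is a reconstruction of exactly that step on top of the paper's characteristic-free machinery (Propositions \ref{componentsGenerators}, \ref{locconst} and the residue form $\mathcal{Q}$). Your two reductions --- to the single assertion $[x-\sigma(x)]\in P^-$ via Proposition \ref{rosen}, and then to exhibiting one parity flip via Proposition \ref{locconst} --- are correct and, importantly, non-circular: you invoke only local constancy, not Proposition \ref{lc}, whose proof in the paper uses Proposition \ref{parity} itself. One caveat: your base change to $\overline{k}$ silently assumes $x$ remains a single point; a closed point of even residue degree splits into an even number of geometric points, each contributing a $P^-$-class, so $[x-\sigma(x)]$ then lies in $P^+$. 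The statement (yours, and the paper's) must be read for geometric points, consistent with the algebraically closed setting of Section 3.

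The genuine gap is in the step you flag as delicate, and as written it is not yet meaningful: the local isotropic plane attached to $L_{s_0}(x-\sigma(x))$ does not live in the same quadratic space as the one attached to $L_{s_0}$. Writing $F:=E'_{s_0}$ and $F':=\pi_*\bigl(L_{s_0}(x-\sigma(x))\bigr)\otimes\mathcal{L}_0$, the lattices $F'(\pm B)_p$ at $p=\pi(x)$ differ from $F(\pm B)_p$, so ``moving to the other ruling of the same split quadric,'' and likewise comparing the indices $a,b$ of the old and new pairs of generators, is not defined. The repair, which is the real content of Mumford's Step 3, is: take $p$ among the $p_i$; then $F(-B)_p\subset F'_p\subset F(B)_p$ while $F'_q=F_q$ for $q\neq p$, so $W_2'':=F'_p/F(-B)_p\oplus\bigoplus_{p_i\neq p}F_{p_i}/F(-B)_{p_i}$ is a maximal isotropic subspace of the \emph{original} $(V,\mathcal{Q})$ (isotropic precisely because the pole at $x$ is cancelled by the zero at $\sigma(x)$ under the norm), and one keeps $W_1$ unchanged. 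Two computations then close the argument. First, using the splitting of $F$ over the completion at $p$, $W''_{2,p}\cong t^{-1}L_x/tL_x$ meets $W_{2,p}\cong (L_x\oplus L_{\sigma x})/t(L_x\oplus L_{\sigma x})$ in the line $L_x/tL_x$, so $\dim(W_2\cap W_2'')=2N-1$ and $W_2,W_2''$ lie in different families by Proposition \ref{componentsGenerators}: this is your ruling flip, now rigorous. Second --- and this step is absent from your sketch --- $W_1\cap W_2''=\Gamma(F')=\HH^0\bigl(\widetilde{C},L_{s_0}(x-\sigma(x))\otimes\pi^*\mathcal{L}_0\bigr)$, since a section of $F(B)$ lying in $F_{p_i}$ at each $p_i\neq p$ and in $F'_p$ at $p$ is exactly a section of $F'$; this is what entitles you to compute the new $h^0$ against the \emph{unchanged} $W_1$. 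Applying Proposition \ref{componentsGenerators} to $(W_1,W_2)$ and $(W_1,W_2'')$ then gives
\[
h^0(L_{s_0}\otimes\pi^*\mathcal{L}_0)+h^0\bigl(L_{s_0}(x-\sigma(x))\otimes\pi^*\mathcal{L}_0\bigr)\equiv \dim(W_2\cap W_2'')+\dim W_2\equiv 1 \pmod{2}.
\]
With these two additions your outline becomes a complete proof, valid in all characteristics since it uses only the paper's versions of the quadric machinery.
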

\begin{proof}
This is \cite[step 3, page 188]{m-tc}, which follows from the previous steps. All the steps are valid when $\textrm{char}(k) = 2$.
\end{proof}

\begin{proposition}[Wirtinger's Theorem]\label{lc}
For any $s \in \Ker(\pi_*)(k)$, let $L_s$ be the line bundle on $\widetilde{C}$ associated to $s$. Let $\mathcal{L}_0$ be a theta characteristic on $C$. Then the function $\Ker(\pi_*)(k) \to \mathbb{Z}/2\mathbb{Z}$ given by $$s \mapsto \dim(\Gamma(\widetilde{C}, L_s\otimes \pi^*\mathcal{L}_0)) \pmod{2}$$
 is locally constant and takes different values on $P^+(k)$ and $P^-(k)$.
\end{proposition}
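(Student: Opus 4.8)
The plan is to first reduce the statement to Proposition \ref{locconst}, and then to isolate the genuinely new content: that the (already locally constant) parity function takes \emph{different} values on the two components $P^+$ and $P^-$ of $\Ker(\pi_*)$.

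For the reduction, I would observe that the function here is literally the one in Proposition \ref{locconst}. Restricting the family $E = (1,\pi)_*\mathcal{L}$ to the fibre $\{s\}\times C$ gives $E_s \cong \pi_* L_s$, so by the projection formula
$$E'_s \cong \pi_*L_s \otimes \mathcal{L}_0 \cong \pi_*(L_s \otimes \pi^*\mathcal{L}_0),$$
whence $\Gamma(C,E'_s) \cong \Gamma(\widetilde{C}, L_s \otimes \pi^*\mathcal{L}_0)$ and the two parity functions coincide. Local constancy is then exactly Proposition \ref{locconst}. Since $\Ker(\pi_*) = P^+ \sqcup P^-$ with both pieces connected, the function is constant on each of $P^+$ and $P^-$, and it remains only to show that these two constant values are distinct.

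For this, I would use Proposition \ref{parity} to pass between the components: for any closed point $x \in \widetilde{C}$ and any $s \in P^+(k)$, the class $s' = s + [x - \sigma(x)]$ lies in $P^-(k)$, and the corresponding line bundle is $M' := L_{s'}\otimes\pi^*\mathcal{L}_0 \cong M(x-\sigma(x))$, where $M := L_s \otimes \pi^*\mathcal{L}_0$. It therefore suffices to exhibit a single $s \in P^+(k)$ and a single point $x$ for which $h^0(M')$ and $h^0(M)$ have opposite parity; since each parity is constant on its component, this forces the two values to differ. I would produce the parity jump from the two short exact sequences
$$0 \to M \to M(x) \to M(x)|_x \to 0, \qquad 0 \to M(x-\sigma(x)) \to M(x) \to M(x)|_{\sigma(x)} \to 0,$$
which give $h^0(M(x)) = h^0(M)+\delta_1$ and $h^0(M(x)) = h^0(M')+\delta_2$ with $\delta_1,\delta_2 \in \{0,1\}$. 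A cohomology-and-base-point analysis (using Serre duality together with $K_{\widetilde{C}} = \pi^*\mathcal{L}_0^{\otimes 2}$, valid since $\pi$ is \'etale) identifies $\delta_1 = 1$ with $\sigma(x) \in \mathrm{Bs}\,|M|$ and $\delta_2 = 1$ with $\sigma(x) \notin \mathrm{Bs}\,|M(x)|$.

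The key point is then to arrange $h^0(M) > 0$, so that $|M|$ is a genuine linear system with finite base locus: choosing $x$ with $\sigma(x)\notin\mathrm{Bs}\,|M|$ gives $\delta_1 = 0$ and $\delta_2 = 1$, hence $h^0(M') = h^0(M)-1$ and the parity flips. Such an $s$ exists because $M$ has degree $\widetilde{g}-1$ and the locus $\{h^0 > 0\}$ is the restriction to a component of $\Ker(\pi_*)$ of the theta divisor of $J\widetilde{C}$; as $\Theta$ is ample and each component is a positive-dimensional torsor under $P^+$ (here $\dim P^\pm = g-1$, positive in the cases of interest), this restriction is a nonempty effective divisor. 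I expect this last step — verifying that the parity genuinely jumps rather than merely staying locally constant — to be the main obstacle, with the two preceding identifications being essentially formal.
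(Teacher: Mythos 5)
Your proposal is correct, and its skeleton coincides with the paper's: the identification $\Gamma(E'_s)=\Gamma(\widetilde{C},L_s\otimes\pi^*\mathcal{L}_0)$ reducing local constancy to Proposition \ref{locconst}, and Proposition \ref{parity} to move between $P^+$ and $P^-$ via $s\mapsto s+[x-\sigma(x)]$, are exactly the paper's two ingredients. Where you genuinely differ is the parity-flip step, which the paper disposes of in one line by citing Mumford \cite[step 2, page 187]{m-tc}; that citation supplies a pointwise flip statement (the parity changes under $M\mapsto M(x-\sigma(x))$ for arbitrary $M$ and $x$), whose proof requires more than your two base-point dichotomies, since those alone cannot rule out $\delta_1=\delta_2=1$ in general. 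Your economization is to prove the flip only for one well-chosen pair: pick $s$ in a component with $h^0(M)>0$ (which exists because $\widetilde{\theta}$ is ample and the translated component is an irreducible projective subvariety of dimension $g-1\geq 1$, so the intersection is nonempty), then pick $x$ with $\sigma(x)\notin\mathrm{Bs}\,|M|$, forcing $\delta_1=0$ and $\delta_2=1$; by local constancy one example suffices. The dichotomies themselves are right: $\delta_2=1$ iff $\sigma(x)\notin\mathrm{Bs}\,|M(x)|$ is immediate from the second sequence, while $\delta_1=1$ iff $\sigma(x)\in\mathrm{Bs}\,|M|$ needs, besides Serre duality and $K_{\widetilde{C}}\cong\pi^*\mathcal{L}_0^{\otimes 2}$, the norm relation $\sigma^*M\otimes M\cong K_{\widetilde{C}}$ (equivalently $\sigma^*L_s\cong L_s^{-1}$ for $s\in\Ker(\pi_*)$), a characteristic-free fact you should state explicitly; you also use tacitly that $\sigma$ is fixed-point free, so $\sigma(x)\neq x$. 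What your route buys is a self-contained, manifestly characteristic-independent argument in place of an external citation; what it costs is the hypothesis $\dim P^+=g-1\geq 1$, harmless in this paper where $g=6$, and a conclusion (flip at one point) weaker than Mumford's, though entirely sufficient for the proposition.
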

\begin{proof}
This map is locally constant by Proposition \ref{locconst}, since $\Gamma(E'_s) = \Gamma(\widetilde{C}, L_s\otimes \pi^*\mathcal{L}_0)$ for all $s$. By \cite[step 2, page 187]{m-tc}, combined with Proposition \ref{parity}, this map takes different values on $P^+(k)$ and $P^-(k)$.
\end{proof}

We return to the study of the principal polarisation on the Prym variety. Recall from Proposition \ref{algequiv} that $\lambda_{\widetilde{\theta}}|_{P^+} = 2\lambda_{D}$ for some divisor $D$ on $P^+$. We wish to upgrade this statement to an equality of divisors.

\begin{proposition}[Riemann-Kempf]\label{kempf}
Let $C$ be a curve of genus $g$ with canonical theta divisor $\theta \subset J_{g-1}C$. For any point $\alpha \in J_{g-1}C(k)$, let $L_{\alpha}$ be the corresponding line bundle on $C$. Then
$$\dim(\Gamma(L_{\alpha})) = \mult_{\theta}(\alpha).$$
\end{proposition}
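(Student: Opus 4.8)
The plan is to realize $\theta$ as a determinantal divisor and then read off the multiplicity from the corank of the defining matrix. Since the multiplicity of a divisor at a point is insensitive to base field extension, I first reduce to the case $k = \overline{k}$ and set $r := \dim \HH^0(C, L_\alpha)$; when $r = 0$ the point $\alpha$ lies off $\theta$ and both sides vanish, so I may assume $r \geq 1$.

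First I would construct the determinantal presentation. Choose a Poincar\'e bundle $\mathcal{P}$ on $C \times J_{g-1}C$ and let $p: C \times J_{g-1}C \to J_{g-1}C$ be the projection. Because $\deg L_\beta = g-1$ forces $\chi(L_\beta) = 0$ for every $\beta$, standard cohomology-and-base-change (the two-term complex computing $\R p_* \mathcal{P}$, after a suitable twist) produces, at least in a neighbourhood of $\alpha$, a morphism $\phi: E^0 \to E^1$ of vector bundles of the same rank $n$ such that for all $\beta$ one has $\Ker(\phi_\beta) = \HH^0(L_\beta)$ and $\coker(\phi_\beta) = \HH^1(L_\beta)$. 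The theta divisor is then cut out by $\det\phi$, and since the generic line bundle of degree $g-1$ has no sections, $\det\phi$ is a nonzero section, so $\theta = V(\det\phi)$ as divisors near $\alpha$.

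Next I would invoke the local analysis of determinantal divisors. At $\alpha$ the matrix $\phi_\alpha$ has corank $r$, and the derivative of $\phi$ induces a symbol map
$$\mu_\alpha: T_\alpha J_{g-1}C \to \Hom\bigl(\Ker(\phi_\alpha), \coker(\phi_\alpha)\bigr), \qquad v \mapsto \overline{d\phi(v)}.$$
The general fact is that $\mult_\theta(\alpha) \geq r$ always, with the projectivized tangent cone contained in the locus $\{[v] : \det \mu_\alpha(v) = 0\}$, and that equality $\mult_\theta(\alpha) = r$ holds precisely when the degree-$r$ form $v \mapsto \det \mu_\alpha(v)$ is not identically zero. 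I would then identify $\mu_\alpha$, under the canonical isomorphisms $T_\alpha J_{g-1}C \cong \HH^1(C, \mathcal{O}_C)$, $\Ker(\phi_\alpha) = \HH^0(L_\alpha)$ and $\coker(\phi_\alpha) = \HH^1(L_\alpha)$, with the cup-product map $v \mapsto (s \mapsto v \cup s)$; by Serre duality this is dual to the multiplication of sections $\HH^0(L_\alpha) \otimes \HH^0(\omega_C \otimes L_\alpha^{-1}) \to \HH^0(\omega_C)$.

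The crux, and the step I expect to be hardest, is showing this degree-$r$ form is nonzero, equivalently that some $v \in \HH^1(C, \mathcal{O}_C)$ gives an isomorphism $\cup v: \HH^0(L_\alpha) \xrightarrow{\sim} \HH^1(L_\alpha)$. I would establish this through the base-point-free pencil trick applied along the complete linear system $|L_\alpha|$, following Kempf's argument, which shows that the multiplication map is nondegenerate and in fact computes the tangent cone as an honest determinantal variety; the genericity needed for the nonvanishing of $\det\mu_\alpha$ is tied to the birationality of the Abel--Jacobi map $C^{(g-1)} \to \theta$ onto its image. Throughout I would take care that each ingredient---cohomology and base change, the symbol-map computation, and the pencil trick---is valid in arbitrary characteristic, which is the whole point in this setting; alternatively, the entire statement may be cited from Kempf's original treatment, which already works over an algebraically closed field of any characteristic.
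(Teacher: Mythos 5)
Your proposal is correct and ultimately rests on the same foundation as the paper: the paper's entire proof is a citation of Kempf's characteristic-free treatment (\cite[Corollary, page 184]{gk}, with the remark that $\mathrm{index}(L_{\alpha}) = h^0(L_{\alpha})$), and your sketch simply unpacks that determinantal argument while deferring its genuinely hard step---the nonvanishing of $v \mapsto \det \mu_\alpha(v)$---to Kempf as well, even offering the citation as a fallback. The one small gloss is your claim that $\theta = V(\det\phi)$ \emph{as divisors}: a priori $V(\det\phi) = m\cdot\theta$ for some $m \geq 1$ since the support is the irreducible locus $W_{g-1}$, and $m = 1$ is exactly the $r = 1$ case of your own symbol argument, so the presentation should be ordered to avoid circularity (as Kempf's does).
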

\begin{proof}
This follows from \cite[Corollary, page 184]{gk} whose proof is independent of the characteristic of $k$. In the proof, note that $\mathrm{index}(L_{\alpha}) = h^0(L_{\alpha})$.
\end{proof}

\begin{definition}
Fixing a theta characteristic $\mathcal{L}_0$ on $C$ will also give us a theta characteristic $\pi^*\mathcal{L}_0$ on $\widetilde{C}$. Suppose that $\pi^*\mathcal{L}_0$ is an even theta characteristic. Define $P_0 := P^+ + [\pi^*\mathcal{L}_0] \subset J_{\widetilde{g}-1}\widetilde{C}$ and $P_1 := P^- + [\pi^*\mathcal{L}_0] \subset J_{\widetilde{g}-1}\widetilde{C}$. The $k$-points of these subschemes are given by
$$P_0(k) = \{[D] \in J_{\widetilde{g} -1}\widetilde{C}(k) : \pi_*D \cong K_C \textrm{ and } h^0(D) \textrm{ even} \},$$
$$P_1(k) = \{[D] \in J_{\widetilde{g} - 1}\widetilde{C}(k) : \pi_*D \cong K_C \textrm{ and } h^0(D) \textrm{ odd} \}.$$
\end{definition}

\begin{proposition}\label{theta}
Let $\pi: \widetilde{C} \to C$ be an \'etale double cover of curves. Let $\widetilde{\theta} \subset J_{10}\widetilde{C}$ be the canonical theta divisor. Then there is some effective divisor $D$ such that $\widetilde{\theta}|_{P_0} = 2D$. 
\end{proposition}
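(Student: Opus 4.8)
The plan is to prove the sharper statement that $\Xi := \widetilde{\theta}|_{P_0}$ is an effective divisor every one of whose multiplicities is even; setting $D := \tfrac{1}{2}\Xi$ then gives the result. Since $P^+$ is geometrically integral and the ``half'' of an effective divisor is unique whenever it exists, the decomposition $\Xi = 2D$ descends from $\overline{k}$ to $k$ by Galois invariance, so I may assume $k = \overline{k}$ and argue with geometric points throughout. A preliminary point is that $\Xi$ is a genuine divisor, i.e. $P_0 \not\subset \widetilde{\theta}$. By Proposition \ref{lc} the function $h^0$ takes only even values on $P_0$, so the support of $\widetilde{\theta}\cap P_0$ is contained in $\{x \in P_0 : h^0(L_x) \geq 2\} = P_0 \cap \Sing(\widetilde{\theta})$ (the last equality by Riemann--Kempf, Proposition \ref{kempf}); one then checks that the general $L \in P_0$ satisfies $h^0(L) = 0$, which guarantees that this locus is proper and $\Xi$ is a bona fide effective divisor.

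Next I would compute the multiplicity of $\Xi$ along each irreducible component. Fix a component $Z$ and a general closed point $x \in Z$, so that $\mult_Z(\Xi) = \mult_x(\Xi)$ is exactly the coefficient of $Z$ in $\Xi$. By Riemann--Kempf (Proposition \ref{kempf}) the multiplicity of $\widetilde{\theta}$ at $x$, computed inside $J_{10}\widetilde{C}$, equals $h^0(L_x)$, and this is even by Proposition \ref{lc} because $P_0$ is the even component. The aim is to show that restriction does not alter this number, i.e.
$$\mult_Z(\Xi) = \mult_x(\widetilde{\theta}) = h^0(L_x),$$
which is even; summing over components then yields $\Xi = 2D$ with $D = \sum_Z \tfrac{1}{2}\mult_Z(\Xi)\,Z$ effective, as desired.

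The main obstacle is precisely this last equality. Restricting a divisor to a smooth subvariety can only raise the order of vanishing: if $f$ is a local equation of $\widetilde{\theta}$ at $x$ with tangent cone $\{f_m = 0\} \subset T_x J_{10}\widetilde{C}$ of degree $m = h^0(L_x)$, then $\mult_Z(\Xi) = m$ provided $f_m$ does not vanish on $T_x P_0$, but it could strictly exceed $m$ — and thereby spoil the parity — if $T_x P_0 \subset \{f_m = 0\}$. To exclude this for general $x \in Z$ I would invoke the explicit form of the Riemann--Kempf singularity theorem, which identifies the tangent cone to $\widetilde{\theta}$ at $L_x$ with the determinantal cone attached to the multiplication map $H^0(L_x) \otimes H^0(K_{\widetilde{C}} \otimes L_x^{-1}) \to H^0(K_{\widetilde{C}})$, and then verify that $T_x P_0$ is not contained in this cone. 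Here the geometry of the double cover enters through the relation $\sigma^* L_x \cong K_{\widetilde{C}} \otimes L_x^{-1}$ valid for $L_x \in P_0$ (compatibly with Proposition \ref{parity}), together with the description of $T_x P_0$ as the tangent space to the Prym; I expect the required transversality to be the step demanding the most care, especially as the covering involution $\sigma$ acts unipotently rather than semisimply in characteristic $2$.
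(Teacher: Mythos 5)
Your strategy coincides with the paper's own proof of Proposition \ref{theta}: use Riemann--Kempf (Proposition \ref{kempf}) plus Wirtinger parity (Proposition \ref{lc}) to see that the ambient multiplicity $\mult_{\widetilde{\theta}}(x)=h^0(L_x)$ is even at every point of $\widetilde{\theta}\cap P_0$, then pass to generic points of components. But you have correctly isolated the one step this leaves open, and it is exactly where your proposal stops being a proof: you need $\mult_Z(\widetilde{\theta}|_{P_0})=\mult_x(\widetilde{\theta})$ at a general point $x$ of each component $Z$, which holds only when $T_xP_0$ is not contained in the tangent cone of $\widetilde{\theta}$ at $x$, and you explicitly defer this transversality (``the step demanding the most care'') without proving it. This is a genuine gap, not a routine verification, because parity of the ambient multiplicity alone cannot close it: in $\mathbb{A}^3$ with coordinates $(u,v,w)$, take $f=u^3+w^2$ and $P=\{w=0\}$; every point of $V(f)\cap P$ has $\mult_x V(f)=2$, even, yet $f|_P=u^3$ has odd coefficient $3$ along its unique component. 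The failure there is precisely the tangency mode you describe (the tangent cone $\{w^2=0\}$ contains $T_xP$), so some input beyond evenness of $h^0$ is indispensable.

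Moreover, the tangency you must exclude is not a pathology that can be waved away: points of $P_0$ with $T_{x,P_0}$ inside the tangent cone are exactly the ``special singularities'' of Section 4.2, whose existence is the reason for Lemma \ref{salvage} and Corollary \ref{mumSS}. To rule them out at the generic point of \emph{every} component of $\widetilde{\theta}|_{P_0}$ --- for an arbitrary \'etale double cover with $g(C)=6$, including hyperelliptic and trigonal $C$ --- one needs a classification of such points (essentially Corollary \ref{mumSS}, which in characteristic $2$ already requires the symmetry argument of Lemma \ref{salvage}) together with a dimension count showing that the special locus cannot contain a divisor of $P_0$; none of this appears in your proposal, and it is material the paper develops only \emph{after} Proposition \ref{theta}. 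In fairness, the paper's own write-up is terse at the identical spot --- it passes from evenness of $\mult_{\widetilde{\theta}}$ at generic points directly to $\widetilde{\theta}|_{P_0}=2D$, tacitly identifying restricted and ambient multiplicities --- so you have located the real content of the statement; but your proposal, as written, does not supply it either.
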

\begin{proof}
The divisor $\widetilde{\theta}|_{P_0}$ on $P_0$ is effective, and its $k$-points are given by
$$\widetilde{\theta}|_{P_0}(k) = \{[D] \in J_{\widetilde{g} - 1}\widetilde{C}(k) : \pi_*D \cong K_C \textrm{ and } h^0(D) \geq 2 \textrm{ even} \}$$
due to Proposition \ref{kempf}. This means that $\mult_{\widetilde{\theta}}(x)$ is even for all $x \in \widetilde{\theta}|_{P_0}$. In particular, this holds for all generic points $x$ of $\widetilde{\theta}|_{P_0}$, and so indeed $\widetilde{\theta}|_{P_0} = 2D$ for some effective divisor $D$ on $P_0$.
\end{proof}

\begin{definition}\label{xi}
Define the \emph{theta divisor} $\Xi \subset P^+$ as $D-\pi^*\mathcal{L}_0$. Then $\lambda_{\Xi}$ is the principal polarisation $\lambda_D$ on $P^+$ constructed in Proposition \ref{algequiv}.
\end{definition}

\begin{corollary}\label{descSing}
The singularities of $\Xi + \pi^*\mathcal{L}_0$ are given by 
\begin{align*}\{x \in P_0 : \mult_{\widetilde{\theta}}(x) \geq 4 \} \cup \{ & x \in P_0 : \mult_{\widetilde{\theta}}(x) = 2 \textrm{ and } \\ & T_{x,P_0} \subset \textrm{ tangent cone to $\widetilde{\theta}$ at $x$}\}.\end{align*}
\end{corollary}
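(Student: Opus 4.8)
The plan is to reduce the whole statement to an elementary comparison of multiplicities. By Definition \ref{xi} we have $\Xi + \pi^*\mathcal{L}_0 = D$, and by Proposition \ref{theta} this $D$ is an effective divisor on the smooth variety $P_0$ satisfying $2D = \widetilde{\theta}|_{P_0}$ as divisors. Since $P_0$ is smooth, an effective Cartier divisor is singular at $x$ precisely when $\mult_x \geq 2$; so I would first record the identity
$$\Sing(D) = \{ x \in P_0 : \mult_x(D) \geq 2 \} = \{ x \in P_0 : \mult_x(\widetilde{\theta}|_{P_0}) \geq 4 \},$$
using $\mult_x(\widetilde{\theta}|_{P_0}) = 2\,\mult_x(D)$, which is immediate from $2D = \widetilde{\theta}|_{P_0}$ read off on a local chart.

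The next step is to compare $\mult_x(\widetilde{\theta}|_{P_0})$ with the multiplicity $\mult_x(\widetilde{\theta})$ of the ambient theta divisor. Writing $\widetilde{\theta}$ locally near $x$ as the vanishing of a function $\Theta$ whose leading homogeneous term $\Theta_m$ has degree $m = \mult_x(\widetilde{\theta})$ on the tangent space to the Jacobian at $x$, I would observe that the lowest-degree part of $\Theta|_{P_0}$, expressed in local coordinates on $P_0$, is the restriction $\Theta_m|_{T_{x,P_0}}$. Denoting by $\mathcal{C}_x = \{\Theta_m = 0\}$ the tangent cone to $\widetilde{\theta}$ at $x$, this yields
$$\mult_x(\widetilde{\theta}|_{P_0}) = \mult_x(\widetilde{\theta}) \ \text{ if } \ T_{x,P_0} \not\subset \mathcal{C}_x, \qquad \mult_x(\widetilde{\theta}|_{P_0}) > \mult_x(\widetilde{\theta}) \ \text{ if } \ T_{x,P_0} \subset \mathcal{C}_x,$$
i.e.\ restricting a function to a smooth subvariety preserves its order of vanishing unless the tangent space of the subvariety is swallowed by the tangent cone.

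Finally I would run the case analysis, where the parity inputs are decisive. By Riemann--Kempf (Proposition \ref{kempf}) together with the description of $P_0$ as the locus of line bundles with even $h^0$, the quantity $\mult_x(\widetilde{\theta}) = h^0(L_x)$ is \emph{even} for every $x \in P_0$, hence lies in $\{0,2,4,\dots\}$; and $\mult_x(\widetilde{\theta}|_{P_0}) = 2\,\mult_x(D)$ is automatically even. If $\mult_x(\widetilde{\theta}) \geq 4$, then $\mult_x(\widetilde{\theta}|_{P_0}) \geq 4$ and $x$ is singular, giving the first set. If $\mult_x(\widetilde{\theta}) = 2$ with $T_{x,P_0} \not\subset \mathcal{C}_x$, then $\mult_x(\widetilde{\theta}|_{P_0}) = 2$, so $\mult_x(D) = 1$ and $D$ is smooth there. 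If $\mult_x(\widetilde{\theta}) = 2$ with $T_{x,P_0} \subset \mathcal{C}_x$, then $\mult_x(\widetilde{\theta}|_{P_0}) > 2$, and being even it is at least $4$, so $x$ is singular, giving the second set. Assembling the cases produces exactly the union in the statement.

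The step I expect to be the main obstacle is the second one: justifying cleanly that the leading term of $\Theta|_{P_0}$ is the restriction of $\Theta_m$, so that containment of $T_{x,P_0}$ in the tangent cone is precisely the obstruction to multiplicity being preserved; and, in the boundary case $\mult_x(\widetilde{\theta}) = 2$, using the evenness forced by $2D = \widetilde{\theta}|_{P_0}$ to upgrade the strict inequality $\mult_x(\widetilde{\theta}|_{P_0}) > 2$ to $\mult_x(\widetilde{\theta}|_{P_0}) \geq 4$. The evenness supplied by Proposition \ref{theta} is exactly what makes this dichotomy sharp.
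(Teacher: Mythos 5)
Your proof is correct and takes essentially the approach the paper intends: the paper states this corollary without proof, as an immediate consequence of Proposition \ref{theta} ($\widetilde{\theta}|_{P_0} = 2D$), Riemann--Kempf (Proposition \ref{kempf}), and the definition of $P_0$ as the even-parity component. Your multiplicity comparison (restriction to the smooth subvariety $P_0$ preserves $\mult_x$ exactly when $T_{x,P_0}$ is not contained in the tangent cone) together with the parity-driven case analysis is precisely the standard argument filling in those omitted details.
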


\section{Cubic threefolds}
In this section we study cubic threefolds over an algebraically closed field $k$. We classify them into two categories, based on whether or not they contain a good line. We show that a cubic threefold has no good lines if and only if they are Hermitian and $\textrm{char}(k) = 2$. It turns out that there is only one isomorphism class of Hermitian cubic threefolds, represented by the Fermat cubic threefold. For non-Hermitian cubic threefolds, we can construct a conic bundle and a respective Prym variety. In this case we show that the origin is the sole singularity of the theta divisor on the Prym variety. 

\subsection{Lines on cubic threefolds}
In this section we study the lines on a cubic threefold. We explain how to give the structure of a conic bundle to cubic threefolds with a good line, after some birational modification.

Let $X = \mathbb{V}_+(f)$ be a smooth cubic threefold. Let $F(X)$ be the Fano scheme of lines on $X$, which is a closed subscheme of the Grassmannian $\mathrm{Gr}(4,2)$.

\begin{proposition}\label{Fsurface}
$F(X)$ is smooth, connected and $2$-dimensional.
\end{proposition}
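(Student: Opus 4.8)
The plan is to realise $F(X)$ as the zero locus of a section of a rank $4$ bundle, compute its tangent spaces via normal bundles, and obtain connectedness by lifting to characteristic $0$. Write $V=H^0(\mathbb{P}^4,\mathcal{O}(1))^{\vee}$, so that lines in $\mathbb{P}^4$ are parametrised by $G:=\mathrm{Gr}(4,2)$, a smooth projective variety of dimension $6$ carrying a tautological rank $2$ subbundle $S$ whose fibre over $[\ell]$ is the plane $\widehat{\ell}\subset V$. The cubic form $f\in\Sym^3 V^{\vee}$ restricts to a global section $\sigma_f$ of the rank $4$ bundle $E:=\Sym^3 S^{\vee}$, and by construction $F(X)$ is the zero locus of $\sigma_f$. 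Since $G$ is smooth of dimension $6$ and $E$ has rank $4$, Krull's height theorem shows every irreducible component of $F(X)$ has dimension at least $6-4=2$. For a line $\ell\subset X$, the identification $T_{[\ell]}G=H^0(\ell,N_{\ell/\mathbb{P}^4})$ together with $d\sigma_f$ yields $T_{[\ell]}F(X)=\ker\bigl(H^0(N_{\ell/\mathbb{P}^4})\to H^0(N_{X/\mathbb{P}^4}|_{\ell})\bigr)=H^0(\ell,N_{\ell/X})$, and this is valid in every characteristic.

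\textbf{The normal bundle and smoothness.} I would pin down $N_{\ell/X}$ as follows. Adjunction with $\omega_X=\mathcal{O}_X(-2)$ (valid in all characteristics) gives $\det N_{\ell/X}=\omega_{\ell}\otimes(\omega_X|_{\ell})^{\vee}=\mathcal{O}(-2)\otimes\mathcal{O}(2)=\mathcal{O}$, so $\deg N_{\ell/X}=0$, and $N_{\ell/X}$ is locally free of rank $2$ because $\ell$ and $X$ are smooth. The conormal sequence of $\ell\subset X\subset\mathbb{P}^4$ identifies $N_{\ell/X}$ with the kernel of a map $\phi:\mathcal{O}_{\mathbb{P}^1}(1)^{\oplus 3}\to\mathcal{O}_{\mathbb{P}^1}(3)$, using $N_{\ell/\mathbb{P}^4}\cong\mathcal{O}(1)^{\oplus 3}$ and $N_{X/\mathbb{P}^4}|_{\ell}\cong\mathcal{O}(3)$. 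In particular $N_{\ell/X}$ embeds into $\mathcal{O}_{\mathbb{P}^1}(1)^{\oplus 3}$, so writing $N_{\ell/X}\cong\mathcal{O}(a)\oplus\mathcal{O}(-a)$ with $a\geq 0$, the summand $\mathcal{O}(a)$ admits a nonzero map to $\mathcal{O}(1)^{\oplus 3}$ and hence $a\leq 1$. Thus $N_{\ell/X}$ is either $\mathcal{O}\oplus\mathcal{O}$ or $\mathcal{O}(1)\oplus\mathcal{O}(-1)$, and in both cases $h^0(N_{\ell/X})=2$. Therefore $\dim T_{[\ell]}F(X)=2$ at every point, which combined with the lower bound forces $F(X)$ to be smooth of pure dimension $2$.

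\textbf{Connectedness.} For nonemptiness and connectedness I would lift to characteristic $0$. Choose a complete DVR $R$ with residue field $k$ and fraction field of characteristic $0$, and lift the cubic form defining $X$ coefficientwise, producing a family $\mathcal{X}\to\Spec R$ with special fibre $X$. Since smoothness is an open condition and the only open subset of $\Spec R$ containing the closed point is all of $\Spec R$, the family $\mathcal{X}$ is smooth over $R$; in particular its generic fibre is a smooth cubic threefold in characteristic $0$. The relative Fano scheme $\mathcal{F}\to\Spec R$ is then smooth and proper with fibres of pure dimension $2$ by the previous steps. For such a morphism the number of geometric connected components of the fibres is locally constant on the base, via the Stein factorisation (finite étale here), hence constant because $\Spec R$ is connected. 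As the generic fibre $F(\mathcal{X}_{\overline{\eta}})$ is nonempty and connected by the classical characteristic-$0$ theory, the same holds for $F(X)$.

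\textbf{Main obstacle.} The smoothness and dimension count are uniform in all characteristics once $N_{\ell/X}$ is determined, so the genuine difficulty is connectedness in characteristic $2$. A direct approach through the Koszul resolution of the structure sheaf of $F(X)$ would require Bott-type vanishing on $G$, which is unavailable in positive characteristic; moreover $\Sym^3 S^{\vee}$ is not ample (its restriction to the plane of lines through a fixed point acquires a trivial quotient, since there $S^{\vee}$ does), so the ample-bundle connectedness theorems do not apply either. Passing to characteristic $0$ by a lift and invoking constancy of $\pi_0$ in the smooth proper family circumvents both issues, which is why I expect the connectedness step to be the crux.
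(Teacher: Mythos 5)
Your proof is correct, but it takes a genuinely different route from the paper's, which disposes of Proposition \ref{Fsurface} purely by citation to Altman--Kleiman: \cite[Theorem 1.3]{altman} for the dimension, \cite[Corollary 1.12]{altman} for smoothness, and \cite[Proposition 1.15(i)]{altman} for connectedness, all established there in arbitrary characteristic. You instead reprove the first two statements from scratch --- the presentation of $F(X)$ as the zero scheme of a section of $\Sym^3 S^{\vee}$ on the $6$-dimensional Grassmannian, the identification $T_{[\ell]}F(X) = \HH^0(\ell, N_{\ell/X})$, and the characteristic-free dichotomy $N_{\ell/X}\cong\mathcal{O}\oplus\mathcal{O}$ or $\mathcal{O}(1)\oplus\mathcal{O}(-1)$ forcing $h^0(N_{\ell/X})=2$, so that the tangent-space dimension matches the lower bound from Krull's theorem --- and you obtain connectedness by lifting to characteristic $0$ over a complete DVR and invoking constancy of the number of geometric connected components in a smooth proper family via the finite \'etale Stein factorization. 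What your route buys is transparency about why characteristic $2$ is harmless here, very much in the spirit of the paper's other characteristic-$2$ arguments; what it costs is that it is still not self-contained, since connectedness of the char-$0$ Fano surface is quoted as classical, so one citation has been traded for another (and Altman--Kleiman prove connectedness directly in positive characteristic, so the lifting detour is in principle avoidable). Two points in your write-up deserve an explicit line of justification. First, smoothness of $\mathcal{F}\to\Spec R$ does not follow formally from smoothness of its fibres: you also need flatness, which holds because $\mathcal{F}$ is cut out in the relative Grassmannian (whose fibres are Cohen--Macaulay) by $4$ equations locally, with every fibre of the expected codimension $4$, so the slicing criterion applies. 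Second, your dimension bound and smoothness claims are vacuous until $F(X)\neq\emptyset$ is known; this is ultimately supplied by the same Stein-factorization argument (the degree of the finite \'etale factor is $1$ on the generic fibre, hence also on the special fibre), so the logical order is fine but should be stated to avoid the appearance of circularity.
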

\begin{proof}
See \cite[Theorem 1.3]{altman} for the fact that $F(X)$ is $2$-dimensional, and \cite[Corollary 1.12]{altman} for smoothness. For connectedness see \cite[Proposition 1.15(i)]{altman}.
\end{proof}

Since $F(X)(k) \neq \emptyset$, there must be a line on $X$. We can assume, after linear change of coordinates, that $X$ contains the line $l = \mathbb{V}_+(x_0,x_1,x_2)$. In this case we may write
$$f(x_0,x_1,x_2,x_3,x_4) = x_3^2L_0 + x_3x_4L_1 + x_4^2L_2 + x_3Q_0 + x_4Q_1 + R$$
for linear $L_0,L_1,L_2$, quadratic $Q_0, Q_1$ and cubic $R$ in $x_0,x_1,x_2$.

\begin{definition}
Consider the following two subschemes of $F(X)$
$$F_0(X) := \{ l \in F  :  \textrm{there exists a } 2\textrm{-plane } P\subset \mathbb{P}^4_{k(l)}  \textrm{ s.t.} P\cap X = 2l \cup l' \textrm{ for some } l'\},$$
$$F_1(X) := \{ l \in F  :  \textrm{there exists a } 2\textrm{-plane } P\subset \mathbb{P}^4_{k(l)}  \textrm{ s.t.} P\cap X = 2l' \cup l \textrm{ for some } l'\}.$$
we call a line $l \in F(X)$ a \emph{good line} if $l \not \in F_0(X) \cup F_1(X)$.
\end{definition}

\begin{lemma}\label{F0dimension}
The schemes $F_0(X)$ and $F_1(X)$ are Zariski closed subschemes of $F(X)$. Moreover, $\dim(F_0(X)) \geq \dim(F_1(X))$.
\end{lemma}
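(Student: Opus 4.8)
The plan is to prove the two assertions separately: closedness by a properness argument over the family of planes through a line, and the dimension bound by recognising $F_0(X)$ as a determinantal degeneracy locus.

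\textbf{Closedness.} First I would work on the $\mathbb{P}^2$-bundle $\pi:\mathbb{P}(\mathcal{Q})\to F(X)$ whose fibre over a line $l=\mathbb{P}(W)$ is the space of $2$-planes $P\supset l$, where $\mathcal{S}\subset V\otimes\mathcal{O}_{F(X)}$ is the tautological rank $2$ subbundle ($V=k^5$) and $\mathcal{Q}=(V\otimes\mathcal{O}_{F(X)})/\mathcal{S}$. Restricting $f$ to the universal plane and dividing out the equation of the universal line produces a universal residual conic, a quadratic form on a rank $3$ bundle over $\mathbb{P}(\mathcal{Q})$. Inside $\mathbb{P}(\mathcal{Q})$ I would define $Z_0=\{(l,P):\text{residual conic contains }l\}$ and $Z_1=\{(l,P):\text{residual conic is a double line}\}$. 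The first is cut out by the vanishing of the restriction of the residual conic to $l$, and the second by the condition that the conic be non-reduced; here one must phrase ``double line'' in characteristic $2$ via the associated quadric being non-reduced rather than via the rank of a symmetric matrix, as emphasised earlier in the paper. With this care both $Z_0,Z_1$ are closed, and since $\pi$ is proper with $F_0(X)=\pi(Z_0)$ and $F_1(X)=\pi(Z_1)$, both images are Zariski closed.

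\textbf{A determinantal description of $F_0$.} Writing $f=x_3^2L_0+x_3x_4L_1+x_4^2L_2+x_3Q_0+x_4Q_1+R$ in coordinates adapted to $l$, the plane $P_p$ through $l$ in direction $p\in\mathbb{P}(V/W)$ meets $X$ in $l$ together with an explicit residual conic, and $l\in F_0(X)$ precisely when some $p$ satisfies $L_0(p)=L_1(p)=L_2(p)=0$, i.e. when the forms $L_i\in(V/W)^\vee$ are linearly dependent. Globalising, the bidegree $(2,1)$ part of $f$ defines a morphism of rank $3$ bundles $\phi:\Sym^2\mathcal{S}^\vee\to\mathcal{Q}^\vee$ on $F(X)$, sending $\{x_3^2,x_3x_4,x_4^2\}$ to $\{L_0,L_1,L_2\}$, and the above identifies $F_0(X)$ with the vanishing locus of $\det\phi$, a section of a line bundle. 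Hence $F_0(X)$ is either all of $F(X)$, a divisor, or empty, so $\dim F_0(X)\le 1$ unless $F_0(X)=F(X)$. Moreover, if $l\in F_1(X)$ is witnessed by $P\cap X=2a\cup l$, the same plane witnesses $a\in F_0(X)$; in particular $F_1(X)=\emptyset$ whenever $F_0(X)=\emptyset$.

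\textbf{The inequality.} If $F_0(X)=F(X)$ there is nothing to prove, since $\dim F_1(X)\le\dim F(X)=2$, and the empty case is covered above, so I assume $F_0(X)$ is a proper divisor. I would use the correspondence $S=\{P:P\cap X \text{ has a double line}\}$ with its two maps $\alpha:S\to F_0(X)$ (recording the doubled line $a$) and $\beta:S\to F_1(X)$ (recording the simple line $b$), the latter surjective, so $\dim F_1(X)\le\dim S$. Over $a\in F_0(X)$ the fibre of $\alpha$ is $\{L_0^a=L_1^a=L_2^a=0\}$, of dimension $2-\rank\phi_a$; thus over the locus where $\rank\phi_a=2$ the map $\alpha$ is quasi-finite, so the corresponding part of $S$ has dimension $\le\dim F_0(X)=1$ and $\dim F_1(X)\le\dim F_0(X)$ follows, \emph{provided} the deeper degeneracy locus $\{\rank\phi\le 1\}$ does not produce a component of $S$ contracted by $\alpha$ yet spread out by $\beta$.

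\textbf{Main obstacle.} The crux is therefore controlling $\{\rank\phi\le 1\}\subset F(X)$. As a degeneracy locus of a $3\times 3$ bundle map its expected codimension is $4$, so on the surface $F(X)$ it is generically empty and the inequality is immediate; the real work is to bound it for special $X$. A line $a$ with $\rank\phi_a\le 1$ forces $x_3^2L_0+x_3x_4L_1+x_4^2L_2=M\cdot q(x_3,x_4)$ for a single linear form $M$, and I would invoke smoothness of $X$: the Jacobian criterion along $a$ rules out $\phi_a=0$ outright and should confine the rank $\le 1$ locus to be finite, so that $\alpha^{-1}(\{\rank\phi\le 1\})$ has dimension $\le 1=\dim F_0(X)$ and cannot enlarge $F_1(X)$ beyond $F_0(X)$ under $\beta$. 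Extracting this bound from the smoothness of $X$ is the step I expect to require the most care.
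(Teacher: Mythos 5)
Your strategy is genuinely different from the paper's and its skeleton is sound, but it is incomplete at exactly the point you flagged, so let me close that gap first. You need the degeneracy locus $\{\rank\phi\le 1\}\subset F(X)$ to be at most finite; in fact smoothness of $X$ forces it to be \emph{empty}, i.e. $\rank\phi_a = 2$ for every line $a\subset X$. Suppose $x_3^2L_0+x_3x_4L_1+x_4^2L_2=M\cdot q(x_3,x_4)$ for a single linear form $M=m_0x_0+m_1x_1+m_2x_2$ and a binary quadratic form $q$ (allowing $q=0$, which covers rank $0$). Restrict the five partials of $f$ to the line $a=\mathbb{V}_+(x_0,x_1,x_2)$: every term of every partial of $x_3Q_0+x_4Q_1+R$ contains a form of positive degree in $x_0,x_1,x_2$ and so vanishes identically on $a$, while $\partial_{x_3}(Mq)$ and $\partial_{x_4}(Mq)$ vanish on $a$ because $M$ does; what survives is $\partial_{x_i}f\,\big|_a=m_i\,q(x_3,x_4)$ for $i=0,1,2$. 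Since $k$ is algebraically closed (the standing assumption of this section), $q$ has a zero $[u_0:v_0]$ on $\mathbb{P}^1$ (any point if $q=0$), and then $[0:0:0:u_0:v_0]$ is a singular point of $X$, a contradiction. This closes your gap with room to spare: $\alpha$ then has single-point fibres, so $\dim S=\dim F_0(X)$, hence $\dim F_1(X)\le\dim S = \dim F_0(X)$, and your case division on whether $F_0(X)$ is a divisor, all of $F(X)$, or empty becomes unnecessary.

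For comparison, the paper's proof is much shorter: it cites \cite[Lemma 1.4]{murre2} for the closedness of $F_0(X)$, then defines a map $F_0(X)\to F_1(X)$ sending $l$ to the residual line of $P\cap X$, where $P$ is \emph{the} plane tangent to $X$ along $l$; this map is algebraic and surjective, which yields simultaneously the closedness of $F_1(X)$ and the inequality $\dim F_0(X)\ge\dim F_1(X)$. Your correspondence $S$ with its two projections is the same mechanism in disguise -- the paper's map is $\beta\circ\alpha^{-1}$ once one knows $\alpha$ is bijective -- but note that the paper's assertion that $P$ is unique, stated there without proof, is precisely the statement $\rank\phi_a=2$ above, so the computation that completes your argument is also the one implicitly underlying the paper's. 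What your route buys is self-containedness: a direct properness proof of closedness (with the correct characteristic-$2$ formulation of ``double line'' via non-reducedness rather than the rank of a symmetric matrix), and the identification of $F_0(X)$ as the zero locus of $\det\phi$, a section of a line bundle on $F(X)$, which globalizes Remark \ref{goodlinebasis} instead of citing Murre.
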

\begin{proof}
See \cite[Lemma 1.4]{murre2} for the fact that $F_0(X) \subset F(X)$ is Zariski closed. The proof is independent of the choice of characteristic.

There is a surjective map $F_0(X) \to F_1(X)$ sending a line $l$ to the residual line in the intersection $P \cap X$ when $P$ is the tangent plane to $X$ at $l$. This is well-defined since $P$ is unique. This map is algebraic and therefore its image $F_1(X)$ is Zariski closed, and $\dim(F_0(X)) \geq \dim(F_1(X))$.
\end{proof}

\begin{remark}\label{goodlinebasis}
Suppose that the line $\mathbb{V}_+(x_0,x_1,x_2)$ on $X$ is not in $F_0(X)(k)$. From the proof of \cite[Lemma 1.4]{murre2}, this is equivalent to $L_0,L_1,L_2$ being linearly independent in $x_0,x_1,x_2$. In this case, by linear coordinate change we may assume that $L_i = x_i$ for all $i$. 
\end{remark}

\begin{proposition}\label{formulaDelta}
Suppose that $l = \mathbb{V}_+(x_0,x_1,x_2)$ is a good line on $X$ and write
$$f(x_0,x_1,x_2,u,v) = u^2x_0 + uvx_1 + v^2x_2 + uQ_0 + vQ_1 + R$$
as in Remark \ref{goodlinebasis}. Let $\pi : X\backslash l \to \mathbb{P}^2_k$ be the induced projection onto $\mathbb{V}_+(u,v)$. This endows $\Bl_l(X)$ with a conic bundle structure. Let $[y_0: y_1: y_2]$ be coordinates on $\mathbb{P}^2_k$. Then the discriminant curve $C$ is a smooth degree $5$ plane curve.
\end{proposition}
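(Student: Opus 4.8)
The plan is to make the conic bundle explicit, compute the discriminant curve by hand in characteristic $2$ (where the determinant of a symmetric matrix no longer detects singular conics), and then deduce its smoothness from the smoothness of $X$ via a direct comparison of partial derivatives.

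First I record the conic bundle. A direction $[y_0:y_1:y_2]\in\mathbb{P}^2$ spans together with $l$ the plane $P_y=\{[sy_0:sy_1:sy_2:u:v]\}$, with homogeneous coordinates $[s:u:v]$. Substituting into $f$ and using that $Q_0,Q_1,R$ are homogeneous in $x_0,x_1,x_2$ gives $f = s\,q_y(s,u,v)$, where
$$q_y = R\,s^2 + Q_0\,su + Q_1\,sv + y_0u^2 + y_1uv + y_2v^2.$$
The factor $s$ cuts out $l$ and $q_y$ is the residual conic, which exhibits the conic bundle structure on $\Bl_l(X)\to\mathbb{P}^2$. Since $\chr(k)=2$, the bilinear form associated to $q_y$ has an alternating $3\times3$ matrix, which is of odd size and hence degenerate; its one–dimensional radical (the \emph{nucleus}) is the point $[s:u:v]=[y_1:Q_1:Q_0]$, and $q_y$ is singular exactly when this nucleus lies on it. Evaluating $q_y$ at the nucleus yields the discriminant
$$\Delta = y_0Q_1^2 + y_2Q_0^2 + y_1Q_0Q_1 + y_1^2R,$$
which is homogeneous of degree $5$, so $C=\mathbb{V}_+(\Delta)$ is a plane quintic.

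For smoothness, I would lift the nucleus to the point $\Phi(y)=[y_0y_1:y_1^2:y_1y_2:Q_1:Q_0]\in\mathbb{P}^4$, which satisfies $f(\Phi(y))=y_1\Delta$ and hence lies on $X$ whenever $y\in C$. The heart of the argument is a direct computation, using $\chr(k)=2$ and the homogeneity of $Q_0,Q_1,R$, establishing the identities
$$\tfrac{\partial f}{\partial u}\big|_{\Phi(y)}=\tfrac{\partial f}{\partial v}\big|_{\Phi(y)}=0 \quad\text{identically, and}\quad \tfrac{\partial f}{\partial x_i}\big|_{\Phi(y)}=\tfrac{\partial \Delta}{\partial y_i}\big|_{y}\ \ (i=0,1,2).$$
Thus a singular point $y\in C$ produces a point $\Phi(y)\in X$ at which every partial of $f$ vanishes, i.e.\ a singular point of $X$, contradicting the smoothness hypothesis. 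The one gap is that $\Phi$ is undefined precisely on the locus $\{y_1=Q_0=Q_1=0\}$, where $q_y$ degenerates to a double line; this locus is nonempty if and only if $l\in F_1(X)$. Hence the assumption that $l$ is good, together with Remark \ref{goodlinebasis} (which uses $l\notin F_0(X)$ to normalise $f$ into the displayed form), guarantees that $\Phi$ is a genuine morphism $\mathbb{P}^2\to\mathbb{P}^4$ and the contradiction is complete, so $\Sing(C)=\emptyset$.

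I expect the main obstacle to be the characteristic $2$ discriminant computation itself — replacing the vanishing of a symmetric determinant by the nucleus construction and checking that $\Delta$ genuinely has degree $5$ — together with verifying the partial-derivative identities for $\Phi$ and correctly matching the indeterminacy locus of $\Phi$ with the bad-line condition $l\in F_1(X)$. Once these identities are in place the Jacobian criterion for $C$ reduces cleanly to that for $X$.
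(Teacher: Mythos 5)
Your proposal is correct and follows essentially the same route as the paper: the paper likewise writes down the residual conic $G_T$, asserts the discriminant $H = y_0Q_1^2 + y_1^2R + y_1Q_0Q_1 + y_2Q_0^2$, and lifts a putative singular point $y \in C$ to the very same point $p = (y_1y_0,\,y_1^2,\,y_1y_2,\,Q_1,\,Q_0) = \Phi(y)$, comparing the Jacobian of $f$ at $p$ with the Jacobian of $H$ at $y$ to contradict smoothness of $X$. The only differences are organizational — the paper asserts the discriminant formula rather than deriving it from the nucleus, and it case-splits ($y_1 \neq 0$ gives a singular point of $X$; $y_1 = 0$ plus $JH = 0$ forces $Q_0 = Q_1 = 0$, contradicting $l \notin F_1(X)$) where you instead invoke $l \notin F_1(X)$ once to make $\Phi$ everywhere defined — but the substance is identical.
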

\begin{proof}
If $\textrm{char}(k) \neq 2$ then this result can be found in \cite[Proposition 1.22]{murre2}. Suppose that $\textrm{char}(k) = 2$.

    Consider the $2$-planes passing through $l$. They are parametrized by coordinates $T = [y_0:y_1:y_2]$ corresponding to a plane $P_T$ spanned by $T$ and $l$. Then $P_T \cap X = K_T \cup l$ where $K_T$ is the conic given by the vanishing of 
$$G_T(u,v,t) = u^2y_0 + uvy_1 + v^2y_2 + utQ_0(y_0,y_1,y_2) + vtQ_1(y_0,y_1,y_2) + t^2R(y_0,y_1,y_2).$$
The discriminant of $G_T(u,v,t)$ is given by
$$H(y_0, y_1, y_2) = y_0 Q_1^2 + y_1^2R + y_1 Q_0Q_1 + y_2 Q_0^2.$$
Since $K_T$ is the compactification of the fiber $\pi^{-1}(T)$, then $C := \mathbb{V}_+(H)$ is the discriminant locus of the conic bundle. The Jacobian matrix of $H$ is 
$$JH = \begin{pmatrix}
Q_1^2 + y_1 (\partial_{y_0}Q_0)Q_1 + y_1 Q_0(\partial_{y_0}Q_1) + y_1^2(\partial_{y_0}R) \\
Q_0Q_1 + y_1 (\partial_{y_1}Q_0)Q_1 + y_1 Q_0(\partial_{y_1}Q_1) + y_1^2(\partial_{y_1}R) \\
Q_0^2 + y_1 (\partial_{y_2}Q_0)Q_1 + y_1 Q_0(\partial_{y_2}Q_1) + y_1^2(\partial_{y_2}R)
\end{pmatrix}.$$
Suppose that $[y_0:y_1:y_2]$ is a singular point of $C$. The Jacobian matrix of $f(x_0,x_1,x_2,u,v)$ is
$$Jf = \begin{pmatrix}
u^2 + u(\partial_{x_0}Q_0) + v(\partial_{x_0}Q_1)+ (\partial_{x_0}R) \\
uv + u(\partial_{x_1}Q_0) + v(\partial_{x_1}Q_1)+ (\partial_{x_1}R) \\
v^2 + u(\partial_{x_2}Q_0) + v(\partial_{x_2}Q_1)+ (\partial_{x_2}R) \\
vx_1 + Q_0 \\
ux_1 + Q_1
\end{pmatrix}$$
so we observe that
$$p = (y_1y_0,y_1^2,y_1y_2,Q_1(y_0,y_1,y_2),Q_0(y_0,y_1,y_2))$$
is a singular point on $X$ if $y_1 \neq 0$. As a result $y_1 = 0$. From the vanishing of $JH$ we must have
$$Q_0^2 = Q_0Q_1 = Q_1^2 = 0$$
which means that $Q_0 = Q_1 = 0$. The corresponding conic in $\mathbb{P}^2_k$ becomes
$$K_{[y_0:0:y_2]} = \mathbb{V}_+(u^2y_0 + v^2y_2 + t^2R(y_0,0,y_2))$$
which is a double line. However $l \not \in F_1$ so we get a contradiction. Thus $C$ is a smooth plane curve of degree $5$.
\end{proof}

\begin{remark}\label{change2plane}
If we fix a line $l \subset X$ in a smooth cubic threefold but vary the $2$-plane $P$ we are projecting onto, this amounts to a linear change of coordinates for the discriminant curve $C$. Therefore the isomorphism class of the discriminant curve $C$ is well-defined given a pair $(X,l)$.
\end{remark}

\begin{definition}[Double cover of curves attached to a cubic threefold equipped with a line]\label{doublecoverattached}
Let $X$ be a smooth cubic threefold with a line $l \subset X$. Let $\pi: Bl_l(X) \to \mathbb{P}^2_k$ be the conic bundle induced by projecting away from $l$ onto a $2$-plane $P \subset \mathbb{P}^4_k$ satisfying $P \cap l = \emptyset$. Let $C$ be the discriminant curve for $\pi$. Consider the Zariski closed subset
$$\widetilde{C} := \{l' \in F(X) : l \textrm{ intersects every line in } \overline{l'}\} \subset F(X).$$ 
Then $\widetilde{C}$ parametrizes the lines on $X$ which lie in a fiber of $\pi$, so there is a natural morphism $\tau: \widetilde{C} \to C$. We call this the \emph{double cover attached to the pair} $(X,l)$. Clearly, the isomorphism class of $\tau$ is independent of the $2$-plane $P$ chosen.
\end{definition}

\begin{proposition}
Let $X$ be a smooth cubic threefold over a field $k$ and let $l\subset X$ be a line. Then $l$ is a good line if and only if the discriminant curve $C$ is smooth, and the double cover attached to $(X,l)$ is \'etale. 
\end{proposition}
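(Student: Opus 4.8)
The plan is to prove both directions of the equivalence, treating separately the conditions on the discriminant curve $C$ and on the \'etale-ness of $\tau$. First I would recall that by Proposition \ref{formulaDelta}, if $l$ is a good line then $C$ is automatically a smooth plane quintic, so one direction of the statement about $C$ is already established. Thus the forward implication reduces to showing that a good line yields an \'etale double cover, and the reverse implication reduces to showing that failure of good-ness (i.e.\ $l \in F_0(X) \cup F_1(X)$) forces either $C$ to be singular or $\tau$ to be ramified.

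The key geometric input is the description of the fibers of the conic bundle. Over a point $T \in \mathbb{P}^2_k$, the conic $K_T$ degenerates exactly along $C = \mathbb{V}_+(H)$, and the fiber $\tau^{-1}(T)$ of the double cover parametrizes the lines contained in $K_T$. When $K_T$ is a smooth conic there are no lines, and over a point of $C$ the conic becomes either a union of two distinct lines (giving two points of $\widetilde{C}$) or a double line (giving a single ramification point). Therefore $\tau$ is \'etale precisely when every fiber of the conic bundle over $C$ is a pair of \emph{distinct} lines, never a double line. The plan is to show that the occurrence of a double-line fiber over a point $T \in C$ is governed by the second-order vanishing of $H$, matching the condition that distinguishes good lines.

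Concretely, I would argue as follows. From the proof of Proposition \ref{formulaDelta}, the degenerate conic $K_T = \mathbb{V}_+(G_T)$ is a double line exactly when $G_T$ is a perfect square as a quadratic form in $(u,v,t)$; by the computation there (setting $y_1 = 0$ and $Q_0 = Q_1 = 0$) this happens precisely when $T$ lies in $F_1(X)$, viewed via the residual-line map. More precisely, a double-line fiber at $T$ corresponds to $T$ being the image of a line $l' \in F_1(X)$, and by Remark \ref{goodlinebasis} the condition $l \in F_0(X)$ is what causes $L_0,L_1,L_2$ to become linearly dependent, obstructing the normal form and forcing $C$ to be singular. Thus: if $l \in F_0(X)$ then $C$ is singular (the normal form of Proposition \ref{formulaDelta} fails and the computation of $H$ degenerates), and if $l \in F_1(X)$ then there is a point of $C$ over which the conic is a double line, so $\tau$ is ramified there. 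Conversely, a good line gives a smooth $C$ by Proposition \ref{formulaDelta} and no double-line fibers, hence an \'etale $\tau$.

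The main obstacle I anticipate is the careful bookkeeping in characteristic $2$ connecting the three pieces of data: the linear independence of $L_0,L_1,L_2$ (membership in $F_0$), the perfect-square degeneration of $G_T$ (membership in $F_1$), and the scheme-theoretic smoothness versus \'etale-ness of $\tau$ as a \emph{degree two} cover. In characteristic $2$ one must be especially careful because a double cover can be inseparable or wildly ramified, so verifying that the fiber over a point of $C$ consists of two reduced distinct points (rather than a non-reduced length-two scheme) requires examining the local structure of $\widetilde{C} \to C$ rather than merely counting geometric points. I would handle this by identifying $\widetilde{C}$ near a point of $C$ with the Fano scheme of lines in the degenerate conic $K_T$ and checking that the two components meet transversally exactly when $K_T$ is a genuine pair of distinct lines, which is the condition controlled by the non-vanishing of the appropriate second-order term of $H$ along $C$.
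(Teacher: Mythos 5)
Your forward direction is essentially the paper's: smoothness of $C$ comes from Proposition \ref{formulaDelta}, and goodness of $l$ rules out double-line fibres, so every fibre of $\tau$ consists of two distinct points. Your worry about characteristic $2$ (that counting geometric points is not enough for \'etaleness) is legitimate, but the paper resolves it more cheaply than your proposed transversality analysis: $\widetilde{C}$ is integral and $C$ is a Dedekind scheme, so the finite morphism $\tau$ is automatically flat, and a finite flat morphism of degree $2$ all of whose geometric fibres have two distinct points is unramified, hence \'etale.

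The genuine gap is in your backward direction, namely the claim that $l \in F_0(X)$ forces $C$ to be singular; ``the normal form fails and the computation of $H$ degenerates'' is not an argument, and the claimed mechanism is in fact wrong. Membership $l \in F_0(X)$ means exactly that $L_0,L_1,L_2$ have a common zero $T \in \mathbb{P}^2$ (equivalently, are linearly dependent). Without the normal form the characteristic-$2$ discriminant is $H = L_0Q_1^2 + L_2Q_0^2 + RL_1^2 + L_1Q_0Q_1$, and at such a point $T$ one computes
$$\partial_{y_j} H(T) \;=\; Q_1(T)^2\,\partial_{y_j}L_0 \;+\; Q_0(T)Q_1(T)\,\partial_{y_j}L_1 \;+\; Q_0(T)^2\,\partial_{y_j}L_2 ,$$
i.e.\ the gradient of $H$ at $T$ is $M\cdot\bigl(Q_1(T)^2,\,Q_0(T)Q_1(T),\,Q_0(T)^2\bigr)^{T}$, where $M$ is the matrix whose columns are the coefficient vectors of $L_0,L_1,L_2$. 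Smoothness of $X$ along $l$ says precisely that $\ker M$ contains no nonzero point of the Veronese cone $\{(A,B,C) : B^2 = AC\}$, and $(Q_1^2,Q_0Q_1,Q_0^2)$ lies on that cone; hence the gradient vanishes if and only if $Q_0(T)=Q_1(T)=0$. So for $Q_0(T),Q_1(T)$ not both zero, $T$ is a \emph{smooth} point of $C$, and nothing in your argument produces a singularity anywhere. What actually fails in the $F_0$ case is the cover, not the curve: at such $T$ one has $G_T = t\,\bigl(u\,Q_0(T) + v\,Q_1(T) + t\,R(T)\bigr)$, so the residual conic contains $l$ itself as a component, the fibre of $\tau$ over $T$ contains only the single line $l'$ of $X$ distinct from $l$, and a generically $2{:}1$ finite cover cannot be \'etale over a point whose fibre has one geometric point. (Your two mechanisms are essentially swapped: in characteristic $2$ it is the $F_1$ case, the double-line fibres with $y_1 = Q_0 = Q_1 = 0$, that produces singular points of $C$, as the Jacobian computation in the proof of Proposition \ref{formulaDelta} shows; your $F_1\Rightarrow$ ramified claim is correct.) Once the fibre description is in place, the backward direction is exactly the paper's one-line observation: \'etaleness forces two distinct lines different from $l$ in every degenerate fibre, which is literally the negation of $l \in F_0(X)\cup F_1(X)$.
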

\begin{proof}
Suppose that $l \subset X$ is a good line. We may apply a linear change of coordinates so that $l = \mathbb{V}_+(x_0,x_1,x_2)$. By Proposition \ref{formulaDelta}, the discriminant curve given by projecting onto $P = \mathbb{V}_+(u,v)$ is smooth. Furthermore, since $l \not \in F_0(X)(k) \cup F_1(X)(k)$, the fibers of the double cover $\tau:\widetilde{C} \to C$ attached to $(X,l)$ are all given by $2$ distinct points, and so $\tau$ is finite and unramified. The scheme $\widetilde{C}$ can be easily shown to be integral, and $C$ is a Dedekind scheme. Therefore $\tau$ is flat, and thus \'etale. 

Suppose that the pair $(X,l)$ is chosen so that the discriminant curve $C$ is smooth and the double cover attached to it is \'etale. Then it follows by definition that $l \not \in  F_0(X)(k) \cup F_1(X)(k)$.
\end{proof}

\begin{example}
We show that smooth cubic threefolds with a good line exist over fields of characteristic $2$. In fact we construct an example over $\mathbb{F}_2$. Consider the cubic form
$$f = x_3^2x_0 + x_3x_4x_1 + x_4^2x_2 + x_3(x_0^2 + x_1^2) + x_4(x_1^2 + x_2^2) + x_0x_2^2 + x_2x_0^2$$
which contains the line $l = \mathbb{V}_+(x_0,x_1,x_2)$. It has Jacobian matrix
$$Jf = \begin{pmatrix}
x_3^2 + x_2^2 \\
x_3x_4 \\
x_4^2 + x_0^2 \\
x_4x_1 + x_0^2 + x_1^2 \\
x_3x_1 + x_1^2 + x_2^2
\end{pmatrix}$$
and a simple analysis shows that $X = \mathbb{V}_+(f)$ is smooth. Clearly $l \not \in F_0(X)(k)$ as the linear terms $L_0,L_1,L_2$ are linearly independent, as per Remark \ref{goodlinebasis}. Finally, $l \not \in F_1(X)(k)$.
\end{example}

\subsection{Cubic threefolds with a good line}
The aim of this subsection is to show that the theta divisor on a Prym variety attached to a cubic threefold with a good line has a sole singularity, given by the origin. In this subsection all cubic threefolds are assumed to contain a good line.

\begin{definition}(Prym variety associated to a cubic threefold with a good line)\label{prymCubic}
Let $X$ be a smooth cubic threefold with a good line $l \subset X$. Let $\pi: \widetilde{C} \to C$ be the double cover attached to $(X,l)$. We call
$$\Prym(X,l) := \Prym(\widetilde{C}/C)$$
the \emph{Prym variety associated to the pair} $(X,l)$.
\end{definition}

 Let $\pi: \widetilde{C} \to C$ be an \'etale double cover associated to a smooth cubic threefold $X$ with a good line $l$ as in Definition \ref{doublecoverattached}, and let $\sigma: \widetilde{C} \to \widetilde{C}$ be the non-trivial involution above $C$. Let $P^+$ the the associated Prym variety. Then $g(C) = 6$ and $g(\widetilde{C}) = 11$, so we have $\dim(P^+) = 5$. Let $H$ be the hyperplane section on $C$. Then $H^2 \cong K_C$, and $\pi^*H$ is a theta characteristic for $\widetilde{C}$. The main purpose of this subsection is to show that $\pi^*H$ is an even theta characteristic. 

\begin{lemma}\label{preH}
Consider the morphism $g:\widetilde{C} \to l$ sending a line $l' \in \widetilde{C}(k)$ to its intersection point with $l$. Given $p \in l(k)$, let $L := g^*(p)$. Then $\pi_*L \cong H$ and $|L|$ is a basepoint-free linear system of degree $5$.
\end{lemma}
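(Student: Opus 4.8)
The statement concerns the morphism $g:\widetilde{C}\to l\cong\mathbb{P}^1_k$, which records the intersection point of a line $l'$ parametrised by $\widetilde{C}$ with the fixed good line $l$. The goal is to identify the pushforward along $\pi$ of a fibre $L=g^*(p)$ with the hyperplane class $H$, and to prove that $|L|$ is a basepoint-free $g^1_5$. My first move is to understand the geometry underlying $g$. A point $p\in l(k)$ corresponds to the $2$-planes $P\subset\mathbb{P}^4_k$ containing $l$ that also pass through $p$; equivalently, recalling the notation of Proposition \ref{formulaDelta}, the lines $l'\in\widetilde{C}$ lying over the discriminant curve $C$ and meeting $l$ at $p$ are exactly the residual lines of the reducible conics $K_T$ passing through $p$. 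I would make this precise by fixing coordinates as in Remark \ref{goodlinebasis}, so that the fibre $g^{-1}(p)$ is cut out inside $\widetilde{C}$ by a linear condition on the plane $P_T$, and hence consists of the lines meeting $l$ at $p$.

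\textbf{Computing $\pi_*L$.} The key identity $\pi_*L\cong H$ I would establish by a degree-plus-class argument. The hyperplane $H$ on the degree-$5$ plane quintic $C$ has degree $5$, so I expect $|L|$ and hence $\pi_*L$ to have degree $5$ as well; the natural way to see this is that the lines $l'$ meeting $l$ at the single point $p$ trace out, under the projection $\pi:\operatorname{Bl}_l(X)\to\mathbb{P}^2_k$, exactly those fibres of the conic bundle whose components pass through the image of $p$, which is a line in $\mathbb{P}^2_k$, i.e. a hyperplane section of $C$. Concretely, a point $p\in l$ determines a hyperplane in the $\mathbb{P}^2_k$ of planes through $l$ (those $T=[y_0:y_1:y_2]$ whose plane $P_T$ contains $p$), and $\tau=\pi$ sends $g^{-1}(p)$ onto the intersection of this hyperplane with $C$. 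Thus $\pi_*(g^*p)$ is linearly equivalent to $H$, and since this holds for every $p\in l(k)$, the linear system $|L|$ varies as $p$ moves over the $\mathbb{P}^1$ of points of $l$, giving a base-point-free pencil, i.e. a $g^1_5$. Degree $5$ follows because $\deg(\pi_*L)=\deg H=5$ and $\pi$ is a double cover, so $\deg L=5$ after checking that each of the five points of $H\cap C$ has a single preimage line through $p$.

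\textbf{Base-point-freeness.} Base-point-freeness of $|L|$ amounts to the statement that no line $l'\in\widetilde{C}$ meets $l$ at \emph{every} point of $l$, which is immediate since $l'\neq l$ implies $l'\cap l$ is a single point; equivalently, the fibres $g^{-1}(p)$ for distinct $p$ are disjoint, so $g$ has no base point and the associated pencil is base-point-free. I would phrase this as: the map $g$ is a genuine morphism to $l\cong\mathbb{P}^1_k$ (not merely a rational map), and $L=g^*(p)$ is the pullback of a point, so $|g^*(p)|\supseteq g^*|p|$ is automatically base-point-free of degree equal to $\deg g$. Hence I need $\deg g=5$, which matches $\deg H=5$.

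\textbf{The main obstacle.} The delicate point, and the one I would spend the most care on, is the identification $\pi_*L\cong H$ as line bundles (not merely a numerical coincidence of degrees), together with confirming that $\deg g=5$ so that $|L|$ really is a $g^1_5$ rather than a pencil of some other degree. In characteristic $2$ one cannot rely on the classical count of residual lines via the symmetric-bilinear-form picture, so I would argue directly from the equation of $K_T$ in Proposition \ref{formulaDelta}: the residual conic $K_T$ degenerates exactly over $C$, and its two components correspond to the two sheets of $\widetilde{C}$ over $C$; pinning down that the component through a prescribed $p\in l$ defines precisely the hyperplane section $H$ requires tracking the intersection of each degenerate conic with the line $l$ carefully. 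Once this incidence is set up correctly, the equality $\pi_*L\cong H$ and the base-point-freeness of the degree-$5$ pencil $|L|$ both follow, and the characteristic-$2$ case presents no further difficulty beyond this bookkeeping.
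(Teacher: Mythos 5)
Your strategy is the right one---it is essentially Beauville's argument, and the paper's own proof is nothing more than a citation to \cite[2.(ii)]{beau} together with the observation that that argument is characteristic-independent---but your write-up has a genuine gap at precisely the step that carries all the content. You claim that $p\in l$ determines a hyperplane in the $\mathbb{P}^2_k$ of planes through $l$, namely ``those $T$ whose plane $P_T$ contains $p$.'' That condition is vacuous: every plane $P_T$ contains $l$, hence contains $p$, so the set you describe is all of $\mathbb{P}^2_k$, not a line. The correct condition is that the \emph{residual conic} $K_T$ passes through $p$, and the whole point of the lemma is that this is a linear condition on $T$. Concretely, in the coordinates of Proposition \ref{formulaDelta} and Remark \ref{goodlinebasis}, a point of $P_T$ is $[ty_0:ty_1:ty_2:u:v]$, the line $l$ is $\{t=0\}$, and $p=[0:0:0:u_0:v_0]$ lies on $K_T=\mathbb{V}_+(G_T)$ if and only if
$$G_T(u_0,v_0,0) = u_0^2y_0 + u_0v_0y_1 + v_0^2y_2 = 0,$$
a nontrivial linear equation in $[y_0:y_1:y_2]$ since $(u_0,v_0)\neq(0,0)$. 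This is what shows that $\pi(g^{-1}(p))$ is a hyperplane section $\ell_p\cap C$, from which $\pi_*L\cong H$, $\deg L=5$, and (via your disjoint-fibres observation) base-point-freeness all follow. You never perform this computation; indeed your ``main obstacle'' paragraph concedes exactly this (``once this incidence is set up correctly\dots''), so the proposal is a plan whose crucial step is both missing and, in the one place where you do make it concrete, stated incorrectly.

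A secondary point you flag but also leave undone: the identity $\pi_*L=\ell_p\cap C$ must hold as divisors, not merely as sets, so multiplicities need handling---for instance at a point $T$ where $p$ is the singular point of $K_T$, both points of $\widetilde{C}$ above $T$ lie in $g^{-1}(p)$, and correspondingly $\ell_p$ must meet $C$ at $T$ with multiplicity at least $2$. Note also that the degree count comes from the fact that pushforward of cycles along the finite morphism $\pi$ preserves degree, giving $\deg L=\deg\pi_*L=\deg H=5$; the phrase ``$\pi$ is a double cover'' is not the relevant input there. These verifications are routine, but they are part of the proof you would need to write before the lemma is established.
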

\begin{proof}
This follows from \cite[2.(ii)]{beau}, whose proof does not make use of the characteristic of the base field.
\end{proof}

\begin{corollary}\label{tildeConnected}
The curve $\widetilde{C}$ is connected.
\end{corollary}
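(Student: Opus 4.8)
The plan is to rule out disconnectedness by a count of global sections, using Lemma \ref{preH} as the crucial input. Since $\pi$ is finite \'etale of degree $2$ over the connected smooth curve $C$ (and $k$ is algebraically closed), the only way $\widetilde{C}$ can fail to be connected is for it to split as a disjoint union $\widetilde{C} = C_1 \sqcup C_2$ of two curves, each of which is mapped isomorphically onto $C$ by $\pi$ (each component is a connected \'etale cover of $C$ of degree dividing $2$ and summing to $2$, hence of degree $1$). I will assume we are in this situation and derive a contradiction.

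The key step is the identity $\HH^0(\widetilde{C}, L) \cong \HH^0(C, \pi_* L)$, which holds because $\pi$ is finite, hence affine. By Lemma \ref{preH} we have $\pi_* L \cong H$, so $h^0(\widetilde{C}, L) = h^0(C, H)$. Since $C$ is a smooth plane quintic embedded in $\mathbb{P}^2$ by $|H|$, twisting the ideal sheaf sequence of $C$ gives $0 \to \mathcal{O}_{\mathbb{P}^2}(-4) \to \mathcal{O}_{\mathbb{P}^2}(1) \to \mathcal{O}_C(1) \to 0$, and using $\HH^0(\mathbb{P}^2, \mathcal{O}_{\mathbb{P}^2}(-4)) = \HH^1(\mathbb{P}^2, \mathcal{O}_{\mathbb{P}^2}(-4)) = 0$ we obtain $h^0(C, H) = 3$. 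In particular $h^0(\widetilde{C}, L) = 3$.

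On the other hand, writing $L = g^*\mathcal{O}_l(p) = g^*\mathcal{O}_l(1)$ (here $l \cong \mathbb{P}^1$, so a point has degree $1$), the restriction satisfies $L|_{C_i} = (g|_{C_i})^*\mathcal{O}_l(1)$. Each morphism $g|_{C_i} : C_i \to l$ is nonconstant, for otherwise $C_i$ would be contained in the finite divisor $g^*(p_0)$ of degree $5$, which is impossible for a curve. A nonconstant morphism to $\mathbb{P}^1$ pulls $\mathcal{O}(1)$ back to a line bundle with at least two independent sections, so $h^0(C_i, L|_{C_i}) \geq 2$ for $i = 1, 2$, whence $h^0(\widetilde{C}, L) = h^0(C_1, L|_{C_1}) + h^0(C_2, L|_{C_2}) \geq 4$. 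This contradicts $h^0(\widetilde{C}, L) = 3$, so $\widetilde{C}$ must be connected. The only point needing care is the identification $h^0(\widetilde{C}, L) = h^0(C, H)$ through the finiteness of $\pi$ together with Lemma \ref{preH}; the remaining computations are routine, and notably the argument is insensitive to the characteristic of $k$.
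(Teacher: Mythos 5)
Your argument has a fatal error in the upper-bound step. You claim $\HH^0(\widetilde{C},L) \cong \HH^0(C,\pi_*L)$ "because $\pi$ is finite, hence affine," and then invoke Lemma \ref{preH} to identify $\pi_*L$ with $H$. This conflates two different pushforwards. The isomorphism $\HH^0(\widetilde{C},\mathcal{O}(L)) \cong \HH^0(C,\pi_*\mathcal{O}(L))$ holds for the \emph{sheaf-theoretic} pushforward, which for the degree-$2$ cover $\pi$ is a rank-$2$ vector bundle on $C$, not a line bundle. The statement $\pi_*L \cong H$ in Lemma \ref{preH} refers to the \emph{norm} (the pushforward of the divisor $L = g^*(p)$, consistent with the paper's use of $\pi_*$ as the norm map $J\widetilde{C} \to JC$ throughout), and $\mathrm{Nm}(\mathcal{O}(L))$ is not $\pi_*\mathcal{O}(L)$ — they do not even have the same rank. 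Consequently your claimed equality $h^0(\widetilde{C},L) = h^0(C,H) = 3$ is false; indeed the paper's Proposition \ref{secH} computes $h^0(L) = 2$ (for the actual, connected $\widetilde{C}$), so the number $3$ is wrong and the contradiction "$3 \geq 4$" evaporates.

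The correct route — and the one the paper takes — is by degrees rather than by sections. Since $|L|$ has degree $5$ (Lemma \ref{preH}), the map $g:\widetilde{C}\to l$ has degree $5$, so if $\widetilde{C} = C_1 \sqcup C_2$ then $\deg(g|_{C_1}) + \deg(g|_{C_2}) = 5$ and some component maps to $\mathbb{P}^1_k$ with degree at most $2$ (and nonconstantly, by your own correct observation that a component cannot lie in a finite fiber). Under the isomorphism $C_i \cong C$, this exhibits $C$ as rational or hyperelliptic; but $C$ is a smooth plane quintic, so $K_C \cong \mathcal{O}_C(2)$ is very ample and $C$ is neither, a contradiction. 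Note that your lower-bound observation ($h^0$ of the restriction is at least $2$) is really the same fact — a nonconstant map of degree $d_i$ gives a $g^1_{d_i}$ — so your proof is salvaged precisely by replacing the faulty global-section count with this degree count.
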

\begin{proof}
Suppose that $\widetilde{C}$ is not connected. Then $\widetilde{C} = C \cup C$ and the map $g: \widetilde{C} \to l$ from Lemma \ref{preH} restricts to some morphism $g':C \to \mathbb{P}^1_k$ of degree $\deg(g') \leq 2$. We know that $C$ is not rational. Since $K_C$ is very ample, it follows from \cite[Proposition \rom{4}.5.2]{hartshorne} that $C$ is not hyperelliptic. Thus any non-constant morphism from $C$ to $\mathbb{P}^1_k$ has degree greater than $2$ and we get a contradiction.
\end{proof} 

\begin{lemma}\label{cupext}
Consider a short exact sequence 
\begin{align}0 \to \mathcal{O}_C \to V \to \mathcal{O}_C \to 0\end{align}
on a curve $C$, and a flat $\mathcal{O}_C$-module $W$. Then the connecting homomorphism for the long exact sequence attached to 
$$0 \to \mathcal{O}_C\otimes W \to V\otimes W \to \mathcal{O}_C \otimes W \to 0$$ 
is given by cup product with the extension class of $(4.1)$ as an element of $\HH^1(\mathcal{O}_C)$.
\end{lemma}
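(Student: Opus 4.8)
The plan is to reduce the statement to an explicit \v{C}ech cocycle computation, which is the standard way of comparing a connecting homomorphism with a cup product. First I would record that the class of the extension $(4.1)$ lives in $\Ext^1_{\mathcal{O}_C}(\mathcal{O}_C,\mathcal{O}_C)\cong \HH^1(C,\mathcal{O}_C)$, and fix a \v{C}ech representative as follows. Choose an affine open cover $\mathcal{U}=\{U_i\}$ of $C$; since each $U_i$ is affine and the quotient $\mathcal{O}_C$ is locally free, the surjection $V\to \mathcal{O}_C$ admits a section $s_i\colon \mathcal{O}_C|_{U_i}\to V|_{U_i}$ over each $U_i$. On overlaps the difference $s_i-s_j$ factors through the sub-line-bundle $\mathcal{O}_C\subset V$, so it is multiplication by a section $e_{ij}\in\mathcal{O}_C(U_{ij})$, and $\{e_{ij}\}$ is a \v{C}ech $1$-cocycle representing the extension class $e$.

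Next I would tensor everything with $W$. Flatness of $W$ guarantees that $(4.1)\otimes W$ is again short exact and, crucially, that the local splittings survive: $s_i\otimes\operatorname{id}_W$ splits $V\otimes W\to W$ over $U_i$, with $(s_i-s_j)\otimes\operatorname{id}_W$ equal to multiplication by $e_{ij}$ on $W$. Since $\mathcal{U}$ is affine and $W$ is quasi-coherent, \v{C}ech cohomology on $\mathcal{U}$ computes $\HH^\bullet(C,W)$, so I can compute the connecting map at the level of \v{C}ech cochains. Given a \v{C}ech $i$-cocycle $\alpha=\{\alpha_{i_0\cdots i_i}\}$ with values in the quotient $W$, lift it via the first index, setting $\tilde\alpha_{i_0\cdots i_i}:=s_{i_0}(\alpha_{i_0\cdots i_i})$, and apply the \v{C}ech differential. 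Subtracting the identically-zero cochain $s_{i_0}(\check\delta\alpha)$ makes all terms using the same splitting $s_{i_0}$ cancel, and what remains on the overlap is $(s_{i_1}-s_{i_0})(\alpha_{i_1\cdots i_{i+1}})=\pm\, e_{i_0 i_1}\cdot\alpha_{i_1\cdots i_{i+1}}$. This is exactly the \v{C}ech representative of the cup product $e\cup[\alpha]$, so $\delta=\pm(e\cup-)$.

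The only genuine subtleties are bookkeeping rather than conceptual. The sign that appears depends on the orientation convention chosen for the extension class and for the \v{C}ech cup product; I would simply fix the sign of $e$ so that $\delta=e\cup-$ on the nose, which is the standard normalisation. The essential place where the hypothesis is used is flatness of $W$: it is what keeps the tensored sequence exact and keeps the chosen sections splitting after tensoring, without which neither the long exact sequence nor the cocycle $\{e_{ij}\}$ would make sense. I expect the main (very mild) obstacle to be this sign-tracking together with confirming that the affine cover is Leray for $W$; both are routine on a curve. If one prefers a coordinate-free argument, the same statement follows by viewing $e$ as a morphism $\mathcal{O}_C\to\mathcal{O}_C[1]$ in the derived category, tensoring the associated distinguished triangle by the flat object $W$ (so that $-\otimes W=-\otimes^{\mathbf{L}}W$), and observing that the connecting map of the resulting triangle is $\HH^\bullet$ of $e\otimes\operatorname{id}_W$, which is by definition the $\HH^\bullet(C,\mathcal{O}_C)$-module action, i.e.\ cup product with $e$.
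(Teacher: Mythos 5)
Your proof is correct, and it takes a genuinely different route from the paper's. The paper's argument is citation-based and never chooses a cover: it identifies $\Ext^1(\mathcal{O}_C,\mathcal{O}_C)$ with $\HH^1(\mathcal{O}_C)$, observes that the class of the extension $(4.1)$ equals $\delta(1)$, where $\delta\colon\HH^0(\mathcal{O}_C)\to\HH^1(\mathcal{O}_C)$ is the connecting homomorphism of $(4.1)$ itself, and then invokes Bredon's compatibility of connecting maps with cup products, $\delta_W(a\cup b)=\delta(a)\cup b$ (valid once the tensored sequence is exact); taking $a=1$ and $b\in\HH^0(W)$ gives the claim. You instead prove this compatibility by hand: local splittings over an affine cover produce a \v{C}ech $1$-cocycle $\{e_{ij}\}$ representing the extension class, the splittings persist after tensoring with $W$, and the lift-and-differentiate computation of the connecting map lands exactly on the \v{C}ech cup product with $\{e_{ij}\}$. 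The paper's route buys brevity, freedom from sign bookkeeping, and applicability to arbitrary sheaves of modules; yours buys self-containedness (you prove the Bredon-type statement rather than cite it), an explicit normalisation of the sign, and---via your derived-category coda---a formulation that generalizes immediately beyond curves. Two minor caveats on your version: \v{C}ech cohomology on an affine cover computes $\HH^\bullet(C,W)$ only for quasi-coherent $W$ (harmless here, since in the paper's application $W=\mathcal{O}_C(H)$ is a line bundle), and flatness of $W$ does slightly less work than you suggest---sections tensor to sections with no hypothesis at all, and even exactness of the tensored sequence is automatic in this case because the quotient term of $(4.1)$ is $\mathcal{O}_C$ itself, hence free.
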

\begin{proof}
By \cite[Proposition \rom{3}.6.3]{hartshorne}, the functors $\Hom(\mathcal{O}_C, \cdot)$ and $\Gamma(C, \cdot)$ are equal so $\Ext^i(\mathcal{O}_C, \mathcal{O}_C) = \HH^i(\mathcal{O}_C)$ for all $i$. By \cite[Exercise \rom{3}.6.1]{hartshorne}, the extensions 
$$0 \to \mathcal{O}_C \to V \to \mathcal{O}_C \to 0$$
are classified by $\Ext^1(\mathcal{O}_C, \mathcal{O}_C) = \HH^1(\mathcal{O}_C)$. This particular extension corresponds to $s = \delta(1)$ if $\delta:  \HH^0(\mathcal{O}_C) \to  \HH^1(\mathcal{O}_C)$ is the connecting homomorphism in the long exact sequence. By \cite[Theorem \rom{2}.7.1(b)]{bredon} if
$$0 \to \mathcal{O}_C\otimes W \to V\otimes W \to \mathcal{O}_C \otimes W \to 0$$ 
is exact then $\delta(a \cup b) = \delta(a) \cup b$ for all $a \in  \HH^0(\mathcal{O}_C), b \in  \HH^0(W)$. It follows that the connecting homomorphism for the long exact sequence is given by cup product with $s$.
\end{proof}

\begin{lemma}\label{upper}
We have the bound $h^0(\mathcal{O}(\pi^*(H))) \leq 4$.
\end{lemma}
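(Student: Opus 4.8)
The plan is to pass from $\widetilde{C}$ down to $C$ via the projection formula, use the Artin--Schreier extension, and reduce the inequality to a rank estimate for a symmetric bilinear form. By the projection formula $h^0(\widetilde{C},\pi^*H)=h^0(C,\pi_*\mathcal{O}_{\widetilde{C}}\otimes H)$. Writing $\mathcal{E}:=\pi_*\mathcal{O}_{\widetilde{C}}$ and using the non-split sequence $0\to\mathcal{O}_C\to\mathcal{E}\to\mathcal{O}_C\to 0$ valid in characteristic $2$, I would tensor by the flat sheaf $H$ to get $0\to H\to\mathcal{E}\otimes H\to H\to 0$, whose long exact sequence gives $h^0(\mathcal{E}\otimes H)=h^0(H)+\dim\Ker(\delta)$ for the connecting map $\delta\colon \HH^0(C,H)\to \HH^1(C,H)$. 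By Lemma \ref{cupext}, $\delta$ is cup product with the extension class $s\in \HH^1(C,\mathcal{O}_C)$ of $\mathcal{E}$. Since $C$ is a smooth plane quintic, $h^0(H)=3$ (as $\HH^1(\mathbb{P}^2,\mathcal{O}(1-5))=0$) and $K_C\cong 2H$, so the lemma is equivalent to $\dim\Ker(\cup s)\le 1$. In characteristic $\ne 2$ the sequence splits and the statement is the classical bound of \cite{beau}, so the content is in characteristic $2$.

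Next I would reinterpret $\cup s$ through Serre duality. Because $K_C-H\cong H$, there is a canonical identification $\HH^1(C,H)\cong \HH^0(C,H)^\vee$, under which $\cup s$ becomes the symmetric bilinear form $B_s\colon \HH^0(C,H)\times \HH^0(C,H)\to k$, $(a,b)\mapsto\langle s,ab\rangle$, where $ab\in \HH^0(C,2H)=\HH^0(C,K_C)$ and $\langle\,,\rangle$ is the Serre pairing. Hence $\dim\Ker(\cup s)=3-\rank(B_s)$, and the lemma is equivalent to $\rank(B_s)\ge 2$. Two lower bounds come for free: since $\widetilde{C}$ is connected (Corollary \ref{tildeConnected}) the extension is non-split, so $s\neq 0$; and since no conic contains the quintic $C$, the restriction $\Sym^2\HH^0(C,H)\to \HH^0(C,K_C)$ is an isomorphism. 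Together these force $B_s\neq 0$, so $\rank(B_s)\ge 1$.

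The heart of the matter, and the step I expect to be the main obstacle, is ruling out $\rank(B_s)=1$. Here I would bring in Artin--Schreier theory: $s$ lies in the image of $\HH^1_{\mathrm{et}}(C,\mathbb{Z}/2)\to \HH^1(C,\mathcal{O}_C)$, so it is fixed by the Hasse--Witt (Frobenius) operator, equivalently $\langle s,\omega\rangle=\langle s,\mathcal{C}\omega\rangle^2$ for all $\omega\in \HH^0(C,K_C)$, where $\mathcal{C}$ is the Cartier operator. For the smooth quintic $C=\mathbb{V}_+(H)$ the Hasse--Witt matrix on $\HH^0(C,K_C)$ (conics) has entries read off from the coefficients of $H$ itself, since $H^{p-1}=H$ in characteristic $2$. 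If $B_s$ had a two-dimensional radical, then after a linear change of the $y_i$ the class $s$ would be dual to a single conic $y_2^2$; substituting this into the Frobenius relation pins down several coefficients of the discriminant quintic $H=y_0Q_1^2+y_1^2R+y_1Q_0Q_1+y_2Q_0^2$ of Proposition \ref{formulaDelta}. I would then show that, once the diagonalising coordinates for $B_s$ are reconciled with the special shape of the discriminant, the resulting constraints on $Q_0,Q_1,R$ are incompatible with the smoothness of $C$ (equivalently of $X$) proved in Proposition \ref{formulaDelta}.

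This final explicit Hasse--Witt computation is where essentially all the difficulty lies: the abstract bound $\rank(B_s)\ge 1$ is formal, but excluding rank exactly $1$ genuinely requires the interplay between the Frobenius-fixed condition on $s$ and the very particular form of the discriminant quintic attached to a good line. I expect one cannot avoid using the concrete Artin--Schreier class of the conic-bundle double cover, so the cleanest route is likely to compute that class (or the relevant entries of the Cartier matrix) directly in the coordinates of Proposition \ref{formulaDelta} rather than to seek a coordinate-free argument.
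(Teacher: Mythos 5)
Your proposal tracks the paper's own proof almost step for step: the projection formula, the non-split Artin--Schreier sequence, the identification of the connecting map $\delta$ as cup product with the extension class via Lemma \ref{cupext}, the Serre-duality reinterpretation as a bilinear form on $\HH^0(\mathcal{O}_C(H))$, the exclusion of rank $0$ via connectedness of $\widetilde{C}$ (Corollary \ref{tildeConnected}), and even the key idea for the remaining case, namely that the class is Frobenius-fixed and hence Cartier-invariant under Serre duality. Up to that point the two arguments coincide.

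The divergence, and the gap, is in the one step you yourself flag as the crux: excluding rank exactly $1$. Your plan is to reconcile the coordinates diagonalising $B_s$ with the special discriminant shape $H=y_0Q_1^2+y_1^2R+y_1Q_0Q_1+y_2Q_0^2$ of Proposition \ref{formulaDelta} and contradict smoothness via constraints on $Q_0,Q_1,R$; you state that one ``cannot avoid'' using the concrete conic-bundle class. This expectation is wrong, and the reconciliation it forces is a genuine obstacle: a linear change of the $y_i$ (needed to put $B_s$ in the form ``select the coefficient of $y_2^2$'') does not preserve that discriminant shape, so the Frobenius relation would give constraints on the coefficients of the \emph{transformed} quintic, entangled with the unknown change-of-basis matrix. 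The paper avoids all of this: after diagonalising $s$ to the $x^2$-selector one simply forgets the provenance of $C$ and uses the explicit formula of \cite[2.5]{costa} for the Cartier operator of an arbitrary plane curve, $\mathcal{C}(\omega_{kl})=\sum_{i,j\geq 1}F^{1/2}_{2i-k,2j-l}\,\omega_{ij}$. Testing Cartier-invariance of $s$ against the three monomials $y,\,x,\,xy$ forces $F_{5,0}=F_{4,1}=F_{4,0}=0$, which makes $[1:0:0]$ a singular point of $C$ --- contradicting smoothness of the quintic alone, with no reference to $Q_0,Q_1,R$ or to the conic bundle. So the statement you reduce to (rank $\geq 2$) is correct and your reduction is sound, but the final computation as you propose it is both unexecuted and routed through an unnecessary and problematic coordinate compatibility; the clean resolution is that Cartier-invariance of a rank-one class is already incompatible with smoothness of \emph{any} plane quintic in characteristic $2$.
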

\begin{proof}
Suppose that $\textrm{char}(k) \neq 2$. Then this result follows from \cite[2.(iv)]{beau}.

Now suppose that $\textrm{char}(k) = 2$. Consider the structure morphism $\pi^{\#}: \mathcal{O}_{C} \to \pi_*\mathcal{O}_{\widetilde{C}}$. Affine locally, this is given by an Artin-Schreier extension since $\pi$ is \'etale of degree $2 = \textrm{char}(k)$. Therefore the cocycle data for $\pi_*\mathcal{O}_{\widetilde{C}}$ is unipotent and we have a short exact sequence
\begin{align}0 \to \mathcal{O}_C \xrightarrow{\pi^{\#}} \pi_*\mathcal{O}_{\widetilde{C}} \to \mathcal{O}_C \to 0.\end{align}
Since $\mathcal{O}(H)$ is flat, we get
$$0 \to \mathcal{O}_C(H) \to (\pi_*\mathcal{O}_{\widetilde{C}})(H) \to \mathcal{O}_C(H) \to 0$$
but $\pi$ is \'etale and so
$$\HH^0(\mathcal{O}(\pi^*H)) = \HH^0(\pi_*\mathcal{O}(\pi^*H)) = \HH^0(\pi_*\pi^*\mathcal{O}(H)) = \HH^0(\mathcal{O}(H)\otimes \pi_*\mathcal{O}_{\widetilde{C}})$$
which means that we are interested in the second term of the long exact sequence
\begin{center}\begin{tikzpicture}[descr/.style={fill=white,inner sep=1.5pt}]
        \matrix (m) [
            matrix of math nodes,
            row sep=1em,
            column sep=2.5em,
            text height=1.5ex, text depth=0.25ex
        ]
        { 0 &  \HH^0(\mathcal{O}_C(H)) & \HH^0((\pi_*\mathcal{O}_{\widetilde{C}})(H)) & \HH^0(\mathcal{O}_C(H)) \\
            & \HH^1(\mathcal{O}_C(H)) & \HH^1((\pi_*\mathcal{O}_{\widetilde{C}})(H)) &\HH^1(\mathcal{O}_C(H)) & 0.\\
        };

        \path[overlay,->, font=\scriptsize,>=latex]
        (m-1-1) edge (m-1-2)
        (m-1-2) edge (m-1-3)
        (m-1-3) edge (m-1-4)
        (m-1-4) edge[out=355,in=175] node[descr,yshift=0.3ex] {$\delta$} (m-2-2)
        (m-2-2) edge (m-2-3)
        (m-2-3) edge (m-2-4)
        (m-2-4) edge (m-2-5);
\end{tikzpicture}\end{center}
Since $h^0(\mathcal{O}_C(H)) = 3$ it suffices to prove that $\rank(\delta) \geq 2$. 

By Lemma \ref{cupext}, the connecting homomorphism $\delta$ is given by cup product by the extension class $(4.2)$. Using Serre duality we can also represent $\delta$ as a map $ \HH^0(\mathcal{O}_C(H)) \to \HH^0(\mathcal{O}_C(H))^{\vee}$, or alternatively as a quadratic form 
$$s:  \HH^0(\mathcal{O}_C(H)) \otimes \HH^0(\mathcal{O}_C(H)) \to k.$$
It suffices to show that $\rank(s) \geq 2$. If $\rank(s) = 0$, then $\delta$ is the zero map, so $(4.2)$ must be the trivial extension. This means $\widetilde{C} \cong C \cup C$, so we get a contradiction by Corollary \ref{tildeConnected}. 

Suppose now that $\rank(s) = 1$. Let $\langle x,y,z \rangle$ be a basis for $\HH^0(\mathcal{O}_C(H))$. We may assume after linear change of variables that
$$s = x^2.$$
The set $\HH^1(C,\mathbb{F}_2)$ classifies $\mathbb{Z}/2\mathbb{Z}-$covers of $C$. The Artin-Schreier sequence
$$\HH^1(C,\mathbb{F}_2) \to \HH^1(\mathcal{O}_C) \xrightarrow{F - 1} \HH^1(\mathcal{O}_C)$$
sends an element $[\pi: \widetilde{C} \to C]$ of $\HH^1(C,\mathbb{F}_2)$ to the extension class $(4.2)$. It follows that the extension class $(4.2)$ in $\HH^1(\mathcal{O}_C)$ is invariant under the Frobenius map. The Frobenius map is dual to the Cartier operator
 $$\mathcal{C}: \HH^0(\Omega_C) \to \HH^0(\Omega_C)$$
under Serre duality. Therefore $s$ must be invariant under $\mathcal{C}$ when realized as an element of $\HH^0(\mathcal{O}_C(2H))^{\vee}$.

Let $F(x,y)$ be the degree $5$ equation defining $C$ on a standard affine chart of $\mathbb{P}^2$, and $F_y := \frac{\partial F}{\partial y}$. Then by \cite[2.3]{costa} we obtain a basis for $\HH^0(\Omega_C)$ given by
$$\left \{\omega_{kl} = x^{k-1}y^{l-1}\frac{dx}{F_y} : k,l \geq 1, k+l \leq 4 \right \}.$$
Write $F(x,y) = \sum_{i,j}F_{ij}x^iy^j$. Then by \cite[2.5]{costa}, the Cartier operator acts as $\mathcal{C}(\omega_{kl}) = \sum_{i,j \geq 1}F_{2i - k, 2j - l}^{1/2}\omega_{ij}$. Under the isomorphism $\HH^0(\Omega_C) \cong \HH^0(\mathcal{O}_C(2))$, the differential forms $\omega_{kl}$ correspond to $x^{k-1}y^{l-1}z^{4 - (k+l)}$, and $s$ is the function which selects the $x^2$ coefficient. For $s$ to be $\mathcal{C}$-invariant, we must have 
$$s\left (\mathcal{C} \left (\sum_{i,j}A_{i,j}x^iy^j \right ) \right ) = \sum_{i,j}F_{5-i,1-j}^{1/2}A_{i,j} = s\left (\sum_{i,j}A_{i,j}x^iy^j \right ) = A_{2,0}$$
for all possible coefficients $A_{i,j}$. Consider the monomials $x^iy^j$ for $(i,j) = \newline (0,1), (1,0), (1,1)$. These cases respectively tell us that $F_{5,0} = F_{4,1} = F_{4,0} = 0$. Then $[1:0:0]$ must be a singular point on $C$ which gives us a contradiction. This shows that $\rank(s) \geq 2$ and so $h^0(\pi^*(H)) \leq 4$.
\end{proof}

\begin{proposition}\label{secH}
We have $h^0(\mathcal{O}(\pi^*H)) = 4$ and so $[\pi^*H] \in P_0(k)$.
\end{proposition}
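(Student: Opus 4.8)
The plan is to establish the lower bound $h^0(\mathcal{O}(\pi^*H)) \geq 4$, since the upper bound is already Lemma \ref{upper}, and then to read off membership in $P_0(k)$ formally. The guiding idea is to realise $\pi^*H$ as a product of two line bundles pulled back from the line $l$. By Lemma \ref{preH} the bundle $L := g^*\mathcal{O}_l(1)$ satisfies $\pi_*L \cong H$, i.e. $\Nm(L) \cong H$, and since $\pi$ is an \'etale double cover with involution $\sigma$ one has $\pi^*\Nm(L) \cong L \otimes \sigma^*L$ (because $\pi^*\pi^{-1}(c) = q + \sigma q$ for each fibre). Hence $\pi^*H \cong L \otimes \sigma^*L$, and the task reduces to computing $h^0(L \otimes \sigma^*L)$.

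The main tool I would use is the base-point-free pencil trick applied to the pencil $V := g^*\HH^0(l,\mathcal{O}_l(1)) \subset \HH^0(\widetilde{C}, L)$, which is base-point-free of dimension $2$ because $g$ is non-constant. Twisting the Koszul resolution $0 \to L^{-1} \to \mathcal{O}_{\widetilde{C}}^{\oplus 2} \to L \to 0$ by $M := \sigma^*L$ and taking global sections produces the exact sequence
$$0 \to \HH^0(\sigma^*L \otimes L^{-1}) \to V \otimes \HH^0(\sigma^*L) \xrightarrow{\mu} \HH^0(\mathcal{O}(\pi^*H)),$$
so that $\dim\operatorname{im}(\mu) = 2\,h^0(\sigma^*L) - h^0(\sigma^*L \otimes L^{-1}) = 2\,h^0(L) - h^0(\sigma^*L \otimes L^{-1})$, using that $\sigma$ is an isomorphism.

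The step I expect to be the main obstacle is showing $\sigma^*L \not\cong L$; granting this, $\sigma^*L \otimes L^{-1}$ is a non-trivial line bundle of degree $0$, whence $h^0(\sigma^*L \otimes L^{-1}) = 0$ and $h^0(\mathcal{O}(\pi^*H)) \geq \dim\operatorname{im}(\mu) = 2\,h^0(L) \geq 4$. To rule out $\sigma^*L \cong L$, I would argue that over the algebraically closed field $k$ a $\sigma$-invariant line bundle admits a $\mathbb{Z}/2\mathbb{Z}$-linearisation: the cocycle obstruction is a scalar which is removable by extracting a square root (possible in every characteristic, as $k$ is perfect and algebraically closed). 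Faithfully flat descent along the finite \'etale cover $\pi$ then yields a line bundle $L'$ on $C$ with $\pi^*L' \cong L$, forcing $\deg L = 2\deg L'$ to be even and contradicting $\deg L = 5$ from Lemma \ref{preH}.

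Finally I would combine $h^0(\mathcal{O}(\pi^*H)) \geq 4$ with the bound $h^0(\mathcal{O}(\pi^*H)) \leq 4$ of Lemma \ref{upper} to obtain equality; note this incidentally forces $h^0(L) = 2$, so the argument is self-consistent. For the membership $[\pi^*H] \in P_0(k)$, I would observe that $\pi^*H = \pi^*\mathcal{L}_0$ is a theta characteristic, since $\pi$ \'etale and $K_C = 2H$ give $2\pi^*H = \pi^*K_C = K_{\widetilde{C}}$; that $\pi_*\pi^*H = 2H = K_C$; and that $h^0(\pi^*H) = 4$ is even. These are exactly the conditions in the defining description of $P_0(k)$, so $[\pi^*H] \in P_0(k)$.
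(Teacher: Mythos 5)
Your proposal is correct and follows essentially the same route as the paper: the paper's explicit short exact sequence $0 \to \mathcal{O}(\sigma L - L) \to \mathcal{O}(\sigma L)^2 \to \mathcal{O}(\sigma L + L) \to 0$ is exactly your Koszul/base-point-free pencil trick for the pencil pulled back along $g$, combined with the same upper bound from Lemma \ref{upper}, the same identification $\pi^*H = L + \sigma L$ via Lemma \ref{preH}, and the same punchline that $\sigma^*L \cong L$ is incompatible with $\deg L = 5$ being odd. The only divergence is local to one sub-step: where you rule out $\sigma^*L \cong L$ by normalising the linearisation cocycle (using that $k$ is algebraically closed, valid in every characteristic) and descending along the \'etale double cover, the paper argues via $\sigma$-invariance of a section of $\mathcal{O}(\sigma L - L)$ to reach the same parity contradiction — your descent argument is, if anything, the more rigorous rendering of that step.
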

\begin{proof}
This proof is based on \cite[2.(iv)]{beau}. We write down the details of the proof in order to show the reader that the proof is valid in all characteristics. Note that $\pi^*H = \sigma L + L$. We know $h^0(\sigma L + L) \leq 4$ by Lemma \ref{upper}, and that $h^0(\sigma L + L) \geq 3$ since $\HH^0(C, \mathcal{O}_C(H)) \subset \HH^0(\sigma L + L)$. All sections coming from $\HH^0(C, \mathcal{O}_C(H))$ are invariant under the action of $\sigma$, so it suffices to find a global section which is not invariant under $\sigma$. 

 Let $L$ be chosen as in Lemma \ref{preH} and choose sections $s,t$ of $\mathcal{O}(L)$ whose divisors do not share any common points. This is possible, for example, by pulling back a basis of the global sections of $\mathcal{O}_{\mathbb{P}^1}(1)$ along $g: \widetilde{C} \to l$. Then $L$ must be globally generated by $s,t$ and we have the short exact sequence 
    $$0 \xrightarrow{} \mathcal{O}(\sigma L-L) \xrightarrow{(t, -s)} \mathcal{O}(\sigma L)^2 \xrightarrow{(s,t)^T} \mathcal{O}(\sigma L +L) \xrightarrow{} 0.$$
Now $h^0(\sigma L - L) = 0$ or $1$ because the only possible effective divisor linearly equivalent to $\sigma L - L$ is $0$. Also $h^0(\sigma L) = h^0(L) \geq 2$ since $s,t \in h^0(L)$. By looking at the long exact sequence in sheaf cohomology, then in fact we get $h^0(\sigma L) = h^0(L) = 2$. Suppose that $h^0(\sigma L + L) = 3$. In this case, $h^0(\sigma L - L) = 1$ so we have a nonzero section $y \in \HH^0(\sigma L - L)$ such that $\Div(y) = \sigma L - L$. However $\sigma(y) = y$ must also hold, which would imply that $\sigma(\sigma L - L) = \sigma L - L$. This in turn implies that $\deg(L)$ must be even, which is a contradiction as $\deg(L) = 5$. Therefore $h^0(\sigma L + L) = 4$ as needed.
\end{proof}

\begin{definition}
Consider the divisor $\Xi + [\pi^*H]$ on $P_0$. The \emph{special singularities} $\Sing_{sp}(\Xi + [\pi^*H])$ are defined as the set
$$ \{x \in P_0(k) : \mult_{\widetilde{\theta}}(x) = 2 \textrm{ and } T_{x,P_0} \subset \textrm{ tangent cone to $\widetilde{\theta}$ at $x$}\}.$$
The \emph{stable singularities} $\Sing_{st}(\Xi + [\pi^*H])$ are defined as the set
$$ \{x \in P_0(k) : \mult_{\widetilde{\theta}}(x) \geq 4 \}.$$
\end{definition}

\begin{lemma}\label{salvage}
Let $x \in \Sing_{sp}(\Xi + [\pi^*H])$ be a special singularity. Let $L_x$ be the associated line bundle. Then the pairing
    $$\langle, \rangle : \HH^0(L_x) \times \HH^0(L_x) \to \HH^0(\Omega_{\widetilde{C}}^1) \cong (T_xJ_{10}\widetilde{C})^{\vee}$$
    sending $(a,b) \mapsto a \otimes \sigma^* b$ is symmetric.
\end{lemma}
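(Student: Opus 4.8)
The plan is to make the pairing explicit and then reduce symmetry to a property of the Kempf tangent cone, exploiting that $x$ lies on $P_0$. First I would record the identification that makes the statement type-check: since $x \in P_0$ we have $\pi_* L_x \cong K_C$, and as $\pi$ is \'etale we have $\Omega^1_{\widetilde{C}} = \pi^* K_C$ and $\pi^*\pi_* L_x \cong L_x \otimes \sigma^* L_x$, hence $L_x \otimes \sigma^* L_x \cong \Omega^1_{\widetilde{C}}$. This shows $\langle a,b\rangle = a\otimes \sigma^* b$ indeed lands in $\HH^0(\Omega^1_{\widetilde{C}})$. This norm identification is $\sigma$-equivariant (the swap of tensor factors corresponds to the canonical action of $\sigma$ on $\sigma^*\Omega^1_{\widetilde{C}} = \Omega^1_{\widetilde{C}}$, using $\pi\sigma = \pi$), from which I would deduce the basic identity $\sigma^*\langle a,b\rangle = \langle b,a\rangle$. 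Consequently symmetry of $\langle,\rangle$ is equivalent to each value being $\sigma$-invariant, and the diagonal values $\langle a,a\rangle$ are automatically $\sigma$-invariant.

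Next I would invoke Proposition \ref{kempf} together with Kempf's theorem on tangent cones. Since $x$ is a special singularity, $\mult_{\widetilde{\theta}}(x) = h^0(L_x) = 2$, and the tangent cone to $\widetilde{\theta}$ at $x$ inside $T_x J_{10}\widetilde{C} \cong \HH^0(\Omega^1_{\widetilde{C}})^{\vee}$ is the locus where the $2\times 2$ matrix $\big(\xi \mapsto \xi(\langle a_i,a_j\rangle)\big)_{i,j}$ is singular, i.e. the quadric $Q(\xi) = \xi(\langle a_1,a_1\rangle)\,\xi(\langle a_2,a_2\rangle) - \xi(\langle a_1,a_2\rangle)\,\xi(\langle a_2,a_1\rangle)$, where $a_1,a_2$ is a basis of $\HH^0(L_x)$. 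Here the multiplication map underlying Kempf's theorem, $\HH^0(L_x)\otimes \HH^0(\Omega^1_{\widetilde{C}}\otimes L_x^{-1}) \to \HH^0(\Omega^1_{\widetilde{C}})$, is precisely our pairing once one uses $\Omega^1_{\widetilde{C}}\otimes L_x^{-1} \cong \sigma^* L_x$.

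I would then compute $T_{x,P_0}$. Translating to the origin gives $T_{x,P_0} = T_0 P^+ = \Ker(\mathrm{d}\pi_*)$, and by Lemma \ref{traceDualSerreDuality} the map $\mathrm{d}\pi_* = \Tr$ is Serre-dual to $\pi^*\colon \HH^0(\Omega^1_C)\to \HH^0(\Omega^1_{\widetilde{C}})$, so $T_{x,P_0} = (\pi^*\HH^0(\Omega^1_C))^{\perp}$. The essential characteristic-$2$ input is the equality $\HH^0(\Omega^1_{\widetilde{C}})^{\sigma} = \pi^*\HH^0(\Omega^1_C)$: the inclusion $\supseteq$ is immediate from $\pi\sigma = \pi$, and for $\subseteq$ I would use the Artin--Schreier sequence $0 \to \mathcal{O}_C \to \pi_*\mathcal{O}_{\widetilde{C}}\to \mathcal{O}_C\to 0$ to identify the $\sigma$-invariant subsheaf of $\pi_*\Omega^1_{\widetilde{C}} = \Omega^1_C\otimes\pi_*\mathcal{O}_{\widetilde{C}}$ with $\Omega^1_C$, so that the invariant global forms are exactly $\pi^*\HH^0(\Omega^1_C)$.

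Finally I would combine these. As $\langle a_1,a_1\rangle$ and $\langle a_2,a_2\rangle$ are $\sigma$-invariant, they lie in $\pi^*\HH^0(\Omega^1_C)$, so every $\xi\in T_{x,P_0}$ annihilates them and $Q|_{T_{x,P_0}} = -\ell_{12}\ell_{21}$, where $\ell_{ij}(\xi) = \xi(\langle a_i,a_j\rangle)$. The special-singularity hypothesis $T_{x,P_0}\subset$ tangent cone forces $Q|_{T_{x,P_0}} = 0$, hence $\ell_{12}\ell_{21} \equiv 0$ as a polynomial on $T_{x,P_0}$; since $k$ is algebraically closed the coordinate ring is a domain, so one factor vanishes identically, say $\ell_{12}\equiv 0$. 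By double orthogonality this means $\langle a_1,a_2\rangle \in (T_{x,P_0})^{\perp} = \pi^*\HH^0(\Omega^1_C)$ is $\sigma$-invariant, whence $\langle a_2,a_1\rangle = \sigma^*\langle a_1,a_2\rangle = \langle a_1,a_2\rangle$, giving symmetry. I expect the main obstacle to be the clean verification that $\HH^0(\Omega^1_{\widetilde{C}})^{\sigma} = \pi^*\HH^0(\Omega^1_C)$ in characteristic $2$, where $\sigma^*$ is unipotent rather than semisimple and the usual $\pm 1$-eigenspace decomposition is unavailable.
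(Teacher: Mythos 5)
Your proof is correct and follows essentially the same route as the paper's: Kempf's determinantal description of the tangent cone, restriction to $T_{x,P_0}$ (on which the diagonal Gram entries vanish because they come from $C$), the special-singularity hypothesis forcing the product of the two off-diagonal entries to vanish identically so that one of them lies in $\pi^*\HH^0(\Omega_C^1)$, and finally $\sigma$-invariance of that entry giving symmetry. The only real difference is cosmetic: the paper observes directly that $\langle s,s\rangle$ and $\langle t,t\rangle$ are norms of sections, hence pulled back from $C$, which sidesteps your (correct, but strictly stronger) Artin--Schreier verification of the equality $\HH^0(\Omega_{\widetilde{C}}^1)^{\sigma} = \pi^*\HH^0(\Omega_C^1)$ in characteristic $2$.
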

\begin{proof}
    We have $h^0(L_x) = 2$ by the Riemann-Kempf theorem, so we can choose a basis $\{ s, t \}$ for $H^0(L_x)$. By \cite[Theorem 2]{gk}, the tangent cone to $\widetilde{\theta}$ at $x$ inside $T_xJ_{10}\widetilde{C}$ is described by $\det(W) = 0$ where 
    $$W = \begin{pmatrix}
\langle s, s \rangle & \langle t, s \rangle \\
\langle s, t \rangle & \langle t, t \rangle
\end{pmatrix}.$$
We have an isomorphism $\HH^0(\Omega_{\widetilde{C}}^1) \cong (T_0J\widetilde{C})^{\vee}$ given by Serre duality and Proposition \ref{tspace}, and the action of $\sigma$ respects this isomorphism. Moreover, $(T_0J\widetilde{C})^{\vee} \cong (T_xJ_{10}\widetilde{C})^{\vee}$ via translation by $x$, and so $ (T_xJ_{10}\widetilde{C})^{\vee}$ attains an action by $\sigma$.

 The $6$-dimensional subspace $\HH^0(\Omega_C^1) \subset \HH^0(\Omega_{\widetilde{C}}^1)$ is a $\sigma$-invariant subspace. Note that by functoriality, the pullback of differential forms $\pi^* : \HH^0(\Omega_C^1) \to \HH^0(\Omega_{\widetilde{C}}^1)$ corresponds to pullback of cotangent spaces $\Nm^*: (T_{\Nm(x)}J_{10}C)^{\vee} \to (T_xJ_{10}\widetilde{C})^{\vee}$ under the norm map $\Nm$. The cotangents which lie in the image $\im(f)$ vanish on $T_xP_0 \subset T_xJ_{10}\widetilde{C}$. For dimension reasons this leads to a short exact sequence
 $$0 \to (T_{Nm(x)}J_{10}C)^{\vee} \xrightarrow{\Nm^*} (T_xJ_{10}\widetilde{C})^{\vee} \xrightarrow{i^*} (T_xP_0)^{\vee} \to 0.$$
Consider the pullback $i^*\det(W)$ of $\det(W)$ to $(T_xP_0)^{\vee} \wedge (T_xP_0)^{\vee}$. The vanishing of $i^*\det(W)$ describes the tangent cone to $\widetilde{\theta}|_{P_0}$ at $x$ inside $P_0$. Since $x$ is a special singulariy, then $T_{x,P_0}$ lies inside the tangent cone to $\widetilde{\theta}$ at $x$. This means that $i^*\det(W)$ must vanish on $T_xP_0$. 

The cotangents $\langle s, s \rangle$ and $\langle t, t \rangle$ lie in $(T_{\Nm(x)}JC)^{\vee}$ since they are norms of $s$ and $t$ respectively. As a result pulling the matrix $W$ back along $i$ yields
 $$i^*W = \begin{pmatrix}
0 & i^*\langle t, s \rangle \\
i^*\langle s, t \rangle & 0
\end{pmatrix}.$$
 Whose determinant must vanish. Thus either $i^*\langle s, t\rangle$ or $i^*\langle t, s\rangle$ must vanish. Suppose without loss of generality that $i^*\langle s,t \rangle = 0$. Then $\langle s, t \rangle$ lies in $(T_{\Nm(x)}JC)^{\vee}$ and is $\sigma$-invariant. Since $\sigma \langle s , t \rangle = \langle t, s \rangle$, we conclude that $\langle, \rangle$ is $\sigma$-invariant.
\end{proof}

\begin{corollary}\label{mumSS}
Let $L$ be a line bundle on $\widetilde{C}$ such that $h^0(L) = 2$ and $\Nm(L) = K_C$. Then $[L] \in P_0$ is a special singularity if and only if
$$L = \pi^*(M)\left (\sum_{i}x_i \right )$$
for some points $x_i \in \widetilde{C}$ and a line bundle $M$ on $C$ with $h^0(M) = 2$.
\end{corollary}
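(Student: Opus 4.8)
\emph{Proof sketch.} The plan is to show that both stated conditions are equivalent to the symmetry of the pairing $\langle a,b\rangle = a\otimes\sigma^* b$ of Lemma~\ref{salvage}, and then to chain these equivalences. Throughout I fix a basis $\{s,t\}$ of $\HH^0(L)$ (legitimate as $h^0(L)=2$), and note that since $\pi$ is \'etale, $L\otimes\sigma^* L\cong\pi^*\Nm(L)=\pi^* K_C=K_{\widetilde{C}}$, so the pairing genuinely lands in $\HH^0(\Omega^1_{\widetilde{C}})$. The computation $\sigma^*\langle s,t\rangle=\sigma^* s\cdot\sigma^{*2}t=\sigma^* s\cdot t=\langle t,s\rangle$ will be used repeatedly; in particular symmetry of the pairing is exactly the statement that $\langle s,t\rangle$ is $\sigma$-invariant.

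First I would settle one piece of characteristic-$2$ linear algebra on differentials: that under the identifications in the proof of Lemma~\ref{salvage} one has $\Ker(i^*)=\im(\Nm^*)=(\HH^0(\Omega^1_{\widetilde{C}}))^{\sigma}$. The inclusion $\im(\Nm^*)=\pi^*\HH^0(\Omega^1_C)\subseteq(\HH^0\Omega^1_{\widetilde{C}})^{\sigma}$ is clear; for the reverse I would argue sheaf-theoretically. Writing $\Omega^1_{\widetilde{C}}=\pi^*\Omega^1_C$ and applying the projection formula gives $\HH^0(\Omega^1_{\widetilde{C}})=\HH^0(C,\Omega^1_C\otimes\pi_*\mathcal{O}_{\widetilde{C}})$, and since $\HH^0$ commutes with taking $\sigma$-invariants while $(\pi_*\mathcal{O}_{\widetilde{C}})^{\sigma}=\mathcal{O}_C$ by the unipotent Artin--Schreier sequence $0\to\mathcal{O}_C\to\pi_*\mathcal{O}_{\widetilde{C}}\to\mathcal{O}_C\to 0$ of Lemma~\ref{upper}, the invariants are exactly $\HH^0(\Omega^1_C)=\pi^*\HH^0(\Omega^1_C)$, a $6$-dimensional space matching $\dim\Ker(i^*)$.

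With this, the equivalence ``$[L]$ a special singularity $\iff$ the pairing is symmetric'' follows from the matrix $W$ of Lemma~\ref{salvage}. Because $\langle s,s\rangle,\langle t,t\rangle\in\im(\Nm^*)=\Ker(i^*)$, the restricted tangent cone is $i^*\det W=-(i^*\langle s,t\rangle)(i^*\langle t,s\rangle)$, so $[L]$ is special iff $i^*\langle s,t\rangle=0$ or $i^*\langle t,s\rangle=0$. If either vanishes, the relevant form lies in $\Ker(i^*)$, i.e.\ is $\sigma$-invariant, and then $\sigma^*\langle s,t\rangle=\langle t,s\rangle$ forces $\langle s,t\rangle=\langle t,s\rangle$, so the pairing is symmetric; conversely if the pairing is symmetric then $\langle s,t\rangle$ is $\sigma$-invariant, hence in $\Ker(i^*)$, so $i^*\langle s,t\rangle=0$ and $[L]$ is special.

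Finally I would match symmetry with the explicit form of $L$. For the forward direction, symmetry reads $s\,\sigma^* t=t\,\sigma^* s$, so $f:=s/t$ is $\sigma$-invariant and descends to $\bar f\in k(C)$. Separating the common fixed part $F$ of $\Div(s),\Div(t)$ from the moving parts $E_s,E_t$ (of disjoint support), the relation $E_s-E_t=\Div(f)=\pi^*\Div(\bar f)$ together with uniqueness of the reduced form forces $E_s=\pi^* A$, $E_t=\pi^* B$ for effective $A,B$ on $C$; thus $L=\pi^* M(\sum_i x_i)$ with $M=\mathcal{O}_C(A)$ and $\sum_i x_i=F$. Here $A$ and $B$ are distinct members of $|M|$ (as $\bar f$ is nonconstant), so $h^0(M)\ge 2$, while $\pi^*\HH^0(M)\hookrightarrow\HH^0(\pi^* M)\hookrightarrow\HH^0(L)$ gives $h^0(M)\le 2$, hence $h^0(M)=2$. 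Conversely, given $L=\pi^* M(F)$ with $h^0(M)=2$ and a basis $\{s',t'\}$ of $\HH^0(M)$, the sections $\pi^* s'\cdot\chi_F,\ \pi^* t'\cdot\chi_F$ (with $\chi_F$ the tautological section of $\mathcal{O}(F)$) are independent and hence a basis of $\HH^0(L)$, and one computes $\langle\pi^* s'\chi_F,\pi^* t'\chi_F\rangle=\pi^*(s't')\cdot\chi_F\sigma^*\chi_F$, which is symmetric since $\chi_F\sigma^*\chi_F$ is $\sigma$-invariant.

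I expect the genuine obstacle to be the characteristic-$2$ identification $(\HH^0\Omega^1_{\widetilde{C}})^{\sigma}=\Ker(i^*)$ of the second paragraph: in characteristic $\neq 2$ one simply splits into $\pm1$-eigenspaces, but here $\sigma^*-1$ is only nilpotent and the symmetric/antisymmetric dichotomy collapses, so the argument must run through the unipotence of $\pi_*\mathcal{O}_{\widetilde{C}}$. This is precisely the point at which Mumford's classical reasoning would fail, and it is what makes the symmetry criterion---and thus the whole corollary---go through.
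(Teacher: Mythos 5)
Your proof is correct, and its skeleton matches the paper's: the paper proves this corollary by invoking Mumford's proposition on \cite[page 343]{m-pv} and substituting Lemma \ref{salvage} for the one step of Mumford's argument (special singularity $\Rightarrow$ symmetric pairing) that genuinely uses $\mathrm{char}(k)\neq 2$, and your third and fourth paragraphs are precisely a reconstruction of the remaining, characteristic-free part of that argument (symmetry $\Leftrightarrow$ $L=\pi^*M\left(\sum_i x_i\right)$, via descent of $f=s/t$ to $k(C)$ and uniqueness of the positive/negative parts of $\Div(f)=\pi^*\Div(\bar f)$). Where you genuinely add something is the implication ``symmetric $\Rightarrow$ special'', for which you prove the characteristic-$2$ identification $\left(\HH^0(\Omega^1_{\widetilde C})\right)^{\sigma}=\pi^*\HH^0(\Omega^1_C)=\Ker(i^*)$ via the projection formula and the unipotence of $\pi_*\mathcal{O}_{\widetilde C}$; this lemma is correct (left-exactness of global sections and flatness of $\Omega^1_C$ let invariants pass through the tensor product, and $(\pi_*\mathcal{O}_{\widetilde C})^{\sigma}=\mathcal{O}_C$ for an Artin--Schreier cover), and it is not stated in the paper. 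Note, however, that for the corollary itself it can be bypassed: in the ``if'' direction, with the basis $\pi^*s'\chi_F,\ \pi^*t'\chi_F$, every entry of the matrix $W$ is visibly a pullback---e.g.\ $\langle \pi^*s'\chi_F,\pi^*t'\chi_F\rangle=\pi^*(s't')\cdot\chi_F\sigma^*\chi_F$ descends to $C$ because $F+\sigma F=\pi^*\pi_*F$---hence lies in $\im(\Nm^*)\subseteq\Ker(i^*)$, which is the \emph{easy} inclusion already contained in the exact sequence from the proof of Lemma \ref{salvage}, so $i^*\det W=0$ and $[L]$ is special directly; this is how Mumford's converse runs and is what the paper's citation implicitly relies on. So your route buys a cleaner intermediate statement---special $\Leftrightarrow$ symmetric, valid in all characteristics---at the price of one extra (correct) lemma, whereas the paper gets by with the one-directional Lemma \ref{salvage} plus the direct pullback observation.
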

\begin{proof}
This follows from the proposition on \cite[page 343]{m-pv}, however we use Lemma \ref{salvage} as a subtitute for showing that the pairing $\langle, \rangle$ is symmetric.
\end{proof}

\begin{proposition}\label{sing}
The sole singularity of the theta divisor $\Xi \subset P_+$ is $0_{P_+}$.
\end{proposition}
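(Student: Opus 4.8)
The plan is to transport the question to the component $P_0$ and then invoke the singularity description already obtained. Translation by $[\pi^*H]$ is an isomorphism $P^+ \xrightarrow{\sim} P_0$ which carries $\Xi$ to $\Xi + [\pi^*H] = D$ (Definition \ref{xi}, with $\mathcal{L}_0 = H$) and carries $0_{P^+}$ to $[\pi^*H]$; so it is equivalent to show that $[\pi^*H]$ is the only singular point of $D$. By Corollary \ref{descSing} the singular locus of $D = \Xi + [\pi^*H]$ is $\Sing_{st}(\Xi + [\pi^*H]) \cup \Sing_{sp}(\Xi + [\pi^*H])$, and by Riemann--Kempf (Proposition \ref{kempf}) together with Proposition \ref{secH} the point $[\pi^*H]$ is a stable singularity, since $\mult_{\widetilde\theta}([\pi^*H]) = h^0(\pi^*H) = 4$. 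Thus it remains to prove (a) that $[\pi^*H]$ is the only stable singularity and (b) that there are no special singularities.

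For (a) I would show that $\pi^*H$ is the unique line bundle $L$ on $\widetilde C$ with $\Nm(L) \cong K_C$ and $h^0(L) \geq 4$, following Beauville's geometric analysis \cite[Proposition 6]{beau}, which is characteristic-independent. An effective divisor in $|L|$ is a sum of lines $l_i'$ on $X$ meeting the good line $l$ and lying in fibres of the conic bundle, and the condition $\Nm(L) \cong K_C \cong \mathcal{O}_C(2)$ forces the projection points $\pi(l_i')$ to lie on a conic in $\mathbb{P}^2$. A dimension count on these configurations, using the smoothness (hence irreducibility) of $X$ to prevent degeneration of the lines, shows that $h^0(L) \geq 4$ is attained only for $L \cong \pi^*H$. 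This step reproduces the fine geometry of the cubic threefold, and I expect it to be the main obstacle, since one must check that Beauville's argument survives in characteristic $2$.

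For (b) I would argue by contradiction using Corollary \ref{mumSS}: a special singularity would be of the form $L = \pi^*(M)(\sum_i x_i)$ with $h^0(M) = 2$ and $h^0(L) = 2$. Since $C$ is a smooth plane quintic it is non-hyperelliptic of gonality $4$, so $\deg M \in \{4,5\}$, and the norm relation $M^{2} \otimes \mathcal{O}_C(\sum_i \pi(x_i)) \cong \mathcal{O}_C(2)$ together with $\deg L = 10$ determines the number of points. If $\deg M = 4$, then $M \cong H(-p)$ is the projection from a point $p \in C$; the norm relation gives $\pi(x_1) + \pi(x_2) \sim 2p$, hence $\pi(x_1) = \pi(x_2) = p$ by non-hyperellipticity, and a short computation yields either $L \cong \pi^*H$ (so $h^0(L) = 4 \neq 2$) or $L \cong \pi^*H \otimes \mathcal{O}(q - \sigma q)$ with $\pi(q) = p$; the latter lies in $P_1$ rather than $P_0$ by Proposition \ref{parity}. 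Both are contradictions.

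The remaining case $\deg M = 5$, where $L = \pi^*M$ and $M$ is an even theta characteristic of $C$ with $h^0(M) = 2$, is the delicate one, and I expect it to be the other main obstacle: here Lemma \ref{salvage} gives no leverage, because every section of $\pi^*M$ is $\sigma$-invariant, so the pairing is automatically symmetric. Instead I would run the Artin--Schreier computation of Lemma \ref{upper}: tensoring $0 \to \mathcal{O}_C \to \pi_*\mathcal{O}_{\widetilde C} \to \mathcal{O}_C \to 0$ by $M$ gives $h^0(\pi^*M) = 4 - \rank(\delta)$, where $\delta$ is cup product with the Artin--Schreier class, and a Cartier-operator argument in the spirit of Lemma \ref{upper} should force $\rank(\delta) \leq 1$, whence $h^0(\pi^*M) \geq 3$ and so $= 4$ by the parity on $P_0$. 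Then $[\pi^*M]$ is a stable singularity, hence equals $[\pi^*H]$ by (a), and in particular is not a new singular point. Assembling the cases gives $\Sing(D) = \{[\pi^*H]\}$, that is, $\Sing(\Xi) = \{0_{P^+}\}$.
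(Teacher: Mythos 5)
Your overall architecture coincides with the paper's proof, which is essentially one sentence: run Beauville's proof of \cite[Proposition 2]{beau} verbatim, substituting Corollary \ref{mumSS} for Mumford's characteristic-$\neq 2$ criterion when dealing with the special singularities. Your reduction to $P_0$, the identification of $[\pi^*H]$ as a stable singularity via Proposition \ref{kempf} and Proposition \ref{secH}, the dichotomy of Corollary \ref{descSing}, and your $\deg M = 4$ analysis (every $g^1_4$ on the quintic is a projection $H(-p)$, non-hyperellipticity forces $\pi(x_1)+\pi(x_2) = 2p$, and Proposition \ref{parity} pushes the residual case into $P_1$) are all correct and are exactly Beauville's steps. (A minor point: the singularity statement in Beauville is his Proposition 2; his Proposition 6 is the tangent-cone statement.)

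The genuine gap is your case $\deg M = 5$. The paper's stance is that, once Corollary \ref{mumSS} is granted, Beauville's geometric argument disposes of this case in every characteristic; you instead assert that a new argument is needed and then do not supply one: ``a Cartier-operator argument in the spirit of Lemma \ref{upper} \emph{should} force $\rank(\delta) \leq 1$'' is a conjecture, not a proof, and it cannot be obtained by mimicking Lemma \ref{upper}. That lemma proves the \emph{reverse} inequality $\rank \geq 2$, for one specific pairing on $\HH^0(\mathcal{O}_C(H))$, by showing that $\rank \leq 1$ would force $C$ to be singular; to get $\rank(\delta_M) \leq 1$ for an \emph{arbitrary} even theta characteristic $M$ with $h^0(M) = 2$ you must exclude $\rank = 2$, a statement about all vanishing theta-nulls of the discriminant quintic for which you offer no mechanism. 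If your implicit claim — that Beauville's treatment of this case breaks in characteristic $2$, e.g.\ because it uses the splitting $\pi_*\mathcal{O}_{\widetilde{C}} \cong \mathcal{O}_C \oplus \eta$ — were right, that would be a criticism of the paper's one-line proof as well; but either way your proposal neither follows Beauville nor completes its replacement, so the case is left open. There is also a logical wobble in how you close it: $h^0(\pi^*M) \geq 3$ already contradicts the defining property $h^0(L) = 2$ of a special singularity, so the detour ``$=4$ by parity, hence stable, hence equals $[\pi^*H]$'' is redundant, and it could never be a benign identification in any event, since $h^0(M) = 2 \neq 3 = h^0(H)$ and in characteristic $2$ the map $\pi^*$ is injective on $k$-points; moreover if $\rank(\delta_M) = 1$ then $h^0(\pi^*M) = 3$ is odd, and Proposition \ref{lc} places $[\pi^*M]$ in $P_1$, outside the locus under discussion altogether.
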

\begin{proof}
This follows from \cite[Proposition 2]{beau}, however we use Corollary \ref{mumSS} when dealing with special singularities.
\end{proof}

\subsection{Cubic threefolds with no good lines}
In this subsection we will classify the cubic threefolds which have no good lines. It turns out that they can only exist in characteristic $2$, and there is a single isomorphism class over $k$ represented by the Fermat cubic threefold.

\begin{definition}
Let $X$ be a smooth projective variety with an embedding $i:X \to \mathbb{P}^n$. Let $(\mathbb{P}^n)^{\vee}$ be the dual projective space of hyperplanes. The \emph{Gauss map} $g:X \to (\mathbb{P}^n)^{\vee}$ is defined by sending $x \in X$ to the tangent space $T_xX$.
\end{definition}

\begin{definition}
Let $k$ be an arbitrary field of characteristic $2$. A \emph{Hermitian threefold} is a cubic hypersurface $\mathbb{V}_+(f) \subset \mathbb{P}^4_k$ such that $f$ contains no squarefree monomials.
\end{definition}

\begin{proposition}\label{2bic}
Let $k$ be an separably closed field of characteristic $2$. Then every smooth Hermitian threefold over $k$ is isomorphic to the Fermat cubic $X = \mathbb{V}_+(x_0^3 + x_1^3 + x_2^3 + x_3^3 + x_4^3)$.
\end{proposition}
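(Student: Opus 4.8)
The plan is to encode a Hermitian threefold by its coefficient matrix and reduce the statement to a surjectivity result of Lang--Steinberg type. Since the non-squarefree degree-$3$ monomials in $x_0,\dots,x_4$ are exactly $x_i^2x_j$ (the case $i=j$ giving $x_i^3$), I would write $f=\sum_{i,j}a_{ij}x_i^2x_j$ and record the coefficients in a matrix $A=(a_{ij})$, the Fermat cubic being the case $A=I$. The first step is to convert smoothness into a condition on $A$. In characteristic $2$ one computes $\partial_{x_k}f=\sum_i a_{ik}x_i^2$, so the Jacobian vector is $A^{T}x^{(2)}$ with $x^{(2)}=(x_0^2,\dots,x_4^2)$. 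Because $x^{(2)}=0$ forces $x=0$, the singular locus of $\mathbb{V}_+(f)$ over $\overline k$ is empty if and only if $A^{T}$, hence $A$, is invertible; so $X$ is a smooth Hermitian threefold precisely when $A\in\GL_5(k)$.

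Next I would record the transformation law under a linear change of coordinates $x=My$. Using that squaring is additive in characteristic $2$, one gets $f=\sum_{k,l}\big(\sum_{i,j}M_{ik}^2a_{ij}M_{jl}\big)y_k^2y_l$, so the new coefficient matrix is $\Frob(M)^{T}AM$, where $\Frob(M)$ denotes the entrywise square of $M$. Thus $\mathbb{V}_+(f)$ is isomorphic to the Fermat cubic as soon as there is an $M\in\GL_5(k)$ with $\Frob(M)^{T}AM=I$, equivalently (setting $N=M^{-1}$) with $A=\Frob(N)^{T}N$. So everything reduces to showing that the morphism $\tau\colon\GL_5\to\GL_5$, $\tau(N)=\Frob(N)^{T}N$, hits every invertible $A$ over $k$.

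To prove this I would invoke the Lang--Steinberg theorem. Define $F\colon\GL_5\to\GL_5$ by $F(N)=(\Frob(N)^{T})^{-1}$; a short computation shows $F$ is a group endomorphism and that $F^2(N)=\Frob^2(N)$, the geometric Frobenius of $\GL_5$ relative to $\mathbb{F}_4$, so $F$ is a Steinberg endomorphism of the connected group $\GL_5$. Its Lang map $L_F(N)=N^{-1}F(N)$ equals $\big(\Frob(N)^{T}N\big)^{-1}=\tau(N)^{-1}$, so surjectivity of $\tau$ is equivalent to surjectivity of $L_F$. By Lang--Steinberg, $L_F$ is surjective and is a torsor under the finite group $\GL_5^{F}$, hence finite étale; separability is automatic since $d\Frob=0$ forces $dL_F=\mathrm{id}$. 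Given $A\in\GL_5(k)$, the fibre $L_F^{-1}(A^{-1})$ is then a non-empty finite étale $k$-scheme, which has a $k$-point because $k$ is separably closed; this produces $N\in\GL_5(k)$ with $A=\Frob(N)^{T}N$, and $M=N^{-1}$ transforms $f$ into the Fermat equation.

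The main obstacle is precisely this last step: obtaining a solution over a field that is only separably closed rather than algebraically closed. Plain Lang's theorem is stated over $\overline{\mathbb{F}}_p$, so the point is to see that $L_F$ is finite étale and defined over the prime field, whence its surjectivity is geometric while the existence of rational points in its fibres follows from separable closedness alone. Verifying that $F$ really is a Steinberg endomorphism (that $F^2$ is a Frobenius and that $\GL_5^{F}$ is finite) is where the characteristic-$2$ identity $d\Frob=0$ does the essential work, and it is also what forces the hypothesis on $k$ to be separable closedness rather than algebraic closedness.
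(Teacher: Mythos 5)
Your proof is correct, but it takes a genuinely different route from the paper: the paper's entire proof of this proposition is a citation to the classification of $q$-bic hypersurfaces, \cite[Corollary 2.7]{qbic}, which rests on a linear-algebraic normal-form classification of the associated Frobenius-sesquilinear forms. You instead encode the threefold by its Gram matrix $A$, reduce the claim to surjectivity of $\tau(N)=\Frob(N)^{T}N$ on $\GL_5$, and obtain that from Lang--Steinberg applied to $F(N)=(\Frob(N)^{T})^{-1}$; your observation that $d\Frob=0$ makes the Lang map \'etale, so that its non-empty fibres over $k$-points are \'etale $k$-schemes and hence have $k$-points, is exactly the right mechanism and is precisely where separable (rather than algebraic) closedness enters. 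Two small points deserve a line in a polished write-up: (i) in the smoothness computation, a common zero of the partials automatically lies on $X$ because Euler's relation in characteristic $2$ gives $\sum_k x_k\partial_{x_k}f=3f=f$, so smooth is indeed equivalent to $A\in\GL_5$; (ii) passing from Lang--Steinberg over $\overline{\mathbb{F}}_2$ to surjectivity over $k$ needs the remark that the image of $L_F$ is constructible and contains every closed point of the Jacobson scheme $\GL_{5,\mathbb{F}_2}$, hence is everything, and scheme-theoretic surjectivity is preserved by the base change $\mathbb{F}_2\to k$ (alternatively, run the standard Lang argument directly over $\overline{k}$: since $dF=0$ every twisted map $g\mapsto g^{-1}aF(g)$ is \'etale, hence has dense open image, and two dense opens in the irreducible variety $\GL_{5,\overline{k}}$ meet). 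What your approach buys is a self-contained, conceptual proof that explains the field hypothesis structurally; what the paper's citation buys is brevity and access to Cheng's finer results on $q$-bics (normal forms in the degenerate case, automorphism groups), which the paper relies on elsewhere in any case.
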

\begin{proof}
This follows from \cite[Corollary 2.7]{qbic}.
\end{proof}

\begin{lemma}\label{NumberLines}
Let $X$ be a smooth cubic threefold and suppose that $\textrm{char}(k) = 2$. Let $\mathbb{L} \subset F(X) \times X$ be the universal family of lines on $X$, and $\pi: \mathbb{L} \to X$ the projection. Then $\pi$ is a generically finite morphism of degree $\deg(\pi) = 6$, and the separable degree of $\pi$ is either $3$ or $6$. 
\end{lemma}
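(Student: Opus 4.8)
The plan is to compute the total degree $\deg(\pi)=[k(\mathbb{L}):k(X)]$ directly as the length of a general fibre, and then to read off the separable degree from the observation that the inseparable degree must be a power of $\mathrm{char}(k)=2$. First I would note that $\pi$ is generically finite: by Proposition \ref{Fsurface} the surface $F(X)$ is smooth and connected, so the $\mathbb{P}^1$-bundle $\mathbb{L}$ is an irreducible threefold and $\dim\mathbb{L}=3=\dim X$; since every point of $X$ lies on a line of $X$, the map $\pi$ is surjective, hence dominant, and a dominant morphism of irreducible varieties of equal dimension is generically finite.

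To compute the degree I would fix a closed point $x\in X$ and choose coordinates so that $x=[1:0:0:0:0]$ and $T_xX=\mathbb{V}_+(x_4)$. Smoothness of $X$ at $x$ then puts the defining equation in the normal form
$$f = x_0^2 x_4 + x_0\,Q(x_1,x_2,x_3,x_4) + C(x_1,x_2,x_3,x_4)$$
with $Q$ quadratic and $C$ cubic. Parametrising a line through $x$ by its direction $v=[v_1:v_2:v_3:v_4]$ and substituting, one finds that the line lies on $X$ exactly when $v_4=0$, $Q(v)=0$ and $C(v)=0$. Thus the scheme-theoretic fibre $\pi^{-1}(x)$ is identified with the intersection $\mathbb{V}_+(\bar Q,\bar C)\subset\mathbb{P}^2$ of the conic $\bar Q:=Q(x_1,x_2,x_3,0)$ and the cubic $\bar C:=C(x_1,x_2,x_3,0)$ in the plane $\mathbb{V}_+(x_4)$ of directions.

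The hard part will be showing this intersection is always $0$-dimensional of length $6$. A common component of $\bar Q$ and $\bar C$ could only be a line in $\mathbb{P}^2$, corresponding to a pencil of lines through $x$ whose union is a $2$-plane $W\ni x$ contained in $X$. But a smooth cubic threefold contains no $2$-plane: if $W=\mathbb{V}_+(x_3,x_4)\subset X$ then $f=x_3A+x_4B$ for conics $A,B$, and the conics $A|_W,B|_W$ on $W\cong\mathbb{P}^2$ meet by B\'ezout, forcing a singular point of $X$ on $W$, a contradiction. Hence $\bar Q$ and $\bar C$ meet properly and $\mathrm{length}\,\mathbb{V}_+(\bar Q,\bar C)=2\cdot 3=6$. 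In particular every fibre is finite, so $\pi$ is finite; as $\mathbb{L}$ is smooth and $X$ is regular, miracle flatness makes $\pi$ flat, whence $\deg(\pi)=\mathrm{length}\,\pi^{-1}(x)=6$.

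Finally, write $\deg(\pi)=\deg_s(\pi)\deg_i(\pi)$ with $\deg_i(\pi)$ the inseparable degree. Since $\deg_i(\pi)$ is a power of $\mathrm{char}(k)=2$ and divides $6$, it must equal $1$ or $2$, so $\deg_s(\pi)\in\{6,3\}$. The substantive content is the degree computation, namely generic finiteness together with the no-$2$-plane argument guaranteeing that the general fibre has length exactly $6$; once this is in place, the statement about the separable degree is immediate from the elementary fact that the inseparable degree is a power of $2$ dividing $6$.
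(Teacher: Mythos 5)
The paper's own proof is a one-line citation to \cite[Proposition 1.7]{altman}, so your self-contained argument is a genuinely different (and more informative) route. Most of it is sound: the normal form $f = x_0^2x_4 + x_0Q + C$ at a smooth point is valid in characteristic $2$, the identification of $\pi^{-1}(x)$ with $\mathbb{V}_+(\bar{Q},\bar{C})\subset\mathbb{P}^2$ is the standard description of the scheme of lines through a point, miracle flatness correctly upgrades finiteness of fibres to flatness (using smoothness of $\mathbb{L}$ via Proposition \ref{Fsurface}), and the closing observation that the inseparable degree is a power of $2$ dividing $6$ is exactly what the separable-degree claim amounts to.

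There is, however, a genuine gap at the crucial step. You assert that a common component of $\bar{Q}$ and $\bar{C}$ ``could only be a line in $\mathbb{P}^2$.'' That is unjustified: the shared irreducible factor could equally be the conic $\bar{Q}$ itself, namely when $\bar{Q}$ is irreducible and $\bar{C} = \bar{Q}\cdot\ell$ for a linear form $\ell$. In that case the union of the corresponding lines through $x$ is not a $2$-plane but the quadric cone $\mathbb{V}_+(x_4,\bar{Q})\subset\mathbb{P}^4$ with vertex $x$, and your no-plane argument says nothing about it; the case is not vacuous a priori, since irreducible conics exist in characteristic $2$ (e.g.\ $x_1^2+x_2x_3$). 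Fortunately the same style of argument closes it. Writing $Q = \bar{Q} + x_4Q'$ and $C = \bar{Q}\ell + x_4C'$, one gets $f = x_4A + \bar{Q}L$ with $A = x_0^2 + x_0Q' + C'$ quadratic and $L = x_0 + \ell$ linear; the four hypersurfaces $x_4 = \bar{Q} = A = L = 0$ in $\mathbb{P}^4$ have a common point by the projective dimension theorem (here one uses that $k$ is algebraically closed, the standing assumption of this section), and at such a point $f$ and all its partial derivatives vanish, contradicting smoothness of $X$. In fact this formulation subsumes your plane case as well: if $G$ of degree $1$ or $2$ divides both $\bar{Q}$ and $\bar{C}$, write $f = x_4A + GB$ and intersect the four hypersurfaces $x_4, G, A, B$. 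With this case added, your proof is complete.
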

\begin{proof}
This follows from \cite[Proposition 1.7]{altman}.
\end{proof}

\begin{lemma}\label{gaussLineInseparable}
Let $X$ be a smooth cubic threefold over $k$ such that $F_0(X) = F(X)$ and $\textrm{char}(k)=2$. Then for any line $L \subset X$, the map $g \big |_L$ is inseparable.
\end{lemma}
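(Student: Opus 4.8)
The plan is to reduce the assertion to a single pointwise condition on the defining equation and then to feed in the global hypothesis. After a linear change of coordinates I would take $L=\mathbb{V}_+(x_0,x_1,x_2)$ and write
$f = x_3^2L_0 + x_3x_4L_1 + x_4^2L_2 + x_3Q_0 + x_4Q_1 + R$
with $L_i$ linear, $Q_i$ quadratic and $R$ cubic in $x_0,x_1,x_2$, so that $g=[\partial_0 f:\dots:\partial_4 f]$. Restricting to $L$ (i.e. setting $x_0=x_1=x_2=0$), the crucial feature of characteristic $2$ is that $\partial_3 f|_L$ and $\partial_4 f|_L$ vanish identically (the term $2x_3L_0$ dies, and $L_1,L_2,Q_0,Q_1$ vanish along $L$), while the first three partials become binary quadratic forms in $x_3,x_4$ whose coefficient vectors are exactly $L_0,L_1,L_2\in k^3$. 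Thus the first three homogeneous coordinates of $g|_L$ are the entries of $x_3^2L_0+x_3x_4L_1+x_4^2L_2$ and the last two are zero. Differentiating, both $\partial_{x_3}(g|_L)$ and $\partial_{x_4}(g|_L)$ are proportional to the vector $L_1$ (the square terms contribute nothing in characteristic $2$). Hence $g|_L$ is inseparable precisely when $L_1=0$, i.e. when the binary form has no cross term; invariantly this says that the direction of $L$ lies in the radical of the second fundamental form of $X$ along $L$, whose polar form is the alternating Hessian of $f$. The whole statement is therefore equivalent to proving $L_1=0$ for every line.

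By Remark \ref{goodlinebasis}, the assumption $L\in F_0(X)$ is equivalent to $L_0,L_1,L_2$ being linearly dependent, i.e. to $\det M=0$ where $M$ is the matrix of coefficient vectors of $L_0,L_1,L_2$. Smoothness of $X$ forces the three quadrics $\partial_0 f|_L,\partial_1 f|_L,\partial_2 f|_L$ to have no common zero on $L\cong\mathbb{P}^1$, since such a zero would be a singular point of $X$; this rules out the degenerate possibilities that $g|_L$ collapses to a point or reduces (after cancelling a common factor) to an isomorphism, so that $g|_L$ is genuinely a degree-$2$ map onto its image. It is important to note that these two inputs alone do \emph{not} give $L_1=0$: one can exhibit a smooth $X$ carrying a line with $L\in F_0(X)$ but $L_1\neq 0$, for which $g|_L$ is separable. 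Consequently the hypothesis that \emph{every} line, and not just $L$, lies in $F_0(X)$ must be used in an essential way.

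Here lies the main obstacle, and I would attack it through the global geometry of $F(X)$. Since $F(X)$ is an irreducible surface (Proposition \ref{Fsurface}) and the locus where $g|_{L'}$ is separable is open, it suffices to treat the generic line and to derive a contradiction from assuming it separable. I see two promising routes. The first is to deform: the identity $\det M=0$ holds for the entire two-dimensional family of lines, so differentiating it along $T_{[L]}F(X)$ produces first-order relations among the $L_i$ which I expect to force the cross term $L_1$ to vanish. The second is to use the second fundamental form directly: a separable degree-$2$ map $L\to\mathbb{P}^1$ in characteristic $2$ is wildly ramified at a unique point $p_0\in L$, where the direction of $L$ must coincide with the nucleus of the conic cut out by the second fundamental form at $p_0$, forcing that conic to be singular; letting $L$ vary over $F(X)$ the points $p_0$ sweep out $X$, so this conic is singular at a general point of $X$, and then Lemma \ref{NumberLines} (six lines through a general point, of separable degree $3$ or $6$) constrains all six line directions to the singular locus of a conic, which is incompatible with its being smooth. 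In either approach the decisive and genuinely hard step is the passage from the \emph{global} statement $F_0(X)=F(X)$ to the \emph{pointwise} vanishing $L_1=0$; the explicit computation of the first paragraph and the smoothness bound of the second are routine by comparison.
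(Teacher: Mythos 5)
Your first two paragraphs are correct and set the problem up well: with $f = x_3^2L_0 + x_3x_4L_1 + x_4^2L_2 + x_3Q_0 + x_4Q_1 + R$, the partials $\partial_{x_3}f$ and $\partial_{x_4}f$ do vanish identically along $L$ in characteristic $2$, the remaining three restrict to the binary quadrics with coefficient vectors $L_0,L_1,L_2$, and (using smoothness to exclude common zeros of these quadrics, hence the constant and degree-one degenerations) $g|_L$ is inseparable exactly when the cross term $L_1$ vanishes --- a condition that is indeed invariant under $\GL_2$ acting on $(x_3,x_4)$, since squares acquire no cross terms in characteristic $2$. Your second paragraph is also right, and it is the honest part of the proposal: by Remark \ref{goodlinebasis}, membership in $F_0(X)$ only says $L_0,L_1,L_2$ are linearly dependent, and dependence together with smoothness along $L$ does not force $L_1=0$ (e.g.\ $L_0=L_1=x_0$, $L_2=x_1$ makes $x_3^2L_0+x_3x_4L_1+x_4^2L_2$ nowhere vanishing on $L$ while $L_1\neq 0$), so the global hypothesis $F_0(X)=F(X)$ must be used.

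But that is exactly where the proof stops. The decisive step --- deducing $L_1=0$ for \emph{every} line from the fact that the dependence relation holds on the whole surface $F(X)$ --- is left unproven: your route (a) ends with ``I expect to force the cross term to vanish,'' and route (b) rests on several unverified claims (that the unique wild ramification point of a separable $g|_L$ has tangent direction equal to the nucleus of the second-fundamental-form conic, that these points sweep out a dense subset of $X$, and the final incompatibility via Lemma \ref{NumberLines}). What you have is a correct reduction plus a research plan, not a proof. The paper closes precisely this gap by quoting Furukawa: the hypothesis $F_0(X)=F(X)$ means every line $L$ admits a $2$-plane $M$ with $X\cap M = 2L\cup L'$, which is the conclusion of \cite[Lemma 1.7]{furukawa}, and then the proofs of \cite[Proposition 1.6]{furukawa} and \cite[Proposition 1.5]{furukawa} go through and yield inseparability of $g|_L$. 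Your route (b) is in fact close in spirit to Furukawa's tangent-plane analysis, so it is the natural direction to push if you want a self-contained argument; but as written, the core of the lemma is missing.
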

\begin{proof}
Let $L \subset X$ be a line. Then there is a $2$-plane $M$ tangent to $X$ at $L$, and $X \cap M = 2L \cup L'$ for some line $L'$. This means that the conclusion of \cite[Lemma 1.7]{furukawa} holds for all lines $L$ on $X$. Then the proofs of \cite[Proposition 1.6]{furukawa} and \cite[Proposition 1.5]{furukawa} follow for $X$, and \cite[Proposition 1.5]{furukawa} in particular shows that  $g \big |_L$ is inseparable for all lines $L \in F(X)$.
\end{proof}

\begin{proposition}\label{nogoodline}
Let $X$ be a smooth cubic threefold over $k$ with no good lines. Then $\textrm{char}(k) = 2$ and $X$ is isomorphic to the Fermat cubic threefold.
\end{proposition}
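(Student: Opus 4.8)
The plan is first to reduce to the case $\textrm{char}(k)=2$ with $F_0(X)=F(X)$. Over a field of characteristic $\neq 2$ the locus of lines of the second type $F_0(X)\cup F_1(X)$ is a proper closed subvariety of the surface $F(X)$, so a good line always exists (this is classical; see \cite{beau} and \cite{murre2}); hence the absence of good lines forces $\textrm{char}(k)=2$. In characteristic $2$ the hypothesis gives $F(X)=F_0(X)\cup F_1(X)$. By Proposition \ref{Fsurface} the Fano surface $F(X)$ is smooth and connected, hence irreducible of dimension $2$; since $F_0(X)$ and $F_1(X)$ are closed with $\dim F_0(X)\geq \dim F_1(X)$ by Lemma \ref{F0dimension}, one of them must exhaust $F(X)$, and in either case $F_0(X)=F(X)$. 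Lemma \ref{gaussLineInseparable} then applies, so the restriction $g|_L$ of the Gauss map is inseparable for every line $L\subset X$.

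Next I would convert this inseparability into an algebraic condition on $f$. Writing $X=\mathbb{V}_+(f)$ and setting $T_{ijk}:=\partial_i\partial_j\partial_k f$, for distinct $i,j,k$ the constant $T_{ijk}$ is the coefficient of the squarefree monomial $x_ix_jx_k$, while $T_{ijk}=0$ whenever two indices coincide because $\partial_i^2 f=0$ in characteristic $2$; thus $X$ is Hermitian precisely when all $T_{ijk}$ vanish. For a line $L=\overline{xy}\subset X$ the components of $g|_L$ are the quadrics $\partial_i f(sx+ty)=s^2\partial_i f(x)+st\,b_{\partial_i f}(x,y)+t^2\partial_i f(y)$, where $b_{\partial_i f}$ is the polar of the quadric $\partial_i f$, and the standard criterion in characteristic $2$ shows that $g|_L$ is inseparable exactly when every cross term $b_{\partial_i f}(x,y)=\sum_{j,k}T_{ijk}x_jy_k$ vanishes. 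Assembling these into the symmetric matrix of linear forms $M(x)=\big(\sum_j T_{ijk}x_j\big)_{i,k}$, inseparability of $g|_L$ for all lines becomes the condition $M(x)\,y=0$ for every line $\overline{xy}\subset X$. The decisive feature is that $M(x)$ has zero diagonal, since $M(x)_{ii}=\sum_j T_{iij}x_j$ and $T_{iij}=\partial_i^2\partial_j f=0$; hence $M(x)$ is \emph{alternating} and so has even rank at every point.

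Finally I would combine the even-rank property with a kernel estimate. For a general point $x\in X$ every line of $X$ through $x$ lies in the tangent hyperplane $T_xX$, and, granting that these lines (there are six, by Lemma \ref{NumberLines}) span $T_xX$, each contributes a vector to $\ker M(x)$, so that $T_xX\subseteq\ker M(x)$ and therefore $\rank M(x)\leq 1$. Being alternating of rank at most $1$, the matrix $M(x)$ vanishes for general $x\in X$, whence every entry of $M$ is a linear form vanishing on the nondegenerate threefold $X$ and so is identically zero. Thus all $T_{ijk}=0$, the form $f$ has no squarefree monomials, and $X$ is Hermitian; Proposition \ref{2bic} then identifies $X$ with the Fermat cubic threefold. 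I expect the main obstacle to be the spanning claim in the penultimate step, namely that the six lines through a general point of $X$ genuinely span $T_xX$ (equivalently that $\ker M(x)=T_xX$ rather than a smaller space), since in characteristic $2$ the tangent cone at the node of the cubic surface $X\cap T_xX$ can be a degenerate quadric, and one must exclude the configuration in which these lines become coplanar.
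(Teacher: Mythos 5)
Your overall skeleton matches the paper's proof: reduce to $\chr(k)=2$ with $F_0(X)=F(X)$ (via Proposition \ref{Fsurface}, Lemma \ref{F0dimension} and Murre in characteristic $\neq 2$), invoke Lemma \ref{gaussLineInseparable}, translate inseparability of $g|_L$ into vanishing of the Hessian cross-terms, use the characteristic-$2$ fact that the Hessian is alternating and hence of even rank, and finish with Proposition \ref{2bic}. All of that is sound. But the obstacle you flag at the end is not a technical nuisance to be excluded: it is a genuine and \emph{fatal} gap, because the spanning claim you need is false for precisely the cubics the proposition concerns. If $X$ has no good lines then (as the proposition asserts) $X$ is Hermitian, so its Hessian vanishes identically; then the quadratic part of $f$ at any $x\in X$ has zero polar form, hence is the square of a linear form, and the tangent cone to $X\cap T_xX$ at $x$ is a \emph{double plane}. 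Consequently every line of $X$ through $x$ lies in that plane. Concretely, for the Fermat cubic $\sum_i x_i^3$ in characteristic $2$, the direction $v$ of any line through $x$ satisfies the two linear equations $\sum_i x_i^2v_i=0$ and $\sum_i x_i^{1/2}v_i=0$, so the three distinct lines through a general point are coplanar and concurrent (this also shows the separable degree in Lemma \ref{NumberLines} is $3$ here, not $6$). Thus your kernel estimate can never improve on $\Ker M(x)\supseteq\langle x,y_1,y_2,y_3\rangle$ of dimension $3$, giving $\rank M(x)\leq 2$; parity then leaves rank $0$ or $2$, and the argument stalls exactly on the cubics you must handle.

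The paper escapes this with a sharper dimension count rather than a stronger geometric input. Instead of the $5\times 5$ Hessian, it works with $\dd_xg$ viewed as the (symmetric, zero-diagonal, hence alternating) form induced on the $3$-dimensional tangent space $T_xX$ --- that is, the Hessian reduced modulo the Euler direction $x$ and restricted to the tangent hyperplane $\{v:\nabla f(x)\cdot v=0\}$. There, the tangent directions of any two or three \emph{distinct} lines through $x$ (distinctness is exactly what Lemma \ref{NumberLines} supplies) span a $2$-dimensional subspace annihilated by the form, \emph{whether or not the lines are coplanar}; hence $\rank(\dd_xg)\leq 3-2=1$, and evenness of rank forces $\rank(\dd_xg)=0$ at a general point, from which the paper concludes that the Hessian of $f$ vanishes and $X$ is Hermitian. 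So the repair is not to prove your spanning claim (you cannot), but to replace $M(x)$ by this induced form on $T_xX$, where the count closes; the remainder of your argument (the reduction steps, the explicit inseparability computation, and the globalization that linear forms vanishing on $X$ are zero) then goes through essentially as in the paper.
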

\begin{proof}
From the assumption, we have $F(X) = F_0(X) \cup F_1(X)$. However by Lemma \ref{F0dimension} we have $\dim(F_0(X)) \geq \dim(F_1(X))$. Since both $F_0(X), F_1(X)$ are Zariski closed subsets of $F(X)$, it follows that $F(X) = F_0(X)$.

Suppose that $\textrm{char}(k) \neq 2$. Then by \cite[Corollary 1.9]{murre2}, $F_0(X) = F(X)$ is a smooth curve, which gives us a contradiction.

Let $x \in X$ and select three distinct lines $L_1,L_2,L_3$ passing through $x$ using Lemma \ref{NumberLines}. Then $g \big |_{L_i}$ is inseparable for $i = 1,2,3$ by Lemma \ref{gaussLineInseparable}. Thus $\rank(\dd_x (g\big |_{L_i})) = 0$ for each $i$. This translates to $(\dd_xg)(T_xL_i) = 0$ for each $i$. Since the $L_1,L_2,L_3$ span at least a $2$-plane, we have $\rank(\dd_xg) \leq 1$. However $\dd_xg$ is a symmetric matrix with zeroes on the diagonal over a field of characteristic $2$. Therefore it has even rank, and so $\rank(\dd_xg) = 0$. It follows that the Hessian of the cubic form representing $X$ vanishes, and so $X$ is a Hermitian threefold. Therefore $X$ is isomorphic to the Fermat cubic threefold by Proposition \ref{2bic}.
\end{proof}

\begin{remark}
It is also easy to verify that if $X$ is a Hermitian threefold, then there are no good lines on $X$. However we do not need this result in this paper.
\end{remark}

\section{Intermediate Jacobians}
In this section we define intermediate Jacobians of cubic threefolds over arbitrary bases and construct them over arbitrary fields. We also prove that intermediate Jacobians are stable under smooth base change. Finally, we prove that the abelian variety underlying an intermediate Jacobian of a cubic threefold is isomorphic to the Albanese of its Fano variety of lines. We use this to prove a specialization result regarding algebraic representatives.

\subsection{Definition of intermediate Jacobians}
In this section we give various definitions of intermediate Jacobians and show that they agree. We also show that intermediate Jacobians, when they exist, satisfy Galois descent.

Let $k$ be an algebraically closed field and let $X$ be a cubic threefold over $k$.

\begin{definition}[Correspondences]
Let $D \subset A$ be a divisor on an abelian variety $A$. Consider the induced polarisation $\lambda_D : A \to A^{\vee}$ and let $y_A$ be the Poincar\'e bundle on $A \times A^{\vee}$. We define $J(D) := (1,\lambda_D)^*(y_A)$ as an element of $\Corr(A) := \Pic(A \times A)/(\pi_1^*\Pic(A) \oplus \pi_2^*\Pic(A))$, the group of correspondences of $A$.

Let $T$ be a smooth variety and let $z \in \Ch^2(X \times T)$. Consider the projection maps $\pi_1,\pi_2: X \times T \times T \to X \times T$ and $\pi: X\times T \times T \to T \times T$. Then we define 
$$I(z) := \pi_*(\pi_1^*z . \pi_2^*z) \in \Ch^1(T \times T).$$
\end{definition}

\begin{definition}\label{defARac}
Write $\Ch^n(X)$ for the Chow group of $n$-cycles on $X$, and $\A^n(X)$ for the subgroup of cycles algebraically equivalent to $0$. Given a smooth variety $T$, a mapping $f: T(k) \to \A^n(X)$ is called \emph{algebraic} if there is a cycle $Z \in \Ch^n(X \times T)$ such that $f(t) = Z_t$ for all $t \in T(k)$.

Let $A$ be an abelian variety. A homomorphism of abelian groups $g: \A^n(X) \to A(k)$ is called \emph{regular} if for all algebraic mappings $f: T(k) \to \A^n(X)$ from smooth varieties $T$, the map $g \circ f : T(k) \to A(k)$ is induced by a scheme morphism. We say $A$ is an \emph{algebraic representative} for $\A^n(X)$ if there is a regular homomorphism $g: \A^n(X) \to A(k)$ which is universal among all regular homomorphisms in the following sense. Given another regular homomorphism $g': \A^n(X) \to B(k)$, there exists a unique homomorphism of abelian varieties $h$ fitting into the commutative diagram
$$\begin{tikzcd}
 \A^n(X) \arrow{r}{g} \arrow{rd}{g'}  & A(k) \arrow[dotted]{d}{h} \\
  & B(k).
\end{tikzcd}
$$
We typically use the notation $\Ab^n_X$ for the $n^{\textrm{th}}$ algebraic representative of $X$. Given an algebraic representative $g: \A^2(X) \to A(k)$, we say a polarisation $\lambda_D: A \to A^{\vee}$ induced by a divisor $D$ is an \emph{incidence polarisation} if for all algebraic maps $f_{z}: T(k) \to \A^2(X)$ induced by $z \in \Ch^2(X \times T)$ we have
 $$(g \circ f_z)^*(J(D)) = -I(z).$$
\end{definition}

\begin{example}
Let $C$ be a curve. Let $\theta$ be the canonical theta divisor on $J_{g-1}C$, which induces a principal polarisation $\lambda: JC \to JC^{\vee}$. Then $JC$ is an algebraic representative for $\A^1(C)$, and $\lambda$ is an incidence polarisation.
\end{example}

\begin{proposition}\label{upincidence}
The incidence polarisation, if it exists, is unique.
\end{proposition}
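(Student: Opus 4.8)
The plan is to reduce the uniqueness of the incidence polarisation to the vanishing of a single correspondence, and then to detect that correspondence by pulling back along a sufficiently large algebraic family. Suppose $\lambda_{D_1}$ and $\lambda_{D_2}$ are two incidence polarisations on $A := \Ab^2_X$, induced by divisors $D_1,D_2$. Subtracting the two instances of the defining relation $(g\circ f_z)^*(J(D_i)) = -I(z)$ gives
$$(g \circ f_z)^*\big(J(D_1) - J(D_2)\big) = 0 \in \Corr(T)$$
for every algebraic map $f_z \colon T(k) \to \A^2(X)$ induced by a cycle $z \in \Ch^2(X \times T)$. The first key input is the standard group isomorphism $\Hom(A, \widehat{A}) \xrightarrow{\sim} \Corr(A)$, $\phi \mapsto (1,\phi)^*y_A$, which carries $J(D_i)$ to $\lambda_{D_i}$ by the very definition of $J$. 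Hence $\gamma := J(D_1) - J(D_2)$ corresponds to $\psi := \lambda_{D_1} - \lambda_{D_2} \in \Hom(A,\widehat{A})$, and it suffices to prove $\gamma = 0$, equivalently $\psi = 0$.

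Next I would translate the vanishing condition through the Albanese. For an algebraic map $\phi = g \circ f_z \colon T \to A$ from a smooth variety $T$, let $a \colon \Alb(T) \to A$ be the induced homomorphism and $\widehat{a} \colon \widehat{A} \to \Pic^0(T)$ its dual. Using the seesaw injection $\Corr(T) \hookrightarrow \Hom(\Alb(T), \Pic^0(T))$ together with functoriality of the Poincar\'e bundle, the pullback $\phi^* \colon \Corr(A) \to \Corr(T)$ is identified with $\psi \mapsto \widehat{a}\circ \psi \circ a$. Thus the hypothesis becomes $\widehat{a}\circ \psi \circ a = 0$ for every such $\phi$.

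It then remains to exhibit a single family for which $a$ is surjective. Here I would first argue that $\im(g)$ generates $A$: if $A_0 \subseteq A$ is the abelian subvariety it generates, then $g$ factors through a regular homomorphism into $A_0$, and the universal property of the algebraic representative forces the inclusion $A_0 \hookrightarrow A$ to split the identity, whence $A_0 = A$. Since $A$ is of finite type, finitely many cycles $z_1,\dots,z_n$ with maps $\phi_i \colon T_i \to A$ already generate $A$. Setting $T := T_1 \times \cdots \times T_n$ and $z := \sum_i (1_X \times \mathrm{pr}_i)^* z_i$, the map $\Phi := g \circ f_z$ equals $\sum_i \phi_i \circ \mathrm{pr}_i$, and its Albanese map $a \colon \prod_i \Alb(T_i) \to A$ has image the abelian subvariety generated by the $\phi_i(T_i)$. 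As a positive-dimensional abelian variety over $k = \overline{k}$ is divisible and not finitely generated, this subvariety must be all of $A$, so $a$ is surjective.

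With $a$ surjective, $\widehat{a}$ has finite kernel; the image of $\psi \circ a$ is then a connected subgroup of $\widehat{A}$ contained in $\ker(\widehat{a})$, hence trivial, so $\psi \circ a = 0$ and therefore $\psi = 0$. This yields $\gamma = 0$, i.e.\ $\lambda_{D_1} = \lambda_{D_2}$, proving uniqueness. I expect the main obstacle to lie in the middle steps: verifying that pullback of correspondences is computed by $\widehat{a}\circ\psi\circ a$ via the Albanese, and producing the single algebraic family whose Albanese map surjects onto $A$ (in particular the reduction to finitely many families and the generation argument); the closing connectedness step is then routine.
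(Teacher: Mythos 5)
Your proof is correct, and in substance it follows the same route as the source the paper leans on: the paper's own proof of this proposition consists entirely of citing Beauville's Remark 3.4.3(i) and observing that the argument there never uses the standing hypothesis $\textrm{char}(k)\neq 2$, whereas you supply the argument itself. Your mechanism — identify $\Corr(A)\cong\Hom(A,\widehat{A})$ so that uniqueness becomes the vanishing of $\psi:=\lambda_{D_1}-\lambda_{D_2}$, show that pullback of correspondences along $\phi=g\circ f_z$ is computed through the Albanese as $\widehat{a}\circ\psi\circ a$, and then exhibit one family for which $a$ is surjective — is precisely what makes the incidence condition pin down the polarisation, so the two proofs agree in content and your write-up makes the step self-contained and visibly characteristic-free. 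Two places should be tightened. First, in the generation step, normalize each family so that $0$ lies in its image (replace $z_i$ by $z_i$ minus the constant family of a fibre; any class of $\A^2(X)$ lies on a family over a smooth projective curve whose image also contains $0$): the image of $a$ is the abelian subvariety generated by the differences $\phi_i(T_i)-\phi_i(t_{i,0})$, not by the sets $\phi_i(T_i)$ themselves, and the appeal to divisibility/non-finite-generation of $A(k)$ does not bridge that gap — what does is the through-$0$ normalization combined with your own first step (the image of $g$ generates $A$) and stabilization of the increasing chain of generated abelian subvarieties, which is exactly what "$A$ is of finite type" should be invoked for. Second, the injection $\Corr(T)\hookrightarrow\Hom(\Alb(T),\Pic^0(T))$ and the duality between $\Alb(T)$ and $\Pic^0(T)_{\mathrm{red}}$ require $T$ smooth and projective (with $\Pic^0$ taken reduced in positive characteristic); this is harmless, since your final family can be taken to be a product of smooth projective curves, which is all the argument needs.
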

\begin{proof}
This follows from \cite[Remark 3.4.3(i)]{beau2}. Although \cite{beau2} is under the assumption that $\textrm{char}(k) \neq 2$ for most of the paper, this particular result does not make use of this assumption.
\end{proof}

\begin{definition}\label{defIJac}
We define the \emph{intermediate Jacobian} of a cubic threefold $X$ as the pair $(\Ab^2_X,\lambda)$ whenever the algebraic representative $\Ab^2_X$ and the incidence polarisation $\lambda$ exist. We may also refer to this principally polarised abelian variety as $JX$.
\end{definition}

This defines intermediate Jacobians over algebraically closed fields. We now proceed to define intermediate Jacobians over arbitrary base schemes. 

\begin{definition}
A trivial family of cubic threefolds over a scheme $S$ is a subscheme of $\mathbb{P}_S^4$ flat over $S$ whose fibre above any $s \in S$ is a cubic threefold over $k(s)$. A family of cubic threefolds over a scheme $S$ is a morphism $X \to S$ such that there exists an \'etale covering $\{U_i \to S\}_i$ so that for each $i$, the base change $X\times_S U_i \to U_i$ is  trivial family of cubic threefolds. 
\end{definition}

Let $X$ be a family of cubic threefolds over a base scheme $S$.

\begin{definition}
Let $\Lambda$ be an integral Noetherian scheme, and $S$ an integral scheme which is smooth, separated and of finite type over $\Lambda$. Let $ Sm_{\Lambda}/S$ be the category of schemes which are separated and dominant over $S$, and smooth and of finite type over $\Lambda$.
\end{definition}

\begin{definition}
There is a contravariant functor $\mathcal{A}^2_{X/S/\Lambda}: Sm_{\Lambda}/S \to \textrm{AbGp}$ to the category of abelian groups defined as follows. 
$$\mathcal{A}^2_{X/S/\Lambda}(T) = \{Z \in \Ch^2(X_T) : Z_{\eta^{sep}_T} \sim_{alg} 0\}$$ 
where $\eta^{sep}_T$ is the separable closure of the generic point $\eta_T$ of $T$. For any morphism $f:T' \to T$ in $Sm_{\Lambda}/S$, we set $\mathcal{A}^2_{X/S/\Lambda}(f)$ to be the refined gysin homomorphism $f^{!}$. Here $\Lambda$ is simply a technical artifact, and we will omit it from the definition whenever we can take $S = \Lambda$.
\end{definition}

\begin{definition}
A \emph{regular morphism} is a natural transformation $h: \mathcal{A}^2_{X/S} \to A$ into some abelian scheme $A$ over $S$. We say $h$ is \emph{surjective} if $ \mathcal{A}^2_{X/S}(k) \to A(k)$ is surjective when $k$ is the separable closure of the generic point of $S$. An \emph{algebraic representative} $\Ab_{X/S}^2$ for $\mathcal{A}^2_{X/S}$ is a regular morphism which is initial among all regular morphisms.
\end{definition}

This functorial point of view extends the classical approach given in Definition \ref{defARac}. To be precise, if $S = \Spec(k)$ is the spectrum of an algebraically closed field, then in \cite[Section 1.4]{achter2022functorial} it is shown that the two approaches agree.

\begin{remark}\label{defsepclosed}
Definition \ref{defARac} can be extended to $k$ separably closed. This also agrees with the functorial definition of Achter et al.
\end{remark}

\begin{proposition}\label{existenceAR2}
Let $X$ be a family of cubic threefolds over a base scheme $S$ which is regular, integral and Noetherian. Then the algebraic representative $\Ab^2_{X/S/S}$ exists.
\end{proposition}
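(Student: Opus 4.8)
The plan is to reduce the existence of $\Ab^2_{X/S/S}$ over the whole base to the existence of the algebraic representative over the single geometric generic point of $S$, and then to invoke the functorial machinery of \cite{achter2022functorial}. Let $\eta$ denote the generic point of $S$, write $K = k(\eta)$ for its residue field, fix an algebraic closure $\overline{K}$ and set $\overline{\eta} = \Spec(\overline{K})$. Because $S$ is regular, integral and Noetherian and $X \to S$ is a smooth projective family of cubic threefolds, this is exactly the input required by the relative existence theorem for algebraic representatives: it constructs $\Ab^2_{X/S/S}$ as an abelian scheme over $S$, initial among regular morphisms out of $\mathcal{A}^2_{X/S}$, provided that the algebraic representative $\Ab^2_{X_{\overline{\eta}}}$ of the geometric generic fibre exists over $\overline{K}$. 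The regularity, integrality and Noetherian hypotheses are precisely what allow the refined Gysin maps defining $\mathcal{A}^2_{X/S}$ to relate cycles on the total family to cycles on the generic fibre, so that the value of the representative over $\eta$ propagates to an abelian scheme over all of $S$. Thus the entire proposition reduces to verifying this one hypothesis.

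First I would analyse the geometric generic fibre $X_{\overline{\eta}}$, which is a smooth cubic threefold over the algebraically closed field $\overline{K}$. By Definition \ref{defIJac}, what must be checked is that the algebraic representative $\Ab^2_{X_{\overline{\eta}}}$ together with an incidence polarisation exists; the latter, if it exists, is unique by Proposition \ref{upincidence}. I would split into cases according to whether $X_{\overline{\eta}}$ contains a good line. If it does, the Prym variety $\Prym(X_{\overline{\eta}},l)$ of Definition \ref{prymCubic} is a principally polarised abelian fivefold whose theta divisor has the origin as its sole singularity by Proposition \ref{sing}; adapting Beauville's identification from \cite{beau} then shows that this Prym variety, equipped with the incidence polarisation, is the algebraic representative $\Ab^2_{X_{\overline{\eta}}}$, so existence holds in this case.

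If instead $X_{\overline{\eta}}$ has no good line, then Proposition \ref{nogoodline} forces $\textrm{char}(\overline{K}) = 2$ and identifies $X_{\overline{\eta}}$ with the Fermat (Hermitian) cubic threefold. Here the Prym construction is unavailable, so I would construct $\Ab^2_{X_{\overline{\eta}}}$ by lifting $X_{\overline{\eta}}$ to a smooth cubic threefold in characteristic $0$, forming its classical intermediate Jacobian there, and specialising back; the functoriality statement \cite[Theorem 5.3]{achter2022functorial} guarantees that this specialisation is genuinely the algebraic representative of the Fermat cubic. In every case $\Ab^2_{X_{\overline{\eta}}}$ exists, and feeding this back into the relative existence theorem of \cite{achter2022functorial} yields the abelian scheme $\Ab^2_{X/S/S}$ over $S$ with its universal regular morphism.

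The hard part will be the verification over the geometric generic point in characteristic $2$, and in particular the Hermitian/Fermat case, where the Prym machinery developed in the preceding sections breaks down completely and one is forced to argue by lifting to characteristic $0$ while controlling the specialisation of the intermediate Jacobian. A secondary technical point is checking that the reduction to the geometric generic fibre is legitimate over a possibly mixed-characteristic base; this is where the regularity of $S$ is essential, since it ensures that the algebraic representative over $\eta$ extends to a genuine abelian scheme over the whole of $S$ rather than merely over a dense open subset.
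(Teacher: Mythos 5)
Your overall framing---reduce to the geometric generic fibre $\overline{\eta}$ and then feed existence there into the relative machinery of \cite{achter2022functorial}---does mirror the internal structure of that paper's existence theorem, but your instantiation of the key hypothesis is where the argument fails. The paper's own proof is a one-line citation of \cite[Theorem 7.7]{achter2022functorial}, and the reason that citation suffices is that for codimension-$2$ cycles the existence of the algebraic representative over an algebraically closed field is \emph{unconditional}: this is Murre's theorem, valid in every characteristic, for every smooth projective threefold, with no good-line hypothesis and no case analysis; it is precisely this input that makes \cite[Theorem 7.7]{achter2022functorial} an unconditional statement over a regular, integral, Noetherian base. Your proposal never invokes Murre's theorem or any substitute for it. Instead you try to manufacture existence over $\overline{K}$ from the cubic-threefold-specific constructions of this paper, which is both unnecessary and, in the Hermitian case, does not close up.

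Two concrete problems. First, you conflate the algebraic representative with the intermediate Jacobian: the proposition asks only for $\Ab^2_{X/S/S}$, yet you insist (citing Definition \ref{defIJac}) on also producing an incidence polarisation. No polarisation is required here, and demanding one makes the Fermat case far harder than it needs to be---constructing the distinguished polarisation for the Fermat cubic in characteristic $2$ is one of the main results of the later part of Section 5, not an ingredient available at this point. Second, and more seriously, your characteristic-$2$ branch is circular as stated: the claim that specialising the characteristic-$0$ intermediate Jacobian back down yields the algebraic representative of the Fermat cubic is exactly what Lemma \ref{specialIJT1}, Lemma \ref{universalCycle} and Proposition \ref{specialIJT} establish later in the paper, and those arguments \emph{presuppose} that $\Ab^2_X$ exists: Lemma \ref{universalCycle} compares $\dim(A)$ with $\dim(\Ab^2_X)$ and invokes the universal property of $\Ab^2_X$, and the spreading-out step also needs the universal $\Ch_0$-triviality of the Fermat cubic (Proposition \ref{CH0}, via \cite{colliot} and \cite[Proposition 5.3]{achter2020decomposition}), which you omit entirely. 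So your existence proof in the Fermat case quietly assumes existence. The repair is simply to delete the case analysis: existence over $\overline{\eta}$ is Murre's theorem, and \cite[Theorem 7.7]{achter2022functorial}, which packages exactly this together with descent and spreading out over a regular integral Noetherian base, gives the proposition in one step---which is the paper's proof.
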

\begin{proof}
This follows from \cite[Theorem $7.7$]{achter2022functorial}.
\end{proof}

In particular, algebraic representatives always exist for cubic threefolds over fields. One of the key facts regarding algebraic representatives is the relation between the $\ell$-adic cohomology of a cubic threefold and its algebraic representative. Intimate knowledge of this relation is required to even define the intermediate Jacobian.

\begin{remark}
Algebraic representatives are stable under base change by separable field extensions due to \cite[Theorem 1]{achter2022functorial}. That is, if $X$ is a cubic threefold over a field $k$ and $L/k$ is a separable field extension, then $(\Ab^2_{X})_L \cong \Ab^2_{X_L}$.
\end{remark}

We postpone the proof of the following proposition. It is required in order to define the intermediate Jacobian.

\begin{proposition}\label{prereq}
Let $X$ be a cubic threefold over a field $k$, and let $\ell \not | \textrm{char}(k)$ be a prime.  Let $\phi: A^2(X_{k^{sep}}) \to \Ab^2_X(k^{sep}) $ be its algebraic representative and let $\lambda^2: T_{\ell}A(X_{k^{sep}}) \to \HH^3(X_{k^{sep}},\mathbb{Z}_{\ell}(2))$ be the $\ell$-adic Bloch map. Then $\phi$ is an isomorphism, which allows us to define the map $(\lambda^2 \circ T_{\ell}(\phi^{-1})): T_{\ell}\Ab^2_X(k^{sep}) \to\HH^3(X_{k^{sep}}, \mathbb{Z}_{\ell}(2))$. We also claim that the dual of this map defines an isomorphism $ \alpha: \HH^1(\Ab^2_{X_{k^{sep}}},\mathbb{Z}_{\ell}) \to \HH^3(X_{k^{sep}}, \mathbb{Z}_{\ell}(2))^{\vee}$.
\end{proposition}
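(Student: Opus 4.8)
The plan is to reduce to a separably closed base, then split the statement into three parts: (a) the regular homomorphism $\phi$ is an isomorphism of groups; (b) the composite $\lambda^2 \circ T_{\ell}(\phi^{-1})$ is an isomorphism of free $\mathbb{Z}_{\ell}$-modules; and (c) dualising produces $\alpha$. Throughout we may replace $k$ by $k^{sep}$ and work over this separably closed field, invoking Remark \ref{defsepclosed} to identify the functorial algebraic representative with the classical one and to apply the theory of the $\ell$-adic Bloch map. Two numerical facts frame the argument: $\Ab^2_X$ is an abelian variety of dimension $5$, so $T_{\ell}\Ab^2_X$ is free of rank $10$ over $\mathbb{Z}_{\ell}$, and $\HH^3(X_{k^{sep}},\mathbb{Z}_{\ell}(2))$ is torsion-free of rank $b_3 = 10$. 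Hence both source and target of the composite are free $\mathbb{Z}_{\ell}$-modules of the same rank, and it suffices to prove the composite is injective with torsion-free cokernel.

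For step (a), I would use the representability of $A^2(X)$ for cubic threefolds. The universal regular homomorphism $\phi$ is surjective by construction, so the real content is injectivity, i.e. that every codimension-$2$ cycle algebraically equivalent to $0$ is detected by $\Ab^2_X$. For a cubic threefold containing a good line this follows from the Prym description developed in Sections 3 and 4 together with the classical Abel--Jacobi isomorphism of \cite{beau}, whose geometric input (the theta divisor on the Prym has a single singularity, Proposition \ref{sing}) has now been established in all characteristics. For the remaining case, the Fermat cubic in characteristic $2$, there is no Prym variety available, so I would lift $X$ to a smooth cubic threefold over a complete DVR of mixed characteristic, where $\phi$ is the classical Abel--Jacobi isomorphism, and descend the conclusion using the specialisation compatibility of algebraic representatives \cite[Theorem 5.3]{achter2022functorial} together with Cheng's structural description \cite[Theorem 6.14]{cheng} of the Fermat intermediate Jacobian.

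For step (b), once $\phi$ is an isomorphism the map $\lambda^2 \circ T_{\ell}(\phi^{-1})$ is defined, and I would appeal to the theory of the second $\ell$-adic Bloch map. The Bloch map is injective on Tate modules in this setting, so the composite is injective, and its image is the coniveau piece $N^1\HH^3(X_{k^{sep}},\mathbb{Z}_{\ell}(2))$. Since the degree-$3$ cohomology of a cubic threefold is generated, via the cylinder homomorphism attached to the universal family of lines, by the cohomology of the Fano surface $F(X)$ — so that $\HH^3$ has coniveau $1$ and $N^1\HH^3 = \HH^3$ — the image is all of $\HH^3(X_{k^{sep}},\mathbb{Z}_{\ell}(2))$. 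An injective map of free rank-$10$ modules onto the torsion-free module $\HH^3$ is then an isomorphism. In positive characteristic the coniveau computation and the integral control of the image are again obtained by reduction from the lift, using smooth and proper base change to compare $\HH^3$ of the special and generic fibres and \cite[Theorem 5.3]{achter2022functorial} to match this with the Tate module of the algebraic representative.

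Finally, for step (c), recall the canonical identification $\HH^1(\Ab^2_{X_{k^{sep}}},\mathbb{Z}_{\ell}) \cong (T_{\ell}\Ab^2_X)^{\vee}$ for an abelian variety over a separably closed field. Dualising the isomorphism of step (b), and inverting the direction as needed, yields the desired isomorphism $\alpha: \HH^1(\Ab^2_{X_{k^{sep}}},\mathbb{Z}_{\ell}) \to \HH^3(X_{k^{sep}}, \mathbb{Z}_{\ell}(2))^{\vee}$. I expect the main obstacle to be step (b) for the Fermat cubic in characteristic $2$: both the injectivity and, especially, the surjectivity (the coniveau computation) of the Bloch map are classically Hodge-theoretic, and making them work integrally when there is no Prym variety requires careful use of the lifting argument and the functorial specialisation results rather than a direct geometric construction.
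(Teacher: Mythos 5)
Your overall architecture (reduce to $k^{sep}$; Prym description for the good-line case; lifting for the Fermat cubic in characteristic $2$; Bloch map; dualise) matches the paper's, and step (c) is unproblematic, but two of your key steps rest on claims that are not available, and these are exactly where the content of the proposition lies. In step (b) you assert that the image of the integral Bloch map is the coniveau piece $N^1\HH^3(X_{k^{sep}},\mathbb{Z}_\ell(2))$, and that the cylinder homomorphism from the Fano surface gives $N^1\HH^3 = \HH^3$ with $\mathbb{Z}_\ell$-coefficients. Both statements are fine with $\mathbb{Q}_\ell$-coefficients, but integrally neither is off the shelf: since source and target are free of rank $10$, an injective map can perfectly well have image of finite but nontrivial index, and ruling this out is precisely the hard point (in characteristic $2$ an integral surjectivity statement for the cylinder map would itself be new). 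The paper closes this gap by a different mechanism: Colliot-Th\'el\`ene's theorem that $2A_0(Y)=0$ for cubic threefolds yields a decomposition $2\Delta = 2p\times X + Z$ of twice the diagonal, and then \cite[Proposition 5.1]{achterBloch} gives directly that $T_\ell\lambda^2$ is an isomorphism onto $\HH^3(X_{k^{sep}},\mathbb{Z}_\ell(2))$ in characteristic $0$ (Lemma \ref{prereq1}); in positive characteristic, surjectivity follows from the lift by smooth and proper base change, and injectivity from \cite[Theorem 8.1]{achter2022functorial} --- this last portion is close to what you propose.

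Second, for the Fermat cubic in characteristic $2$ you invoke ``specialisation compatibility of algebraic representatives'' as a quotable theorem. That is not a formal statement one can cite: injectivity of $\phi$ on the special fibre does not follow from injectivity on the generic fibre (cycles on the special fibre need not lift), and in this paper the specialisation result for algebraic representatives (Proposition \ref{specializeAR}) is a \emph{downstream} theorem, proved only later via the Albanese comparison, so appealing to it here would be circular. What actually makes the Fermat case work is the universal $\Ch_0$-triviality of the Fermat cubic (Proposition \ref{CH0}), which underlies \cite[Proposition 4.3]{achter2020decomposition}: this produces a universal cycle $\widehat{Z}$ over $\overline{\mathbb{F}_2}$ exhibiting $\phi$ as an isomorphism, and the paper then transports this cycle to an arbitrary separably closed field $k^{sep}$ using the density argument of Lemma \ref{denseFieldExt} and Lemma \ref{universalCycle} in the proof of Proposition \ref{specialIJT}. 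Note that this last base change step, from $\overline{\mathbb{F}_2}$ (or the residue field of your DVR) to a general $k^{sep}$, is unavoidable and is not addressed in your proposal. Finally, Cheng's \cite[Theorem 6.14]{cheng} plays no role in this argument; in the paper it enters only later, for the injectivity of the Albanese map of the Fano surface needed in the Torelli theorem, and it does not by itself say anything about injectivity of $\phi$ on Chow groups.
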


\begin{definition}\cite[Property 2.4]{Benoist}\label{IJfield}
\noindent Let $X$ be a cubic threefold over a field $k$. A class $\theta \in \textrm{NS}(\Ab^2_{X})$ is \emph{distinguished} if it is a principal polarisation and for some prime $\ell \not | \textrm{char}(k)$, the image $cl_1(-\theta_{k^{sep}})$ under the first $\ell$-adic chern class
$$\textrm{NS}(\Ab^2_{X_{k^{sep}}}) \to \HH^2_{\textrm{\'et}}(\Ab^2_{X_{k^{sep}}},\mathbb{Z}_{\ell}(1)) \cong \left (\bigwedge^2 \HH^1_{\textrm{\'et}}(\Ab^2_{X_{k^{sep}}},\mathbb{Z}_{\ell}) \right )(1)$$
corresponds, under the isomorphism $\alpha$ from Proposition \ref{prereq}, to the cup product
$$\bigwedge^2 \HH^3_{\textrm{\'et}}(X_{k^{sep}},\mathbb{Z}_{\ell}(2)) \xrightarrow{\cup}  \HH^6_{\textrm{\'et}}(X_{k^{sep}},\mathbb{Z}_{\ell}(4)) \xrightarrow{\textrm{deg}} \mathbb{Z}_{\ell}(1).$$
If a distinguished class $\theta$ exists, then we call $(\Ab^2_X, \theta)$ the \emph{intermediate Jacobian} of $X$. One can check that this agrees with Definition \ref{defIJac}, and that any distinguished class $\theta$ is unique by the discussion following \cite[Property 2.4]{Benoist}.
\end{definition}

This defines intermediate Jacobians over arbitrary fields under some additional conditions, which we will confirm later on in this chapter. We are now ready to define intermediate Jacobians over arbitrary base schemes. 

\begin{definition}
Let $X$ be a family of cubic threefolds over a base scheme $S$. The intermediate Jacobian of $X$ is a principally polarised abelian scheme $(\Ab^2_{X/S},\Theta)$ such that $\Theta$ is flat over $S$ and the fibre $(\Ab^2_{X/S},\Theta)_s$ above any point $s \in S$ is the intermediate Jacobian of $X_s$. It is easy to show that such an object is unique, if it exists.
\end{definition}

We show that intermediate Jacobians over fields satisfy Galois descent.

\begin{lemma}\label{dist}
Let $D$ be an effective divisor on an abelian variety $A$ such that $\lambda_D$ is a principal polarisation. Then $D$ is the unique effective divisor in its linear equivalence class. Moreover, for any other effective divisor $E$ such that $\lambda_E = \lambda_D$, we have $E = D + x$ for some $x \in A$.
\end{lemma}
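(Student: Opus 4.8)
The plan is to deduce both statements from the Riemann--Roch theorem for line bundles on abelian varieties together with the fact that a principal polarisation induces an isomorphism onto the dual.

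First I would show that $L := \mathcal{O}_A(D)$ is ample. By definition of a polarisation, $\lambda_D = \lambda_M$ for some ample line bundle $M$. Since $\lambda$ is a homomorphism $\Pic(A) \to \Hom(A,\widehat{A})$ with kernel $\Pic^0(A)$, the equality $\lambda_{\mathcal{O}_A(D) \otimes M^{-1}} = \lambda_D - \lambda_M = 0$ shows that $\mathcal{O}_A(D) \otimes M^{-1} \in \Pic^0(A) = \widehat{A}$, so $L$ is algebraically equivalent to the ample bundle $M$ and is therefore itself ample. Now the Riemann--Roch relation (see \cite[Page 150]{m-av}) gives $\chi(L)^2 = \deg(\lambda_D) = 1$, so $\chi(L) = \pm 1$; since $L$ is ample, the higher cohomology vanishes and $\chi(L) = h^0(L)$, which is positive because $D$ is effective. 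Hence $h^0(L) = 1$, which says precisely that $D$ is the unique effective divisor in its linear equivalence class.

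For the second statement, suppose $E$ is effective with $\lambda_E = \lambda_D$. Then $\lambda_{\mathcal{O}_A(E) \otimes \mathcal{O}_A(D)^{-1}} = 0$, so $\mathcal{O}_A(E) \otimes \mathcal{O}_A(D)^{-1}$ is a $k$-point of $\Pic^0(A) = \widehat{A}$. Because $\lambda_D$ is a \emph{principal} polarisation, the morphism $\phi_L : A \to \widehat{A}$, $x \mapsto t_x^* L \otimes L^{-1}$, is an isomorphism of schemes, hence bijective on $k$-points; thus there is a unique $x \in A(k)$ with $t_x^* \mathcal{O}_A(D) \cong \mathcal{O}_A(E)$. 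Since $h^0$ is translation-invariant, $t_x^* L$ is again ample with a unique effective divisor in its class, namely a translate of $D$; as $E$ is effective and lies in this class, the uniqueness from the first part (applied to the principally polarising bundle $t_x^* L$) forces $E$ to equal that translate, i.e. $E = D + x$ after relabelling the point.

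The only genuinely delicate point is the passage from ``$\lambda_D$ is a principal polarisation'' to ``$L = \mathcal{O}_A(D)$ is ample with $h^0(L) = 1$'': this relies on the fact that $L$ differs from an honestly ample bundle only by an element of $\Pic^0$ (so ampleness is preserved) and on the vanishing of higher cohomology for ample line bundles on abelian varieties. Everything else is bookkeeping with the isomorphism $\phi_L$ and the translation-invariance of $h^0$; in particular no characteristic or algebraic-closedness hypotheses are needed, so the lemma holds over an arbitrary field.
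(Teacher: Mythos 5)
Your proof is correct and follows essentially the same route as the paper's: Riemann--Roch plus Mumford's vanishing theorem to get $h^0(\mathcal{O}_A(D)) = 1$, then the isomorphism $\lambda_D : A \to \widehat{A} \cong \Pic^0(A)$ to produce the translating point $x$ and conclude via the first part. Your write-up is in fact slightly more careful than the paper's on two minor points it glosses over — justifying that $\mathcal{O}_A(D)$ is ample (via algebraic equivalence to an ample bundle) and pinning down the sign $\chi(L) = +1$ using effectivity.
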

\begin{proof}
Let $L := \mathcal{O}(D)$. By \cite[Riemann-Roch Theorem]{m-av}, we obtain $\chi(L) = 1$. On the other hand, by \cite[Vanishing Theorem]{m-av} we have $H^i(A,L) = 0$ for all but one $i$ as $L$ is ample. Since $\mathbb{P}H^0(A,L)$ classifies effective divisors $E$ with $\mathcal{O}(E) \cong L$, and there is at least one such effective divisor $D$, we must have $h^0(A,L) = 1$. 

For the second part, let $E$ is an effective divisor. Then $\lambda_E = \lambda_D$ if and only if $E \sim_{alg} D$. Note that $A^{\vee} \cong \Pic^0(A)$ classifies the divisors algebraically equivalent to $0$ modulo linear equivalence. These are exactly the translation-invariant divisor classes. Then $[D] + \Pic^0(A)$ classifies the divisor classes which induce the polarisation $\lambda_D$. The map $\lambda_D: A \to \Pic^0(A)$ sending $x \mapsto [T_xD - D]$ is an isomorphism and so it follows that $\mathcal{O}(E) \cong T_x L $ for some $x \in A$. From he first part, it follows that $E = T_xD$. 
\end{proof}

\begin{proposition}\label{GalDescentIJ}
Let $X$ be a smooth cubic threefold over a field $k$, and let $L/k$ be a Galois extension. Suppose that the intermediate Jacobian $(\Ab^2_{X_L},\Theta)$ exists and that $\Theta$ can be represented by an effective divisor. Then the pair $(\Ab^2_{X_L},\Theta)$ is Galois-equivariant and descends to the intermediate Jacobian of $X$.
\end{proposition}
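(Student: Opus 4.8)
The plan is to descend the data in three stages: the abelian scheme, then its polarisation class, and finally the effective theta divisor.

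First I would reduce the abelian-scheme part to the base-change statement already recorded. Write $G = \Gal(L/k)$ and $A = \Ab^2_X$. Since $L/k$ is Galois it is separable, so $L \subset k^{sep}$ and $k^{sep} = L^{sep}$; by the invariance of algebraic representatives under separable field extensions (the remark following Proposition \ref{existenceAR2}) there is a canonical isomorphism $\Ab^2_{X_L} \cong A_L := A \times_k \Spec(L)$. In particular $\Ab^2_{X_L}$ already carries the descent datum coming from $A/k$, so the underlying abelian scheme descends and it remains to descend the polarisation $\Theta$.

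Next I would show that $\Theta \in \NS(A_L)$ is $G$-invariant. Fix $\sigma \in G$ and lift it to $\tilde\sigma \in \Gal(k^{sep}/k)$. Because $X$ and $A$ are defined over $k$, the automorphism $\tilde\sigma$ acts on $\HH^3_{\textrm{\'et}}(X_{k^{sep}},\mathbb{Z}_{\ell}(2))$ and on $\HH^1_{\textrm{\'et}}(A_{k^{sep}},\mathbb{Z}_{\ell})$, compatibly with the isomorphism $\alpha$ of Proposition \ref{prereq}, with the first $\ell$-adic Chern class, and with the cup-product pairing of Definition \ref{IJfield}; all of these are Galois-equivariant. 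Consequently $\tilde\sigma^*\Theta_{k^{sep}}$ again satisfies the defining condition of a distinguished class, so $\sigma^*\Theta$ is distinguished. By the uniqueness of the distinguished class (Definition \ref{IJfield}) we get $\sigma^*\Theta = \Theta$, that is $\Theta \in \NS(A_L)^G$. Using the standard identification $\NS(A_L) \cong \Hom^{\mathrm{sym}}(A_L, A_L^{\vee})$ together with Galois descent for homomorphisms of abelian schemes, the $G$-invariant class $\Theta$ descends to a class $\Theta_0 \in \NS(A)$ with $(\Theta_0)_L = \Theta$. Principality of $\Theta_0$ may be checked after the faithfully flat base change to $L$, and since the distinguished condition only refers to $X_{k^{sep}}$ and $A_{k^{sep}}$ (which are unchanged, as $k^{sep} = L^{sep}$), the class $\Theta_0$ is distinguished for $X/k$. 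Hence $(\Ab^2_X, \Theta_0)$ is the intermediate Jacobian of $X$ and base-changes to $(\Ab^2_{X_L}, \Theta)$.

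Finally I would promote this to a descent of the effective theta divisor, which is where the effectivity hypothesis and Lemma \ref{dist} enter. Let $D$ be an effective divisor representing $\Theta$; since $\lambda_D$ is a principal polarisation, Lemma \ref{dist} shows $D$ is the unique effective divisor in its linear equivalence class. For $\sigma \in G$ the divisor $\sigma^*D$ is again effective and, by the $\NS$-invariance above, satisfies $\lambda_{\sigma^*D} = \lambda_D$; the ``moreover'' part of Lemma \ref{dist} then gives $\sigma^*D = D + x_{\sigma}$ for a unique $x_{\sigma} \in A(L)$, and $\sigma \mapsto x_{\sigma}$ is a $1$-cocycle with class in $\HH^1(G, A(L))$. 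The main obstacle is to show this obstruction vanishes, and I would do this by exploiting the canonical geometry of the theta divisor: its singular locus is the single $G$-fixed point $0_A$ (Proposition \ref{sing}), and since singularities are intrinsic, $\Sing(\sigma^*D) = \Sing(D + x_{\sigma}) = \Sing(D) + x_{\sigma} = \{x_{\sigma}\}$ must equal $\{0_A\}$, forcing $x_{\sigma} = 0$ and hence $\sigma^*D = D$. This furnishes a $G$-equivariant structure on the pair $(A_L, D)$, so $D$ descends to an effective theta divisor $D_0$ over $k$ representing $\Theta_0$, completing the descent of $(\Ab^2_{X_L},\Theta)$ to the intermediate Jacobian of $X$.
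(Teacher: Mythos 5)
Your first two stages are sound and run parallel to the paper's proof: the identification $\Ab^2_{X_L} \cong (\Ab^2_X)_L$ is what the paper extracts from \cite[Theorem 5.9]{achter2022functorial}, and your descent of the polarisation class (Galois-equivariance of the $\ell$-adic data plus uniqueness of distinguished classes) substitutes for the paper's appeal to \cite[Lemma 12.3]{achter2020decomposition}. The genuine gap is in your third stage, where you kill the translation cocycle $x_\sigma$ by invoking Proposition \ref{sing} to assert $\Sing(\Theta)=\{0_A\}$. Proposition \ref{sing} is proved only for the theta divisor $\Xi$ of the Prym variety attached to a cubic threefold \emph{with a good line}, over an algebraically closed field (the standing assumption of Section 4). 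It says nothing when $X$ is Hermitian: then $X$ has no good line (Proposition \ref{nogoodline}), there is no Prym variety, and the intermediate Jacobian of $X_{k^{sep}}$ is produced by lifting the Fermat cubic to characteristic $0$ and specializing (Lemma \ref{specialIJT1}, Proposition \ref{specialIJT}); nothing proved before this point controls the singular locus of that specialized theta divisor, and singular loci can strictly grow under specialization, so one cannot even assert it is a single point. Since Proposition \ref{GalDescentIJ} is applied to exactly this Hermitian case in Proposition \ref{HIJ} (with $L = k^{sep}$), your argument fails precisely where the proposition is needed most. The paper's own proof never uses the geometry of the singular locus: it deduces the equivariance and descent of the effective divisor from Lemma \ref{dist} together with the already-descended polarisation, and so applies uniformly to both cases.

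A smaller, fixable inaccuracy in the same step: even in the non-Hermitian case, the hypothesis only gives \emph{some} effective divisor $\Theta$ representing the distinguished class; by uniqueness of distinguished classes and Lemma \ref{dist}, $\Theta$ is a translate of the Prym theta divisor, so its singular locus is a single point $p$ which need not be $0_A$. The singularity comparison then gives $x_\sigma = \sigma^{-1}(p)-p$, i.e.\ the cocycle is a coboundary (so the translate of $\Theta$ by $-p$ is $G$-invariant and descends), rather than $x_\sigma = 0$ outright. That repair is routine, but it does not touch the Hermitian case, which is the real obstruction to your approach.
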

\begin{proof}
By \cite[Theorem 5.9]{achter2022functorial}, the algebraic representative $\Ab^2_X$ exists and is constructed via Galois descent. By \cite[Lemma 12.3]{achter2020decomposition}, the principal polarisation induced by $\Theta$ descends to a principal polarisation $\Lambda: \Ab^2_X \to (\Ab^2_X)^{\vee}$. Combined with Lemma \ref{dist}, we deduce that the effective divisor $\Theta$ is Galois-equivariant and descends to an effective divisor $\Theta_k$ on $\Ab^2_X$ inducing $\Lambda$. Moreover, $\Theta_k$ is distinguished by definition.
\end{proof}

\subsection{Smooth base change for intermediate Jacobians}
Let $X$ be a family of cubic threefolds over a base scheme $S$. In this section we collect some fact regarding algebraic representatives and prove that intermediate Jacobians are stable under base change by inverse limits of smooth morphisms when $S$ is smooth and connected over $\mathbb{Z}$.

\begin{proposition}\label{baseChangeCat}
Let $S' \to S$ be a morphism obtained as the inverse limit of morphisms in $Sm_{\Lambda}/S$. Then 
$$(\mathcal{A}^2_{X/S})_{S'} \cong \mathcal{A}^2_{X_{S'}/S'}$$
where the left hand side is obtained as a fibered product of functors.
\end{proposition}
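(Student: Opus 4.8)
The plan is to check the claimed isomorphism objectwise and reduce everything to two continuity principles: that Chow groups commute with filtered inverse limits of schemes having affine flat transition maps, and that an algebraic equivalence can be spread out to a finite stage. I would first write $S'=\varprojlim_i S_i$ as a filtered limit of objects $S_i\in Sm_\Lambda/S$ with affine transition morphisms. A test object $T'\to S'$ is of finite type, so by the spreading-out results of \cite{sp} it descends to some $T_{i_0}\to S_{i_0}$, and setting $T_i:=T_{i_0}\times_{S_{i_0}}S_i$ one obtains $T'\cong\varprojlim_{i\ge i_0}T_i$ and hence $X_{T'}\cong\varprojlim_i X_{T_i}$, again a filtered limit with affine flat transition maps. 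On function fields $k(\eta_{T'})=\varinjlim_i k(\eta_{T_i})$, and since separable closure commutes with filtered colimits of fields one also gets $\eta^{sep}_{T'}=\varinjlim_i\eta^{sep}_{T_i}$.

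Next I would unwind the fibered-product construction: the value of the left-hand functor on $T'$ is the colimit $\varinjlim_i\mathcal{A}^2_{X/S}(T_i)$, which is a subgroup of $\varinjlim_i\Ch^2(X_{T_i})$. The continuity of Chow groups identifies $\varinjlim_i\Ch^2(X_{T_i})\cong\Ch^2(X_{T'})$ naturally, and inside this group both $\varinjlim_i\mathcal{A}^2_{X/S}(T_i)$ and $\mathcal{A}^2_{X_{S'}/S'}(T')=\{Z\in\Ch^2(X_{T'}):Z_{\eta^{sep}_{T'}}\sim_{alg}0\}$ sit as subgroups, using that $X_{T'}=(X_{S'})_{T'}$. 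It then suffices to prove that these two subgroups coincide and that the identification is natural in $T'$.

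For the inclusion of the left-hand group into the right-hand one, a class represented by some $Z_i$ with $(Z_i)_{\eta^{sep}_{T_i}}\sim_{alg}0$ stays algebraically trivial after the further base change to the field extension $\eta^{sep}_{T'}\supseteq\eta^{sep}_{T_i}$. For the reverse inclusion, given $Z\in\Ch^2(X_{T'})$ with $Z_{\eta^{sep}_{T'}}\sim_{alg}0$, I would lift $Z$ to some $Z_i$ by continuity; the algebraic equivalence over $\eta^{sep}_{T'}=\varinjlim_j\eta^{sep}_{T_j}$ is witnessed by a connected finite-type parametrizing scheme together with an interpolating cycle, and all of this finite-type data is already defined over some $\eta^{sep}_{T_j}$. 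Descending it shows $(Z_j)_{\eta^{sep}_{T_j}}\sim_{alg}0$, so $Z$ lies in $\varinjlim_i\mathcal{A}^2_{X/S}(T_i)$. Naturality with respect to the refined Gysin maps follows because the identification $\Ch^2(X_{T'})\cong\varinjlim_i\Ch^2(X_{T_i})$ is compatible with pullback.

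The main obstacle is the reverse inclusion in the third paragraph: one must spread the data realizing the equivalence $Z_{\eta^{sep}_{T'}}\sim_{alg}0$ down from the colimit field $\eta^{sep}_{T'}$ to a finite stage $\eta^{sep}_{T_j}$ in a way compatible with the separable-generic-fibre condition built into the functor. This rests on the facts that algebraic equivalence may be tested using smooth connected curves, that the witnessing cycle and parametrizing scheme are of finite presentation and hence descend, and that both the formation of generic points under $T'=\varprojlim_i T_i$ and the passage to separable closures commute with the filtered limit.
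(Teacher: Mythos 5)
The paper's own proof of this proposition is a one-line citation: it is equation (2.2) of \cite[Section 2.1]{achter2022functorial}. Your proposal is therefore a genuinely different, self-contained route, and its skeleton — descend the test object $T'$ to a finite stage, identify $\varinjlim_i\Ch^2(X_{T_i})$ with $\Ch^2(X_{T'})$, then match the two subgroups by spreading out algebraic equivalences — is exactly the kind of argument that underlies the cited result. However, one load-bearing assertion fails in the stated generality: you claim $X_{T'}\cong\varprojlim_i X_{T_i}$ is a filtered limit with affine \emph{flat} transition maps, and you rest the proof on continuity of Chow groups for affine flat systems. Morphisms in $Sm_\Lambda/S$ are arbitrary $S$-morphisms between $\Lambda$-smooth schemes — closed immersions, for instance, are allowed — so neither the transition maps $S_j\to S_i$, nor their base changes $T_j=T_{i_0}\times_{S_{i_0}}S_j\to T_i$, nor the projections $T'\to T_i$, are flat in general. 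This is precisely why the paper defines $\mathcal{A}^2_{X/S/\Lambda}$ on morphisms by the refined Gysin map $f^!$ rather than by flat pullback. Without flatness, both of your declared pillars give way: the continuity principle you invoke is available in the references only for flat affine transition maps, and, worse, the comparison map $\varinjlim_i\Ch^2(X_{T_i})\to\Ch^2(X_{T'})$ is not even defined, since $T'\to T_i$ is neither flat nor lci (the lci property, unlike flatness, does not pass to cofiltered limits, as it requires finite presentation).

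The gap is repairable in two ways. Either add the hypothesis that the transition morphisms are flat — this costs the paper nothing, since every actual invocation of the proposition (localization at generic points, separable closures of residue fields, and the inverse limits of smooth morphisms in Proposition \ref{basechange} and Remark \ref{defActIJ}) is a limit of flat morphisms, in which case the projections $T'\to T_i$ are flat affine and your two continuity principles apply verbatim; or keep the full generality and rerun the argument with refined Gysin transition maps, which requires a compatibility statement (spreading out a cycle and then applying $f^!$ agrees at some finite stage with spreading out its restriction) that your proposal does not supply. Two further points are glossed over and should be made explicit: the models $T_i$ must be shown to lie in $Sm_\Lambda/S$, for which you need $T'\to S'$ to be \emph{smooth} (not merely of finite type — over the possibly non-Noetherian $S'$ smoothness is also what gives the finite presentation needed for spreading out), spread smoothness out to $T_{i_0}\to S_{i_0}$, compose with $S_i\to\Lambda$, and arrange dominance over $S$; and the projections $T'\to T_i$ must be arranged to be dominant, since the defining condition of $\mathcal{A}^2_{X/S}(T_i)$ refers to $\eta^{sep}_{T_i}$, so your identification $\eta^{sep}_{T'}=\varinjlim_i\eta^{sep}_{T_i}$ is only meaningful when $\eta_{T'}$ actually lies over each $\eta_{T_i}$.
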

\begin{proof}
This is equation $(2.2)$ in \cite[Section $2.1$]{achter2022functorial}.
\end{proof}

\begin{proposition}\label{extendRatMap}
Let $f:T \to A$ be a rational map of schemes over $S$, and suppose that $A$ is an abelian scheme and that $T$ is regular. Then $f$ extends uniquely to a morphism.
\end{proposition}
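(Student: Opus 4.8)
The plan is to reduce to the classical extension theorem of Weil, after first disposing of uniqueness. \textbf{Uniqueness} is immediate: a rational map is given on a dense open $U \subseteq T$, the scheme $T$ is reduced (being regular), and $A \to S$ is separated (abelian schemes are proper, hence separated); therefore any two $S$-morphisms $T \to A$ that agree on the dense open $U$ coincide. It remains to prove existence, that is, that the maximal domain of definition $U$ of $f$ is all of $T$. I would argue this by stratifying the complement $Z := T \setminus U$ according to codimension, in two steps.

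\textbf{Step 1 (codimension one).} Let $\eta \in T$ be a point of codimension $1$. Since $T$ is regular, $\mathcal{O}_{T,\eta}$ is a discrete valuation ring with fraction field $k(T)$. The rational map $f$ supplies an $S$-morphism $\Spec k(T) \to A$, while the structure morphism supplies $\Spec \mathcal{O}_{T,\eta} \to S$. Because $A \to S$ is proper, the valuative criterion of properness extends $\Spec k(T) \to A$ uniquely to $\Spec \mathcal{O}_{T,\eta} \to A$ over $S$, so $\eta \in U$. Hence $Z$ contains no point of codimension $1$; equivalently, $Z$ has codimension $\geq 2$ in $T$.

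\textbf{Step 2 (codimension $\geq 2$).} Properness alone cannot finish the job — for a general proper target such as $\mathbb{P}^n$ a map defined in codimension one need not extend (e.g. $[x:y:z]\mapsto[x:y]$ on $\mathbb{P}^2$) — so here the group structure of $A$ is essential. The extra input is Weil's theorem that, for an $S$-rational map from a regular (resp. smooth over $S$) scheme into a smooth separated $S$-group scheme, the locus of indeterminacy is \emph{pure of codimension one}. Its proof runs through the difference map $g\colon T \times_S T \dashrightarrow A$, $(x,y)\mapsto f(x)-f(y)$, which is regular on $U\times_S U$, vanishes along the diagonal, and by a translation argument cannot acquire indeterminacy of codimension $\geq 2$; one then transfers this back to $f$ by writing $f(x) = g(x,w)+f(w)$ for a suitable $w\in U$. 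Granting Weil's theorem, Step 1 says $Z$ has no codimension-one component while Weil's theorem says $Z$ is pure of codimension one, so $Z=\emptyset$ and $U=T$, as required. I would cite the relative form of this result (Bosch--L\"utkebohmert--Raynaud, \emph{N\'eron Models}, \S8.4).

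\textbf{Main obstacle and caveats.} The heart of the matter is Step 2, specifically Weil's pure-codimension-one statement: this is exactly the place where the abelian (group) structure is indispensable and where the naive analogue for arbitrary proper targets fails. Two technical points require attention. First, if one wishes to run the difference-map proof directly, one needs $T \times_S T$ to be regular so that Step 1 applies to it; this holds when $T$ is smooth over $S$ (smooth over a regular base is regular), and under the mere hypothesis that $T$ is regular I would instead invoke the relative Weil extension theorem as a black box, or spread out and reduce to the classical statement over a field. Second, in forming $f(x)=g(x,w)+f(w)$ one must ensure $f(w)$ is a genuine section over a neighbourhood of the point being filled in, which is automatic once $w\in U$.
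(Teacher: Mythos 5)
Your proposal is correct and coincides with the paper's treatment: the paper proves this proposition simply by citing Bosch--L\"utkebohmert--Raynaud, \emph{N\'eron Models}, \S 8.4, Corollary 6, which is exactly the relative Weil extension theorem you invoke, and your two-step argument (valuative criterion of properness in codimension one, plus purity of the indeterminacy locus) is precisely the proof of that cited result. Your caveat about $T$ being merely regular rather than smooth over $S$ is handled in the reference by viewing $f$ as a rational section of the abelian scheme $A\times_S T \to T$, so that the base becomes $T$ itself.
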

\begin{proof}
This is follows from \cite[Corollary 6, Section 8.4]{neron}.
\end{proof}

\begin{lemma}\label{liftfunctor}
Let $\eta$ be the generic point of $S$. Any (surjective) regular homomorphism $\phi_{\eta}:\mathcal{A}^2_{X_{\eta}/S_{\eta}} \to A_{\eta}$ lifts to a (surjective) regular homomorphism $\phi: \mathcal{A}^2_{X/S} \to A$.
\end{lemma}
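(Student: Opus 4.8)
The plan is to build the lift $\phi$ object by object: given a test scheme $T$ and a cycle class in $\mathcal{A}^2_{X/S}(T)$, I restrict to the generic fibre, apply $\phi_\eta$ there, and then extend the resulting section of $A$ over all of $T$ using the N\'eron extension property of Proposition \ref{extendRatMap}. Concretely, fix $T \in Sm_\Lambda/S$ with generic fibre $T_\eta := T \times_S S_\eta$, and take $Z \in \mathcal{A}^2_{X/S}(T)$. Reducing to the integral components of the smooth scheme $T$, dominance of $T$ over the integral base $S$ shows that the generic point of $T$ lies over $\eta$, so $T_\eta$ is a dense open subscheme of $T$ and the generic points of $T$ and $T_\eta$ coincide. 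Hence the defining condition $Z_{\eta^{sep}_T} \sim_{alg} 0$ is unchanged by restriction, and $Z_{T_\eta}$ lands in $\mathcal{A}^2_{X_\eta/S_\eta}(T_\eta)$ under the identification of Proposition \ref{baseChangeCat}. Applying $\phi_\eta$ yields a point $\phi_\eta(Z_{T_\eta}) \in A_\eta(T_\eta) = A(T_\eta)$, the last equality because $T_\eta \to S$ factors through $S_\eta$; equivalently, a morphism $T_\eta \to A$ over $S$.

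Since $T$ is regular (being smooth over $\Lambda$, which we may take to be regular, e.g.\ $\Lambda = \mathbb{Z}$) and $A$ is an abelian scheme over $S$, Proposition \ref{extendRatMap} applies to the rational map $T \dashrightarrow A$ determined by $\phi_\eta(Z_{T_\eta})$ on the dense open $T_\eta$: it extends uniquely to a morphism $T \to A$. I define $\phi_T(Z) \in A(T)$ to be this extension. The uniqueness clause of Proposition \ref{extendRatMap} is what makes the definition behave functorially. For a morphism $g : T' \to T$ in $Sm_\Lambda/S$, both $\phi_{T'}(g^! Z)$ and $g^*\phi_T(Z)$ are morphisms $T' \to A$; over the dense open $T'_\eta$ they agree, because the refined Gysin homomorphism $g^!$ commutes with the flat base change to the generic fibre and $\phi_\eta$ is itself natural. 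By uniqueness of the extension they therefore agree on all of $T'$. The identical argument shows that $\phi_T$ is a group homomorphism, as $\phi_T(Z_1 + Z_2)$ and $\phi_T(Z_1) + \phi_T(Z_2)$ are morphisms into the separated scheme $A$ which coincide over $T_\eta$. Thus $\{\phi_T\}$ is a natural transformation, i.e.\ a regular homomorphism $\phi : \mathcal{A}^2_{X/S} \to A$ restricting to $\phi_\eta$ on the generic fibre.

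For the surjectivity clause, let $k$ be the separable closure of the generic point of $S$; this is also the separable closure of the generic point of $S_\eta$. The point $\Spec k \to S$ factors through $S_\eta$, so $(\Spec k)_\eta = \Spec k$, and the construction gives $\phi_{\Spec k} = \phi_\eta$ on $\mathcal{A}^2_{X/S}(k) = \mathcal{A}^2_{X_\eta/S_\eta}(k)$, using Proposition \ref{baseChangeCat} together with $A(k) = A_\eta(k)$. Consequently $\mathcal{A}^2_{X/S}(k) \to A(k)$ is surjective exactly when $\phi_\eta$ is, and the parenthetical assertion follows.

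I expect the main obstacle to be the verification that $\{\phi_T\}$ is genuinely a natural transformation rather than merely a pointwise assignment: this rests on the compatibility of the refined Gysin homomorphisms $g^!$ with restriction to the generic fibre, which then lets uniqueness in Proposition \ref{extendRatMap} propagate the generic-fibre identities over all of $T'$. The two supporting geometric inputs are the density of $T_\eta$ in $T$ (so that ``rational map'' is meaningful) and the regularity of $T$ (so that N\'eron extension applies); both hold for every object of the category under the standing hypotheses that $S$ is regular and integral and that the objects are smooth and dominant.
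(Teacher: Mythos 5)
Your proposal is correct and follows essentially the same route as the paper: define $\phi(T)$ by restricting a cycle to the dense generic fibre $T_\eta$, applying $\phi_\eta$ there, and extending the resulting map $T_\eta \to A$ uniquely to $T \to A$ via the N\'eron-type extension property of Proposition \ref{extendRatMap}, with surjectivity immediate from the definition since the separable closure of the generic point is common to both functors. The only difference is one of detail: the paper states this construction in three sentences, whereas you additionally verify naturality and additivity by propagating generic-fibre identities through the uniqueness clause of Proposition \ref{extendRatMap} — checks the paper leaves implicit.
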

\begin{proof}
We construct $\phi$ as follows. For each object $T$ in $Sm_{\Lambda}/S$, $\phi(T):\mathcal{A}^2_{X/S}(T) \to A(T)$ sends $Z \in \mathcal{A}^2_{X/S}(T) $ to the unique morphism $T \to A$ which lifts $\phi_{\eta}(T_{\eta})(Z_{\eta}): T_{\eta} \to A_{\eta}$. This makes sense because $\phi_{\eta}(T_{\eta})(Z_{\eta})$ is a map from a regular scheme to an abelian scheme, so it extends uniquely by Proposition \ref{extendRatMap}. By definition of surjective regular homomorphisms, if $\phi_{\eta}$ is surjective, so is $\phi$.
\end{proof}

We will now bundle up the results of Faltings-Chai and Vasiu on extensions of Abelian schemes into a single result. This is necessary in order to prove base change theorems regarding the algebraic representative.

\begin{definition}
Let $S$ be some scheme which is flat over a DVR $R$ with generic point $\eta$. We call $S$ \emph{healthy} if for any open $U \subset S$ with complement of codimension at least two such that $S_{\eta} \subset U$, and any abelian scheme $A$ over $U$, there is an abelian scheme $\widetilde{A}$ over $S$ so that $\widetilde{A}_U \cong A$. 
\end{definition}

\begin{proposition}\label{IJatP}
Let $S \to \mathbb{Z}_{(p)}$ be a smooth $\mathbb{Z}_{(p)}$-scheme where $p$ is any prime. Then $S$ is healthy.
\end{proposition}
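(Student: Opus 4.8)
The plan is to reduce the statement directly to the two cited extension theorems, after checking that $S$ meets all standing hypotheses. First I would record the relevant properties of the base: $R = \mathbb{Z}_{(p)}$ is a discrete valuation ring of mixed characteristic $(0,p)$, with uniformizer $p$, perfect residue field $\mathbb{F}_p$, and absolute ramification index $e = 1$. Its generic point is $\eta = \Spec(\mathbb{Q})$. Since $S$ is smooth over $\mathbb{Z}_{(p)}$, it is flat over $R$ (smooth morphisms are flat) and regular (a scheme smooth over a regular base is regular), so $S$ does satisfy the blanket conditions appearing in the definition of healthy.

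Next I would observe that the definition of healthy given here is exactly the notion of a \emph{healthy regular scheme} in the sense of Faltings--Chai and Vasiu: for every open $U \subset S$ containing $S_\eta$ and with complement of codimension at least two, every abelian scheme over $U$ extends to an abelian scheme over $S$. Note that the extension, if it exists, is automatically unique, since an isomorphism between two such extensions over the codimension-$\geq 2$ open $U$ extends by the Néron mapping property (Proposition \ref{extendRatMap}); hence only \emph{existence} of $\widetilde{A}$ needs to be established. It therefore suffices to prove that an arbitrary $S$ smooth over $\mathbb{Z}_{(p)}$ is healthy regular, and this is precisely what the cited results provide. For $p \geq 3$ the ramification bound $e = 1 \leq p - 1$ is satisfied, so \cite[Corollary 6.8]{Faltings} applies directly and gives the conclusion. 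For $p = 2$, the Faltings--Chai hypothesis is no longer available, and I would instead invoke \cite[Theorem 1.3]{Vasiu}, which establishes the healthy regular property for regular schemes faithfully flat over a mixed-characteristic discrete valuation ring with no restriction on the residue characteristic; applying it with $R = \mathbb{Z}_{(2)}$ completes this case.

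The main obstacle is not any computation but the careful matching of definitions: one must confirm that the hypotheses of each theorem are met by an arbitrary smooth $\mathbb{Z}_{(p)}$-scheme (flatness, regularity, the mixed-characteristic DVR setup) and that their conclusions coincide verbatim with the notion of healthy used here, including the codimension-$\geq 2$ and $S_\eta \subset U$ conditions. The genuinely delicate point is the prime $p = 2$, where Faltings--Chai does not suffice; this is exactly why both references are needed, with Vasiu's theorem removing the ramification obstruction at $p = 2$.
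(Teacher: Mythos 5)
Your proof is correct, but it is organized differently from the paper's, which consists of a single citation: \cite[Theorem 1.3]{Vasiu} is invoked for \emph{all} primes $p$ at once, with no case distinction. The reason the paper can do this is that Vasiu's theorem holds for regular schemes formally smooth over a mixed-characteristic discrete valuation ring with ramification index $e \leq p-1$, and for $\mathbb{Z}_{(p)}$ one has $e = 1 \leq p-1$ for every prime, including $p = 2$. Your case split is therefore harmless but unnecessary, and it contains a small internal inconsistency worth fixing: the bound $e \leq p-1$ that you attribute to \cite[Corollary 6.8]{Faltings} is actually the hypothesis of Vasiu's theorem; the Faltings--Chai result requires the strict inequality $e < p-1$ (equivalently $e \leq p-2$), which is exactly why it is available for $p \geq 3$ (where $1 < p-1$) but fails for $p = 2$ (where $1 \not< 1$) --- as written, your stated bound would wrongly admit $p = 2$ into the Faltings--Chai case, contradicting your own next sentence. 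Your preliminary verifications (flatness and regularity of $S$, uniqueness of the extension via the N\'eron mapping property of Proposition \ref{extendRatMap}) are correct and are left implicit in the paper; note also that in the paper the two references play disjoint roles --- Vasiu handles the mixed-characteristic local rings here, while \cite[Corollary 6.8]{Faltings} is reserved for the residue-characteristic-zero case appearing later in Proposition \ref{codim2extend} --- rather than being split by the size of $p$ as in your argument.
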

\begin{proof}
From \cite[Theorem 1.3]{Vasiu}, we know that $S$ is healthy.
\end{proof}

\begin{proposition}\label{codim2extend}
Let $U \subset S$ be an open subset of a smooth connected scheme over $\mathbb{Z}$, with complement of codimension at least $2$. Let $A_U$ be an abelian scheme over $U$. Then $A_U$ extends to an abelian scheme $A$ over $S$.
\end{proposition}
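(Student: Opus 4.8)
The plan is to reduce the global extension problem to the two local extension results already available, treating the characteristic zero locus and each positive residue characteristic separately. Note that Proposition \ref{IJatP} cannot be applied in one stroke: healthiness of $S_{(p)} := S\times_{\mathbb{Z}}\mathbb{Z}_{(p)}$ over the DVR $\mathbb{Z}_{(p)}$ only permits extension across a codimension $\geq 2$ locus that is contained in the special fibre $S_{\mathbb{F}_p}$, that is, it requires the generic fibre $S_{\mathbb{Q}}$ to already lie in the given open set. So first I would clear the bad locus away from the characteristic zero fibre, and only then localise at the finitely many remaining primes.

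First, consider the generic fibre $S_{\mathbb{Q}} := S\times_{\mathbb{Z}}\mathbb{Q}$, which is smooth over $\mathbb{Q}$ and hence, being connected and regular, an integral healthy regular scheme in the sense of Faltings--Chai. Since $S \to \Spec\mathbb{Z}$ is flat, the open immersion $U_{\mathbb{Q}} := U\times_{\mathbb{Z}}\mathbb{Q}\hookrightarrow S_{\mathbb{Q}}$ still has complement of codimension $\geq 2$, so by \cite[Corollary 6.8]{Faltings} the abelian scheme $A_U|_{U_{\mathbb{Q}}}$ extends to an abelian scheme $\mathcal{A}_{\mathbb{Q}}$ over $S_{\mathbb{Q}}$. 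Writing $S_{\mathbb{Q}} = \varprojlim_{m} S[1/m]$, a standard limit argument (EGA IV, \S 8) shows that $\mathcal{A}_{\mathbb{Q}}$ descends to an abelian scheme over $S[1/m]$ for some integer $m$, and that this descent agrees with $A_U$ over $U[1/m]$ after possibly enlarging $m$. Gluing this extension to $A_U$ along the overlap $U[1/m]$ produces an abelian scheme over $U_1 := U\cup S[1/m]$ extending $A_U$, whose complement $Z_1 := S\setminus U_1$ is contained in the finitely many special fibres $S_{\mathbb{F}_p}$ with $p\mid m$ and still has codimension $\geq 2$.

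It then remains to extend across each such prime $p$ in turn. Localising, $S_{(p)}$ is healthy by Proposition \ref{IJatP}, and the open $U_1\times_S S_{(p)}\subset S_{(p)}$ contains the generic fibre $S_{\mathbb{Q}}$ (since $S_{\mathbb{Q}}\subset S[1/m]\subset U_1$) with complement contained in $S_{\mathbb{F}_p}$ of codimension $\geq 2$; hence the restriction of $A_{U_1}$ extends to an abelian scheme over $S_{(p)}$. Spreading out again, this extension descends to an open $S[1/n]\supseteq S_{\mathbb{F}_p}$ with $p\nmid n$, carrying along the identification with $A_{U_1}$ over the overlap, and gluing enlarges the open set to include $S_{\mathbb{F}_p}$. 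Iterating over the finitely many primes dividing $m$ exhausts $Z_1$ and yields the desired abelian scheme $A$ over all of $S$; uniqueness of the extensions, which follows from Proposition \ref{extendRatMap} applied to the identity and guarantees that the gluing data is canonical, ensures the pieces are consistent. The main obstacle is precisely the bookkeeping forced by the hypothesis of healthiness: one must first use the characteristic zero result of Faltings--Chai to push the codimension $\geq 2$ bad locus entirely into positive characteristic fibres, since only then is Proposition \ref{IJatP} applicable at each prime, and the accompanying spreading-out and gluing arguments, though routine, are essential for globalising the local statements.
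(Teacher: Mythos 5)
Your proof is correct, but it organizes the induction differently from the paper, and the comparison is instructive. The paper runs a maximality argument on the poset of extensions $(U\subset W, A_W)$ and localises \emph{at points}: at a generic point $\eta$ of the bad locus it applies healthiness of the local ring $\Spec(\mathcal{O}_{S,\eta})$ (Vasiu in mixed characteristic, \cite[Corollary 6.8]{Faltings} in residue characteristic $0$), spreads out to a neighbourhood of $\eta$, and glues with $A_W$ --- which forces a genuine cocycle verification over a family of overlaps $W'_{\zeta}$, handled via the uniqueness statement of Proposition \ref{extendRatMap}. You instead stratify \emph{by residue characteristic}: first extend over $S_{\mathbb{Q}}$ using Faltings--Chai, descend to some $S[1/m]$, then run a finite induction over the primes $p\mid m$ using healthiness of $S\times_{\mathbb{Z}}\mathbb{Z}_{(p)}$. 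The inputs are identical, but your route buys two things: Proposition \ref{IJatP} is applied literally as stated, to an honest smooth $\mathbb{Z}_{(p)}$-scheme obtained by base change, whereas the paper applies it to local rings $\mathcal{O}_{S,\eta}$, which are not of finite type over $\mathbb{Z}_{(p)}$ (so the paper is implicitly invoking the local-ring form of Vasiu's theorem); and your gluings always involve exactly two pieces along one overlap, so the cocycle conditions are vacuous. What the paper's pointwise localisation buys in exchange is that the generic-fibre hypothesis in the definition of healthy is \emph{automatic} there (the closed point of a mixed-characteristic local ring never lies in the generic fibre), which is exactly the bookkeeping your two-stage argument is designed to handle; your opening diagnosis of why a one-shot application of Proposition \ref{IJatP} fails is the correct identification of this issue. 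One caveat: your descent of $\mathcal{A}_{\mathbb{Q}}$ from $S_{\mathbb{Q}}=\varprojlim_m S[1/m]$ to a finite stage $S[1/m]$ needs $S$ quasi-compact and quasi-separated (e.g.\ Noetherian); this is harmless in the paper's setting (the application is to opens of $\mathbb{P}^{34}_{\mathbb{Z}}$), but the paper's local argument does not need it, since its limit arguments are confined to affine neighbourhoods of single points.
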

\begin{proof}
Consider the poset of extensions $(U \subset W,A_W)$ of $A_U$. It suffices to show that the maximal element is defined over $S$. Take a maximal element $(U \subset W,A_W)$ and suppose $W \neq S$. Let $\eta \in S \backslash W$ be the generic point of an irreducible component of the complement. Since $S$ is smooth and $\overline{\eta}$ has codimension at least two, there is a chain $\overline{\eta} \subsetneq \overline{\zeta} \neq S$ for some point $\zeta \in S$.

Consider the stalk $S_{(\eta)}$. If this local ring is of mixed characteristic, then it is a healthy local ring by Proposition \ref{IJatP}. If it has residue characteristic $0$, then it is healthy by \cite[Corollary 6.8]{Faltings}. Since $A_W \big |_{S_{(\eta)}}$ is defined everywhere outside the maximal point, which has codimension at least $2$, it extends uniquely to an abelian scheme $A_{(\eta)}$ over $\Spec(S_{(\eta)})$.

We now proceed to spread $A_{(\eta)}$ out to an abelian scheme $A_V'$ defined over an open neighbourhood $V$ of $\eta$. This agrees with $A_W$ when pulled back to the stalk $S_{(\zeta)}$ for any point $\zeta$ of $S$ which specializes to $\eta$. The next step is to glue this abelian scheme with $A_W$, after possibly shrinking $V$, to a strictly larger abelian scheme.

Take the isomorphism $\phi_{\zeta}: (A_V')_{(\zeta)} \xrightarrow{\sim} (A_W)_{(\zeta)}$ of abelian schemes over the stalk at $\zeta$. This spreads out to an isomorphism of abelian schemes $\phi_{W'_{\zeta}}$ in some open subset $W'_{\zeta} \subset W \cap V$ which contains $\zeta$. Now let $\zeta$ vary across all points of $S$ which specialize to $\eta$, and consider $W' := \bigcup_{\zeta}W'_{\zeta} \subset W \cap V$. By removing some closed subsets, we can shrink $V$ so that $W \cap V = W'$, and $\eta$ still lies in $V$ since by construction, we cannot have removed any closed subset containing $\eta$.

Now we are in a position to glue to an abelian scheme $A_{V \cup W}$ over $V \cup W$. We just need to verify the cocycle conditions for $\{\phi_{W'_{\zeta}}\}_{\zeta}$. More precisely, for any $\zeta_1,\zeta_2$ we must verify
$$\phi_{W'_{\zeta_1}} \big |_{W'_{\zeta_1} \cap W'_{\zeta_2}} = \phi_{W'_{\zeta_2}} \big |_{W'_{\zeta_1} \cap W'_{\zeta_2}}.$$
If $\eta_S$ is the generic point of $S$, then actually $(\phi_{W'_{\zeta_1}})_{(\eta_S)} = (\phi_{W'_{\zeta_2}})_{(\eta_S)}$, and due to Proposition \ref{extendRatMap}, the two maps above are equal. This verifies the cocycle conditions and so we obtain an abelian scheme $A_{V \cup W}$ over $V \cup W$ extending $A_W$. We conclude that $W = S$ as required.
\end{proof}

\begin{remark}
Using results of \cite{Vasiu}, we can prove more general results about extensions of abelian schemes, but we do not need these results in this paper.
\end{remark}

\begin{lemma}\label{extendSurjRegHom}
Let $S$ be a smooth scheme over $\mathbb{Z}$ and let $X \to S$ be a family of cubic threefolds. Let $B$ be an abelian variety over $k(\eta_S)$ and consider a surjective regular homomorphism
$$\alpha: \mathcal{A}^2_{X_{\eta_S}/\eta_{S}} \to B.$$
Then $B$ extends to an abelian variety $B_S$ over $S$, and $\alpha$ extends to a surjective regular homomorphism $\widetilde{\alpha}: \mathcal{A}^2_{X_S/S}\to B_S$.
\end{lemma}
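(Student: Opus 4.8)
The plan is to reduce everything to two inputs already in hand: the extension of abelian schemes across a locus of codimension $\geq 2$ (Proposition \ref{codim2extend}), and the lifting of regular homomorphisms from the generic fibre (Lemma \ref{liftfunctor}). The only genuine work is to produce the abelian scheme $B_S$ over all of $S$; once this exists, Lemma \ref{liftfunctor} applied to $A = B_S$ immediately lifts $\alpha$ to a surjective regular homomorphism $\widetilde{\alpha}: \mathcal{A}^2_{X_S/S} \to B_S$. We may assume $S$ is connected, hence integral and regular, so that $\eta_S$ is its generic point and $k(\eta_S)$ its function field.

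First I would factor $\alpha$ through the algebraic representative. By Proposition \ref{existenceAR2} the algebraic representative $\Ab^2_{X/S}$ exists as an abelian scheme over all of $S$, and by Proposition \ref{baseChangeCat} together with the base change compatibility of algebraic representatives, its generic fibre is $\Ab^2_{X_{\eta_S}}$. Since $\alpha$ is a surjective regular homomorphism over the field $k(\eta_S)$, the universal property of $\Ab^2_{X_{\eta_S}}$ produces a surjective homomorphism of abelian varieties $\beta: \Ab^2_{X_{\eta_S}} \twoheadrightarrow B$ through which $\alpha$ factors as $\mathcal{A}^2_{X_{\eta_S}/\eta_S} \to \Ab^2_{X_{\eta_S}} \xrightarrow{\beta} B$.

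The heart of the argument is to extend $B$ across the codimension one points of $S$. Spreading out, $B$ and $\beta$ are defined over some dense open $U_0 \subset S$. Let $\xi$ be a codimension one point of $S$; then $\mathcal{O}_{S,\xi}$ is a discrete valuation ring, and $\Ab^2_{X/S}$ restricts to an abelian scheme over it, so $\Ab^2_{X_{\eta_S}}$ has good reduction at $\xi$. Fixing a prime $\ell$ invertible in the residue field $\kappa(\xi)$, the Galois representation $V_{\ell}\Ab^2_{X_{\eta_S}}$ is therefore unramified at $\xi$. Because $\beta$ is a surjection of abelian varieties it induces a Galois-equivariant surjection $V_{\ell}\Ab^2_{X_{\eta_S}} \twoheadrightarrow V_{\ell}B$, and a quotient of an unramified representation is unramified; hence $V_{\ell}B$ is unramified at $\xi$. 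By the N\'eron--Ogg--Shafarevich criterion, $B$ has good reduction at $\xi$, i.e. it extends to an abelian scheme over $\mathcal{O}_{S,\xi}$. I expect this good reduction step to be the main obstacle, the delicate point being the application of the criterion over the possibly imperfect residue field $\kappa(\xi)$, which one handles by passing to the strict henselisation and choosing $\ell \neq \mathrm{char}\,\kappa(\xi)$.

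Since an extension over a normal base is unique up to unique isomorphism by Proposition \ref{extendRatMap}, these local extensions glue with $B_{U_0}$ to an abelian scheme over an open $U_1 \supseteq U_0$ containing every codimension one point; thus $S \setminus U_1$ has codimension at least $2$. Now Proposition \ref{codim2extend} extends this to an abelian scheme $B_S$ over all of $S$, whose generic fibre is $B$. Finally, applying Lemma \ref{liftfunctor} with $A = B_S$ lifts $\alpha$ to a surjective regular homomorphism $\widetilde{\alpha}: \mathcal{A}^2_{X_S/S} \to B_S$, completing the proof.
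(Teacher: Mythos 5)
Your overall skeleton is the same as the paper's: extend $B$ over the DVRs at codimension one points, spread out and glue the local extensions using uniqueness (Proposition \ref{extendRatMap}), pass from an open set with codimension $\geq 2$ complement to all of $S$ via Proposition \ref{codim2extend}, and finish with Lemma \ref{liftfunctor}. However, the one step carrying the real content --- good reduction of $B$ at a codimension one point $\xi$ --- is obtained circularly in your argument. You need the generic fibre of $\Ab^2_{X/S}$ to be $\Ab^2_{X_{\eta_S}}$, but that statement is precisely Proposition \ref{basechange} specialized to $S' = \eta_S$, and the paper proves Proposition \ref{basechange} \emph{using} Lemma \ref{extendSurjRegHom}, the very lemma at hand. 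Proposition \ref{baseChangeCat}, which you cite, only identifies the base change of the functor $\mathcal{A}^2_{X/S}$, not of its algebraic representative; a priori the universal property yields only a comparison homomorphism $h: \Ab^2_{X_{\eta_S}} \to (\Ab^2_{X/S})_{\eta_S}$, and at best (using surjectivity of the algebraic representative) one could hope $h$ is surjective. That is useless here: good reduction passes to quotients, not to sources of surjections (consider $E_1 \times E_2 \twoheadrightarrow E_1$ with $E_2$ of bad reduction). So neither the isomorphism, nor even an injection $\Ab^2_{X_{\eta_S}} \hookrightarrow (\Ab^2_{X/S})_{\eta_S}$, is available at this stage, and your claim that $V_{\ell}\Ab^2_{X_{\eta_S}}$ is unramified at $\xi$ is unsupported.

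The missing idea --- and what the paper does instead --- is to get unramifiedness directly for $B$ from the hypothesis that $\alpha$ is a \emph{surjective} regular homomorphism: by \cite[Proposition 8.2]{achter2022functorial}, such an $\alpha$ induces a Galois-equivariant injection $T_{\ell}(B) \hookrightarrow \HH^3(X_{\eta_S},\mathbb{Z}_{\ell}(2))$ for suitable $\ell$. Since $X$ is smooth and proper over $\mathcal{O}_{S,\xi}$, this cohomology is unramified at $\xi$ by smooth proper base change, hence so is its submodule $T_{\ell}(B)$, and the N\'eron--Ogg--Shafarevich criterion (packaged as \cite[Lemma 6.1]{Achter_2017}, which also handles the delicate residue-field issues you flag) extends $B$ to an abelian scheme over $\mathcal{O}_{S,\xi}$. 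Note this is exactly where the surjectivity hypothesis on $\alpha$ enters. Your factorization $\alpha = \beta \circ \phi$ through $\Ab^2_{X_{\eta_S}}$ is correct but does not bypass this cohomological input: at best it reduces the general case to the special case $B = \Ab^2_{X_{\eta_S}}$, for which you would still need the injection of $T_{\ell}$ into cohomology rather than an appeal to base change of algebraic representatives. The remainder of your proof (gluing, codimension two extension, and the application of Lemma \ref{liftfunctor}) is fine and agrees with the paper.
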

\begin{proof}
By \cite[Proposition 8.2]{achter2022functorial} there is a Galois-equivariant injection $T_{\ell}(B) \to \HH^3(X_{\eta_S},\mathbb{Z}_{\ell}(2))$ if $\ell \neq \textrm{char}(k(\eta_S))$ is coprime to some integer $r$. Let $\zeta$ be a divisor on $S$ and consider the DVR $S_{(\zeta)}$. By \cite[Lemma 6.1]{Achter_2017}, $B$ extends uniquely to an abelian scheme over $S_{(\zeta)}$. We can then spread this out to an abelian scheme $B_{V_{\zeta}}$ over an open neighbourhood $V_{\zeta}$ of $\zeta$. Note that $(B_{V_{\zeta}})_{\eta_S} \cong B$. As a result, for any two divisors $\zeta_1,\zeta_2$ there is an open $W_{\zeta_1,\zeta_2} \subset V_{\zeta_1} \cap V_{\zeta_2}$ and an isomorphism $\phi_{\zeta_1,\zeta_2}: (B_{V_{\zeta_1}})_{W_{\zeta_1,\zeta_2}} \xrightarrow{\sim}  (B_{V_{\zeta_2}})_{W_{\zeta_1,\zeta_2}}$. Due to Proposition \ref{extendRatMap}, these isomorphisms satisfy the cocycle conditions, once restricted to a common base. It follows that we can extend $B$ to an abelian scheme $B_V$ over an open $V \subset S$ whose complement has codimension at least $2$. Then Proposition \ref{codim2extend} extends $B_V$ to an abelian scheme $B_S$ over $S$. Finally, Lemma \ref{liftfunctor} allows us to lift $\alpha$ to a surjective regular homomorphism $\widetilde{\alpha}: \mathcal{A}^2_{X_S/S}\to B_S$.
\end{proof}

\begin{lemma}\label{monoSep}
Let $f:A \to B$ be a morphism of abelian varieties over a field $k$. Suppose that there is a morphism $g:B \to A$ such that $g \circ f = 1_A$, and that $f$ is surjective on $k^{sep}$-points. Then $f$ is an isomorphism.
\end{lemma}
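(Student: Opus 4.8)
The plan is to show that $f$ is simultaneously a closed immersion and surjective, and then conclude that it is an isomorphism. The first observation is purely formal: the relation $g \circ f = 1_A$ makes $f$ a monomorphism of schemes, since if $f \circ u = f \circ v$ for two morphisms $u,v$ into $A$, then composing on the left with $g$ forces $u = v$. Because $A$ is proper over $k$ (abelian varieties are projective) and $B$ is separated over $k$, the morphism $f$ is proper; and a proper monomorphism is automatically a closed immersion. Hence $f$ identifies $A$ with a closed subscheme $f(A) \subseteq B$.

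The second step is to feed in the hypothesis on $k^{\mathrm{sep}}$-points to prove $f(A) = B$. I would base change along $k \to k^{\mathrm{sep}}$, which preserves both the property of being a closed immersion and surjectivity on $k^{\mathrm{sep}}$-points. The key input is that a smooth variety over a separably closed field has a Zariski-dense set of rational points: a smooth $k^{\mathrm{sep}}$-scheme has a dense set of closed points whose residue field is a finite separable extension of $k^{\mathrm{sep}}$, and such points are $k^{\mathrm{sep}}$-rational. Thus $B_{k^{\mathrm{sep}}}(k^{\mathrm{sep}})$ is dense in $B_{k^{\mathrm{sep}}}$. Since $f$ is surjective on $k^{\mathrm{sep}}$-points, the closed subset $f(A_{k^{\mathrm{sep}}})$ contains this dense set and therefore equals $B_{k^{\mathrm{sep}}}$. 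So $f_{k^{\mathrm{sep}}}$ is a surjective closed immersion into the reduced scheme $B_{k^{\mathrm{sep}}}$; the corresponding surjection $\mathcal{O}_{B_{k^{\mathrm{sep}}}} \twoheadrightarrow \mathcal{O}_{A_{k^{\mathrm{sep}}}}$ has kernel supported nowhere (by surjectivity on points) and hence consisting of nilpotents, which vanish by reducedness, so $f_{k^{\mathrm{sep}}}$ is an isomorphism. Faithfully flat descent along $k \to k^{\mathrm{sep}}$ then yields that $f$ itself is an isomorphism.

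The argument is essentially formal once the two standard facts are in place, so I do not expect a serious obstacle; the one point requiring genuine care is the density of $k^{\mathrm{sep}}$-points on $B$, which is exactly where the smoothness of $B$ and the \emph{separable} (rather than merely algebraic) closedness of $k^{\mathrm{sep}}$ enter. I would flag this explicitly. A more structural alternative, available when $f$ and $g$ are homomorphisms, is to note that $g \circ f = 1_A$ forces $\ker f = 0$ as a group scheme, so that $f$ is a closed immersion onto an abelian subvariety $A' := f(A)$ with $f : A \xrightarrow{\sim} A'$; one then forms the quotient $q : B \to B/A'$, which is smooth and surjective (its kernel $A'$ is smooth) and hence surjective on $k^{\mathrm{sep}}$-points, so that $(B/A')(k^{\mathrm{sep}}) = 0$. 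Since a positive-dimensional abelian variety over a separably closed field has nontrivial prime-to-$\mathrm{char}(k)$ torsion, this forces $B/A' = 0$, i.e.\ $A' = B$, recovering the same conclusion.
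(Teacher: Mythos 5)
Your main argument is correct and follows essentially the same route as the paper's proof: both reduce to $k^{sep}$ by base change, both use that $k^{sep}$-points are dense on a smooth variety over a separably closed field, and both exploit the retraction $g$ to make the proper morphism $f$ a monomorphism; the paper finishes via miracle flatness and the Stacks Project lemma that a flat surjective monomorphism is an isomorphism, while you finish via ``proper monomorphism $=$ closed immersion'' plus reducedness of $B_{k^{sep}}$, which is an interchangeable final step. Your alternative group-theoretic argument (trivial kernel, so $A$ maps isomorphically onto an abelian subvariety $A'$, and $(B/A')(k^{sep})=0$ forces $B/A'=0$ by the existence of nontrivial prime-to-$\mathrm{char}(k)$ torsion) is also sound, and you rightly flag that it needs $f$ and $g$ to be homomorphisms --- an assumption not literally in the statement but satisfied in the paper's application, where these maps arise from universal properties of algebraic representatives.
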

\begin{proof}
The morphism $f$ is proper, hence quasi-compact. Since $B$ is smooth, it is geometrically reduced and so the generic point $\eta_B$ has separable residue field. A similar argument shows that the set of $k^{sep}$-points is dense in $A$ and $B$, so we conclude that $f^{sep}$ is surjective. The morphism $f^{sep}$ is also proper, and it is a monomorphism due to the existence of a retract $g^{sep}$. It is flat due to miracle flatness. It follows from \cite[Tag 06NC]{sp} that $f^{sep}$ is an isomorphism, whose inverse is $g^{sep}$. Then $f$ is also an isomorphism, with inverse $g$.
\end{proof}

\begin{proposition}\label{basechange}
Let $S$ be a smooth scheme over $\mathbb{Z}$, and let $X \to S$ be a family of cubic threefolds. Then for any morphism $S' \to S$ which is an inverse limit of morphisms in $Sm_{\mathbb{Z}}/S$, we have
$$(\Ab^2_{X/S})_{S'} \cong \Ab^2_{X_{S'}/S'}.$$
\end{proposition}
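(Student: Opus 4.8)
The plan is to first establish the statement when $S'\to S$ is smooth of finite type, and then pass to the inverse limit. So suppose first that $T\to S$ is smooth of finite type; then $T$ is regular, integral and Noetherian, being smooth over $\mathbb{Z}$, so $\Ab^2_{X_T/T}$ exists by Proposition \ref{existenceAR2}, and the base change $(\Ab^2_{X/S})_T$ is again an abelian scheme over $T$. By Proposition \ref{baseChangeCat} we may identify $\mathcal{A}^2_{X_T/T}\cong (\mathcal{A}^2_{X/S})_T$, so the universal regular morphism $\phi\colon \mathcal{A}^2_{X/S}\to \Ab^2_{X/S}$ base-changes to a regular morphism $\phi_T\colon \mathcal{A}^2_{X_T/T}\to (\Ab^2_{X/S})_T$. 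The universal property of $\Ab^2_{X_T/T}$ then furnishes a canonical homomorphism of abelian schemes $u_T\colon \Ab^2_{X_T/T}\to (\Ab^2_{X/S})_T$ compatible with the regular morphisms, and the goal is to show that $u_T$ is an isomorphism.

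Next I would check that $u_T$ is an isomorphism on the generic fibre. Since $T\to S$ is smooth its fibres are geometrically regular, so the generic point $\eta_T$ of $T$ lies over the generic point $\eta_S$ of $S$ with $k(\eta_T)/k(\eta_S)$ a separable field extension. Using the compatibility of the algebraic representative with restriction to the generic fibre, which is inherent in the construction behind Proposition \ref{existenceAR2}, together with the stability of algebraic representatives under separable field extensions recorded after Proposition \ref{existenceAR2}, I would identify both $(\Ab^2_{X_T/T})_{\eta_T}$ and $(\Ab^2_{X/S})_{\eta_T}$ canonically with $\Ab^2_{X_{\eta_T}}$. As $u_T$ is compatible with the two regular morphisms $\phi$ under these identifications, its restriction $u_T|_{\eta_T}$ is then an isomorphism.

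To promote this to an isomorphism over all of $T$, I would exploit regularity. The total space $(\Ab^2_{X/S})_T$ is smooth over the regular base $T$, hence regular, so the inverse of $u_T|_{\eta_T}$ is a rational map from $(\Ab^2_{X/S})_T$ to the abelian scheme $\Ab^2_{X_T/T}$ over $T$, which extends uniquely to a morphism $w_T$ by Proposition \ref{extendRatMap}. The composites $w_T\circ u_T$ and $u_T\circ w_T$ agree with the respective identity morphisms at the generic point; since the sources are regular and the targets are abelian schemes over $T$, the uniqueness clause of Proposition \ref{extendRatMap} forces $w_T\circ u_T=\mathrm{id}$ and $u_T\circ w_T=\mathrm{id}$, so $u_T$ is an isomorphism. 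This establishes base change for smooth morphisms of finite type.

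Finally, writing $S'=\varprojlim_\alpha S_\alpha$ with each $S_\alpha\to S$ in $Sm_{\mathbb{Z}}/S$, the finite-level isomorphisms $(\Ab^2_{X/S})_{S_\alpha}\cong \Ab^2_{X_{S_\alpha}/S_\alpha}$ base-change to $S'$ and, combined with Proposition \ref{baseChangeCat}, should exhibit $(\Ab^2_{X/S})_{S'}$ as an algebraic representative for $\mathcal{A}^2_{X_{S'}/S'}$, yielding $(\Ab^2_{X/S})_{S'}\cong \Ab^2_{X_{S'}/S'}$. The main obstacle is precisely this limit step: verifying the universal property over the non-Noetherian base $S'$ requires spreading an arbitrary regular morphism and its abelian-scheme target down to some finite level $S_\alpha$, using that the objects of $Sm_{\mathbb{Z}}/S$ are of finite presentation, then applying the finite-level result and checking that the resulting factorisations are independent of $\alpha$ and mutually compatible across the inverse system; one must also confirm that $S'$, as a filtered limit of regular $\mathbb{Z}$-schemes along smooth transition maps, remains regular so that Proposition \ref{extendRatMap} is available in the limit.
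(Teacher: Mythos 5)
Your proposal reproduces the outer skeleton of the paper's argument (construct the natural comparison map, check it on generic fibres, upgrade a generic-fibre isomorphism to an isomorphism of abelian schemes via the uniqueness in Proposition \ref{extendRatMap}), but it assumes away the actual core of the proposition. The step in which you ``identify both $(\Ab^2_{X_T/T})_{\eta_T}$ and $(\Ab^2_{X/S})_{\eta_T}$ canonically with $\Ab^2_{X_{\eta_T}}$'', justified as being ``inherent in the construction behind Proposition \ref{existenceAR2}'', is precisely the statement at issue. Proposition \ref{existenceAR2} gives only the \emph{existence} of $\Ab^2_{X/S}$; it says nothing about its generic fibre, and the base-change results quoted in the paper (the remark after Proposition \ref{existenceAR2}, i.e.\ \cite[Theorem 1]{achter2022functorial}) concern field-to-field extensions only. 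Note moreover that $\eta_S \to S$ is itself an inverse limit of morphisms in $Sm_{\mathbb{Z}}/S$ (dense open immersions), so the compatibility $(\Ab^2_{X/S})_{\eta_S}\cong \Ab^2_{X_{\eta_S}/\eta_S}$ is a special case --- indeed the essential case, to which the paper explicitly reduces --- of Proposition \ref{basechange}. Asserting it as known is circular.

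The paper's proof is devoted almost entirely to this point, and the difficulty is asymmetric: initiality of $\Ab^2_{X_{\eta_S}/\eta_S}$ applied to the restriction of the universal regular morphism gives a map $f:\Ab^2_{X_{\eta_S}/\eta_S}\to(\Ab^2_{X/S})_{\eta_S}$ for free, but producing a map \emph{back} requires spreading out the algebraic representative of the generic fibre to a surjective regular homomorphism $\widetilde{\alpha}:\mathcal{A}^2_{X/S}\to A$ over all of $S$ (Lemma \ref{extendSurjRegHom}). That lemma is where the real work happens: N\'eron-model extensions over the DVRs $S_{(\zeta)}$, a gluing argument, and --- crucially in mixed characteristic --- extension of abelian schemes across closed subsets of codimension at least $2$ (Proposition \ref{codim2extend}, resting on Faltings--Chai and Vasiu's healthy schemes, Proposition \ref{IJatP}). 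Only then do initiality (giving $j:\Ab^2_{X/S}\to A$ with $j_{\eta_S}\circ f=1$), Lemma \ref{liftfunctor}, and Lemma \ref{monoSep} yield that $f$ is an isomorphism. None of this machinery, or any substitute for it, appears in your proposal, so the finite-level claim --- and hence the whole proof --- does not go through. Two smaller defects: objects of $Sm_{\mathbb{Z}}/S$ are only required to be dominant and separated over $S$ (and smooth over $\mathbb{Z}$), not smooth over $S$, so your finite-level case as formulated does not even cover all morphisms in the inverse system (though your argument only really uses regularity of $T$ and dominance, so this is repairable); and your final limit step is, as you yourself concede, only a sketch, whereas the paper avoids descending arbitrary regular morphisms to finite level by applying Proposition \ref{extendRatMap} directly over $S'$ and reducing to generic points.
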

\begin{proof}
There is a natural map $g: \Ab^2_{X_{S'}/S'} \to (\Ab^2_{X/S})_{S'}$ due to Proposition \ref{baseChangeCat}. We need to show that this map is an isomorphism. By Proposition \ref{extendRatMap}, this occurs if and only if $g_{\eta_{S'}}: (\Ab^2_{X_{S'}/S'})_{\eta_{S'}} \to (\Ab^2_{X/S})_{\eta_{S'}}$ is an isomorphism, when $\eta_{S'}$ is the generic point of $S'$. Note that $k(\eta_{S'})$ is a separable field extension of $k(\eta_{S})$ and so by \cite[Theorem 5.10]{achter2022functorial} we have $(\Ab^2_{X_{\eta_S}/\eta_S})_{\eta_{S'}} \cong \Ab^2_{X_{\eta_{S'}}/\eta_{S'}}$ via the natural map. From the universal property we obtain morphisms
$$\begin{tikzcd}
\Ab^2_{X_{\eta_{S'}}/\eta_{S'}} \arrow{r}{h} \arrow{rd}{f} &(\Ab^2_{X_{S'}/S'})_{\eta_{S'}} \arrow{d}{g_{\eta_{S'}}} \\
   &   (\Ab^2_{X/S})_{\eta_{S'}}.
\end{tikzcd}$$
If $f$ and $h$ were isomorphisms, it would follow that $g_{\eta_{S'}}$ is an isomorphism. Therefore we are reduced to the case $S' = \eta_S$.

Consider the algebraic representative $\alpha: \mathcal{A}^2_{X_{\eta_S}/\eta_{S}} \to \Ab^2_{X_{\eta_S}/\eta_{S}}$. By Lemma \ref{extendSurjRegHom}, we can spread this out to $\widetilde{\alpha}: \mathcal{A}^2_{X_S/S}\to A$ for some abelian scheme $A$ over $S$. There is a natural morphism $j:\Ab^2_{X/S} \to A$. Consider the generic fiber
$$j_{\eta_S} :(\Ab^2_{X/S})_{\eta_S} \to A_{\eta_S} \cong \Ab^2_{X_{\eta_S}/\eta_{S}}.$$ 
By the universal property there is also a unique map $f:A_{\eta_S} \to (\Ab^2_{X/S})_{\eta_S}$. Then $j_{\eta_S} \circ f = 1_{ \Ab^2_{X_{\eta_S}/\eta_{S}}}$ since $\mathcal{A}^2_{X_{\eta_S}/\eta_{S}} \to \Ab^2_{X_{\eta_S}/\eta_{S}}$ is initial among natural transformations to abelian schemes. This means $f$ is injective.

Consider $\widetilde{\alpha}_{\eta_S}: \mathcal{A}^2_{X_{\eta_S}/\eta_{S}} \to (\Ab^2_{X_S/S})_{\eta_S}$. Then $f \circ j_{\eta_S} \circ \widetilde{\alpha}_{\eta_S} =\widetilde{\alpha}_{\eta_S}$ for the same reason as above, by lifting both morphisms to $\mathcal{A}^2_{X/S} \to \Ab^2_{X/S}$ using Lemma \ref{liftfunctor} and using the universal property. As $\widetilde{\alpha}$ is a surjective regular homomorphism, it is surjective on $\eta_S^{sep}$-points. Therefore $f$ is also surjective on $\eta_S^{sep}$-points and we conclude that $f$ is an isomorphism by Lemma \ref{monoSep}.
\end{proof}

This proves the smooth base change theorem for algebraic representatives. The next step is to consider what happens to the distinguished polarisation.

\begin{lemma}\label{cohfunc}
Let $X$ be a cubic threefold over a field $k$, and let $\ell \not | \textrm{char}(k)$ be a prime. Let $L/k$ be a field extension such that $(\Ab^2_X)_L \cong \Ab^2_{X_L/L}$. Suppose that
$$\lambda^2 \circ T_{\ell}(\phi^{-1}): T_\ell \Ab^2_{X_{k^{sep}}} \to \HH^3(X_{k^{sep}}, \mathbb{Z}_{\ell}(2))$$
is the map described in Proposition \ref{prereq}. Then this is functorial under the extension $i: k \to L$ in the sense that the following diagram commutes
$$\begin{tikzcd}
T_\ell \Ab^2_{X_{k^{sep}}} \arrow{r}{\lambda^2 \circ T_{\ell}(\phi^{-1})} \arrow{d}{i^*} & \HH^3(X_{k^{sep}}, \mathbb{Z}_{\ell}(2)) \arrow{d}{i^*} \\
  T_\ell \Ab^2_{X_{L^{sep}}} \arrow{r}{\lambda^2 \circ T_{\ell}(\phi^{-1})} & \HH^3(X_{L^{sep}}, \mathbb{Z}_{\ell}(2)).
\end{tikzcd}$$
Moreover, the vertical arrows are isomorphisms.
\end{lemma}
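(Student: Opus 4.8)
The plan is to insert the Tate module $T_\ell \A^2(X)$ of the group of codimension-two cycles algebraically equivalent to zero into the square, using the factorisation of the horizontal maps as $\lambda^2 \circ T_\ell(\phi^{-1})$. Over each of $k^{sep}$ and $L^{sep}$ one has a cycle-pullback map $\A^2(X_{k^{sep}}) \to \A^2(X_{L^{sep}})$ along the base change $X_{L^{sep}} \to X_{k^{sep}}$, inducing a map $i^*_{cyc}$ on Tate modules. I would then verify two commutative squares separately: the \emph{left} square, involving $T_\ell(\phi^{-1})$ and $i^*_{cyc}$, expressing compatibility of the universal regular homomorphism with base change; and the \emph{right} square, involving $\lambda^2$ and the cohomological $i^*$, expressing functoriality of the Bloch map. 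Composing the two squares yields the diagram in the statement.

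For the vertical arrows, I would argue as follows. On the cohomology side, $X$ is smooth and proper over $k$, so $\ell$-adic étale cohomology is invariant under the extension $k^{sep} \hookrightarrow L^{sep}$ of separably closed fields (by smooth and proper base change together with invariance under purely inseparable extensions, since $\ell \nmid \textrm{char}(k)$); hence $i^*: \HH^3(X_{k^{sep}},\mathbb{Z}_{\ell}(2)) \to \HH^3(X_{L^{sep}},\mathbb{Z}_{\ell}(2))$ is an isomorphism. On the algebraic representative side, the extensions $k^{sep}/k$ and $L^{sep}/L$ are separable, so by \cite[Theorem 5.10]{achter2022functorial} we have $(\Ab^2_X)_{k^{sep}} \cong \Ab^2_{X_{k^{sep}}}$ and $(\Ab^2_{X_L})_{L^{sep}} \cong \Ab^2_{X_{L^{sep}}}$. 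Combining these with the hypothesis $(\Ab^2_X)_L \cong \Ab^2_{X_L}$ gives a chain of natural isomorphisms
$$\Ab^2_{X_{L^{sep}}} \cong (\Ab^2_{X_L})_{L^{sep}} \cong (\Ab^2_X)_{L^{sep}} \cong (\Ab^2_{X_{k^{sep}}})_{L^{sep}},$$
identifying $\Ab^2_{X_{L^{sep}}}$ with the base change of $\Ab^2_{X_{k^{sep}}}$ to $L^{sep}$. Since $\ell \nmid \textrm{char}(k)$, the Tate module of a fixed abelian variety over $k^{sep}$ is unchanged under this extension of separably closed fields, so the vertical map on $T_\ell \Ab^2$ is an isomorphism.

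For commutativity, the right-hand square commutes by the functoriality of the $\ell$-adic Bloch map under pullback of cycles, which is built into its construction (Proposition \ref{prereq}) via the compatibility of cycle class maps with flat pullback. The left-hand square commutes because the universal regular homomorphism is compatible both with pullback of cycles and with base change of algebraic representatives: for a torsion cycle class on $X_{k^{sep}}$, the base change of its image under $\phi_k$ equals the image under $\phi_L$ of its pullback, once we use the identification $\Ab^2_{X_{L^{sep}}} \cong (\Ab^2_{X_{k^{sep}}})_{L^{sep}}$ from the previous paragraph. Passing to $\ell$-power torsion and taking inverse limits then gives commutativity on Tate modules, and $\phi_k,\phi_L$ are isomorphisms by Proposition \ref{prereq} so $T_\ell(\phi^{-1})$ is defined.

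The main obstacle I anticipate is that $L^{sep}/k^{sep}$ need not be separable when $L/k$ is inseparable, so one cannot invoke the separable base change theorem directly for the extension $k^{sep} \hookrightarrow L^{sep}$. This is exactly the role of the hypothesis $(\Ab^2_X)_L \cong \Ab^2_{X_L}$: it lets one factor the possibly inseparable passage $k^{sep} \to L^{sep}$ through the separable extensions $k^{sep}/k$ and $L^{sep}/L$ via $L$, recovering the natural identification $\Ab^2_{X_{L^{sep}}} \cong (\Ab^2_{X_{k^{sep}}})_{L^{sep}}$. This identification simultaneously makes the vertical arrow an isomorphism and supplies the base-change compatibility of $\phi$ needed in the left square. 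The remaining care is bookkeeping: ensuring that every isomorphism used above is the canonical base-change map (with compatible choices of embeddings $k^{sep} \hookrightarrow L^{sep}$), so that they are genuinely compatible with the horizontal maps rather than merely abstractly isomorphic.
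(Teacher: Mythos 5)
Your proposal is correct and follows essentially the same route as the paper's proof: commutativity is obtained by splitting the square along the factorisation $\lambda^2 \circ T_\ell(\phi^{-1})$, using functoriality of the $\ell$-adic Bloch map under flat pullback for one half and the hypothesis $(\Ab^2_X)_L \cong \Ab^2_{X_L/L}$ (combined with separable base change for algebraic representatives) for the other, while the vertical isomorphisms come from invariance of \'etale cohomology under extension of separably closed fields and from the fact that $\ell^n$-torsion of an abelian variety is already defined over a separably closed field. Your write-up is in fact slightly more careful than the paper's, which leaves the identification $\Ab^2_{X_{L^{sep}}} \cong (\Ab^2_{X_{k^{sep}}})_{L^{sep}}$ implicit.
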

\begin{proof}
The second Bloch map $\lambda^2$ is functorial under flat pullbacks due to \cite[Proposition A.21]{achterBloch}. From the fact that $(\Ab^2_X)_L \cong \Ab^2_{X_L/L}$, we know that $\phi$ commutes with $i^*$. This proves that the above square is commutative. The right vertical arrows is an isomorphism by \cite[0A5E]{sp}, and the left vertical arrow is an isomorphism since all the $\ell^n$-torsion points of an abelian variety are defined over a separably closed field for all $n$.
\end{proof}

\begin{lemma}\label{bigCommDiagram}
Let $i: k \to l$ be an extension of separably closed fields, and let $\ell \not | \textrm{char}(k)$ be a prime. Let $X$ be a cubic threefold over $k$ so that $(\Ab^2_X)_l \cong \Ab^2_{X_l/l}$. Suppose that
$$\lambda^2 \circ T_{\ell}(\phi^{-1}): T_\ell \Ab^2_{X_{k^{sep}}} \to \HH^3(X_{k^{sep}}, \mathbb{Z}_{\ell}(2))$$
is the map described in Proposition \ref{prereq}. Then this induces the horizontal isomorphisms in the following diagram
$$\begin{tikzcd}
\HH^1(\Ab^2_{X_k},\mathbb{Z}_{\ell}) \arrow{r}{\alpha} \arrow{d}{i^*} & \HH^3(X_k, \mathbb{Z}_{\ell}(2))^{\vee}   \\
 \HH^1(\Ab^2_{X_l},\mathbb{Z}_{\ell}) \arrow{r}{\alpha} &  \HH^3(X_l, \mathbb{Z}_{\ell}(2))^{\vee} \arrow{u}{\_ \circ i^*}
\end{tikzcd}$$
which we claim is commutative.
\end{lemma}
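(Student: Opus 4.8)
The plan is to deduce the commutativity of the square from the commutativity of the square in Lemma \ref{cohfunc} by dualising. Since $k$ and $l$ are separably closed, the subscripts $k^{sep},l^{sep}$ appearing in Lemma \ref{cohfunc} coincide with $k,l$. Write $\beta_k := \lambda^2 \circ T_{\ell}(\phi^{-1}) : T_{\ell}\Ab^2_{X_k} \to \HH^3(X_k,\mathbb{Z}_{\ell}(2))$ and likewise $\beta_l$ over $l$; by Proposition \ref{prereq} both are isomorphisms, and $\alpha$ is by definition the isomorphism associated to $\beta$ by $\mathbb{Z}_{\ell}$-linear duality, so that under the canonical identification $\HH^1(\Ab^2_{X_k},\mathbb{Z}_{\ell}) \cong (T_{\ell}\Ab^2_{X_k})^{\vee}$ we have $\alpha^{-1} = \beta_k^{\vee}$ (and similarly over $l$). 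Applying Lemma \ref{cohfunc} with $L = l$, which is legitimate because $(\Ab^2_X)_l \cong \Ab^2_{X_l/l}$ is assumed, yields that the square with horizontal maps $\beta_k,\beta_l$ and vertical maps the base change maps $i^*$ on $T_{\ell}\Ab^2$ and on $\HH^3(X,\mathbb{Z}_{\ell}(2))$ commutes, all four maps being isomorphisms.

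First I would apply the contravariant functor $\Hom_{\mathbb{Z}_{\ell}}(-,\mathbb{Z}_{\ell})$ to this square. Dualising a commutative square of isomorphisms produces a commutative square with all arrows reversed, whose horizontal arrows are $\beta_k^{\vee}$ and $\beta_l^{\vee}$ and whose vertical arrows are the duals $(i^*)^{\vee}$. The dualised right-hand vertical arrow is precisely $\_ \circ i^* : \HH^3(X_l,\mathbb{Z}_{\ell}(2))^{\vee} \to \HH^3(X_k,\mathbb{Z}_{\ell}(2))^{\vee}$, which is the right vertical map in the statement. The dualised left-hand vertical arrow is the dual of the base change map on Tate modules, and under $\HH^1 \cong (T_{\ell})^{\vee}$ it must be compared with the base change map $i^*$ on $\HH^1$ appearing in the statement.

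The crux is therefore to check that, under $\HH^1(\Ab^2_{X_k},\mathbb{Z}_{\ell}) \cong (T_{\ell}\Ab^2_{X_k})^{\vee}$, the base change map $i^*$ on $\HH^1$ is the inverse transpose of the base change map $i^*$ on the Tate module; that is, $(i^*_{T})^{\vee} = (i^*_{\HH^1})^{-1}$. This follows from the compatibility of the perfect pairing $\HH^1(\Ab^2_{X_k},\mathbb{Z}_{\ell}) \times T_{\ell}\Ab^2_{X_k} \to \mathbb{Z}_{\ell}$ with base change along $i$: the pairing is preserved by the two base change maps, so for $\xi$ and $v$ one has $(i^*_{\HH^1}\xi)(i^*_{T}v) = \xi(v)$, whence $i^*_{\HH^1}\xi = \xi \circ (i^*_{T})^{-1}$ since $i^*_{T}$ is an isomorphism. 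Granting this, substituting $\beta_k^{\vee} = \alpha^{-1}$, $\beta_l^{\vee} = \alpha^{-1}$, the identification of the dualised verticals above, and rearranging the dualised commutative square — all of whose maps are isomorphisms — yields exactly the relation $\alpha = (\_ \circ i^*) \circ \alpha \circ i^*$ asserted by the diagram. I expect the only non-formal point, and hence the main obstacle, to be this base-change compatibility of the duality pairing between $\HH^1$ and the Tate module; the remainder is a formal dualisation together with careful bookkeeping of the directions of the arrows.
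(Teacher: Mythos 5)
Your proposal is correct, and its skeleton is the same as the paper's: dualise the square of Lemma \ref{cohfunc} and reduce everything to the compatibility, under the extension $i:k \to l$ of separably closed fields, of the duality between $\HH^1(\Ab^2_{X},\mathbb{Z}_{\ell})$ and $T_{\ell}\Ab^2_{X}$. Your bookkeeping is accurate: $\alpha^{-1} = \beta^{\vee}$ under the identification $\HH^1 \cong (T_{\ell})^{\vee}$, the dual of $i^*$ on $\HH^3$ is $\_ \circ i^*$, and the relation $(i^*_{T})^{\vee} = (i^*_{\HH^1})^{-1}$ does yield $\alpha = (\_ \circ i^*) \circ \alpha \circ i^*$. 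The difference is in where the mathematical content sits. You cite the base-change compatibility of the perfect pairing $\HH^1(\Ab^2_{X_k},\mathbb{Z}_{\ell}) \times T_{\ell}\Ab^2_{X_k} \to \mathbb{Z}_{\ell}$ as a known fact (and correctly flag it as the only non-formal point), whereas the paper's proof consists almost entirely of establishing that fact. It does so by factoring the identification $\HH^1(\Ab^2_{X_k},\mathbb{Z}_{\ell}) \cong T_{\ell}(\Ab^2_{X_k})^{\vee}$ into two stages: the isomorphism $\HH^1(\Ab^2_{X_k},\mathbb{Z}_{\ell}) \cong T_{\ell}((\Ab^2_{X_k})^{\vee})$ coming from the Kummer sequence $0 \to \mu_{\ell^n} \to \mathbb{G}_m \to \mathbb{G}_m \to 0$, whose base-change compatibility is functoriality of that sequence, and the Weil-pairing isomorphism $T_{\ell}((\Ab^2_{X_k})^{\vee}) \cong T_{\ell}(\Ab^2_{X_k})^{\vee}$, whose base-change compatibility the paper checks directly from the explicit construction of the pairing via $\ell^n$-torsion line bundles and translation. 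So your argument is complete once you supply a justification (or citation) for the asserted compatibility --- for instance via the identification $\HH^1_{\textrm{\'et}}(A,\mathbb{Z}/\ell^n) \cong \Hom(A[\ell^n],\mathbb{Z}/\ell^n)$ and its functoriality in the separably closed base field, or by the paper's two-stage factorisation --- since as written the clause ``the pairing is preserved by the two base change maps'' asserts precisely the statement that the paper's proof is devoted to verifying.
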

\begin{proof}
Let us begin by recalling the construction of the perfect pairing $T_{\ell}(A) \times T_{\ell}(A^{\vee}) \to \mathbb{Z}_{\ell}$ for an abelian variety $A$. Take an $\ell^n$-torsion point $p \in A(k)$ and a line bundle $L \in A^{\vee}(k)$ with an isomorphism $\psi: L^{\ell^n} \to \mathcal{O}_A$. Then the pullback $T_p^*\psi : L^{\ell^n} \to \mathcal{O}_A$ by the translation by $p$ map is another isomorphism, and we get an element $\psi \circ (T_p^*\psi)^{-1} \in \Aut(\mathcal{O}_A)$. Note that $\Aut(\mathcal{O}_A) \cong \mathbb{G}_m$, and so this defines a pairing
$$A[\ell^n](k) \times A^{\vee}[\ell^n](k) \to \mathbb{Z}/\ell^n\mathbb{Z}$$
which is perfect. We then take the inverse limit of these pairings. From this construction, if $i: k \to l$ is an extension of separably closed fields, then the following diagram commutes
$$\begin{tikzcd}
T_\ell ((\Ab^2_{X_k})^{\vee}) \arrow{r}{\sim} \arrow{d}{i^*} &T_\ell (\Ab^2_{X_k})^{\vee}  \\
  T_\ell ((\Ab^2_{X_l})^{\vee}) \arrow{r}{\sim} &  T_\ell (\Ab^2_{X_l})^{\vee}  \arrow{u}{\_ \circ i^*}.
\end{tikzcd}$$
We get a commutative diagram
$$\begin{tikzcd}
\HH^1(\Ab^2_{X_k},\mathbb{Z}_{\ell}) \arrow{r} \arrow{d} & T_\ell ((\Ab^2_{X_k})^{\vee}) \arrow{r}{\sim} \arrow{d}{i^*} &T_\ell (\Ab^2_{X_k})^{\vee} \arrow{r}  & \HH^3(X_k, \mathbb{Z}_{\ell}(2))^{\vee}   \\
 \HH^1(\Ab^2_{X_l},\mathbb{Z}_{\ell}) \arrow{r} & T_\ell ((\Ab^2_{X_l})^{\vee}) \arrow{r}{\sim} &  T_\ell (\Ab^2_{X_l})^{\vee}  \arrow{u}{\_ \circ i^*} \arrow{r} & \HH^3(X_l, \mathbb{Z}_{\ell}(2))^{\vee} \arrow{u}
\end{tikzcd}$$
where the right square is achieved by applying duals to Lemma \ref{cohfunc}, and the left square arises by considering the long exact sequence attached to
$$0 \to \mu_{\ell^n} \to \mathbb{G}_m \xrightarrow{\ell^n} \mathbb{G}_m \to 0$$
over $\Ab^2_{X_k}$, and taking the inverse limit over $n$. The outer square commuting then finishes the proof.
\end{proof}

\begin{lemma}\label{extendPol}
Let $(\Ab^2_{X/k},\Theta)$ be the intermediate Jacobian of a cubic threefold $X$, and let $L/k$ be a field extension so that $(\Ab^2_X)_L \cong \Ab^2_{X_L/L}$. Then $(\Ab^2_{X/k},\Theta)_L \cong (\Ab^2_{X_L/L},\Theta_L)$ is the intermediate Jacobian of $X_L$.
\end{lemma}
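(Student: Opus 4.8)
The plan is to verify that $\Theta_L$ satisfies the two requirements of the notion of a \emph{distinguished} class in Definition \ref{IJfield}: that it is a principal polarisation, and that its first $\ell$-adic Chern class matches the cup-product pairing under the isomorphism $\alpha$. Since the underlying abelian scheme is already identified by hypothesis, $(\Ab^2_X)_L \cong \Ab^2_{X_L/L}$, and since any distinguished class is unique (by the discussion following \cite[Property 2.4]{Benoist}), establishing these two facts identifies $(\Ab^2_{X_L/L},\Theta_L)$ as the intermediate Jacobian of $X_L$.

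First I would dispose of the principal polarisation claim, which is purely formal. As $\Theta$ is a principal polarisation on $\Ab^2_{X/k}$, the associated morphism $\lambda_\Theta: \Ab^2_{X/k} \to (\Ab^2_{X/k})^\vee$ is an isomorphism; its base change $(\lambda_\Theta)_L = \lambda_{\Theta_L}$ is then again an isomorphism of the base-changed abelian varieties, so $\Theta_L \in \NS((\Ab^2_X)_L)$ remains a principal polarisation.

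For the distinguished condition I would reduce to separably closed fields. Choosing compatible separable closures gives an extension $i: k^{sep} \to L^{sep}$ of separably closed fields, since any element separable over $k$ is separable over $L$ and hence $k^{sep} \subseteq L^{sep}$. The hypothesis $(\Ab^2_X)_L \cong \Ab^2_{X_L/L}$ together with stability of algebraic representatives under separable field extensions gives $(\Ab^2_{X_{k^{sep}}})_{L^{sep}} \cong \Ab^2_{X_{L^{sep}}/L^{sep}}$, so Lemma \ref{bigCommDiagram} applies to $i$. The distinguished condition over $k$ says that, over $k^{sep}$, the class $cl_1(-\Theta_{k^{sep}}) \in (\bigwedge^2 \HH^1(\Ab^2_{X_{k^{sep}}},\mathbb{Z}_\ell))(1)$ corresponds under $\bigwedge^2\alpha$ to the cup-product pairing on $\HH^3(X_{k^{sep}},\mathbb{Z}_\ell(2))$, and I want the analogous statement over $L^{sep}$ for $(\Theta_L)_{L^{sep}} = (\Theta_{k^{sep}})_{L^{sep}}$. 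Three functorialities combine: the first Chern class is functorial, so $cl_1(-(\Theta_{k^{sep}})_{L^{sep}}) = (\bigwedge^2 i^*)(cl_1(-\Theta_{k^{sep}}))$; the isomorphism $\alpha$ is functorial, which is exactly the commuting square of Lemma \ref{bigCommDiagram}; and both the cup product and the degree map on $\HH^\bullet$ are functorial under pullback by $i^*$. Because $i^*$ induces isomorphisms on all the relevant étale cohomology groups (as recorded in Lemma \ref{cohfunc}, via \cite[0A5E]{sp}), chasing $cl_1(-\Theta_{k^{sep}})$ around the resulting commutative diagram transports the defining identity of the distinguished condition from $k^{sep}$ to $L^{sep}$. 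Hence $\Theta_L$ is distinguished.

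I expect the only real work to be the bookkeeping in the third paragraph: correctly tracking the directions of the pullback maps $i^*$ and their duals (note the upward arrow $\_\circ i^*$ in Lemma \ref{bigCommDiagram}), and checking that the cup-product pairing over $L^{sep}$ is genuinely the image of the cup-product pairing over $k^{sep}$ under the isomorphism $i^*$ on $\HH^3$, rather than merely some pullback of it. All the substantive cohomological input has, however, already been assembled in Lemma \ref{bigCommDiagram} and Lemma \ref{cohfunc}, so this amounts to splicing those compatibilities together rather than proving anything genuinely new.
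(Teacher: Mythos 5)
Your proposal is correct and follows essentially the same route as the paper's proof: reduce to checking that $\Theta_L$ is distinguished, then transport the defining identity from $k^{sep}$ to $L^{sep}$ using functoriality of $cl_1$, the commutative diagram of Lemma \ref{bigCommDiagram}, and compatibility of the cup product with pullback. Your version is in fact slightly more careful in two respects the paper glosses over — verifying the hypothesis of Lemma \ref{bigCommDiagram} via stability of algebraic representatives under separable extensions, and avoiding the paper's implicit assumption that $cl_1(\Theta)$ is a decomposable class $a \cup b$ — but these are refinements of the same argument, not a different one.
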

\begin{proof}
We just need to show that $\Theta_L$ is distinguished. The first $\ell$-adic Chern class $cl_1$ is functorial with respect to flat pullbacks. Write $cl_1(\Theta) = a \cup b$ for $a,b \in \HH^1_{\textrm{\'et}}(\Ab^2_{X_{k^{sep}}},\mathbb{Z}_{\ell})$ and let $i:k \to L$ be the field extension. Then $cl_1(\Theta_L) = i^*cl_1(\Theta) = i^*a \cup i^*b$. Suppose that $a,b$ correspond to maps 
$$\widehat{a},\widehat{b}:  \HH^3(X_{k^{sep}}, \mathbb{Z}_{\ell}(2)) \to \mathbb{Z}_{\ell}$$
so that $\widehat{a} \cup \widehat{b}$ is the cup product map. Then by Lemma \ref{bigCommDiagram}, if $i^*a, i^*b$ correspond to maps
$$\widehat{i^*a},\widehat{i^*b}:  \HH^3(X_l, \mathbb{Z}_{\ell}(2)) \to \mathbb{Z}_{\ell}$$
 then we have $i^*(\widehat{i^*a}\cup \widehat{i^*b}) = \widehat{a} \cup \widehat{b}$ as the cup product map. This means that $\widehat{i^*a}\cup \widehat{i^*b}$ must be the cup product map, since cup product commutes with flat pullback. We conclude that $\Theta_L$ is distinguished.
\end{proof}

\begin{proposition}\label{baseChangeIJ}
Let $X \to S$ be a family of cubic threefolds and let $S' \to S$ be a morphism which is the inverse limit of smooth morphisms. Suppose that $(\Ab^2_{X/S})_{S'} \cong \Ab^2_{X_{S'}/S'}$ and that the intermediate Jacobian $(\Ab^2_{X/S},\Theta)$ of $X$ exists. Then $(\Ab^2_{X/S},\Theta)_{S'} = (\Ab^2_{X_{S'}/S'},\Theta_{S'})$ is the intermediate Jacobian of $X_{S'}$.
\end{proposition}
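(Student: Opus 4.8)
The plan is to reduce the statement to a fibrewise check, since by definition $(\Ab^2_{X_{S'}/S'},\Theta_{S'})$ is the intermediate Jacobian of $X_{S'}$ precisely when $\Theta_{S'}$ is flat over $S'$ and the fibre over every point $s' \in S'$ is the intermediate Jacobian of the cubic threefold $(X_{S'})_{s'}$. The underlying abelian scheme is already handled by the hypothesis $(\Ab^2_{X/S})_{S'} \cong \Ab^2_{X_{S'}/S'}$, and flatness of $\Theta_{S'}$ over $S'$ is immediate from flatness of $\Theta$ over $S$ together with stability of flatness under base change; principality of the polarisation is likewise preserved. So everything comes down to identifying the fibres of the polarised abelian scheme $((\Ab^2_{X/S})_{S'},\Theta_{S'})$.

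Next I would fix a point $s' \in S'$ lying over $s \in S$ and unwind the fibres. On the one hand $(X_{S'})_{s'} \cong (X_s)_{k(s')}$, and on the other hand the fibre of the polarised abelian scheme is $((\Ab^2_{X/S})_s,\Theta_s)_{k(s')} = (\Ab^2_{X_s},\Theta_s)_{k(s')}$, where $(\Ab^2_{X_s},\Theta_s)$ is the intermediate Jacobian of $X_s$ because $(\Ab^2_{X/S},\Theta)$ is assumed to be the intermediate Jacobian of the family $X/S$. Thus the content is exactly that base change of the intermediate Jacobian of $X_s$ along the field extension $k(s')/k(s)$ yields the intermediate Jacobian of $(X_s)_{k(s')}$, which is the subject of Lemma \ref{extendPol}.

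To invoke Lemma \ref{extendPol} I must verify its hypothesis that $(\Ab^2_{X_s})_{k(s')} \cong \Ab^2_{(X_s)_{k(s')}/k(s')}$, i.e. that the algebraic representative commutes with the base change $k(s) \to k(s')$. The key observation is that this extension is \emph{separable}: the morphism $S' \to S$ is an inverse limit of smooth morphisms, smooth morphisms induce separably generated residue field extensions, and separability is preserved under the filtered colimit computing the residue field at a point of the limit. With separability in hand, stability of algebraic representatives under separable field extensions (the remark following \cite[Theorem 1]{achter2022functorial}) gives the required isomorphism, and Lemma \ref{extendPol} then shows that $(\Ab^2_{X_s},\Theta_s)_{k(s')}$ is the intermediate Jacobian of $(X_{S'})_{s'}$.

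Assembling these, the fibre of $((\Ab^2_{X/S})_{S'},\Theta_{S'})$ over each $s' \in S'$ is the intermediate Jacobian of $(X_{S'})_{s'}$, and together with flatness this identifies the base change with the intermediate Jacobian of $X_{S'}$. The main obstacle I anticipate is the separability of the residue field extensions $k(s')/k(s)$: this is what licenses the use of separable base change for algebraic representatives, and hence of Lemma \ref{extendPol}, and it must be argued for an arbitrary inverse limit of smooth morphisms rather than for a single smooth morphism.
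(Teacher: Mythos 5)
Your proof follows the same route as the paper's: the paper's own proof is precisely the fibrewise reduction followed by an application of Lemma \ref{extendPol} to every fibre of $(\Ab^2_{X_{S'}/S'},\Theta_{S'})$, and your unwinding of the fibres, the flatness and principality observations, and your identification of the crux --- namely that Lemma \ref{extendPol} can only be invoked at a fibre once one knows $(\Ab^2_{X_s})_{k(s')} \cong \Ab^2_{(X_s)_{k(s')}/k(s')}$ --- are all correct. Indeed, you make explicit a verification that the paper's one-line proof passes over in silence.

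However, your verification of that hypothesis contains a false claim. It is not true that smooth morphisms induce separably generated residue field extensions at \emph{all} points; this holds only at \emph{generic} points, where it follows because the fibre $S'_s$ is smooth, hence geometrically reduced, over $k(s)$, and residue fields at generic points of a geometrically reduced scheme over a field are separable extensions of that field. (This is exactly what licenses the use of \cite[Theorem 5.10]{achter2022functorial} in the proof of Proposition \ref{basechange}, where only generic points intervene.) At non-generic points it fails whenever $k(s)$ is imperfect: take $S = \Spec(\mathbb{F}_2(t))$, $S' = \mathbb{A}^1_S$, and let $s'$ be the closed point cut out by $x^2 - t$; then $k(s') = \mathbb{F}_2(t^{1/2})$ is purely inseparable over $k(s)$. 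Such points genuinely occur in the situations the paper cares about, since any non-closed point $s$ of the characteristic-$2$ fibre of $U \subset \mathbb{P}^{34}_{\mathbb{Z}}$ has imperfect residue field. So the separable base change theorem for algebraic representatives cannot be applied at those fibres, and your argument leaves the hypothesis of Lemma \ref{extendPol} unproved exactly where it is nontrivial; the filtered-colimit remark does not help, since the single-morphism case already fails. A repair needs a base change statement for $\Ab^2$ that is insensitive to separability --- for instance by routing through the Albanese comparison of Proposition \ref{albvsar}, as the paper does in the proof of Proposition \ref{specializeAR} --- or else one must read the fibrewise isomorphism into the hypotheses of the proposition, which is in effect what the paper's terse proof does.
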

\begin{proof}
This follows from the definition of intermediate Jacobians and applying Lemma \ref{extendPol} to every fiber of $(\Ab^2_{X_{S'}/S'},\Theta_{S'})$.
\end{proof}

This reduces base change theorems for intermediate Jacobians to base change theorems for algebraic representatives.

\subsection{Intermediate Jacobians of non-Hermitian cubic threefolds}
In this chapter we show the existence of intermediate Jacobians for non-Hermitian cubic threefolds over fields, as well as for families of non-Hermitian cubic threefolds over normal Noetherian bases.

Let $X$ be a smooth cubic threefold with a good line $l$ over a separably closed field $k$. Let $\pi: \widetilde{C} \to C$ be the \'etale double cover attached to $(X,l)$ and let $\Prym(\widetilde{C}/C)$ be the Prym variety, with theta divisor $\Xi$. 

\begin{proposition}\label{algrep}
The map $\Ch^1(\widetilde{C}) \to \Ch^2(\Bl_l(X))$ sending the class of a point $x\in \widetilde{C}$ to its corresponding line $[L(x)] \in Ch^2(\Bl_l(X))$ induces an isomorphism
$$\Psi: \Prym(\widetilde{C}/C)(k) \cong A^1(\widetilde{C})/\pi^*A^1(C) \to A^2(\Bl_l(X)) \cong A^2(X)$$
which realizes $\Prym(\widetilde{C}/C)$ as an algebraic representative for $A^2(X)$.
\end{proposition}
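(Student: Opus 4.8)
The plan is to adapt the arguments of Murre \cite{murre, murre2} and Beauville \cite{beau}, isolating the few points at which characteristic $2$ intervenes and invoking the Prym-theoretic results of the previous sections wherever Murre and Beauville rely on their characteristic $\neq 2$ constructions.

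First I would set up the incidence correspondence. Let $\mathcal{L} \subset \widetilde{C} \times X$ be the universal family of lines parametrised by $\widetilde{C}$; its strict transform in $\widetilde{C} \times \Bl_l(X)$ is a family of curves, and the associated cycle $Z \in \Ch^2(\widetilde{C} \times \Bl_l(X))$ induces, in the sense of Definition \ref{defARac}, an algebraic map $\widetilde{C}(k) \to A^2(\Bl_l(X))$, $x \mapsto [L(x)]$, and hence a homomorphism $A^1(\widetilde{C}) \to A^2(\Bl_l(X))$. Here $A^1(\widetilde{C}) = \Pic^0(\widetilde{C})(k) = J\widetilde{C}(k)$, the algebraically trivial classes on a curve being exactly the degree-zero ones. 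I would then record that the blow-up formula $\Ch^2(\Bl_l(X)) \cong \Ch^2(X) \oplus \Ch^1(l)$ restricts on algebraically trivial cycles to an isomorphism $A^2(\Bl_l(X)) \cong A^2(X)$, since $A^1(l) = A^1(\mathbb{P}^1) = 0$; this justifies the identification on the right of the displayed formula and transports everything to $X$.

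The next step is to factor this homomorphism through the Prym. For $x \in \widetilde{C}$ the fibre of the conic bundle over $\pi(x) \in C$ is the degenerate conic $L(x) \cup L(\sigma x)$, so $[L(x)] + [L(\sigma x)]$ is the class of a conic fibre and is independent of $x$ modulo algebraic equivalence. As $\pi^*[p] = [x] + [\sigma x]$ for $\pi^{-1}(p) = \{x,\sigma x\}$, the subgroup $\pi^* A^1(C)$ lies in the kernel, and the map descends to $A^1(\widetilde{C})/\pi^* A^1(C)$. To identify this quotient with $\Prym(\widetilde{C}/C)(k) = P^+(k)$ I would use the homomorphism $1 - \sigma^* : J\widetilde{C} \to J\widetilde{C}$. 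Standard Prym theory, valid in all characteristics, gives $\Ker(1-\sigma^*)^0 = \pi^* JC$ and $\mathrm{im}(1-\sigma^*) = P^+$. This is precisely where characteristic $2$ helps rather than hinders: descent of a $\sigma$-invariant line bundle along the étale cover $\pi$ is obstructed by $\mu_2(k)$, and since $\mu_2(k) = \{1\}$ when $\mathrm{char}(k) = 2$, the $\sigma$-invariant classes coincide on $k$-points with $\pi^* JC(k)$, so $\Ker(1-\sigma^*)(k) = \pi^* JC(k)$ with no extra two-torsion. Hence $1-\sigma^*$ induces a group isomorphism $A^1(\widetilde{C})/\pi^* A^1(C) \xrightarrow{\sim} P^+(k)$, and composition yields the map $\Psi : P^+(k) \to A^2(X)$.

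It remains to prove that $\Psi$ is bijective and exhibits $P^+$ as the algebraic representative. Surjectivity is the assertion that $A^2(X)$ is generated by differences of classes of lines; this is geometric and characteristic-independent, and I would cite the relevant lemmas of Murre \cite{murre, murre2}. For the universal property I would run the standard argument: given any regular homomorphism $g' : A^2(X) \to B(k)$, the composite $\widetilde{C}(k) \to A^2(X) \to B(k)$ is induced by a morphism $\widetilde{C} \to B$, which by the universal property of $J\widetilde{C}$ and the conic-fibre relation factors through $P^+ \to B$; surjectivity of $\Psi$ then forces uniqueness, so $\Psi^{-1}$ is initial among regular homomorphisms. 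The genuinely hard point, and the main obstacle, is injectivity of $\Psi$ — equivalently, that the incidence homomorphism $J\widetilde{C}(k) \to A^2(X)$ has kernel no larger than $\pi^* JC(k)$. A dimension count shows the induced map $P^+ \to \Ab^2_X$ is at worst an isogeny (both have dimension $5$), so the content is that this isogeny is trivial on $k$-points. I would obtain this by adapting Murre's argument \cite{murre}, whose inputs on rational equivalence of lines survive in characteristic $2$ and whose only characteristic-sensitive ingredient is again the triviality of $\mu_2(k)$; alternatively, once the principal polarisation $\Xi$ of Proposition \ref{sing} is matched with the incidence polarisation, equality of the degrees of two principal polarisations forces the isogeny to have degree $1$. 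I would use whichever route avoids circularity with Proposition \ref{prereq}.
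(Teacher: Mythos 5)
Your outline reproduces Beauville's correspondence correctly, but at the decisive step --- the universal property --- it passes over the one difficulty that is genuinely new in characteristic $2$, and which the paper's entire proof is organised around. The point is a factor of $2$. The isomorphism $\Psi$ is by definition the map descended to the quotient: $\Psi((1-\sigma^*)D) = \mathrm{inc}(D)$, where $\mathrm{inc}$ denotes the incidence homomorphism $A^1(\widetilde{C}) \to A^2(X)$. Since $\pi^*\pi_* = 1+\sigma^*$, the involution acts as $[-1]$ on $\Ker(\pi_*)$, so for $p \in P^+(k)$ one has $\mathrm{inc}(p) = \Psi(2p) = 2\Psi(p)$: the incidence map restricted to $P^+ \subset J\widetilde{C}$ is $2\Psi$, not $\Psi$. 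Consequently, when you apply regularity of $g'$ to the algebraic map $\widetilde{C}(k) \to A^2(X)$ and factor through the Jacobian by the Albanese property, the homomorphism $J\widetilde{C} \to B$ you obtain restricts on $P^+$ to a morphism inducing $g' \circ 2\Psi$ --- exactly the same input the paper starts from (it cites Beauville for ``$2\Psi$ is an algebraic map''). In characteristic $\neq 2$ one finishes by dividing by $2$, using that $P^+[2]$ is étale; in characteristic $2$ this is precisely what fails, since $P^+[2]$ has an infinitesimal part, and knowing that a morphism kills $P^+[2](k)$ does not allow one to factor it through $[2]$. The paper's actual new content is the workaround: over a separably closed field the connected-étale sequence of $P^+[2]$ splits, one quotients by the étale part $P^+[2](k)$, and only then recovers $g' \circ \Psi$ on $k$-points. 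Your proposal never confronts this factor of $2$ at all.

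The charitable reading of your ``factors through $P^+ \to B$'' --- namely, factor the Albanese morphism through the quotient abelian variety $Q := J\widetilde{C}/\pi^* JC$ rather than restrict to $P^+$ --- does avoid the factor of $2$, and would arguably give a cleaner proof than the paper's; but it then requires identifying $Q$ with $P^+$ as abelian varieties, not merely as groups of $k$-points. This is where your appeal to $\mu_2(k) = \{1\}$ falls short: it proves $\Ker(1-\sigma^*)(k) = \pi^* JC(k)$, i.e.\ that $(1-\sigma^*) : Q \to P^+$ is bijective on $k$-points, but in characteristic $2$ the action of $\sigma^*$ on $\HH^1(\widetilde{C},\mathcal{O}_{\widetilde{C}})$ is unipotent rather than semisimple, so $\Ker(1-\sigma^*)$ could a priori be non-reduced; in that case $Q \to P^+$ would be a purely inseparable isogeny of degree greater than $1$, still bijective on points, and your argument would exhibit $Q$ --- not $P^+$ --- as the algebraic representative. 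Closing this requires a scheme-level computation (for instance that $1-\mathrm{d}\sigma^*$ has rank exactly $g-1$ on $\HH^1(\widetilde{C},\mathcal{O}_{\widetilde{C}})$, which can be extracted from the sequence $0 \to \mathcal{O}_C \to \pi_*\mathcal{O}_{\widetilde{C}} \to \mathcal{O}_C \to 0$), and nothing of the sort appears in your proposal. Finally, your two suggested routes to injectivity of $\Psi$ are left vague and, as you yourself concede, flirt with circularity (both the dimension count for $\Ab^2_X$ and the matching with the incidence polarisation lean on results downstream of this proposition); the paper sidesteps this entirely by importing injectivity and surjectivity wholesale from Beauville's Theorem 3.1 and concentrating its effort solely on the universal property.
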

\begin{proof}
We follow \cite[Proposition 3.3]{beau2} for the proof. All relevant details can be adapted to the case of separably closed fields. It is proven in \cite[3.3.2]{beau2} that $2\Psi$ is an algebraic map, and in \cite[Theorem 3.1]{beau2} that $\Psi$ is an isomorphism of groups. Let $g: A^2(X) \to B(k)$ be a regular morphism. Writing $P^+$ for $\Prym(\widetilde{C}/C)$, the map $g \circ 2\Psi : P^+(k) \to B(k)$ is induced by a morphism of abelian varieties. However we cannot directly conclude that $g \circ \Psi$ is a morphism of abelian varieties.

The morphism  $g \circ 2\Psi$ kills $P^+[2](k)$. Since $k$ is separably closed, the connected-\'etale sequence
$$0 \to P^+[2]^0 \to P^+[2] \to P^+[2](k) \to 0$$
splits and we can consider the map $q: P^+ \to P^+$ given by the quotient by $P^+[2](k)$. Note that $P^+[2](k)$ is killed under $g \circ 2\Psi$, so by passing to the quotient we may produce a morphism $g': P^+ \to B$ such that $g'\circ q = g \circ 2 \Psi$. On $k$-points this has the effect
$$\begin{tikzcd}
 A^2(X) \arrow{r}{g} \arrow{rd}{\Psi^{-1}}  & B(k) \\
  & P^+(k).  \arrow[dotted]{u}{g'}
\end{tikzcd}
$$
Two morphisms of abelian varieties coincide if and only if they coincide on $k$-points when $k$ is separably closed. As a result $g'$ must be the unique morphism of abelian varieties making this diagram commute, and so $P^+$ is an algebraic representative for $A^2(X)$.
\end{proof}

\begin{proposition}\label{incpol}
The principal polarisation $\lambda_{\Xi}$ on $\Prym(\widetilde{C}/C)$ defined in Definition \ref{xi} is an incidence polarisation for $A^2(X)$.
\end{proposition}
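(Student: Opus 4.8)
The plan is to adapt Beauville's computation in \cite[Section 3]{beau2} to a separably closed field of characteristic $2$. By Proposition~\ref{upincidence} the incidence polarisation is unique once it exists, so it suffices to check that $\lambda_{\Xi}$ satisfies the defining relation $(g \circ f_z)^*(J(\Xi)) = -I(z)$, where $g = \Psi^{-1} : A^2(X) \to \Prym(\widetilde{C}/C)(k)$ is the regular homomorphism of Proposition~\ref{algrep} and $f_z : T(k) \to A^2(X)$ is the algebraic map attached to an arbitrary cycle $z \in \Ch^2(X \times T)$. Since $g$ is regular, $h_z := g \circ f_z$ is induced by a genuine morphism $T \to P^+$, and the claim reduces to the equality $(h_z \times h_z)^* J(\Xi) = -I(z)$ in $\Ch^1(T \times T)$.

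The first step I would take is to transport the problem from the cubic threefold to the curve $\widetilde{C}$ by means of the incidence correspondence $\gamma \in \Ch^2(\widetilde{C} \times \Bl_l(X))$ whose fibre over $x \in \widetilde{C}$ is the line $L(x)$; by construction $\Psi$ is induced by $\gamma_* : A^1(\widetilde{C}) \to A^2(X)$. Pulling $z$ back along $\gamma$ produces a family $\gamma^* z \in \Ch^1(\widetilde{C} \times T)$ with Abel--Jacobi map $a \colon T \to J\widetilde{C}$, whose image lies in a translate of $P^+$ and recovers $h_z$ under $\Psi$. On the Jacobian $J\widetilde{C}$ the canonical theta polarisation $\lambda_{\widetilde{\theta}}$ is the incidence polarisation for $A^1(\widetilde{C})$ --- this is the curve case recorded in the example following Definition~\ref{defARac} --- so we have the classical relation $a^* J(\widetilde{\theta}) = -I(\gamma^* z)$, and after this reduction the problem becomes a statement entirely about $J\widetilde{C}$, its restriction to $P^+$, and the intersection form $I(\gamma^* z)$.

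The delicate point is the bookkeeping of several factors of $2$. On one hand $\lambda_{\widetilde{\theta}}|_{P^+} = 2\lambda_{\Xi}$ by Proposition~\ref{algequiv}, so functoriality of the Poincar\'e bundle under the inclusion $i : P^+ \hookrightarrow J\widetilde{C}$ gives $(i \times i)^* J(\widetilde{\theta}) = 2\,J(\Xi)$ in $\Corr(P^+)$, which is where the factor from Proposition~\ref{algequiv} enters. On the other hand, the proof of Proposition~\ref{algrep} shows that it is $2\Psi$, rather than $\Psi$ itself, that is directly algebraic, so the relation between $a$ and $h_z$ and the self-composition $\gamma^{t} \circ \gamma$ on $\widetilde{C}$ each contribute their own factor. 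The plan is to combine $a^* J(\widetilde{\theta}) = -I(\gamma^* z)$ with the identity $(i \times i)^* J(\widetilde{\theta}) = 2\,J(\Xi)$ and with the intersection-theoretic comparison of $I(z)$ on $X$ to $I(\gamma^* z)$ on $\widetilde{C}$ through $\gamma$, and then to verify that all of these factors reconcile so that the net outcome is exactly $(h_z \times h_z)^* J(\Xi) = -I(z)$, with the correct sign.

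The main obstacle is precisely this reconciliation: one must track the constants and signs in the intersection identity relating $I(z)$ to the incidence form on $\widetilde{C}$ passed through $\gamma^{t} \circ \gamma$, and match them against the discrepancy between $\lambda_{\Xi}$ and $\lambda_{\widetilde{\theta}}|_{P^+}$. Every cycle-theoretic and cohomological ingredient in \cite[Section 3]{beau2} is characteristic-independent and applies verbatim over a separably closed field, so the genuinely new content is confirming that none of Beauville's intermediate steps secretly invokes $\textrm{char}(k) \neq 2$. This holds because the only inputs the argument requires --- the identification of $\Psi$ via the family of lines and the restriction relation for the theta correspondence --- rest solely on Propositions~\ref{algrep} and \ref{algequiv}, both of which we have already established in all characteristics.
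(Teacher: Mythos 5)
Your proposal is correct and takes essentially the same approach as the paper: the paper's proof of Proposition \ref{incpol} is simply a citation of Beauville's intersection-theoretic argument \cite[Proposition 3.5]{beau2} together with the remark that it is characteristic-independent, and your outline is a faithful reconstruction of exactly that argument --- transport via the incidence correspondence of lines to $\widetilde{C}$, the curve case of the incidence polarisation on $J\widetilde{C}$, and the reconciliation of the factor of $2$ from $\lambda_{\widetilde{\theta}}|_{P^+} = 2\lambda_{\Xi}$ (Proposition \ref{algequiv}) against the factor contributed by $\gamma^t \circ \gamma$. Since the only inputs are Propositions \ref{algrep} and \ref{algequiv}, both established in all characteristics, your characteristic-independence claim matches the paper's justification.
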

\begin{proof} 
This follows from \cite[Proposition 3.5]{beau2}. The proof uses intersection theory and does not make use of the characteristic of the base field.
\end{proof}

\begin{corollary}\label{nHIJ}
Let $X$ be a smooth cubic threefold with a good line $l$ over an arbitrary field $k$. Then the intermediate Jacobian of $X$ exists and is given by $(\Prym(X,l), \Xi)$.
\end{corollary}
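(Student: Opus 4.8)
The plan is to first establish the statement over a separable closure and then descend to $k$. The key initial observation is that all the relevant data are already defined over the base field $k$: the line $l\subset X$ is a $k$-line, so the attached double cover $\pi:\widetilde{C}\to C$ of Definition \ref{doublecoverattached}, the hyperplane section $H$ on $C$ (hence the theta characteristic $\pi^*H$), the Prym variety $\Prym(X,l)$ and its theta divisor $\Xi$ of Definition \ref{xi} all descend to $k$. Since smoothness of the discriminant curve $C$ and \'etaleness of $\pi$ are geometric conditions, the line $l$ remains good after base change, so $l_{k^{sep}}$ is a good line on $X_{k^{sep}}$.

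Next I would treat the separably closed case. Over $k^{sep}$, Proposition \ref{algrep} identifies $\Prym(X,l)_{k^{sep}}$ with the algebraic representative $\Ab^2_{X_{k^{sep}}}$ via the cycle map $\Psi$, and Proposition \ref{incpol} shows that $\lambda_{\Xi}$ is the incidence polarisation. By Remark \ref{defsepclosed}, together with the compatibility of Definition \ref{IJfield} with Definition \ref{defIJac}, this says precisely that $(\Prym(X,l)_{k^{sep}},\Xi_{k^{sep}})$ is the intermediate Jacobian of $X_{k^{sep}}$.

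Then I would descend along $k^{sep}/k$. The class $\Xi_{k^{sep}}$ is a principal polarisation, hence represented by a unique effective divisor by Lemma \ref{dist}, so Proposition \ref{GalDescentIJ} applies and guarantees that the intermediate Jacobian of $X_{k^{sep}}$ descends to the intermediate Jacobian of $X$; in particular $\Ab^2_X$ exists and carries a distinguished class. It remains to identify the descended object with $(\Prym(X,l),\Xi)$. Because $\Psi$ is induced by the $k$-defined homomorphism $\Ch^1(\widetilde{C})\to\Ch^2(\Bl_l X)$, it is Galois-equivariant, so the tautological descent datum on $\Prym(X,l)_{k^{sep}}$ coming from its $k$-model matches, under $\Psi$, the descent datum on $\Ab^2_{X_{k^{sep}}}$; likewise $\Xi$ is a $k$-divisor restricting to $\Xi_{k^{sep}}$. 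By uniqueness of Galois descent (and Lemma \ref{dist} for the effective divisor) the descended pair is exactly $(\Prym(X,l),\Xi)$, which is therefore the intermediate Jacobian of $X$.

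The main obstacle I anticipate is the bookkeeping in this final step: one must ensure that the identification of the Prym with the algebraic representative, as well as the incidence polarisation, are not merely isomorphisms over $k^{sep}$ but compatible with the Galois action. This is resolved by checking that each ingredient ($\widetilde{C}$, $\pi$, $\pi^*H$, $\Psi$ and $\Xi$) is defined over $k$, so that the descent datum is the canonical one and uniqueness of descent forces the conclusion; everything else reduces to the separably closed case already handled in Propositions \ref{algrep} and \ref{incpol}.
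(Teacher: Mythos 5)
Your proposal is correct and follows essentially the same route as the paper: the separably closed case is handled by Propositions \ref{algrep} and \ref{incpol}, and the descent to $k$ is exactly Proposition \ref{GalDescentIJ}. The additional bookkeeping you carry out (checking that $\widetilde{C}$, $\pi$, $\pi^*H$, $\Psi$ and $\Xi$ are defined over $k$ so that the descended object is the $k$-model $(\Prym(X,l),\Xi)$) is just an explicit verification of what the paper's two-line proof leaves implicit.
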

\begin{proof}
From Proposition \ref{algrep} and Proposition \ref{incpol}, the intermediate Jacobian $J(X_{k^{sep}})$ exists and is given by $(\Prym(X_{k^{sep}},l), \Xi_{k^{sep}})$. By Proposition \ref{GalDescentIJ}, the intermediate Jacobian $J(X)$ also exists and is given by $(\Prym(X,l), \Xi)$.
\end{proof}

\begin{lemma}\label{prereq1}
Let $X$ be a  cubic threefold over a field $k$ of characteristic $0$, and let $\ell$ be a prime. Then Proposition \ref{prereq} holds in this case.
\end{lemma}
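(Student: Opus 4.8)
The plan is to reduce to the field of complex numbers and then invoke the classical Hodge-theoretic description of the intermediate Jacobian. Since $\textrm{char}(k) = 0$ we have $k^{sep} = \overline{k}$, and $X_{\overline{k}}$ is already defined over some finitely generated subfield of $\overline{k}$, whose algebraic closure embeds into $\mathbb{C}$. Fixing such an embedding $i: \overline{k} \hookrightarrow \mathbb{C}$, which is an extension of separably closed fields, I would use the stability of the algebraic representative under separable base change together with the functoriality recorded in Lemma \ref{cohfunc} and Lemma \ref{bigCommDiagram}: the vertical comparison maps $i^*$ on $T_\ell \Ab^2_X$ and on $\HH^3(X, \mathbb{Z}_\ell(2))$ are isomorphisms and are compatible with $\lambda^2 \circ T_\ell(\phi^{-1})$ and with $\alpha$. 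It therefore suffices to establish both claims over $\mathbb{C}$, and the iso property of $\phi$ over $\overline{k}$ follows from that over $\mathbb{C}$ by the Lefschetz principle, since algebraic triviality of cycles and the formation of $\Ab^2_X$ are compatible with the extension.

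Over $\mathbb{C}$ the first claim is the classical identification of $\Ab^2_{X_{\mathbb{C}}}$ with the Griffiths intermediate Jacobian $J^2(X_{\mathbb{C}})$ together with the fact that the Abel-Jacobi map $A^2(X_{\mathbb{C}}) \to J^2(X_{\mathbb{C}})(\mathbb{C})$ is an isomorphism. For a smooth cubic threefold $\HH^3$ carries a Hodge structure of type $\{(2,1),(1,2)\}$, so $J^2(X_{\mathbb{C}})$ is a principally polarised abelian fivefold and the Abel-Jacobi map is an isomorphism onto it by \cite{clem} (see also \cite{beau}). This shows that $\phi$ is an isomorphism.

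For the second claim I would compare the $\ell$-adic Bloch map with its Betti counterpart. Under the comparison isomorphism between singular and $\ell$-adic cohomology, $\HH^3(X_{\mathbb{C}}, \mathbb{Z}_\ell(2)) \cong \HH^3(X_{\mathbb{C}}, \mathbb{Z})\otimes \mathbb{Z}_\ell(2)$, and the map $\lambda^2 \circ T_\ell(\phi^{-1})$ is identified with the map $T_\ell J^2(X_{\mathbb{C}}) \to \HH^3(X_{\mathbb{C}}, \mathbb{Z}_\ell(2))$ induced by the tautological identification $\HH_1(J^2(X_{\mathbb{C}}), \mathbb{Z}) \cong \HH^3(X_{\mathbb{C}}, \mathbb{Z})$ coming from the very definition of the intermediate Jacobian. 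This is precisely the comparison of the $\ell$-adic Bloch map with the topological Abel-Jacobi map, for which I would cite \cite{achterBloch}. Since $h^{3,0}(X_{\mathbb{C}}) = 0$, this identification is an isomorphism of $\mathbb{Z}_\ell$-modules; dualising and using $\HH^1(\Ab^2_X, \mathbb{Z}_\ell) \cong (T_\ell \Ab^2_X)^{\vee}$ then shows that $\alpha$ is an isomorphism.

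The main obstacle is this last step: making precise that over $\mathbb{C}$ the $\ell$-adic Bloch map genuinely recovers the standard topological realisation of the intermediate Jacobian, so that its bijectivity becomes a consequence of $h^{3,0} = 0$ and the computation of its image in \cite{achterBloch}. Once that comparison is in hand the remaining verifications are formal, and the reduction to $\mathbb{C}$ is routine given the functoriality already recorded in Lemma \ref{cohfunc} and Lemma \ref{bigCommDiagram}.
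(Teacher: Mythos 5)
Your proposal is essentially correct, but it takes a genuinely different route from the paper. The paper's proof is purely algebraic and much shorter: since $\chr(k)=0$, the cubic threefold $X$ is automatically non-Hermitian, so Proposition \ref{algrep} (the Prym variety is the algebraic representative, via the group isomorphism $\Psi$) gives the bijectivity of $\phi$ in one line over $k^{sep}$, with no detour through $\mathbb{C}$; for the bijectivity of $\alpha$, the paper invokes Coray's theorem that $2A_0(Y)=0$ for cubic threefolds over any field, deduces a decomposition of twice the diagonal $2\Delta = 2p\times X + Z$ \`a la Bloch--Srinivas, and then applies \cite[Proposition 5.1]{achterBloch} to conclude that $T_\ell\lambda^2$ is an isomorphism of Galois modules. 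Your route instead reduces to $\mathbb{C}$ and uses the classical Hodge-theoretic picture (Clemens--Griffiths/Murre for $\phi$, and the comparison of the $\ell$-adic Bloch map with the topological Abel--Jacobi map for $\alpha$). What the paper's approach buys is uniformity and economy: it avoids transcendental input, yields Galois-equivariance for free, and sidesteps precisely the comparison you flag as your ``main obstacle''; what your approach buys is a direct link to the classical theory, and it makes the char-$0$ statement feel inevitable rather than resting on the diagonal trick.

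Two points in your reduction deserve care. First, $\overline{k}$ need not embed into $\mathbb{C}$ when its cardinality exceeds the continuum; you gesture at the fix (spread $X$ out over a finitely generated subfield $k_0$ and compare along $\overline{k_0}\hookrightarrow\overline{k}$ and $\overline{k_0}\hookrightarrow\mathbb{C}$ separately), but you should not then write a single embedding $i:\overline{k}\hookrightarrow\mathbb{C}$. Second, transferring the bijectivity of $\phi$ back from $\mathbb{C}$ is not a bare ``Lefschetz principle'': you need (i) injectivity of $A^2(X_{K})\to A^2(X_{\Omega})$ for an extension $\Omega/K$ of algebraically closed fields (standard rigidity, but it should be cited), (ii) surjectivity of the regular homomorphism over $\overline{k}$ (part of the existence theory of algebraic representatives), and (iii) compatibility of the base-change isomorphism $(\Ab^2_{X})_{\Omega}\cong \Ab^2_{X_{\Omega}}$ with the two $\phi$'s. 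All three are true and available, so this is a presentational gap rather than a mathematical one --- though note that (i)--(iii) can be bypassed entirely by quoting Proposition \ref{algrep}, as the paper does.
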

\begin{proof}
First of all, $X$ must be non-Hermitian in this case. Thus $\phi$ is an isomorphism by Proposition \ref{algrep}. This gives us a map $$\lambda^2 \circ T_{\ell}(\phi^{-1}): T_\ell \Ab^2_{X_{k^{sep}}} \to \HH^3(X_{k^{sep}}, \mathbb{Z}_{\ell}(2))$$
of $\mathbb{Z}_{\ell}$-modules. By \cite[Proposition 1.3]{colliot}, we know that $2A_0(Y) = 0$ for any cubic threefold $Y$ over any field $F$ containing $k$. Following the proof of \cite[Lemma 1.3]{colliotU}, using the fact that $2A_0(X_{k(X)}) = 0$, we obtain a decomposition of twice the diagonal
$$2\Delta = 2p \times X + Z \in \Ch^3(X \times X)$$
for some $Z$ supported on $X \times D$ where $D \subset X$ is a proper closed subset, and some point $p \in X$. By \cite[Proposition 5.1]{achterBloch}, it follows that $T_{\ell}\lambda^2$ is an isomorphism of $\Gal(\overline{k}/k)$-modules. Thus $\lambda^2 \circ T_{\ell}(\phi^{-1})$ is an isomorphism.
\end{proof}

Next we proceed to construct Prym schemes for families of cubic threefolds.

\begin{proposition}
   Let $\widetilde{C} \xrightarrow{\pi} C \to S$ be a finite Galois morphism of curves over a scheme $S$. This induces a pushforward morphism on relative Jacobians $\Pic^0(\widetilde{C}/S) \xrightarrow{\pi_*} \Pic^0(C/S)$, and this morphism is smooth.
\end{proposition}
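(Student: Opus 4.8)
The plan is to reduce to the field case already established in Proposition~\ref{pushforwardsmooth} via a fibrewise criterion for smoothness. Two structural facts make this reduction available. First, since $\widetilde{C}\to S$ and $C\to S$ are families of smooth proper curves, the obstruction to smoothness of the relative Picard functor at a fibre lives in $H^2$ of the structure sheaf of that fibre, which vanishes for a curve; hence both $\Pic^0(\widetilde{C}/S)\to S$ and $\Pic^0(C/S)\to S$ are smooth (indeed abelian schemes) over $S$, as recorded in \cite[\S 8.4]{neron}. Second, the formation of $\Pic^0$ and of the pushforward map $\pi_*$ commutes with arbitrary base change on $S$; in particular, for each $s\in S$ the identifications $\Pic^0(\widetilde{C}/S)_s\cong J\widetilde{C}_s$ and $\Pic^0(C/S)_s\cong JC_s$ carry the fibre of $\pi_*$ over $s$ to the norm map $(\pi_s)_* : J\widetilde{C}_s\to JC_s$ attached to the induced finite \'etale cover $\widetilde{C}_s\to C_s$ of curves over $\kappa(s)$.

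With these identifications, the second step is simply to invoke the field case: by Proposition~\ref{pushforwardsmooth} each fibre map $(\pi_s)_*$ is smooth. Thus $\pi_* : \Pic^0(\widetilde{C}/S)\to \Pic^0(C/S)$ is a morphism of $S$-schemes, both of which are smooth over $S$, all of whose fibres over the points of $S$ are smooth morphisms.

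The final step is the fibrewise criterion itself. Since source and target are flat and of finite presentation over $S$, and each $(\pi_s)_*$ is flat, the crit\`ere de platitude par fibres \cite[Tag 039A]{sp} shows $\pi_*$ is flat; it is locally of finite presentation because it is a morphism between schemes of finite presentation over $S$. Moreover the fibre of $\pi_*$ over a point $b\in \Pic^0(C/S)$ lying above $s\in S$ coincides with the fibre of $(\pi_s)_*$ over $b\in JC_s$, which is smooth over $\kappa(b)$ because $(\pi_s)_*$ is smooth. A flat, locally finitely presented morphism all of whose geometric fibres are smooth is itself smooth \cite[Tag 01V9]{sp}, so $\pi_*$ is smooth, as claimed.

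The geometry of the fibres is not the difficulty here, since it is entirely handled by Proposition~\ref{pushforwardsmooth}; the substantive point to verify carefully is the bookkeeping underlying the fibrewise criterion. Concretely, one must check that the relative Jacobians are representable and smooth over $S$, and that both $\Pic^0$ and the norm map are compatible with base change, so that the fibres of $\pi_*$ really are the norm maps of the fibre covers. Once these compatibilities are in place, smoothness of $\pi_*$ follows formally from the field case.
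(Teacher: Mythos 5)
Your proof is correct and follows essentially the same route as the paper's: both establish that $\pi_*$ is flat and locally of finite presentation, identify its fibres (via base-change compatibility of $\Pic^0$ and of the norm map) with fibres of the norm maps $(\pi_s)_* : J\widetilde{C}_s \to JC_s$ attached to the fibre covers, and then invoke the field case, Proposition \ref{pushforwardsmooth}, to conclude smoothness. The one point of divergence is the flatness step: the paper deduces flatness from miracle flatness applied to $\pi_*$ viewed as a surjective homomorphism of smooth group schemes (surjectivity coming from $\pi_* \circ \pi^* = [d]$), whereas you deduce it from the crit\`ere de platitude par fibres combined with flatness of the fibre maps; your variant is arguably the more robust one over a general base, since miracle flatness as usually stated requires a regular target, and $\Pic^0(C/S)$ need not be regular when $S$ is not, while the fibrewise criterion applies without any such hypothesis.
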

\begin{proof}
Consider the pullback morphism $\Pic^0(C/S) \xrightarrow{\pi^*} \Pic^0(\widetilde{C}/S)$ over $S$, which satisfies $\pi_* \circ \pi^* = [2]_{\Pic^0(C/S)}$. We know that $\pi_*$ is a surjective homomorphism of smooth group schemes. Therefore it is flat by miracle flatness. It is also locally of finite presentation, and so it suffices to show for any $p \in \Pic^0(C/S)$ that the fibre $\Pic^0(\widetilde{C}/S)_p \to \Spec(k(p))$ is smooth. 

Let $s: \Pic^0(C/S) \to S$ be the structure morphism. It suffices to show that $(\pi_*)_{s(p)}: \Pic^0(\widetilde{C}/S)_{s(p)} \to \Pic^0(\widetilde{C}/S)_{s(p)}$ is smooth over $\Spec(k(s(p)))$, which follows from Proposition \ref{pushforwardsmooth}.
\end{proof}

\begin{definition}
Let $\widetilde{C} \xrightarrow{\pi} C \to S$ be a finite \'etale morphism of relative curves over $S$. Then $\Ker(\pi_*)^0$ is an abelian scheme, which we call the \emph{Prym scheme} and denote by $\Prym_S(\widetilde{C}/C)$.
\end{definition}

\begin{proposition}\label{polPrym}
If $\widetilde{C} \xrightarrow{\pi} C \to S$ is a finite \'etale morphism of curves over $S$ of degree $2$, then $\Prym_S(\widetilde{C}/C)$ is principally polarised with polarisation $\Xi_S$, satisfying $2\Xi_S = \widetilde{\theta}_S \big |_{\Pic^0(\widetilde{C}/S)}$ when $\widetilde{\theta}_S$ is the canonical polarisation on the relative Jacobian of $\widetilde{C}$.
\end{proposition}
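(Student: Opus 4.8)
The plan is to carry the field-case argument of Proposition \ref{algequiv} over to the relative setting, the point being that every step there is a formal manipulation of homomorphisms of abelian schemes, their duals, and kernels of isogenies, all of which remain valid over $S$ once the relevant numerical data are controlled fibrewise. Write $P^+ := \Prym_S(\widetilde{C}/C)$, with its closed immersion $i: P^+ \to \Pic^0(\widetilde C/S)$, and let $\lambda_{\widetilde\theta_S}: \Pic^0(\widetilde C/S) \to \Pic^0(\widetilde C/S)^\vee$ be the canonical principal polarisation attached to $\widetilde\theta_S$. I would define the symmetric homomorphism $d := \widehat i \circ \lambda_{\widetilde\theta_S}\circ i : P^+ \to (P^+)^\vee$ (symmetry follows from $\widehat{\lambda_{\widetilde\theta_S}} = \lambda_{\widetilde\theta_S}$). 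Since $\widetilde C \to C$ is finite \'etale of degree $2$, every geometric fibre $\widetilde C_s \to C_s$ is an \'etale double cover of a smooth proper curve, so Proposition \ref{algequiv} applies fibrewise and gives $d_s = \lambda_{\widetilde\theta}|_{P^+_s} = 2\lambda_{D_s}$ with $\lambda_{D_s}$ a principal polarisation. In particular $d$ is fibrewise a polarisation, hence a polarisation over $S$, and it remains to produce a principal polarisation $\lambda_{\Xi_S}$ with $d = 2\lambda_{\Xi_S}$.

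The heart of the proof is to show that $\Ker(d) = P^+[2]$ as finite locally free group schemes over $S$. The inclusion $\Ker(d) \subseteq P^+[2]$ is the exact relative analogue of the computation in Proposition \ref{algequiv}: for a point $s$ of $\Ker(d)$ one has $\widehat i(\lambda_{\widetilde\theta_S}(i(s))) = 0$, while the relative form of Lemma \ref{norm-pullback duality} together with $\pi_* \circ i = 0$ gives $\widehat{\pi^*}(\lambda_{\widetilde\theta_S}(i(s))) = \lambda_{\theta_C}(\pi_*(i(s))) = 0$; hence $\lambda_{\widetilde\theta_S}(i(s))$ lies in $\Ker(\widehat{\pi^* + i})$. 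As in Proposition \ref{bound} the kernel of the isogeny $\pi^* + i$ is killed by $2$, so its Cartier dual $\Ker(\widehat{\pi^* + i})$ is killed by $2$ as well; since $\lambda_{\widetilde\theta_S}$ is an isomorphism and $i$ a monomorphism this forces $2s = 0$, giving a closed immersion $\Ker(d) \hookrightarrow P^+[2]$. Now $d$ is an isogeny, being so on each fibre with constant degree $2^{2(g-1)}$ by Proposition \ref{algequiv}, so $\Ker(d)$ is finite locally free of rank $2^{2(g-1)}$, which is exactly the rank of $P^+[2]$. A closed immersion of finite locally free $S$-schemes of equal rank is an isomorphism, so $\Ker(d) = P^+[2]$.

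I would then conclude by factoring and checking principality fibrewise. Since $d$ annihilates $P^+[2] = \Ker([2]_{P^+})$ and $[2]_{P^+}: P^+ \to P^+$ is a finite locally free epimorphism, the universal property of the quotient produces a unique homomorphism $\lambda: P^+ \to (P^+)^\vee$ with $d = \lambda \circ [2]_{P^+} = 2\lambda$, and $\lambda$ is symmetric because $d$ and $[2]_{P^+}$ are. On each fibre the relation $d_s = 2\lambda_{D_s}$ forces $\lambda_s = \lambda_{D_s}$, a principal polarisation. A homomorphism of abelian schemes that is an isomorphism on every fibre is an isomorphism, and a symmetric homomorphism that is a polarisation on every fibre is a polarisation; hence $\lambda$ is a principal polarisation, represented by a relative divisor $\Xi_S$. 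This yields $2\Xi_S = d = \widetilde\theta_S|_{\Pic^0(\widetilde C/S)}$, as required.

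I expect the main obstacle to be precisely this globalisation. Fibre by fibre the divisibility $d = 2\lambda$ is immediate from Proposition \ref{algequiv}, but producing a \emph{single} $\lambda$ over all of $S$ requires knowing that $d$ kills the relative $2$-torsion over $S$, not merely fibrewise, and a fibrewise-zero homomorphism out of a finite locally free group scheme need not be zero without a flatness input. The equal-rank closed-immersion argument identifying $\Ker(d)$ with $P^+[2]$ is exactly what supplies this input, and it has the advantage of working without any normality, integrality, or reducedness hypothesis on $S$; this is the step I would be most careful to get right.
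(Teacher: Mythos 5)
Your proof is correct, and in outline it follows the same route as the paper: relativise Proposition \ref{algequiv}, extract the symmetric homomorphism $d=\widehat{i}\circ\lambda_{\widetilde{\theta}_S}\circ i$, factor it through $[2]_{P^+}$, and verify principality fibrewise. Where you genuinely diverge is at the factorisation step, and there your treatment is tighter than the paper's own. The paper writes down the relative $\Lambda$ on $\Pic^0(C/S)\times\Prym_S(\widetilde{C}/C)$, observes it is diagonal because it is diagonal on fibres (harmless, since the off-diagonal entries are homomorphisms of abelian schemes, where vanishing on fibres does imply vanishing by rigidity; alternatively $B=\lambda_{\theta_C}\circ\pi_*\circ i=0$ directly), and then passes from the fibrewise statement $\Ker(D_p)=P^+_p[2]$ to the factorisation $D=E\circ[2]$ with the word ``clearly''. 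As you correctly identify, that inference is exactly what needs an argument: a homomorphism out of a finite flat group scheme can vanish on every fibre without vanishing (multiplication by $\epsilon$ on $\alpha_2$ over $k[\epsilon]/(\epsilon^2)$ in characteristic $2$, composed with an inclusion into an abelian scheme, is such an example), and in any case the inclusion $\Ker(D)\subseteq P^+[2]$ expressed by ``$[2]$ kills $\Ker(D)$'' points the wrong way for factoring through $[2]$ --- what the universal property needs is $P^+[2]\subseteq\Ker(D)$. Your two-step remedy --- (i) the functorial $S'$-point computation giving a closed immersion $\Ker(d)\hookrightarrow P^+[2]$, using the relative form of Lemma \ref{norm-pullback duality} (itself deducible fibrewise via rigidity) and the fact from Proposition \ref{bound} that $\Ker(\pi^*+i)$, hence its Cartier dual, is killed by $2$; and (ii) the equal-rank argument, where $d$ is fibrewise an isogeny of degree $2^{2(g-1)}$ so that $\Ker(d)$ is finite locally free of the same rank as $P^+[2]$, and an equal-rank closed immersion of finite locally free $S$-schemes is an isomorphism --- supplies precisely the scheme-theoretic equality $\Ker(d)=P^+[2]$ that the factorisation through the fppf quotient $[2]:P^+\to P^+$ requires. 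The rest of your argument (uniqueness of $\lambda$, the identification $\lambda_s=\lambda_{D_s}$ because Hom groups of abelian varieties are torsion-free, fibrewise isomorphism implies isomorphism, symmetry plus fibrewise polarisation implies polarisation) matches the paper. In short: same strategy, but you make rigorous the one step the paper leaves as an assertion, and you do so with no extra hypotheses on $S$.
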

\begin{proof}
We follow the proof of Proposition \ref{algequiv} but over an arbitrary base scheme $S$. Let $i:\Prym_S(\widetilde{C}/C) \to \Pic^0(\widetilde{C}/S)$ be the inclusion. Then we may define the polarisation $\Lambda$ given by the composition

\begin{center}\begin{tikzpicture}[descr/.style={fill=white,inner sep=1.5pt}]
        \matrix (m) [
            matrix of math nodes,
            row sep=3em,
            column sep=2.5em,
            text height=1.5ex, text depth=0.25ex
        ]
{ \Pic^0(C/S) \times \Prym_S(\widetilde{C}/C) & \Pic^0(\widetilde{C}/S) \\
   \widehat{\Pic^0(\widetilde{C}/S)} & \widehat{\Pic^0(C/S)} \times \widehat{\Prym_S(\widetilde{C}/C)}. \\ 
};

        \path[overlay,->, font=\scriptsize,>=latex]
        (m-1-1) edge node[midway, above]{$\pi^* + i$} (m-1-2)
        (m-1-2)  edge[out=355,in=175] node[descr,yshift=0.3ex]{$\widetilde{\theta}_S$} (m-2-1)
        (m-2-1) edge node[midway, above]{$(\widehat{\pi^*},\widehat{i})$} (m-2-2);
\end{tikzpicture}\end{center}

which for any point $p \in S$ specializes to the polarisation $\lambda$ in Proposition \ref{algequiv}. Writing $\Lambda$ as a matrix, we also obtain
$$\Lambda = \begin{pmatrix}
A & B \\
C & D
\end{pmatrix} = \begin{pmatrix}
A & 0 \\
0 & D
\end{pmatrix}$$
because $\Lambda$ is diagonal once we specialize to any point of $S$. We are interested in the polarisation $D: \Prym_S(\widetilde{C}/C) \to \widehat{\Prym_S(\widetilde{C}/C)}$. For any $p \in S$ we have $\Ker(D_{p}) = \Prym_S(\widetilde{C}/C)_p[2]$ so clearly $[2]_{\Prym_S(\widetilde{C}/C)}$ must kill $\Ker(D)$, and so we can factor $D = E \circ [2]_{\Prym_S(\widetilde{C}/C)}$ for some morphism $E: \Prym_S(\widetilde{C}/C) \to \widehat{\Prym_S(\widetilde{C}/C)}$. From Proposition \ref{algequiv}, $E_p$ is a principal polarisation for all $p \in S$ and so $E$ itself is a principal polarisation.
\end{proof}

\begin{definition}
Let $X$ be a family of cubic threefolds over a Noetherian base $S$. Then the \emph{Fano scheme of relative lines} $F(X/S) \to S$ exists. A \emph{family of lines} on $X$ over $S$ is a section $s:S \to F(X/S)$ of this morphism. There is a Zariski open subset $F_{\textrm{good}}(X/S)\subset F(X/S)$ parametrising families of good lines.
\end{definition}

Let $X$ be a family of cubic threefolds over a normal Noetherian base $S$ such that no fibre is isomorphic to a Hermitian cubic threefold. We will associate to $X$ a principally polarised abelian scheme $J(X/S)$ which is the intermediate Jacobian of $X$. 

\begin{proposition}\label{attachprym}
Let $S$ be a normal Noetherian scheme over $\Spec(\mathbb{Z})$. Let $X$ be a smooth cubic threefold over $S$ such that $X_s$ is not Hermitian for any $s \in S$. Then there is a principally polarised abelian scheme $(\Ab^2_{X/S},\Theta)$ over $S$ such that for all points $s \in S$ we obtain $(\Ab^2_{X/S},\Theta)_s \cong (\Ab^2_{X_s},\Theta_s)$ as principally polarised abelian varieties, with $\Theta_s$ being distinguished.
\end{proposition}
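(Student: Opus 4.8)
The plan is to construct the intermediate Jacobian \'etale-locally via the Prym scheme and then glue using the uniqueness of the intermediate Jacobian of a family. Since the connected components of a normal Noetherian scheme are integral, I may assume $S$ is integral (this is harmless and can be dropped if desired). The key starting observation is that, because no fibre of $X \to S$ is Hermitian, Proposition \ref{nogoodline} applied over the geometric points of $S$ shows that every geometric fibre carries a good line. Hence the open subscheme $F_{\textrm{good}}(X/S) \subset F(X/S)$, being open in the relative Fano scheme of lines (smooth and projective over $S$ with $2$-dimensional smooth fibres by Proposition \ref{Fsurface}), is smooth and \emph{surjective} over $S$. A smooth surjective morphism admits sections \'etale-locally, so there is an \'etale surjection $u: S' \to S$ together with a section $S' \to F_{\textrm{good}}(X_{S'}/S')$, that is, a family of good lines $l$ on $X_{S'}$.

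Given this family of good lines $l$, projecting $X_{S'}$ away from $l$ endows $\Bl_l(X_{S'})$ with a relative conic bundle structure over $\mathbb{P}^2_{S'}$, whose discriminant is a relative smooth plane quintic $C \to S'$ (Proposition \ref{formulaDelta} fibrewise, together with flatness), and whose associated double cover $\widetilde{C} \to C$ is finite and fibrewise \'etale, hence finite \'etale of degree $2$ by the fibrewise criterion for flatness. Applying the Prym scheme construction and Proposition \ref{polPrym} I obtain a principally polarised abelian scheme $(\Prym_{S'}(\widetilde{C}/C), \Xi_{S'})$ over $S'$. For each point $s \in S'$ its fibre is $(\Prym(X_s, l_s), \Xi_s)$, which by Corollary \ref{nHIJ} is the intermediate Jacobian of $X_s$; moreover $\Xi_s$ is the incidence polarisation by Proposition \ref{incpol}, hence distinguished in the sense of Definition \ref{IJfield}. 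Thus $(\Prym_{S'}(\widetilde{C}/C), \Xi_{S'})$ is, by definition, the intermediate Jacobian of the family $X_{S'}$.

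It remains to descend this object along $u: S' \to S$. Over $S'' := S' \times_S S'$, the two projections pull the local construction back to principally polarised abelian schemes whose fibres are the intermediate Jacobians of the fibres of $X_{S''}$; by the uniqueness of the intermediate Jacobian of a family, the two pullbacks are canonically isomorphic, and the same uniqueness forces the cocycle condition on the triple overlap. Effective \'etale descent for polarised abelian schemes then yields a principally polarised abelian scheme $(\Ab^2_{X/S}, \Theta)$ over $S$. Since formation of fibres and the distinguished condition are insensitive to the \'etale base change $u$, the fibre of $(\Ab^2_{X/S}, \Theta)$ over each $s \in S$ is the intermediate Jacobian of $X_s$ with $\Theta_s$ distinguished; finally, $\Theta$ is recorded as the relative theta divisor, which exists because the local divisors $\Xi_{S'}$ are relatively effective (Definition \ref{xi}) and, being fibrewise the unique effective theta divisor (Lemma \ref{dist}), descend to a divisor flat over $S$.

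I expect the main obstacle to be the descent step together with the passage from the polarisation to an honest relative divisor $\Theta$ flat over $S$: the gluing itself is painless thanks to the uniqueness of the family intermediate Jacobian, but verifying that effective \'etale descent applies to the polarised pair, and that the fibrewise-unique theta divisors assemble into a single relative Cartier divisor (where the normality of $S$ is convenient), requires care. The construction of the local Pryms and their fibrewise identification with the intermediate Jacobian, by contrast, is essentially a relative repackaging of Corollary \ref{nHIJ} and Proposition \ref{polPrym}.
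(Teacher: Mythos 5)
Your construction follows the same route as the paper's own proof, which is simply a citation: the paper says the statement ``follows analogously to \cite[Theorem 3.4]{achter}'', and Achter's argument is exactly your outline --- \'etale-locally find a family of good lines (possible since, by Proposition \ref{nogoodline}, every non-Hermitian geometric fibre has a good line, so $F_{\textrm{good}}(X/S) \to S$ is smooth and surjective), form the relative Prym with its principal polarisation (Proposition \ref{polPrym}), identify the fibres with intermediate Jacobians (Corollary \ref{nHIJ}), and descend along the \'etale cover.

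However, the descent step as you justify it has a genuine gap. You derive both the gluing isomorphism over $S'' = S' \times_S S'$ and the cocycle condition from ``the uniqueness of the intermediate Jacobian of a family''. That uniqueness is only uniqueness up to isomorphism, and an isomorphism between the two pullbacks is never unique: it can always be composed with automorphisms of the principally polarised abelian scheme (for instance $-1$, which preserves every polarisation). Uniqueness up to non-unique isomorphism yields neither a canonical choice of gluing isomorphism nor the cocycle identity --- this is precisely the situation in which nontrivial twisted forms can survive descent, so the argument as stated does not close. The repair, which is what Achter's proof actually does, is to use the canonicity supplied by the algebraic representative rather than bare uniqueness: over each generic point $\xi$ of a connected component of $S''$ (a field, since $S''$ is \'etale over the normal Noetherian $S$, hence itself normal), the two pullbacks are Pryms attached to two different good lines, and each is the algebraic representative $\Ab^2_{X_\xi}$ (Proposition \ref{algrep} together with Galois descent as in Corollary \ref{nHIJ}); the universal property therefore provides a \emph{unique} isomorphism between them compatible with the regular homomorphisms, and it respects the polarisations because both are incidence polarisations, which are unique by Proposition \ref{upincidence}. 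Since homomorphisms of abelian schemes over a normal Noetherian integral base are determined by, and extend from, their generic fibres (the extension principle underlying Proposition \ref{extendRatMap} and Lemma \ref{extendSurjRegHom}), this canonical isomorphism spreads out over all of $S''$, and the cocycle condition on the triple overlap can be checked at generic points, where it holds again by the universal property. With these canonical isomorphisms in hand, effective descent (using the relative ampleness furnished by the polarisation) and your fibrewise statements go through as written.
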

\begin{proof}
The proof follows analogously to \cite[Theorem 3.4]{achter}.
\end{proof}

\begin{definition}
Given a family of good lines $s:S \to F_{\textrm{good}}(X/S)$ on a family of cubic threefolds $X$ over a Noetherian base $S$, we can construct the \emph{Prym scheme} $(\Prym_S(X,s),\Xi)$ as in \cite[Theorem 3.4]{achter}, and this is also the intermediate Jacobian of $X$. For Noetherian $T$, given any Cartesian diagram
$$\begin{tikzcd}
X_T \arrow{r} \arrow{d} & X \arrow{d} \\
  T \arrow{r}{f} & S
\end{tikzcd}$$
the morphism $X_T \to T$ is also a family of cubic threefolds, and we have $F(X_T/T) \cong F(X/S)_T$. From $s$ we obtain a natural map $f^*s: T \to F(X_T/T)$ which is a section of the structure map $ F(X_T/T) \to T$ and therefore represents a family of lines on $X_T$. If $s$ is a good family of lines, then so is $f^*s$, since we can check this condition pointwise on the base.
\end{definition}

\begin{proposition}\label{baseChangePrym}
Let $X \to S$ be a family of cubic threefolds over a Noetherian base $S$. For Noetherian $T$, consider a Cartesian diagram 
$$\begin{tikzcd}
X_T \arrow{r} \arrow{d} & X \arrow{d} \\
  T \arrow{r}{f} & S.
\end{tikzcd}$$
Suppose that the Fano scheme of relative lines $F(X/S) \to S$ has a section $s:S \to F(X/S)$ which lands in the locus $F_{\textrm{good}}(X/S)$ of good lines. Then the Prym schemes $(\Prym_S(X,s),\Xi)$ and $(\Prym_T(X_T,f^*s),\Xi_T)$ exist and we have 
$$(\Prym_T(X_T,f^*s),\Xi_T) \cong (\Prym_S(X,s),\Xi)_T.$$
\end{proposition}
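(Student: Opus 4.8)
The plan is to follow the construction of the Prym scheme step by step and verify at each stage that it is compatible with the base change $f\colon T\to S$, so that the whole comparison reduces to an identification of open subschemes inside a common ambient Jacobian. First I would record that the relative double cover $\widetilde{C}\xrightarrow{\pi}C\to S$ attached to $(X,s)$ commutes with base change: the discriminant curve $C$ is cut out of $\mathbb{P}^2_S$ by a discriminant which is polynomial in the coefficients of the defining cubic form, and $\widetilde{C}$ is a closed subscheme of the relative Fano scheme $F(X/S)$, whose formation satisfies $F(X_T/T)\cong F(X/S)_T$. Hence the double cover attached to $(X_T,f^*s)$ is canonically $\widetilde{C}_T\xrightarrow{\pi_T}C_T\to T$, and it is again finite \'etale of degree $2$ because $f^*s$ is a good family of lines; in particular both Prym schemes exist. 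Relative Jacobians commute with base change, so $\Pic^0(\widetilde{C}/S)_T\cong\Pic^0(\widetilde{C}_T/T)$ and similarly for $C$; the norm and pullback maps are functorial, giving $(\pi_*)_T=(\pi_T)_*$ and $(\pi^*)_T=(\pi_T)^*$. As kernels are fibre products, this produces a canonical isomorphism $\Ker(\pi_*)_T\cong\Ker((\pi_T)_*)$ of group schemes over $T$, compatible with $\pi^*$ and with the zero sections.

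The crux is to pass from this to the identity components. Because $\pi_*\colon\Pic^0(\widetilde{C}/S)\to\Pic^0(C/S)$ is smooth by Proposition \ref{pushforwardsmooth}, its kernel $\Ker(\pi_*)$ — being the pullback of $\pi_*$ along the zero section $S\to\Pic^0(C/S)$ — is smooth over $S$, and by Proposition \ref{rosen} every geometric fibre has exactly two connected components. Thus the relative identity component $\Prym_S(\widetilde{C}/C)=\Ker(\pi_*)^0$ is an open subscheme of $\Ker(\pi_*)$. I would now compare the two candidate open subschemes of $\Ker((\pi_T)_*)$, namely the image of $(\Ker(\pi_*)^0)_T$ under the (base changed, hence open) immersion and the relative identity component $\Ker((\pi_T)_*)^0=\Prym_T(\widetilde{C}_T/C_T)$. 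It suffices to check they have the same underlying set, for which one compares fibres: over $t\in T$ with $s=f(t)$ the first restricts to $\Prym(\widetilde{C}_s/C_s)\times_{k(s)}k(t)$, while the second restricts to the identity component of $\Ker((\pi_*)_s)\times_{k(s)}k(t)$. These agree because $\Prym(\widetilde{C}_s/C_s)$ is an abelian variety, hence geometrically connected, so that forming the identity component of $\Ker((\pi_*)_s)$ commutes with the field extension $k(s)\to k(t)$. Since a subset of $\Ker((\pi_T)_*)$ is determined by its intersections with the fibres over points of $T$, the two open subsets coincide, and the two open subschemes are therefore equal; this gives the desired isomorphism of abelian schemes $(\Prym_S(\widetilde{C}/C))_T\cong\Prym_T(\widetilde{C}_T/C_T)$.

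It remains to match the principal polarisations. The polarisation $\Xi_S$ of Proposition \ref{polPrym} is obtained from the composite $\Lambda=(\widehat{\pi^*},\widehat{i})\circ\widetilde{\theta}_S\circ(\pi^*+i)$ by isolating its diagonal entry $D=\widetilde{\theta}_S\big|_{\Prym_S(\widetilde{C}/C)}$ and factoring $D=\Xi_S\circ[2]$. Every ingredient commutes with base change: the canonical polarisation $\widetilde{\theta}_S$ on the relative Jacobian, the maps $\pi^*$ and $i$, and the passage to dual abelian schemes; hence $\Lambda_T$ is exactly the analogous composite built over $T$, so $D_T$ equals the corresponding diagonal entry over $T$. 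Since $[2]$ is an epimorphism the factorisation through it is unique, whence $(\Xi_S)_T=\Xi_T$. Combining the three steps yields the claimed isomorphism $(\Prym_T(X_T,f^*s),\Xi_T)\cong(\Prym_S(X,s),\Xi)_T$ of principally polarised abelian schemes. I expect the main obstacle to be the middle step: passage to identity components does not commute with base change for general group schemes, and the argument genuinely relies both on the smoothness of $\Ker(\pi_*)$ over $S$ (so that the identity component is open) and on the geometric connectedness of the fibrewise Prym varieties (so that the two open subsets match).
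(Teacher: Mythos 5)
Your proposal is correct and follows essentially the same route as the paper's proof: base-change compatibility of the conic bundle, discriminant curve and \'etale double cover attached to $(X,s)$, then base-change compatibility of the kernel of the norm map, then of the polarisation via the construction in Proposition \ref{polPrym}. The one place you go beyond the paper is the identity-component step --- the paper simply asserts that ``the kernel commutes with base change,'' whereas you correctly observe that the Prym scheme is $\Ker(\pi_*)^0$ rather than $\Ker(\pi_*)$, and you justify, using smoothness of $\Ker(\pi_*)$ over $S$ (so the relative identity component is open) together with geometric connectedness of the fibrewise Prym varieties, that forming the relative identity component also commutes with base change; this fills in a detail the paper's own wording glosses over.
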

\begin{proof}
The existence part is clear. The families of good lines $s,f^*s$ provide us with families of conic bundles $p_S:X' \to \mathbb{P}^2_S$ and $p_T: X_T' \to  \mathbb{P}^2_T$ which fit into a Cartesian diagram 
$$\begin{tikzcd}
X_T' \arrow{r} \arrow{d} & X' \arrow{d} \\
  \mathbb{P}^2_T \arrow{r} & \mathbb{P}^2_S
\end{tikzcd}$$
and so it is clear that the discriminant curves $\Delta(p_S)$ and $\Delta(p_T)$ satisfy $\Delta(p_T) \cong \Delta(p_S)_T$. This yields a commutative diagram of \'etale double covers of families of curves
$$\begin{tikzcd}
\widetilde{\Delta(p_T)} \arrow{r} \arrow{d} & \Delta(p_T) \arrow{d} \\
  \widetilde{\Delta(p_S)} \arrow{r} & \Delta(p_S)
\end{tikzcd}$$
which is Cartesian. It follows that $\Prym_T(X_T,f^*s) \cong \Prym_S(X,s)_T$ as the kernel commutes with base change. As for the polarisations, the polarisation $\Lambda$ constructed in Proposition \ref{polPrym} is stable under base change, as the theta divisor on Jacobians of curves is also stable under base change.
\end{proof}

\begin{remark}
Proposition \ref{baseChangePrym} can be used to remove the normality condition when constructing intermediate Jacobians of families of non-Hermitian cubic threefolds as follows. It suffices to consider trivial families of non-Hermitian cubic threefolds $X \to S$ due to Corollary \ref{del2}. Then there is a morphism $f: S \to U$ so that $X \cong f^*\mathcal{X}$. Then we we observe that $f^*J(\mathcal{X}) \cong J(X)$ by Proposition \ref{baseChangePrym}. The Noetherian requirement could also be weakened, however we do not need these results in this paper.
\end{remark}

\subsection{Intermediate Jacobians of Hermitian cubic threefolds}
In this section we prove that intermediate Jacobians exist for Hermitian cubic threefolds over arbitrary fields. 
Let $X$ be a Hermitian cubic threefold over an algebraically closed field $k$.

\begin{proposition}\label{CH0}
$X$ is universally $\Ch_0$-trivial.
\end{proposition}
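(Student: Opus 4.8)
The plan is as follows. Since $k$ is algebraically closed it is in particular separably closed, so by Proposition~\ref{2bic} we may assume that $X$ is the Fermat cubic threefold $\mathbb{V}_+(x_0^3+x_1^3+x_2^3+x_3^3+x_4^3)$ over $k$. Recall that $X$ is universally $\Ch_0$-trivial precisely when $A_0(X_F)=0$ for every field extension $F/k$, where $A_0$ denotes zero-cycles of degree $0$ modulo rational equivalence; by the argument of Bloch--Srinivas this is equivalent to the existence of a decomposition of the diagonal $\Delta_X = [X\times x_0] + Z$ in $\Ch^3(X\times X)$ with $Z$ supported on $X\times D$ for a proper closed $D\subsetneq X$ and a closed point $x_0$, and in turn to the statement that over $K:=k(X)$ the class of the generic point equals that of a $k$-rational point in $\Ch_0(X_K)$. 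First I would carry out this reduction, so that it suffices to treat the single field $F=K$.

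The starting point is the theorem of Colliot-Th\'el\`ene \cite[Proposition 1.3]{colliot} that $2A_0(Y)=0$ for every cubic hypersurface $Y$ of positive dimension over any field; applied to $X_F$ this shows that $A_0(X_F)$ is annihilated by $2$ for all $F$, so the whole problem reduces to removing the residual $2$-primary torsion. My plan to do this is to produce a generically finite dominant map $f\colon Y \dashrightarrow X$ of \emph{odd} degree $d$ from a variety $Y$ that is itself universally $\Ch_0$-trivial (for instance rational): then $f_*f^* = d\cdot\mathrm{id}$ on $\Ch_0$ forces $d\cdot A_0(X_F)=0$, and since $\gcd(2,d)=1$ this combines with $2A_0(X_F)=0$ to give $A_0(X_F)=0$. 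The difficulty is that the standard unirational parametrisations of a cubic threefold — projection from a rational point, or projection from a line through the conic bundle of Proposition~\ref{formulaDelta} — all have degree $2$ and hence only reprove $2A_0=0$. One must therefore exploit the special geometry of the Fermat cubic in characteristic $2$: here I would invoke Lemma~\ref{NumberLines}, which shows that the family of lines through a general point has odd separable degree $3$, together with Cheng's detailed analysis of the line geometry of $q$-bic threefolds in \cite{cheng}, to extract the required odd-degree domination.

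The main obstacle is exactly this last step, namely killing the $2$-primary torsion that Colliot-Th\'el\`ene's result leaves undetermined; the reduction and the $\gcd$ argument are formal once an odd-degree parametrisation is in hand. An alternative route, in keeping with the lifting philosophy of this section, would be to lift $X$ to a smooth cubic threefold over a complete discrete valuation ring of mixed characteristic $(0,2)$ and to \emph{specialise} an integral decomposition of the diagonal from the characteristic-$0$ fibre down to $X$: after spreading the decomposition out over the base and applying Fulton's specialisation map for cycles, a decomposition of the diagonal on the generic fibre descends to one on the special fibre. This reduces the claim to universal $\Ch_0$-triviality of a smooth cubic threefold in characteristic $0$, for which the $2$-torsion must likewise be eliminated. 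Either way, the crux is that characteristic $2$ and the Hermitian structure of $X$ conspire to remove the factor of $2$; I expect the cleanest argument to run through Cheng's work on $q$-bics, and I would present that as the primary line, falling back on the specialisation argument if the odd-degree domination proves awkward to write down explicitly.
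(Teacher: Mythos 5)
Your reduction to the Fermat cubic and your use of Colliot-Th\'el\`ene's result that $2A_0(Y)=0$ for cubic hypersurfaces match the paper's setup, but the decisive step is missing, and you have identified it yourself: nothing in the proposal actually kills the $2$-primary part of $A_0(X_F)$. The paper closes this gap not by a gcd argument but by citing \cite[Theorem 2.8]{colliot}, which proves universal $\Ch_0$-triviality for (almost) diagonal cubic hypersurfaces of dimension at least $3$ directly, exploiting the diagonal shape of the equation; the only point the paper adds is that that proof never uses that the ground field is $\mathbb{C}$, so it applies verbatim in characteristic $2$. Without citing or reproving a statement of this kind, neither of your two routes terminates.

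Concretely: your primary route needs an odd-degree dominant map onto $X$ from a universally $\Ch_0$-trivial variety, and the ingredients you point to do not supply one. Lemma \ref{NumberLines} says the universal family of lines $\mathbb{L}\to X$ has degree $6$ with \emph{separable} degree $3$ or $6$; this concerns the inseparability of a degree-$6$ covering, not an odd-degree parametrisation, and in any case $\mathbb{L}$ is a $\mathbb{P}^1$-bundle over the Fano surface $F(X)$, which is far from universally $\Ch_0$-trivial (it has $5$-dimensional Albanese, so already $A_0(F(X))\neq 0$). The known unirational parametrisations of a cubic threefold have degree $2$, and Shioda-type parametrisations of Fermat hypersurfaces in characteristic $p=2$ have degree a power of $2$, so $f_*f^*$ arguments only reprove $2A_0=0$. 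Your fallback route via lifting and Fulton specialisation of a decomposition of the diagonal is logically sound in its direction (generic fibre to special fibre), but it requires as input that the characteristic-$0$ Fermat cubic threefold is universally $\Ch_0$-trivial — a statement of exactly the same difficulty, which you leave open, and which for general cubic threefolds is a genuinely hard question (by Voisin it is equivalent to algebraicity of the minimal class on the intermediate Jacobian); for the diagonal case it is again precisely \cite[Theorem 2.8]{colliot} that provides it. So the proposal as written has a genuine gap at its crux.
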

\begin{proof}
Since $k$ is algebraically closed, we may assume that $X$ is the Fermat cubic threefold by Proposition \ref{nogoodline}. Then this follows from \cite[Theorem 2.8]{colliot}, whose proof does not rely on the ground field being $\mathbb{C}$.
\end{proof}

\begin{lemma}\label{specialIJT1}
Let $X$ be the Fermat cubic threefold over an algebraically closed field $k$ of characteristic $2$. Then the intermediate Jacobian $(\Ab^2_X,\Theta)$ of $X$ exists. 
\end{lemma}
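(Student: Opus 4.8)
The plan is to construct the pair $(\Ab^2_X,\Theta)$ by lifting the Fermat cubic to mixed characteristic and transporting the distinguished polarisation from the generic fibre down to $X$. Let $R = W(k)$ be the ring of Witt vectors, a complete discrete valuation ring with residue field $k$ of characteristic $2$ and fraction field $K$ of characteristic $0$, and set $S = \Spec(R)$. Let $\mathcal{X} \subset \mathbb{P}^4_S$ be the Fermat cubic $\mathbb{V}_+(x_0^3 + \cdots + x_4^3)$ over $S$. Since the residue characteristic is $2$ and the generic characteristic is $0$, no fibre has characteristic $3$, and so every geometric fibre is a smooth cubic threefold; thus $\mathcal{X} \to S$ is a smooth family of cubic threefolds over the regular integral Noetherian base $S$, with special fibre $\mathcal{X}_k \cong X$. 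By Proposition \ref{existenceAR2} the algebraic representative $\Ab^2_{\mathcal{X}/S}$ exists as an abelian scheme over $S$; write $A := \Ab^2_{\mathcal{X}/S}$.

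First I would handle the generic fibre. The field $K$ has characteristic $0$, so $\mathcal{X}_K$ is non-Hermitian and acquires a good line over $\overline{K}$; hence by Corollary \ref{nHIJ}, together with Lemma \ref{prereq1} which supplies Proposition \ref{prereq} in characteristic $0$, the intermediate Jacobian $(\Ab^2_{\mathcal{X}_K},\Theta_K)$ exists with $\Theta_K$ a distinguished principal polarisation. Since $\Spec(K) \to S$ is the inverse limit of the open subschemes of $S$, the base-change compatibility of Proposition \ref{baseChangeCat} (sharpened to an isomorphism of algebraic representatives as in \cite[Theorem 5.3, Theorem 5.10]{achter2022functorial}) identifies $A_K$ with $\Ab^2_{\mathcal{X}_K}$. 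Thus $\Theta_K$ is a distinguished polarisation on the generic fibre of the abelian scheme $A$.

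Next I would spread the polarisation out over $S$. The total space $A$ is regular, being smooth over the regular base $S$, and its dual $A^{\vee}$ is again an abelian scheme over $S$. Hence the symmetric homomorphism $\lambda_{\Theta_K} : A_K \to (A_K)^{\vee}$ extends, by Proposition \ref{extendRatMap}, to a homomorphism $\lambda : A \to A^{\vee}$ over $S$. This $\lambda$ is symmetric by continuity and is an isomorphism on the generic fibre; as the degree of an isogeny is locally constant in a proper flat family, $\lambda_k$ is also an isomorphism, and over a discrete valuation ring the specialisation of a polarisation is a polarisation, so $\lambda$ is a principal polarisation over $S$ induced by a relative effective divisor $\Theta$ extending $\Theta_K$.

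Finally I would identify the special fibre. Using the universal $\Ch_0$-triviality of $X$ from Proposition \ref{CH0}, the specialisation theorem \cite[Theorem 5.3]{achter2022functorial} (together with \cite{achter2020decomposition}) identifies $A_k$ with the algebraic representative $\Ab^2_X$, so that $\Theta_k := \Theta|_{A_k}$ is a principal polarisation on $\Ab^2_X$. It then remains to check that $\Theta_k$ is distinguished in the sense of Definition \ref{IJfield}; here Proposition \ref{prereq} holds for $X$ by the argument of Lemma \ref{prereq1} with Proposition \ref{CH0} replacing the characteristic-$0$ input, so the isomorphism $\alpha$ of Proposition \ref{prereq} exists for $X$. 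The distinguished condition compares the first $\ell$-adic Chern class of $\Theta_k$ with a cup product in $\HH^3$ for a prime $\ell \neq 2$, and the Chern class, the Bloch map, the isomorphism $\alpha$, and the cup product are all compatible with the smooth proper specialisation isomorphism $\HH^3(\mathcal{X}_{\overline{K}},\mathbb{Z}_{\ell}(2)) \cong \HH^3(X_{\overline{k}},\mathbb{Z}_{\ell}(2))$; since $\Theta_K$ is distinguished, so is $\Theta_k$, whence $(\Ab^2_X,\Theta_k)$ is the intermediate Jacobian of $X$. The hard part will be exactly this last step: matching the $\ell$-adic data of the algebraic representative on the two fibres through smooth proper base change and verifying that distinguishedness is preserved under specialisation, which is where the $\Ch_0$-triviality hypothesis and the specialisation of algebraic representatives are indispensable.
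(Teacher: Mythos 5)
Your overall strategy is exactly the paper's: lift the Fermat cubic to the Witt ring $W(k)$, use the characteristic-zero (Prym/good line) theory to produce the intermediate Jacobian of the generic fibre, and then specialize, with universal $\Ch_0$-triviality of Fermat hypersurfaces controlling the specialization. The paper compresses the middle of your argument -- the spreading out of the principally polarised abelian scheme over $W(k)$ and the identification of its special fibre with $(\Ab^2_X,\Xi_s)$ -- into a single citation of \cite[Proposition 5.3]{achter2020decomposition}, and obtains distinguishedness of the specialized polarisation from the discussion on page 57 of that reference; your N\'eron-extension argument for the polarisation and your smooth-proper-base-change argument for distinguishedness are in effect re-derivations of that cited material, and those parts are sound.

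The one step that does not work as written is your justification that Proposition \ref{prereq} holds for $X$, which you need before ``distinguished'' in Definition \ref{IJfield} even makes sense for $\Theta_k$. You assert that it follows ``by the argument of Lemma \ref{prereq1} with Proposition \ref{CH0} replacing the characteristic-$0$ input''. But the argument of Lemma \ref{prereq1} has two parts, and only the second (the Bloch-map isomorphism via a decomposition of the diagonal) transfers once one has $\Ch_0$-triviality. The first part -- that $\phi: \A^2(X_{k^{sep}}) \to \Ab^2_X(k^{sep})$ is an isomorphism -- is obtained there from Proposition \ref{algrep}, which requires a good line, precisely what the characteristic-$2$ Fermat cubic lacks; so the appeal to that lemma's argument fails at its first step. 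For the Hermitian case this isomorphism must come from a different source: in the paper it is the universal-cycle result \cite[Proposition 4.3]{achter2020decomposition} for the Fermat cubic over $\overline{\mathbb{F}_2}$ (this is how Proposition \ref{specialIJT} and the eventual proof of Proposition \ref{prereq} proceed), or equivalently it is part of what \cite[Proposition 5.3]{achter2020decomposition} and its page-57 discussion deliver under universal $\Ch_0$-triviality. Your proof goes through once this step is rerouted through that input, which you in any case cite elsewhere; but as stated, the parenthetical justification is a genuine gap, and it is load-bearing, since without the isomorphism $\alpha$ of Proposition \ref{prereq} the conclusion of the lemma is not even formulated.
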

\begin{proof}
We may lift $X$ to a cubic threefold $\widetilde{X}$ over the witt ring $W(k)$, which is a DVR. Let $\eta$ be the genetic point of $\Spec(W(k))$, and let $s$ be the special point. Consider the intermediate Jacobian of the generic fiber $(\Ab^2_{\widetilde{X}_{\eta}},\Xi_{\eta})$. Since $\widetilde{X}_{\eta}$ is also a Fermat hypersurface, it is universally $\Ch_0$-trivial by \cite[Theorem 2.8]{colliot}. Therefore we can apply \cite[Proposition 5.3]{achter2020decomposition} to spread out to a principally polarised abelian scheme $(\Ab^2_{\widetilde{X}/W(k)},\Xi)$ over $W(k)$ so that
$$(\Ab^2_{\widetilde{X}/W(k)},\Xi)_s \cong (\Ab^2_X,\Xi_s).$$
Since $\Xi_{\eta}$ can be represented by an effective divisor, so can $\Xi_s$, being the specialization of $\Xi$. Moreover, Proposition \ref{CH0} tells us that $X$ is universally $\Ch_0$-trivial, and so from the discussion in \cite[Page 57]{achter2020decomposition}, $\Xi_s$ is distinguished.
\end{proof}

\begin{definition}
Let $f:\mathcal{A}^2_{X/k}\to A$ be a regular homomorphism for a cubic threefold over a field $k$. A cycle $Z \in A^2(X \times A)$ is called \emph{universal} if the induced algebraic map $Z_* : A(k) \to A^2(X)$ is such that $f_k \circ Z_* = 1_A$. 
\end{definition}

\begin{lemma}\label{universalCycle}
Let $X$ be a cubic threefold over a separably closed field $k$. Let $f:\mathcal{A}^2_{X/k}\to A$ be a regular homomorphism with a universal cycle $Z$. If $\dim(A) = \dim(\Ab^2_X)$, then $A$ is an algebraic representative.
\end{lemma}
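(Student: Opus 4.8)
The plan is to compare the given regular homomorphism $f$ with the universal one supplied by the algebraic representative. Since $X$ is a cubic threefold over a field, the algebraic representative $\phi : \mathcal{A}^2_{X/k} \to \Ab^2_X$ exists by Proposition \ref{existenceAR2}, and its defining universal property extends to our separably closed base field by Remark \ref{defsepclosed}. By that universal property, $f$ factors uniquely as $f = h \circ \phi$ for a homomorphism of abelian varieties $h : \Ab^2_X \to A$. It therefore suffices to show that $h$ is an isomorphism: once this is established, $f = h \circ \phi$ inherits the universal property of $\phi$, so $(A,f)$ is an algebraic representative.

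First I would extract a section of $h$ from the universal cycle $Z$. The induced map $Z_* : A(k) \to A^2(X)$ is algebraic, and composing it with the regular homomorphism $\phi$ gives, by the very definition of regularity, a map $A(k) \to \Ab^2_X(k)$ induced by a morphism of schemes $g : A \to \Ab^2_X$; after translating by $-g(0)$ and invoking rigidity we may assume $g$ is a homomorphism. The defining property $f_k \circ Z_* = 1_A$ of a universal cycle, combined with $f = h \circ \phi$, gives $h \circ g = 1_A$. Hence $g$ is a section of $h$; in particular $g$ is a (split) monomorphism of abelian varieties, so $\Ker(g) = 0$ as a group scheme.

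Next I would use the dimension hypothesis to promote $g$ to an isomorphism. Since $g$ is a proper monomorphism it is a closed immersion, so $\im(g)$ is an abelian subvariety of $\Ab^2_X$ of dimension $\dim A = \dim \Ab^2_X$; as $\Ab^2_X$ is connected of this dimension, $\im(g) = \Ab^2_X$ and $g$ is surjective. At this point Lemma \ref{monoSep} applies directly: $g$ admits the left inverse $h$ and is surjective on $k^{sep}$-points (here $k^{sep}=k$, as $k$ is separably closed), whence $g$ is an isomorphism with inverse $h$. Therefore $h$ is an isomorphism and $(A,f)$ is an algebraic representative, as desired.

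The step I expect to be the genuine obstacle is the passage from the section $g$ to an isomorphism in positive characteristic, since a homomorphism of abelian varieties that is bijective on $k$-points (a Frobenius, say) need not be an isomorphism over an imperfect separably closed field. Two features rescue the argument and are exactly what the hypotheses provide: the equality $\dim A = \dim \Ab^2_X$ forces the section $g$ to be surjective rather than a closed immersion onto a \emph{proper} abelian subvariety, and the fact that $g$ is a \emph{monomorphism} (not merely bijective) is what distinguishes it from an inseparable isogeny and lets Lemma \ref{monoSep} upgrade "surjective with a one-sided inverse" to "isomorphism" while sidestepping infinitesimal kernels. For this reason I would phrase the surjectivity and the final step scheme-theoretically rather than on $k$-points.
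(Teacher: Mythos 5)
Your proof is correct and follows essentially the same route as the paper's: factor $f$ through the algebraic representative via the universal property, use the universal cycle to produce a section, and use the dimension equality to conclude that the resulting split monomorphism $A \to \Ab^2_X$ is an isomorphism, whence the comparison map is too. The paper states this more tersely (``an injection of abelian varieties of the same dimension, hence an isomorphism''); your added care with rigidity, the closed-immersion argument, and Lemma \ref{monoSep} just fills in the details the paper leaves implicit.
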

\begin{proof}
From the universal propery of algebraic representatives we obtain a commutative diagram
$$\begin{tikzcd}
& A^2(X) \arrow{r}{f_k} \arrow{d}{\psi}  & A(k) \\
   A(k)  \arrow{ru}{Z_*} \arrow{r} & \Ab^2_{X}(k) \arrow{ur}{g} &
\end{tikzcd}
$$
where it suffices to show that $g$ is an isomorphism. We obtain $g \circ \psi \circ Z_* = 1_A$ by definition of universal cycles. This means than $\psi \circ Z_*$ is an injection of abelian varieties of the same dimension, hence an isomorphism with inverse $g$.
\end{proof}

\begin{lemma}\label{denseFieldExt}
Let $X$ be a variety which is smooth over a separably closed field $k$. Let $K/k$ be an arbitrary field extension. Then the points $X(k) \subset X_K(K)$ are Zariski dense.
\end{lemma}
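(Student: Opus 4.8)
The plan is to reduce the statement to the well-known fact that a smooth variety over a separably closed field carries a Zariski dense set of rational points, and then to propagate this density across the projection $X_K \to X$. Since density can be checked on the members of an affine open cover, I would first reduce to the case $X = \Spec(A)$ with $A$ a finitely generated integral $k$-domain smooth over $k$, so that $X_K = \Spec(A \otimes_k K)$; if $X(k)$ is dense in each $(U_i)_K$ for an affine cover $\{U_i\}$ of $X$, then it is dense in $X_K$, since any nonempty open of $X_K$ meets some $(U_i)_K$.

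For the density of $X(k)$ in $X$, I would take any nonempty open $U \subseteq X$ and, by the local structure theory of smooth morphisms, shrink $U$ so that there is an \'etale morphism $g : U \to \mathbb{A}^n_k$. As $g$ is flat and locally of finite presentation its image is open, and since $k$ is separably closed (hence infinite) the set $\mathbb{A}^n_k(k)$ is dense, so $g(U)$ contains a $k$-point $p$. The fibre $g^{-1}(p)$ is nonempty and \'etale over $k(p) = k$, hence a finite disjoint union of spectra of finite separable extensions of $k$; because $k$ is separably closed each of these is $\Spec(k)$, producing a $k$-point of $U$. Thus $X(k)$ meets every nonempty open and is dense. (Note this argument goes through a rational point $p$ of affine space precisely in order to avoid the fact that an arbitrary closed point of a smooth variety may have inseparable residue field.)

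To transport this to $X_K$, I would show that the closure of the image of $X(k)$ is all of $X_K$. Writing a $k$-point $x$ as a maximal ideal $\mathfrak{m}_x \subset A$ with $A/\mathfrak{m}_x = k$, the induced $K$-point $x_K$ is a maximal ideal $\mathfrak{M}_x \subset A \otimes_k K$, and the closure of $\{x_K : x \in X(k)\}$ equals $V\!\left(\bigcap_x \mathfrak{M}_x\right)$; it therefore suffices to prove $\bigcap_x \mathfrak{M}_x = 0$. Given $0 \neq f \in A \otimes_k K$, I would write $f = \sum_{i=1}^n a_i \otimes c_i$ with the $c_i \in K$ linearly independent over $k$. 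The value of $f$ at $x_K$ is $\sum_i a_i(x)\,c_i$ with $a_i(x) \in k$, which vanishes exactly when every $a_i$ vanishes at $x$. If $f \in \mathfrak{M}_x$ for all $x \in X(k)$, then each $a_i$ vanishes on the dense set $X(k)$, forcing $a_i \in \bigcap_x \mathfrak{m}_x = (0)$ since $A$ is a domain, hence reduced; so $f = 0$, a contradiction. This yields density in each affine piece, and hence in $X_K$.

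The main subtlety I anticipate is that $K/k$ need not be finitely generated, so $X_K \to X$ is in general not of finite type and one cannot simply invoke openness of the projection to pull dense rational points back. The linear-independence computation in the last paragraph sidesteps this entirely, reducing the whole statement to two clean inputs: the separable-closedness of $k$ (used to manufacture the $k$-points of $X$) and the reducedness of $A$ (used to conclude $f = 0$).
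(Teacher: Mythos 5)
Your proof is correct, but it takes a genuinely different route from the paper's. The paper's argument is much terser: it asserts that $X(k)$ is dense in $X$ (as a consequence of smoothness over a separably closed field), then considers the closure $\overline{X(k)}$ inside $X_K$ and claims $\dim(\overline{X(k)}) = \dim(X) = \dim(X_K)$ by ``a simple induction on $\dim(X)$,'' from which density follows; implicitly this last step also uses that $X_K$ is irreducible of finite type over $K$, so that a closed subset of full dimension must be everything (irreducibility of $X_K$ does hold here, since an irreducible variety over a separably closed field stays irreducible under any field extension, but the paper does not comment on this). You instead (i) actually prove the density of $X(k)$ in $X$ via the \'etale-local structure of smooth morphisms, density of $\mathbb{A}^n_k(k)$ for $k$ infinite, and the fact that a scheme finite \'etale over a separably closed field is a disjoint union of copies of $\Spec(k)$; and (ii) replace the dimension-theoretic step by the algebraic computation: writing $0 \neq f = \sum_i a_i \otimes c_i \in A \otimes_k K$ with the $c_i$ linearly independent over $k$, vanishing of $f$ at every induced $K$-point forces each $a_i$ to vanish on the dense set $X(k)$, hence $a_i = 0$ by reducedness of $A$. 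Your version buys several things: it sidesteps dimension theory and the irreducibility issue entirely (reducedness of $A$ suffices, so reducible smooth varieties are handled uniformly), it is insensitive to $K/k$ being non-finitely generated, and it supplies the density claim that the paper leaves as an assertion. The cost is only length; both arguments are sound.
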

\begin{proof}
It follows from the assumptions that $X(k)$ is Zariski dense in $X$. Consider the closure $\overline{X(k)}$ in $X_K$. We claim that $\dim(\overline{X(k)}) = \dim(X) = \dim(X_K)$, from which the lemma follows immediately. This follows from a simple induction on $\dim(X)$.
\end{proof}

\begin{proposition}\label{specialIJT}
Let $X$ be the Fermat cubic over a separably closed field $k$ of characteristic $2$. Then the intermediate Jacobian $(\Ab^2_X,\Theta)$ of $X$ exists. 
\end{proposition}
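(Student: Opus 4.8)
The plan is to descend the intermediate Jacobian from the algebraic closure $\bar k$ of $k$ down to $k$. Since $k$ is separably closed but possibly imperfect, the extension $\bar k/k$ is purely inseparable; in particular it is a primary extension and $\Spec\bar k\to\Spec k$ is a universal homeomorphism. This is exactly why Galois descent (Proposition \ref{GalDescentIJ}) is unavailable, and the entire difficulty is to carry the distinguished polarisation across this inseparable extension. By Lemma \ref{specialIJT1} the intermediate Jacobian $(\Ab^2_{X_{\bar k}},\Theta_{\bar k})$ exists and has dimension $5$, with $\Theta_{\bar k}$ an effective, distinguished principal polarisation. By Proposition \ref{existenceAR2} the algebraic representative $f:\mathcal{A}^2_{X/k}\to\Ab^2_{X_k}$ already exists over $k$, so the only missing ingredient is a distinguished polarisation on $\Ab^2_{X_k}$.

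First I would prove that $(\Ab^2_{X_k})_{\bar k}\cong\Ab^2_{X_{\bar k}}$. Because $X$ is the Fermat cubic threefold it is defined over $\mathbb{F}_2\subseteq k$, and Proposition \ref{CH0} (through \cite{colliot}) provides a decomposition of the diagonal which is already defined over $k$; from it one extracts a universal cycle $Z\in\A^2(X\times\Ab^2_{X_k})$ for $f$, so that $f\circ Z_\ast=\mathrm{id}$. Base-changing to $\bar k$ yields a regular homomorphism $f_{\bar k}:\mathcal{A}^2_{X_{\bar k}/\bar k}\to(\Ab^2_{X_k})_{\bar k}$ and a cycle $Z_{\bar k}$; the identity $f_{\bar k}\circ(Z_{\bar k})_\ast=\mathrm{id}$ is known on the set of $k$-points of $(\Ab^2_{X_k})_{\bar k}$, which is Zariski dense by Lemma \ref{denseFieldExt}, hence it holds identically, so $Z_{\bar k}$ is a universal cycle over $\bar k$. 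The universal property of $\Ab^2_{X_{\bar k}}$ then gives a surjection $\Ab^2_{X_{\bar k}}\twoheadrightarrow(\Ab^2_{X_k})_{\bar k}$, and the section coming from $Z_{\bar k}$ forces both abelian varieties to have dimension $5$, after which Lemma \ref{universalCycle} identifies $(\Ab^2_{X_k})_{\bar k}$ with $\Ab^2_{X_{\bar k}}$. This is the heart of the argument, and I expect the dimension bookkeeping in the universal-cycle step to be the most delicate point, since it is precisely what cannot be read off from Galois invariance.

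Via this comparison isomorphism, $\Theta_{\bar k}$ becomes a principal polarisation $\lambda_{\Theta_{\bar k}}:(\Ab^2_{X_k})_{\bar k}\to((\Ab^2_{X_k})_{\bar k})^{\vee}$. Because $\bar k/k$ is primary, Chow's theorem on homomorphisms of abelian varieties gives an isomorphism $\Hom(\Ab^2_{X_k},(\Ab^2_{X_k})^{\vee})\xrightarrow{\sim}\Hom((\Ab^2_{X_k})_{\bar k},((\Ab^2_{X_k})_{\bar k})^{\vee})$, so $\lambda_{\Theta_{\bar k}}$ descends uniquely to $\lambda:\Ab^2_{X_k}\to(\Ab^2_{X_k})^{\vee}$ over $k$; being a principal polarisation is then checked after the faithfully flat base change to $\bar k$. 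The associated class $\Theta_k\in\NS(\Ab^2_{X_k})$ is effective, since the effective divisor $\Theta_{\bar k}$ is determined by its dense set of $k$-points (Lemma \ref{denseFieldExt}) and hence descends, or alternatively by appeal to Lemma \ref{dist}.

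Finally I would check that $\Theta_k$ is distinguished in the sense of Definition \ref{IJfield}. As $\Spec\bar k\to\Spec k$ is a universal homeomorphism it induces isomorphisms on $\ell$-adic étale cohomology for $\ell\neq 2$, so the first $\ell$-adic Chern class, the cup-product pairing on $\HH^3(X,\mathbb{Z}_{\ell}(2))$, and the comparison isomorphism $\alpha$ of Proposition \ref{prereq} all match under base change to $\bar k$, exactly as in the functoriality diagrams of Lemma \ref{bigCommDiagram} and Lemma \ref{extendPol}. Since $\Theta_{\bar k}$ is distinguished, transporting back shows that $cl_1(-\Theta_k)$ corresponds to the cup-product pairing, so $\Theta_k$ is distinguished; this also confirms that Proposition \ref{prereq} holds for $X$ over $k$. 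Hence $(\Ab^2_{X_k},\Theta_k)$ is the intermediate Jacobian of $X$.
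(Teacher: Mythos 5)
Your proposal inverts the architecture of the paper's proof, and the inversion is exactly where it breaks. The paper exploits the fact that the Fermat cubic is defined over $\overline{\mathbb{F}_2}$, which is an \emph{algebraically closed} subfield of $k$: all the heavy cycle-theoretic input --- the universal cycle $Z$ and the fact that the regular homomorphism $\A^2(X) \to \Ab^2_X(\overline{\mathbb{F}_2})$ is an isomorphism induced by a cycle $\widehat{Z}$ --- is taken from \cite[Proposition 4.3]{achter2020decomposition} \emph{over} $\overline{\mathbb{F}_2}$, and only explicit cycle-induced maps are then base changed up to $k$. You instead try to run this machinery over $k$ itself (and over $\bar k$). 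Your first gap is the claim that from the decomposition of the diagonal ``one extracts a universal cycle $Z \in \A^2(X \times \Ab^2_{X_k})$ for $f$'' over $k$: that extraction is precisely \cite[Proposition 4.3]{achter2020decomposition}, which is invoked in the paper only over algebraically closed fields, and note that Proposition \ref{CH0} itself is stated over an algebraically closed field. Over a separably closed, possibly imperfect, field this step is not available off the shelf; avoiding it is the entire reason the paper routes the argument through $\overline{\mathbb{F}_2}$ and then uses density of $\overline{\mathbb{F}_2}$-points (Lemma \ref{denseFieldExt}).

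Your second gap is the assertion that ``base-changing to $\bar k$ yields a regular homomorphism $f_{\bar k}$.'' Regular homomorphisms are not known to remain regular --- or even to be well defined on $\A^2(X_{\bar k})$ --- after a purely inseparable base change; the paper's base-change results (Proposition \ref{baseChangeCat}, Proposition \ref{basechange}, \cite[Theorem 5.10]{achter2022functorial}) apply to smooth morphisms and separable extensions only, and $\bar k/k$ is inseparable here. The paper is engineered to sidestep exactly this: it never base changes the abstract representative, only the cycles $Z$ and $\widehat{Z}$, and then \emph{proves} that $((\widehat{Z})_k)_*$ is regular by factoring it through the Abel--Jacobi map of $\Pic^0$ --- a step absent from your argument. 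Without regularity of $f_{\bar k}$, your density argument (``holds on the dense set of $k$-points, hence identically'') and your appeal to the universal property of $\Ab^2_{X_{\bar k}}$ both collapse, so the comparison $(\Ab^2_{X_k})_{\bar k} \cong \Ab^2_{X_{\bar k}}$ is not established. Incidentally, the dimension bookkeeping you single out as the delicate point is in fact the easy part (it follows from the injection $T_\ell \Ab^2_{X_k} \hookrightarrow \HH^3(X_k,\mathbb{Z}_\ell(2))$ of \cite[Theorem 8.1]{achter2022functorial}). Your two genuinely novel ingredients --- descending the polarisation via Chow's theorem for the primary extension $\bar k/k$, and checking distinguishedness via invariance of \'etale cohomology under universal homeomorphisms --- are sound in themselves, but they sit on top of the gapped comparison, and in the paper's route they are unnecessary, since the polarisation already exists over $\overline{\mathbb{F}_2}$ and Lemma \ref{extendPol} transports it up to $k$.
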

\begin{proof}
Since $k$ is separably closed, it contains the field $\overline{\mathbb{F}_2}$, which is algebraically closed. The Fermat cubic $X$ is defined over $\overline{\mathbb{F}_2}$. Thus by Lemma \ref{specialIJT1}, the intermediate Jacobian $(\Ab^2_{X/\overline{\mathbb{F}_2}},\Theta)$ exists. By \cite[Proposition 4.3]{achter2020decomposition}, there is a universal cycle $Z \in A^2(\Ab^2_{X/\overline{\mathbb{F}_2}} \times X)$. Let $f: \mathcal{A}^2_{X} \to \Ab^2_X$ be the regular homomorphism. Then by \cite[Proposition 4.3]{achter2020decomposition} we also get that 
$$f_{\overline{\mathbb{F}_2}}: A^2(X) \to \Ab^2_X(k)$$
is an isomorphism, and is given by the pushforward $\widehat{Z}_*$ for some cycle $\widehat{Z} \in A^2((\Ab^2_{X/\overline{\mathbb{F}_2}})^{\vee} \times X)$. Now we base change to $k$ to obtain homomorphisms
$$(\Ab^2_X)_k(k) \xrightarrow{(Z_k)_*} A^2(X_k) \xrightarrow{((\widehat{Z})_k)_*} (\Ab^2_X)_k(k).$$
We claim that $((\widehat{Z})_k)_*$ is regular. Let $T$ be a smooth $k$-scheme and let $g_W: T(k) \to A^2(X_k)$ be an algebraic mapping. We must show that $((\widehat{Z})_k)_* \circ g_W$ arises from a morphism of schemes. We can factor $((\widehat{Z})_k)_*$ as
$$A^2(X_k) \xrightarrow{((\widehat{Z})_k)^*} A^1((\Ab^2_X)_k^{\vee}) \xrightarrow{AJ} (\Ab^2_X)_k(k)$$
where $AJ$ is the Abel-Jacobi map, or more precisely the algebraic representative, noting that $\Pic^0((\Ab^2_X)_k^{\vee}) \cong (\Ab^2_X)_k$. As a result it would suffice to show that $((\widehat{Z})_k)^* \circ g_W$ is algebraic. This follows from a simple intersection theoretic argument which shows that 
$$((\widehat{Z})_k)^* \circ g_W = g_{(\pi_2)_*(\pi_3^*W . \pi_1^*\widehat{Z})}$$
is indeed algebraic, given by the cycle $(\pi_2)_*(\pi_3^*W . \pi_1^*\widehat{Z})$. In particular, $((\widehat{Z})_k)_* \circ (Z_k)_*$ is a homomorphism which agrees with $1_{(\Ab^2_X)_k}$ on the subset $\Ab^2_X(\overline{\mathbb{F}_2}) \subset (\Ab^2_X)_k(k)$, which is dense by Lemma \ref{denseFieldExt}. Therefore this homomorphism is the identity and $Z_k$ is a universal cycle. It follows by Lemma \ref{universalCycle} that $(\Ab^2_X)_k \cong \Ab^2_{X_k/k}$. Now we have to construct the distinguished polarisation. By Lemma \ref{extendPol}, we get that the divisor $\Theta_k$ is distinguished on $\Ab^2_{X_k}$, and so $(\Ab^2_{X_k},\Theta_k)$ is the intermediate Jacobian we seek.
\end{proof}

\begin{proof}[Proof of Proposition \ref{prereq}]
For Cubic threefolds over fields of characteristic $0$ this is Lemma \ref{prereq1}. Otherwise, we may lift $X$ to a cubic threefold $\widetilde{X}$ over the Cohen-Witt ring $W_C(k)$, which has generic point $\eta$ and special point $s$. There is a commutative diagram

 $$\begin{tikzcd}
T_{\ell}A^2(\widetilde{X}_{\overline{\eta}}) \arrow{r}{\lambda^2} \arrow{d} & H^3(\widetilde{X}_{\overline{\eta}},\mathbb{Z}_{\ell}(2)) \arrow{d} \\
 T_{\ell}A^2(X_{k^{sep}}) \arrow{r}{\lambda^2} &  H^3(X_{k^{sep}},\mathbb{Z}_{\ell}(2))
\end{tikzcd}$$
with the right vertical map being an isomorphism, which implies that the bottom horizontal map is surjective. It is also injective by \cite[Theorem 8.1]{achter2022functorial}.

If $X$ is non-Hermitian, then $\phi$ is an isomorphism by Proposition \ref{algrep}. Otherwise, we may assume that $k = k^{sep}$ and that $X$ is the Fermat cubic threefold by Proposition \ref{nogoodline}. We have $\overline{\mathbb{F}_2} \subset k^{sep}$, and by \cite[Proposition 4.3]{achter2020decomposition}, $\phi$ is an isomorphism induced by a cycle $\widehat{Z}$ for the Fermat cubic over $\overline{\mathbb{F}_2}$. From the proof of Proposition \ref{specialIJT} it is evident that $\widehat{Z}_{k^{sep}} = \widehat{Z_{k^{sep}}}$ induces the isomorphism $\phi$ which we seek, in general. This establishes the result in all cases.
\end{proof}

\begin{proposition}\label{HIJ}
Let $X$ be a Hermitian threefold over a field $k$. Then the intermediate Jacobian $(\Ab^2_X,\Theta)$ exists.
\end{proposition}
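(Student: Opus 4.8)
The plan is to establish the result over the separable closure $k^{sep}$ and then descend, exactly paralleling the non-Hermitian strategy of Corollary \ref{nHIJ} but with the Prym-variety input replaced by the lifting construction for the Fermat cubic. Since a Hermitian threefold is defined only over a field of characteristic $2$, we automatically have $\chr(k)=2$. First I would base change to $k^{sep}$: the scheme $X_{k^{sep}}$ is a smooth Hermitian cubic threefold over a separably closed field of characteristic $2$, so by Proposition \ref{2bic} it is isomorphic to the Fermat cubic threefold. By Proposition \ref{specialIJT} the intermediate Jacobian $(\Ab^2_{X_{k^{sep}}},\Theta_{k^{sep}})$ therefore exists.

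The second step is Galois descent. Since $k^{sep}/k$ is a Galois extension and the intermediate Jacobian over $k^{sep}$ exists, Proposition \ref{GalDescentIJ} will produce the intermediate Jacobian $(\Ab^2_X,\Theta)$ over $k$, provided its theta divisor is representable by an effective divisor. To verify this last hypothesis I would trace through the construction: in Lemma \ref{specialIJT1} the class $\Xi_s$ over $\overline{\mathbb{F}_2}$ is the specialization of the effective divisor $\Xi_\eta$, hence effective; and in Proposition \ref{specialIJT} the theta divisor over $k^{sep}$ is obtained from this one by the field base change of Lemma \ref{extendPol}, under which effectivity of divisors is preserved. Thus $\Theta_{k^{sep}}$ is effective, Proposition \ref{GalDescentIJ} applies, and we obtain a distinguished class $\Theta$ on $\Ab^2_X$, hence the intermediate Jacobian of $X$.

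The main obstacle I anticipate is bookkeeping around the descent datum rather than any new geometric content. Specifically, I must ensure that Proposition \ref{GalDescentIJ} genuinely applies along the infinite (pro-finite) extension $k^{sep}/k$; this is fine because the results it invokes, namely the construction of $\Ab^2_X$ by Galois descent and the descent of the polarisation, are formulated for the absolute Galois action. A secondary point to check is that the isomorphism $X_{k^{sep}}\cong F_{k^{sep}}$ with the Fermat cubic is used only to guarantee the \emph{existence} of $(\Ab^2_{X_{k^{sep}}},\Theta_{k^{sep}})$ as an intrinsic object of $X_{k^{sep}}$; the Galois action employed in the descent is the one coming from $X/k$, not any action on the Fermat model, so no compatibility between this isomorphism and the Galois action is needed.
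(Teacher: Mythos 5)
Your proposal is correct and follows essentially the same route as the paper: base change to $k^{sep}$, identify $X_{k^{sep}}$ with the Fermat cubic via Proposition \ref{2bic}, invoke Proposition \ref{specialIJT} for existence over $k^{sep}$, and descend via Proposition \ref{GalDescentIJ}. Your extra step of tracing effectivity of the theta divisor back through Lemma \ref{specialIJT1} is a hypothesis the paper's proof leaves implicit, so including it only strengthens the argument.
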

\begin{proof}
The base change $X_{k^{sep}}$ is isomorphic to the Fermat cubic by Proposition \ref{2bic}. Thus its intermediate Jacobian $J(X_{k^{sep}})$ exists by Proposition \ref{specialIJT}. Then by Proposition \ref{GalDescentIJ}, the intermediate Jacobian of $X$ also exists.
\end{proof}

\subsection{Comparison with the Albanese variety of the Fano surface of lines}
In this section we prove that the second algebraic representative of a cubic threefold is isomorphic to the Albanese variety of its Fano surface of lines. This is a crucial ingredient in proving that intermediate Jacobians of cubic threefolds are stable under specialization. 

Let $X$ be a cubic threefold over a field $k$ which contains a good line $l_0$. Let $p:\widetilde{C} \to C$ be the associated \'etale double cover of the discriminant curve for the conic bundle $\pi: \Bl_{l_0}(X) \to \mathbb{P}^2$. Let $P^+ = \Prym(X,l_0)$ be the associated Prym variety.

\begin{lemma}\label{imm}
Let $X$ be a non-Hermitian cubic threefold over a separably closed field $k$. There exists a closed immersion $\alpha:F(X) \to P^+$ which sends a general line $l \in F(k)$ to $[D(l)-D_0]$. Here $[D(l)] \in  J_5\widetilde{C}$ is the effective divisor describing the $5$ intersection points $ \pi^{-1}(C) \cap l$, and $[D_0] \in  J_5\widetilde{C}$ is fixed so that $\pi_*D_0 \cong H$ and $[D_0]$ belongs to the same connected component as $D(l)$ inside
$$S:= \{x \in  J_5\widetilde{C} : \pi_*x \cong H\}.$$
This map is only well-defined up to translation in the Prym variety.
\end{lemma}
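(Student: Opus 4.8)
The plan is to build $\alpha$ first as a rational map on a dense open subset of $F(X)$, extend it to a morphism, and then verify it is a closed immersion. Let $U \subseteq F(X)$ be the locus of lines $l$ of the first type that are disjoint from $l_0$ and meet $\pi^{-1}(C)$ transversally in $5$ reduced points. Since $F(X)$ is an irreducible smooth surface by Proposition \ref{Fsurface} and the excluded loci are proper and closed, $U$ is open and dense. On $U$ the assignment $l \mapsto D(l)$ is governed by the incidence correspondence $\Gamma = \{(l,q) \in U \times \widetilde{C} : q,\ \text{regarded as a line on } X,\ \text{meets } l\}$, whose fibre over $l$ is $D(l)$; pushing $\Gamma$ forward to $\widetilde{C}$ and composing with the Abel--Jacobi map realises $l \mapsto [D(l)]$ as an algebraic map $U \to J_5\widetilde{C}$, and translating by $-[D_0]$ gives an algebraic map $U \to J\widetilde{C}$.

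Next I would check that the image lands in $P^+$. For $l \in U$, projection from $l_0$ carries $l$ isomorphically onto a line $\overline{l} \subset \mathbb{P}^2$, and the five points of $\pi^{-1}(C)\cap l$ map under $p$ to the intersection $\overline{l}\cap C$, a divisor in the linear system $|H|$; hence $p_*D(l) \cong H$ and $D(l) \in S$, where $p_* : J\widetilde{C} \to JC$ denotes the norm map of $p$. Since $U$ is connected, the image of $l \mapsto [D(l)]$ is connected, so all the $D(l)$ lie in a single component of the two-component $\Ker(p_*)$-torsor $S$; choosing $D_0$ in that component forces $[D(l)-D_0] \in \Ker(p_*)^0 = P^+$ for every $l \in U$. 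As $F(X)$ is smooth and $P^+$ is an abelian variety, the rational map $U \to P^+$ extends uniquely to a morphism $\alpha : F(X) \to P^+$ by Proposition \ref{extendRatMap}, well-defined up to the translation coming from the choice of $D_0$.

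It remains to prove that $\alpha$ is a closed immersion. Since $F(X)$ is proper over $k$ and $P^+$ is separated, it suffices to show that $\alpha$ is universally injective and unramified, that is, injective on geometric points with injective differential everywhere. For injectivity on points I would follow the argument of \cite{clem} and \cite{beau} that a general line $l$ is recovered from the configuration $D(l)$ of lines meeting it, reducing the general case to $U$ by density and base change to $\overline{k}$ (Lemma \ref{denseFieldExt}). For unramifiedness I would compute $d\alpha$ at $l \in U$: here $T_lF(X) = \HH^0(l, N_{l/X}) \cong k^2$, and under Serre duality together with the identification $T_0J\widetilde{C} \cong \HH^1(\widetilde{C}, \mathcal{O}_{\widetilde{C}})$ of Proposition \ref{tspace}, the differential of this Abel--Prym map is the map recording the first-order motion of the five points via the residue pairing; its injectivity then follows as in \cite{beau}.

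The main obstacle will be the differential computation together with point-injectivity in characteristic $2$, where one cannot pass freely between quadratic forms and symmetric bilinear forms. As throughout this paper, the remedy is to work with the trace pairing (as in Lemma \ref{char2duality}) in place of a polarised quadratic form. I also expect to need the density results of Lemma \ref{denseFieldExt} to descend both the injectivity and the immersion statements from $\overline{k}$ to the separably closed field $k$, since the arguments of \cite{beau} and \cite{clem} are phrased over an algebraically closed base.
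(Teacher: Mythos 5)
Your overall architecture matches the paper's: both proofs ultimately rest on Beauville's argument (\cite[Proposition 3]{beau} and the Corollary following it) for the substance of the closed-immersion statement, and both must then handle the passage to a separably closed (rather than algebraically closed) base field. Your reconstruction of the map via the incidence correspondence, the connectedness argument placing the image in a single component of $\Ker(p_*)$, and the extension across all of $F(X)$ via Proposition \ref{extendRatMap} are all sound and are implicit in the citation the paper makes. But there is a genuine gap in how you propose to verify the closed-immersion property: you plan to check injectivity and the differential only at general points (``a general line $l$ is recovered from $D(l)$'', ``compute $d\alpha$ at $l \in U$'') and then ``reduce the general case to $U$ by density''. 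Neither universal injectivity nor unramifiedness propagates from a dense open subset to all of $F(X)$: a morphism can be an embedding on a dense open and still contract a curve, identify points, or ramify in the complement (think of the normalization of a nodal curve). Beauville's proof is not a general-point argument plus density; it treats every line, and importing it wholesale --- which is what the paper does, asserting the arguments are characteristic-independent --- is exactly what is needed.

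The second gap is in the descent step, which is the only part of the proof the paper actually writes out. Since $\overline{k}/k$ is purely inseparable, there is no Galois action to exploit, and density of $k$-points (Lemma \ref{denseFieldExt}) by itself gives nothing: the issue is to show the \emph{morphism} $\alpha$, a priori constructed over $\overline{k}$, is defined over $k$. The paper's key observation is that for every $l \in F(k)$ the divisor $D(l)$ consists of $5$ distinct points, so the polynomial cutting out $\pi^{-1}(C)\cap l$ is separable; since $k$ is separably closed, $[D(l)]$ (and a suitable choice of $[D_0]$) is already a $k$-point of $J_5\widetilde{C}$, whence $\alpha$ descends, and a morphism over $k$ whose base change to $\overline{k}$ is a closed immersion is itself a closed immersion by flat descent. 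Your incidence-correspondence construction would in fact produce a morphism over $k$ directly, which could replace the paper's separability argument --- but then the correct concluding step is flat descent of the closed-immersion property along $\Spec(\overline{k}) \to \Spec(k)$, not an appeal to density.
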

\begin{proof}
When $k$ is algebraically closed, this follows from the proof of  \cite[Proposition 3]{beau} and the Corollary which follows it. The arguments are algebro-geometric and do not make use of the characteristic of the field.

For a separably closed field $k$, consider the morphism $\alpha: F(X_{\overline{k}}) \to (P^+)_{\overline{k}}$. We claim that it is actually defined over $k$, in which case it will follow that the descended morphism $\alpha':F(X) \to P^+$ is also a closed immersion. For any line $l \in F(k)$, the divisor $[D(l)] \in (J_5\widetilde{C})_{\overline{k}}$ consists of $5$ distinct points, which means that the polynomial defining $ \pi^{-1}(C) \cap l$ is separable over $k$. But $k$ is separably closed and so in fact $[D(l)] \in J_5\widetilde{C}$. Moreover, $[D_0]$ can be chosen from $ J_5\widetilde{C}$ as well. This shows that $\alpha$ descends to a morphism $\alpha'$ over $k$ as required.
\end{proof}

We wish to prove an analogous result for Hermitian cubic threefolds.

\begin{definition}
Let $X$ be a Hermitian threefold defined by an equation $F$ over $k$. This is equivalent to a sesquilinear form $\langle,\rangle$ on $V^f\otimes V$, where $V := k^5$ and $V^f$ denotes a twist by frobenius. A \emph{Hermitian point} of $X$ is a point $p$ satisfying
$$\langle p, v \rangle = \langle v, p \rangle^2$$
for all $v \in V$. When $X$ is the Fermat cubic threefold, its Hermitian points are just $X(\mathbb{F}_4)$.

Let $p \in X$ be a Hermitian point. We write $D_p \subset F(X)$ for the divisor of lines passing through $p$, which is indeed a divisor. Let $\ell \subset X$ be a line. We similarly write $D_{\ell} \subset F(X)$ for the divisor of lines incident on $\ell$.
\end{definition}

\begin{lemma}\label{albemb}
Let $X$ be the Fermat cubic threefold over an algebraically closed field $k$ of characteristic $2$. Then the albanese morphism $\alpha: F(X) \to \Alb(F(X))$ is injective on $k$-points.
\end{lemma}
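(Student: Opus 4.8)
The plan is to reduce the statement to injectivity of a \emph{line-class map} and then feed in Cheng's structural result. First I would use the universal line $\mathbb{L}\subset F(X)\times X$, with its projections $p:\mathbb{L}\to F(X)$ and $q:\mathbb{L}\to X$, to form the cylinder homomorphism $q_*p^*:A^2(F(X))\to A^2(X)$, which sends (the class of) a line $l$ to its class $[l]$ as a $1$-cycle on $X$. Composing with the Abel--Jacobi map $A^2(X)\to\Ab^2_X(k)$ gives a regular homomorphism on $A^2(F(X))$, so by the universal property of the Albanese (the algebraic representative of $A^2(F(X))$, using Remark \ref{defsepclosed} since $k$ is algebraically closed) it factors uniquely through a homomorphism of abelian varieties $\gamma:\Alb(F(X))\to\Ab^2_X$ with $\gamma\circ\alpha=\beta$, where $\beta:F(X)\to\Ab^2_X$ sends $l$ to its Abel--Jacobi class $[l]$. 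Since $\beta(l_1)\neq\beta(l_2)$ forces $\alpha(l_1)\neq\alpha(l_2)$, it suffices to show that $\beta$ is injective on $k$-points. I stress that this reduction uses nothing about $\gamma$ being an isomorphism, so it introduces no circularity with the comparison theorem that this section is building towards.

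Next I would invoke \cite[Theorem 6.14]{cheng}, which supplies a purely inseparable isogeny $\psi:\Ab^2_X\to\prod_{i=1}^5 E_i$ to a product of elliptic curves. A purely inseparable isogeny is a universal homeomorphism, in particular radicial and hence bijective on $k$-points for $k$ algebraically closed. Consequently $\beta$ is injective on $k$-points if and only if the genuine morphism $\psi\circ\beta:F(X)\to\prod_{i=1}^5 E_i$ is injective on $k$-points. The point of routing through $\psi$ is that it replaces the infinitesimal (cotangent) analysis used in characteristic $\neq 2$ — which fails here, since in characteristic $2$ the Albanese map need not be unramified and the global $1$-forms need not separate tangent directions — by the purely point-theoretic problem of showing that one explicit morphism from the Fano surface to an abelian variety separates closed points.

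The main obstacle is exactly this last step: proving $\psi\circ\beta$ is injective on $k$-points. Here I would bring in the explicit incidence geometry of lines on the Fermat cubic, namely the incidence divisors $D_\ell$ of lines meeting a fixed line $\ell$ and the divisors $D_p$ of lines through a Hermitian point $p\in X(\mathbb{F}_4)$. If $l_1\neq l_2$ had the same image then $[l_1]=[l_2]$ in $\Ab^2_X(k)$; combined with Proposition \ref{CH0} (universal $\Ch_0$-triviality, which as in the proof of Proposition \ref{specialIJT} forces $A^2(X)\to\Ab^2_X(k)$ to be an isomorphism), this means $l_1$ and $l_2$ are algebraically equivalent as $1$-cycles on $X$. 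I would then derive a contradiction by intersecting with a suitable family of incidence divisors: distinct lines meet the lines through a common Hermitian point, and the ambient lines, in distinct patterns, so the classes of the incidence divisors $D_{l_1}$ and $D_{l_2}$ (equivalently, the images of $l_1,l_2$ under the elliptic projections $f_i=\psi_i\circ\beta$) cannot all agree. Making this separation precise, by running through the explicit list of lines through the Hermitian points of the Fermat cubic and showing the resulting conditions distinguish any two distinct lines, is the technical heart of the argument; everything preceding it is formal.
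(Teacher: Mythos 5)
Your overall architecture does match the paper's: reduce injectivity of $\alpha$ to injectivity of a composite $F(X)\to\Ab^2_X\to(\text{product of elliptic curves})$ via \cite[Theorem 6.14]{cheng}, use that purely inseparable isogenies are bijective on $k$-points, and finish with incidence geometry on the Fermat cubic. The formal reductions in your first two paragraphs are correct. But there is a genuine gap exactly at what you call the ``technical heart,'' and the mechanism you sketch for it fails as stated. You argue: if $\psi\circ\beta(l_1)=\psi\circ\beta(l_2)$ then, via $A^2(X)\cong\Ab^2_X(k)$, the cycles $l_1$ and $l_2$ are equivalent (note: this gives \emph{rational} equivalence, not algebraic equivalence -- the latter is automatic and vacuous since $F(X)$ is connected), and you then want a contradiction ``by intersecting with a suitable family of incidence divisors,'' on the grounds that distinct lines meet lines through Hermitian points in distinct patterns. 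Rational equivalence of $1$-cycles on $X$ imposes no constraint on such intersection patterns: intersecting with any surface in $X$ only produces rationally equivalent $0$-cycles, and since $X$ is universally $\Ch_0$-trivial (Proposition \ref{CH0}), all $0$-cycles of equal degree on $X$ are rationally equivalent, so this carries no information at all.

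What actually closes the argument (and is what the paper does) is to transfer the equivalence to $F(X)$ and exploit that the relevant incidence loci are \emph{elliptic curves}. Fix a Hermitian line $\ell$ with its five Hermitian points $x_1,\dots,x_5$; each $C_i:=D_{x_i}$ is an elliptic curve, and the composite of Cheng's chain $J(C)\to\Alb(F(X))\to\Ab^2_X\to\Pic^0_{F(X)}\to J(C)$, precomposed with $\alpha$, sends $\ell'\mapsto \mathcal{O}_{F(X)}(D_{\ell'}-D_{\ell})\big|_{C}$. By \cite[Corollary 6.7]{cheng}, $D_{\ell'}$ meets each $C_i$ in exactly one point, so $\mathcal{O}(D_{\ell'})\big|_{C_i}$ has degree $1$; since degree-$1$ line bundles on an elliptic curve biject with its points, equality of classes forces equality of the actual intersection points. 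This is the step that converts ``same class'' into ``same incidence pattern'': $\ell_1$ and $\ell_2$ must be incident to the same line through each $x_i$. Even then one needs a geometric endgame: this incidence condition forces $\ell_1,\ell_2,\ell$ into a coplanarity configuration, and one must rerun the argument with a second Hermitian line $\ell_0$ avoiding that configuration, as well as handle lines lying in $D_{\ell}$. None of these ingredients -- the elliptic-curve structure of the $D_{x_i}$, the degree-$1$ rigidity on elliptic curves, or the coplanarity argument with a varying Hermitian line -- appears in your proposal, so as written it is a plan with the correct skeleton but not a proof.
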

\begin{proof}
Let $\ell$ be a Hermitian line on $X$. That is, $\ell$ is a line spanned by two Hermitian points. Then there are precisely $5$ Hermitian points $x_1 , \dotsc , x_5$ on $\ell$ and for each Hermitian point $x_i$, the incidence variety $D_{x_i} \subset F(X)$ is an elliptic curve by the paragraph following \cite[Corollary 6.6]{cheng}. Let $C = \oplus_iC_i$ be the sum of these elliptic curves. By \cite[Theorem 6.14]{cheng}, there is a chain of purely inseparable isogenies
$$J(C) \xrightarrow{v_*} \Alb(F(X)) \xrightarrow{L_*} \Ab_X^2 \xrightarrow{L^*} \Pic^0_{F(X)} \xrightarrow{v^*} J(C)$$
so it suffices to show that $v^* \circ L^* \circ L_* \circ \alpha$ is injective on $k$-points. This morphism sends a line $\ell'$ to $\mathcal{O}_{F(X)}(D_{\ell'} - D_{\ell}) \big |_{C}$. By \cite[Corollary 6.7]{cheng}, $D_{\ell'}$ intersects each elliptic curve $C_i$ at precisely one point, for $\ell' \in F(X)(k) \backslash D_{\ell}(k)$. Suppose that two lines $\ell_1,\ell_2 \in F(X)(k) \backslash D_{\ell}(k)$ are sent to the same point of $JC$. This happens if an only if the unique line $q_i$ through $x_i$ incident on $\ell_1$ is also incident on $\ell_2$, for all $i = 1 , \dotsc , 5$. This forces $\ell_1,\ell_2,\ell$ to be $3$-coplanar. Clearly, we may select another Hermitian line $\ell_0$ which is not $3$-coplanar with $\ell_1,\ell_2$. Repeating the construction with $\ell_0$, we observe that $v^* \circ L^* \circ L_* \circ \alpha$ sends $\ell_1,\ell_2$ to different points, as required. For general lines $\ell_1,\ell_2 \in F(X)(k)$, we can find a Hermitian line $\ell_0$ such that $\ell_1,\ell_2 \not\in D_{\ell_0}(k)$, and repeat the argument above.
\end{proof}

\begin{proposition}\label{albvsar}
Let $X$ be a cubic threefold over an arbitrary field $k$. The morphism $L_*: \Alb(F(X)) \to \Ab^2_X$ is an isomorphism.
\end{proposition}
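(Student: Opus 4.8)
The plan is to reduce to a separably closed base field and then split according to whether $X$ is Hermitian. The formation of the Albanese commutes with field extensions, the algebraic representative $\Ab^2_X$ commutes with separable extensions (the Remark following Proposition \ref{existenceAR2}), and $L_*$ is functorial, while being an isomorphism of abelian varieties descends along the faithfully flat extension $k \to k^{sep}$. So I would first replace $k$ by $k^{sep}$ and assume $k$ separably closed.

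In the non-Hermitian case $X$ carries a good line $l_0$, and by Corollary \ref{nHIJ} we have $\Ab^2_X = P^+ = \Prym(X,l_0)$ with its principal polarisation $\Xi$. Lemma \ref{imm} furnishes a closed immersion $\alpha : F(X) \to P^+$. By the universal property of the Albanese, $\alpha$ factors (up to translation) as $F(X) \xrightarrow{a} \Alb(F(X)) \xrightarrow{L_*} P^+$ with $a$ the Albanese morphism; since $\alpha = L_* \circ a$ is a closed immersion, so is $a$, and since $\alpha(F(X))$ generates $P^+$ the map $L_*$ is surjective. The incidence correspondence also produces the dual map $L^* : \Ab^2_X \to \Pic^0_{F(X)} = \widehat{\Alb(F(X))}$, surjective by the symmetric argument, so comparing dimensions forces $\dim \Alb(F(X)) = \dim \Ab^2_X = 5$ and $L_*$ is an isogeny. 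Finally $L_*$ pulls $\Xi$ back to the polarisation on $\Alb(F(X))$ induced by the correspondence, and the principality of both polarisations forces $\deg(L_*) = 1$ by multiplicativity of degrees. Concretely this is Beauville's identification $\Alb(F(X)) \cong P^+$ in \cite[Proposition 3]{beau} and the corollary following it, whose proof is algebro-geometric and carries over to $k^{sep}$ through Lemma \ref{imm}.

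In the Hermitian case $X_{k^{sep}}$ is the Fermat cubic by Proposition \ref{2bic}, and it is already defined over $\overline{\mathbb{F}_2} \subset k^{sep}$; I would descend the question to $\overline{\mathbb{F}_2}$ using the base-change results in the proof of Proposition \ref{specialIJT} together with Lemma \ref{denseFieldExt}, so that $X$ is the Fermat cubic over the algebraically closed field $\overline{\mathbb{F}_2}$. Here I would exploit \cite[Theorem 6.14]{cheng}, which gives the chain of purely inseparable isogenies
$$J(C) \xrightarrow{v_*} \Alb(F(X)) \xrightarrow{L_*} \Ab_X^2 \xrightarrow{L^*} \Pic^0_{F(X)} \xrightarrow{v^*} J(C),$$
together with Lemma \ref{albemb}, which shows the Albanese morphism is injective on points, so that $a$ is a closed immersion here as well. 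The composite around this cycle is a self-map of the principally polarised abelian variety $J(C)$ which Cheng's analysis identifies with the principal polarisation; as each arrow is a purely inseparable isogeny of $p$-power degree and the composite has degree $1$, multiplicativity of degrees forces every arrow, in particular $L_*$, to be an isomorphism.

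I expect the delicate point to be the Hermitian case in characteristic $2$: a priori $L_*$ is only a purely inseparable isogeny, and injectivity on points — which holds automatically for such isogenies over a perfect field — cannot by itself rule out an infinitesimal kernel, so the whole weight falls on the rigidity of the full correspondence forcing the degree of the chain composite to be $1$. Verifying this identification of the composite (and, in the non-Hermitian case, that $L_*$ respects the principal polarisations) is where the real work lies. As a clean fallback one can instead lift the Fermat cubic to characteristic $0$ over $W(\overline{\mathbb{F}_2})$, where it becomes non-Hermitian so that $L_*$ is an isomorphism on the generic fibre by the first case; the generic inverse then extends to a morphism of abelian schemes by Proposition \ref{extendRatMap}, and agreement on the schematically dense generic fibre shows it is a two-sided inverse, whence the special-fibre $L_*$ attached to $X$ is an isomorphism.
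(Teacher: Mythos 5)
Your non-Hermitian case is, in substance, the paper's argument: both reduce to Beauville's identification of $\Alb(F(X))$ with the Prym. The paper runs it slightly differently — it observes that $(\_ \cdot \widetilde{C}) \circ L^* \circ L_*$ is precisely the Abel--Jacobi map induced by the closed immersion $\alpha$ of Lemma \ref{imm}, invokes \cite[Proposition 9]{beau} to see that this composite is an isomorphism, and concludes that $L_*$ is a proper monomorphism, hence a closed immersion, hence an isomorphism by equality of dimensions. Your finishing move is vaguer than what you would actually need: there is no a priori principal polarisation on $\Alb(F(X))$ against which to compare $L_*^*\Xi$, and the ``symmetric argument'' for surjectivity of $L^*$ is not symmetric at all (surjectivity of $L_*$ only gives $\widehat{L_*}$ finite kernel, not surjectivity of $L^*$, unless you already know the dimensions agree — which is what you are trying to prove). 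But since you ultimately defer to \cite[Proposition 3]{beau} and its corollary, i.e.\ to the same source the paper leans on, this half is fine.

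The genuine gap is your primary route in the Hermitian case: the assertion that the composite of Cheng's chain $J(C) \to \Alb(F(X)) \to \Ab^2_X \to \Pic^0_{F(X)} \to J(C)$ has degree $1$ because ``Cheng's analysis identifies it with the principal polarisation.'' No such identification is available in \cite[Theorem 6.14]{cheng} as cited; what that theorem supplies is exactly a chain of purely inseparable isogenies, with no degree-one statement about the composite. Indeed, if the composite had degree $1$, every arrow in the chain would be an isomorphism, Cheng's theorem would have been stated as an isomorphism, Lemma \ref{albemb} — which extracts only injectivity on $k$-points, precisely because infinitesimal kernels are invisible on points — would be pointless, and Proposition \ref{albvsar} itself would be a one-line corollary. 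So multiplicativity of degrees gives you nothing: a priori the inseparability could sit inside $L_*$ itself, which is exactly the possibility you correctly flag as the delicate point and then do not exclude. Fortunately, your ``fallback'' is the paper's actual proof: lift the Fermat cubic to the (Cohen--)Witt ring, apply the non-Hermitian case to the generic fibre, extend the inverse across the DVR by Proposition \ref{extendRatMap}, and specialize. The one step you should make explicit there is the identification $(\Ab^2_{\widetilde{X}})_s \cong \Ab^2_{X}$ of the special fibre of the spread-out algebraic representative with the algebraic representative of the special fibre; algebraic representatives do not obviously commute with specialization, and this is exactly what the universal-cycle argument in the proof of Proposition \ref{specialIJT} supplies (you cite that proof, but for the reduction to $\overline{\mathbb{F}_2}$ rather than for this identification, which is where it carries the real weight — note the paper's Proposition \ref{specializeAR} comes after, and depends on, Proposition \ref{albvsar}, so it cannot be used here). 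With the fallback promoted to the main argument and that step spelled out, your proof coincides with the paper's.
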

\begin{proof}
First of all, the morphism $L_*$ is defined over $k$ since the universal line $L$ is defined over $k$. It suffices to show that the base change $(L_*)_{k^{sep}}$ is an isomorphism.

Suppose that $X$ is non-Hermitian and $k = k^{sep}$, and consider the chain
$$f: \Alb(F(X)) \xrightarrow{L_*} \Ab^2_X \xrightarrow{L^*} \Pic^0(F(X)) \xrightarrow{\_ \cdot \widetilde{C}} J\widetilde{C}$$
where the last arrow is given by intersection with the cycle $[\widetilde{C}] \in \Ch^1(F(X))$. Then $L^* \circ L_*$ sends the class of a line $[\ell]$ to the divisor $D_{\ell}$, and so the map $f$ sends the class of a line $[\ell]$ to $[D(\ell)]$. As a result, $f$ is actually the Abel-Jacobi map induced by the morphism $\alpha: F(X) \to P^+$ described in Lemma \ref{imm}. By \cite[Proposition 9]{beau}, it follows that $f$ is an isomorphism. Thus $L_*$ is a proper monomorphism, and thus a closed immersion by \cite[04XV]{sp}. A closed immersion of abelian varieties of the same dimension is an isomorphism, as required.

Suppose now that $X$ is Hermitian. We may suppose that $k = k^{sep}$ and that $X$ is the Fermat cubic threefold. We may lift it to a cubic threefold $\widetilde{X}$ over the Cohen-Witt ring $W_C(k)$. Let $\eta,s$ be the generic and special points of $W_C(k)$. Consider the algebraic representative $\Ab^2_{\widetilde{X}}$.

We know that $L_* : \Alb(F(\widetilde{X}_{\eta})) \to \Ab^2_{\widetilde{X}_{\eta}}$ is an isomorphism, which spreads out to a morphism $L_*: \Ab^2_{\widetilde{X}} \to \Alb(F(\widetilde{X}))$. This too must be an isomorphism. From the proof of Proposition \ref{specialIJT}, we know that $ \Ab^2_{\widetilde{X}_s} \cong  (\Ab^2_{X_{\overline{\mathbb{F}_2}}})_s$ is the base change of the algebraic representative of the Fermat cubic over $\overline{\mathbb{F}_2}$. It is also implicit in the proof of this proposition that if $\widetilde{X}$ were a lift of $X$ over $\overline{\mathbb{F}_2}$ to $W_C(\overline{\mathbb{F}_2})$, then
$$(\Ab^2_{\widetilde{X}})_{\overline{\mathbb{F}_2}} \cong \Ab^2_{X}.$$ 
Putting this all together, we understand that $(\Ab^2_{\widetilde{X}})_s \cong \Ab^2_{\widetilde{X}_s}$, and so the specialization of $L_*$ is also given by $L_*: \Alb(F(\widetilde{X}_s)) \to \Ab^2_{\widetilde{X}_s}$. This is an isomorphism, as required.
\end{proof}

\begin{proposition}\label{specializeAR}
Let $S$ be a DVR with special point $s$ and generic point $\eta$. Suppose that $X$ is a cubic threefold over $S$. Then $(\Ab^2_X)_s \cong \Ab^2_{X_s}$.
\end{proposition}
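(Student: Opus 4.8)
The plan is to bridge the two algebraic representatives through the relative Albanese of the Fano surface, exploiting the identification of Proposition \ref{albvsar} together with the fact that the Albanese (equivalently, $\Pic^0$) of a smooth proper family has formation compatible with arbitrary base change, whereas algebraic representatives are only known a priori to commute with smooth base change. Since a DVR is regular, integral and Noetherian, Proposition \ref{existenceAR2} gives the relative algebraic representative $\Ab^2_{X/S}$ as an abelian scheme over $S$ (I write $\Ab^2_X$ for $\Ab^2_{X/S}$). The relative Fano scheme $F(X/S)\to S$ is smooth and projective with geometrically connected $2$-dimensional fibres by Proposition \ref{Fsurface} and the results of Altman--Kleiman, and by Proposition \ref{albvsar} each fibrewise $\Pic^0(F(X_s))$ is an abelian variety of constant dimension $5$. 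Hence $\Pic^0_{F(X/S)/S}$ is an abelian scheme of relative dimension $5$ whose formation commutes with base change; dualising, the relative Albanese $\mathcal{B}:=\Alb(F(X/S)/S)$ is an abelian scheme over $S$ with $\mathcal{B}_s\cong\Alb(F(X_s))$ and $\mathcal{B}_\eta\cong\Alb(F(X_\eta))$.

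Next I would construct the comparison morphism over $S$. The universal line $L\subset F(X/S)\times_S X$ induces, after choosing a base point, a pointed $S$-morphism $F(X/S)\to\Ab^2_{X/S}$ sending a line to the class of its associated relative $2$-cycle; by the universal property of the relative Albanese this factors through a homomorphism of abelian schemes $L_*:\mathcal{B}\to\Ab^2_{X/S}$ over $S$, whose formation is compatible with base change to the fibres and which recovers, on each fibre, the map $L_*$ of Proposition \ref{albvsar}. To control the generic fibre, note that $\eta\to S$ is the inverse limit of the open immersions $U\hookrightarrow S$, which are étale hence smooth; thus Proposition \ref{baseChangeCat} gives $(\Ab^2_{X/S})_\eta\cong\Ab^2_{X_\eta}$, and under this identification $(L_*)_\eta$ is precisely the isomorphism $\Alb(F(X_\eta))\xrightarrow{\sim}\Ab^2_{X_\eta}$ of Proposition \ref{albvsar}.

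It remains to promote this generic isomorphism to one over all of $S$. Since $\Ab^2_{X/S}$ is regular (being smooth over the regular base $S$) and $\mathcal{B}$ is an abelian scheme, the inverse $(L_*)_\eta^{-1}$ extends by Proposition \ref{extendRatMap} to a morphism $g:\Ab^2_{X/S}\to\mathcal{B}$ over $S$; the two composites $g\circ L_*$ and $L_*\circ g$ agree with the identity on the dense generic fibre, hence equal the identity by the uniqueness in Proposition \ref{extendRatMap}, so $L_*$ is an isomorphism of abelian schemes over $S$. Restricting to the special fibre yields $(\Ab^2_{X/S})_s\cong\mathcal{B}_s\cong\Alb(F(X_s))$, and composing with the isomorphism $\Alb(F(X_s))\cong\Ab^2_{X_s}$ of Proposition \ref{albvsar} gives the desired $(\Ab^2_X)_s\cong\Ab^2_{X_s}$.

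The hard part will be the first step: verifying that the relative Albanese is genuinely an abelian scheme whose formation is compatible with base change over the DVR, which amounts to local freeness and constancy of rank of $R^1\pi_*\mathcal{O}_{F(X/S)}$ across all residue characteristics (including the characteristic $2$ Hermitian fibres), and constructing $L_*$ over $S$ in a way that is visibly compatible with the fibrewise maps of Proposition \ref{albvsar}. Once these structural facts are secured, the rigidity argument over the DVR via Proposition \ref{extendRatMap} is routine, and the conclusion follows formally.
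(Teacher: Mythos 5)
Your proposal follows the same route as the paper's own proof: both bridge $(\Ab^2_{X/S})_s$ and $\Ab^2_{X_s}$ through the Albanese of the Fano surface, apply Proposition \ref{albvsar} on the two fibres, use base-change compatibility of $F(X/S)$ and its Albanese, and promote the generic-fibre isomorphism by rigidity over the DVR. Indeed the paper's entire proof is four sentences doing exactly this, and what you call ``the hard part'' --- that $\Alb(F(X/S)/S)$ is an abelian scheme whose formation commutes with base change --- is precisely what the paper compresses into the phrase ``Albanese varieties and Fano schemes of lines are stable under base change in this case''; flagging it is fair, and your diagnosis via $R^1\pi_*\mathcal{O}_{F(X/S)}$ is the right one.

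There is, however, one step where the justification you cite does not support the claim, and it is the step the whole argument hangs on. You assert that Proposition \ref{baseChangeCat} gives $(\Ab^2_{X/S})_\eta \cong \Ab^2_{X_\eta}$. It does not: that proposition identifies only the \emph{functors}, $(\mathcal{A}^2_{X/S})_\eta \cong \mathcal{A}^2_{X_\eta/\eta}$, and passing from base change of the functor to base change of its representative is exactly the content of Proposition \ref{basechange} --- which is not applicable here, since its hypothesis requires the base to be smooth over $\mathbb{Z}$, and a general DVR (say $k[[t]]$, or a Cohen--Witt ring) is not even of finite type over $\mathbb{Z}$. To fill this step one must rerun the proof of Proposition \ref{basechange} over the DVR: first extend $\Ab^2_{X_\eta}$ to an abelian scheme over $S$ --- this is where genuine content enters, namely good reduction, obtained from the Galois-equivariant embedding $T_\ell \Ab^2_{X_\eta} \hookrightarrow \HH^3(X_{\overline{\eta}},\mathbb{Z}_\ell(2))$, unramifiedness of this Galois module because $X/S$ is smooth and proper, and N\'eron--Ogg--Shafarevich as in Lemma \ref{extendSurjRegHom} (i.e.\ \cite[Lemma 6.1]{Achter_2017}) --- and then lift the regular homomorphism by Lemma \ref{liftfunctor} and conclude with the universal-property argument and Lemma \ref{monoSep}. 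Without this identification your $(L_*)_\eta$ cannot be matched with the isomorphism of Proposition \ref{albvsar}, so the rigidity argument via Proposition \ref{extendRatMap} has nothing to start from. (A smaller repair: your relative $L_*$ is built ``after choosing a base point'', i.e.\ a section of $F(X/S)\to S$, which need not exist; one should instead define $L_*$ on degree-zero cycles, or spread out the generic-fibre morphism using Proposition \ref{extendRatMap}.) In fairness, the paper's proof leaves the same generic-fibre identification implicit --- it is needed even to make sense of its claim that the relative $L_*$ ``is an isomorphism due to Proposition \ref{albvsar}'' --- so you have reproduced the paper's argument, gap and all; but as written, the citation you give for this step is insufficient, and it is here, at least as much as in the representability of the relative Albanese, that real work remains.
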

\begin{proof}
 Consider the morphism $L_*: \Alb(F(X)) \to \Ab^2_X $, which is an isomorphism due to Proposition \ref{albvsar}. Since Albanese varieties and Fano schemes of lines are stable under base change in this case, we obtain an isomorphism $(L_*)_s: \Alb(F(X_s)) \to (\Ab^2_X)_s$. Again by Proposition \ref{albvsar} we have an isomorphism $L_*: \Alb(F(X_s)) \to \Ab^2_{X_s}$. Thus
$f = L_* \circ (L_*)_s^{-1}: (\Ab^2_X)_s \to  \Ab^2_{X_s}$ is an isomorphism. 
\end{proof}

\section{Applications}
In this section we apply the theory to solve some problems regarding cubic threefolds. We construct a functor from the stack of cubic threefolds to the stack of principally polarised abelian schemes of dimension $5$ which extends the known intermediate Jacobian functor. We also solve a conjecture of Deligne by constructing the intermediate Jacobian of the universal cubic threefold. We prove that smooth cubic threefolds over arbitrary fields are non-rational. We also prove the Torelli theorem for cubic threefolds over arbitrary fields, which states that the intermediate Jacobian of a cubic threefold determines the cubic threefold, up to isomorphism.

\subsection{Preliminaries on automorphisms and cohomology}

Let $\ell$ be a prime different from $\textrm{char}(k)$. We collect some results about the automorphisms and cohomology of cubic threefolds and their intermediate Jacobians.

\begin{remark}\label{defActIJ}
Let $X/S$ and $Y/T$ be families of smooth cubic threefolds and let $S,T$ be smooth and of finite type over an integral Noetherian scheme $\Lambda$. Consider a commutative diagram
$$\begin{tikzcd}
X \arrow{r}{g} \arrow{d} & Y \arrow{d} \\
  S \arrow{r}{f} & T
\end{tikzcd}$$
where $f$ is obtained as an inverse limit of smooth morphisms over $\Lambda$. By Proposition \ref{baseChangeCat}, we know that $(\mathcal{A}^2_{Y/T})_S \cong \mathcal{A}^2_{Y_S/S}$. Then there are natural morphisms $\Ab^2_{X/S} \to  \Ab^2_{Y_S/S} \to \Ab^2_{Y/T}$. Let's denote their composition by $g'$. Then the association $g \mapsto g'$ is functorial. In particular, for any family of cubic threefolds $X/S$, the automorphism group $\Aut(X/S)$ acts naturally on $\Aut(\Ab^2_{X/S}/S)$. 

Let $X$ be a cubic threefold over a field $k$. For all $u,v \in \HH^3(X_{k^{sep}},\mathbb{Z}_{\ell}(2))$ and $\sigma \in \Aut(X_{k^{sep}})$, it is clear that $\sigma(u) \cup \sigma(v) = u \cup v$ since $\sigma$ can only act on $\HH^6_{\textrm{\'et}}(X_{k^{sep}},\mathbb{Z}_{\ell}(4)) \cong \mathbb{Z}_{\ell}(1)$ via the identity. Thus $\Aut(X)$ preserves the distinguished polarisation when it exists. More generally for families of cubic threefolds $X$ over a base $S$ as above, the group $\Aut(X/S)$ preserves the distinguished polarisation when it exists.
\end{remark} 

\begin{proposition}\label{AJcohomology2}
Let $X$ be a cubic threefold over a field $k$, and let $\Ab^2_{X}$ be the second algebraic representative. Let $\ell$ be a prime different from $\textrm{char}(k)$. Then there is an isomorphism
$$\HH^1(\Ab^2_{X_{k^{sep}}},\mathbb{Q}_{\ell}) \cong \HH^3(X_{k^{sep}},\mathbb{Q}_{\ell}(2))^{\vee}$$
which is $\Aut(X_{k^{sep}})$-equivariant and Galois-equivariant.
\end{proposition}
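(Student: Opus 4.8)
The plan is to promote the isomorphism $\alpha$ of Proposition \ref{prereq} to $\mathbb{Q}_{\ell}$-coefficients and then verify that it respects both the Galois action and the action of $\Aut(X_{k^{sep}})$; since $\alpha$ is already an isomorphism of $\mathbb{Z}_{\ell}$-modules, the entire content lies in checking these two equivariances. Concretely, I would tensor the isomorphism
$$\alpha: \HH^1(\Ab^2_{X_{k^{sep}}},\mathbb{Z}_{\ell}) \to \HH^3(X_{k^{sep}},\mathbb{Z}_{\ell}(2))^{\vee}$$
of Proposition \ref{prereq} with $\mathbb{Q}_{\ell}$ to obtain the stated isomorphism. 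Recall that by construction $\alpha$ is the $\mathbb{Z}_{\ell}$-linear dual of $\lambda^2 \circ T_{\ell}(\phi^{-1})$, where $\lambda^2$ is the $\ell$-adic Bloch map and $\phi$ is the algebraic representative isomorphism, so that establishing equivariance of $\alpha$ reduces to establishing naturality of these two constituent maps.

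For Galois equivariance, I would regard $X_{k^{sep}}$ as a cubic threefold over the separably closed field $k^{sep}$ and apply Lemma \ref{bigCommDiagram} to each automorphism $\sigma \in \Gal(k^{sep}/k)$, viewed as a self-extension $\sigma: k^{sep} \to k^{sep}$ of separably closed fields. The hypothesis of that lemma holds trivially, since $\sigma$ is an isomorphism, and the resulting commutative square says precisely that $\alpha$ intertwines the $\sigma$-actions on $\HH^1(\Ab^2_{X_{k^{sep}}},\mathbb{Z}_{\ell})$ and on $\HH^3(X_{k^{sep}},\mathbb{Z}_{\ell}(2))^{\vee}$. Ranging over all $\sigma$ and tensoring with $\mathbb{Q}_{\ell}$ yields Galois equivariance.

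For $\Aut$-equivariance, I would argue by naturality of the two maps composing $\alpha$. An automorphism $g \in \Aut(X_{k^{sep}})$ acts on cycles $A^2(X_{k^{sep}})$ and on $\HH^3(X_{k^{sep}},\mathbb{Z}_{\ell}(2))$ by pushforward, and on $\Ab^2_{X_{k^{sep}}}$ through the functoriality recorded in Remark \ref{defActIJ}. The Bloch map $\lambda^2$ is natural with respect to the action of self-correspondences, in particular the graph $\Gamma_g$, so it commutes with the $g$-action; this is the functoriality of the Bloch map established in \cite{achterBloch}. The map $\phi$ commutes with the $g$-action by the universal property of the algebraic representative, equivalently by Remark \ref{defActIJ}. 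Hence $\lambda^2 \circ T_{\ell}(\phi^{-1})$ is $g$-equivariant, and dualizing shows that $\alpha$ is $\Aut(X_{k^{sep}})$-equivariant.

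The main obstacle is pinning down the naturality of the $\ell$-adic Bloch map under the automorphism action precisely enough, namely that $g$'s action on $T_{\ell}A^2(X_{k^{sep}})$ is carried by $\lambda^2$ to its action on \'etale cohomology. Once this compatibility---a consequence of the functoriality of the Bloch map under correspondences---is in hand, both equivariances follow formally, and the remaining verifications (that the actions commute with the structure maps and that tensoring with $\mathbb{Q}_{\ell}$ preserves everything) are routine.
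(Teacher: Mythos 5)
Your proposal is correct and follows essentially the same route as the paper: the paper's proof also takes the isomorphism $\alpha$ of Proposition \ref{prereq} and obtains $\Aut(X_{k^{sep}})$-equivariance by adapting the functoriality arguments of Lemma \ref{cohfunc} (naturality of $\lambda^2$ and of $\phi$) and Lemma \ref{bigCommDiagram} (compatibility of the Weil-pairing dualization) to an automorphism $g:X\to X$ and the induced $g':\Ab^2_X\to\Ab^2_X$. Your device of obtaining Galois equivariance by applying Lemma \ref{bigCommDiagram} to each $\sigma\in\Gal(k^{sep}/k)$ viewed as a self-extension of $k^{sep}$ is just a concrete bookkeeping of what the paper leaves implicit in citing Proposition \ref{prereq}, not a different argument.
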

\begin{proof}
This follows from Proposition \ref{prereq}. For the fact that the isomorphism is $\Aut(X_{k^{sep}})$-equivariant, see the proofs of Lemma \ref{cohfunc} and Lemma \ref{bigCommDiagram}, and adapt them to the case of an automorphism $g: X \to X$ and $g': \Ab^2_X \to \Ab^2_X$.
\end{proof}

\begin{lemma}\label{autJ}
Let $X$ be a smooth cubic threefold over a field $k$. Then the map
$\Aut(X_{k^{sep}}) \to \Aut(J(X_{k^{sep}}))$ from Remark \ref{defActIJ} is injective and Galois-equivariant.
\end{lemma}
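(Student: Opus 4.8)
The plan is to establish the two assertions separately, deducing injectivity from the faithfulness of the automorphism action on $\ell$-adic cohomology, with the positive-characteristic case handled by lifting.

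\textbf{Galois-equivariance.} The association $g \mapsto g'$ of Remark \ref{defActIJ} is defined functorially from the cubic threefold, and the isomorphism of Proposition \ref{AJcohomology2} is simultaneously $\Aut(X_{k^{sep}})$-equivariant and $\Gal(k^{sep}/k)$-equivariant. Since the Galois action on $\Aut(X_{k^{sep}})$ and on $\Aut(J(X_{k^{sep}}))$ is induced by the $\Gal(k^{sep}/k)$-action on the base changes $X_{k^{sep}}$ and $J(X_{k^{sep}})$, the naturality of the construction gives $\sigma(g)' = \sigma(g')$ for all $\sigma$. I would make this precise by writing $g'$ as the canonical composite $\Ab^2_{X_{k^{sep}}} \to \Ab^2_{X_{k^{sep}}}$ coming from the functoriality of $\mathcal{A}^2_{X/S}$ and checking that it commutes with the descent data defining the Galois action; no cohomology is needed here.

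\textbf{Reduction to cohomology.} Suppose $g \in \Aut(X_{k^{sep}})$ maps to the identity of $J(X_{k^{sep}})$. Then $g'$ acts trivially on the Tate module $T_{\ell}J(X_{k^{sep}})$, hence on $\HH^1(\Ab^2_{X_{k^{sep}}}, \mathbb{Q}_{\ell})$, and therefore, through the $\Aut(X_{k^{sep}})$-equivariant isomorphism of Proposition \ref{AJcohomology2}, $g$ acts trivially on $\HH^3(X_{k^{sep}}, \mathbb{Q}_{\ell}(2))$. Since $g$ preserves the hyperplane class it already acts trivially on the even cohomology (spanned by powers of the hyperplane class, with $\HH^1 = \HH^5 = 0$), so the whole problem reduces to showing that $\Aut(X_{k^{sep}})$ acts \emph{faithfully} on $\HH^3(X_{k^{sep}}, \mathbb{Q}_{\ell})$.

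\textbf{Faithfulness on $\HH^3$.} Every automorphism of a smooth hypersurface of degree $\geq 3$ and dimension $\geq 2$ is induced by a linear automorphism of the ambient space, so $\Aut(X_{k^{sep}})$ embeds into $\PGL_5(k^{sep})$ and is finite. In characteristic $0$ the primitive middle cohomology carries, via the Jacobian-ring description $\HH^{2,1}(X) \cong (k[x_0,\dots,x_4]/(\partial_i f))_1$, a copy of the standard $5$-dimensional representation of the inducing linear map; because this representation is faithful modulo scalars, an automorphism acting trivially on $\HH^3$ must be scalar, hence trivial in $\PGL_5$, hence the identity on $X$. The main obstacle is the characteristic $2$ (more generally positive-characteristic) case, where Hodge theory is unavailable: there I would lift $X$ \emph{together with} its finite-order automorphism $g$ to the Cohen–Witt ring, exactly as in the proofs of Proposition \ref{specialIJT} and Proposition \ref{albvsar}, and use the $\ell$-adic specialization/comparison isomorphisms to transport the trivial action on $\HH^3$ to the generic fibre, where the characteristic $0$ faithfulness applies. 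I expect the genuinely delicate point to be producing a compatible lift of the automorphism (not merely of $X$), which is where the finiteness of $\Aut(X_{k^{sep}})$ and the rigidity of smooth cubics are used; once $g = \mathrm{id}$ follows, the asserted injectivity is immediate.
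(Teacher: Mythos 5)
Your Galois-equivariance argument and your reduction of injectivity to faithfulness of $\Aut(X_{k^{sep}})$ on $\HH^3(X_{k^{sep}},\mathbb{Q}_{\ell}(2))$ are sound, and they follow the same route as the paper (both pass through the equivariant isomorphism of Proposition \ref{AJcohomology2}); your characteristic-$0$ faithfulness argument via the Jacobian ring is also correct. The genuine gap is the positive-characteristic case, which you propose to handle by lifting $X$ \emph{together with} the automorphism $g$ to the Cohen--Witt ring. No such lifting is available, and the propositions you point to do not provide one: Proposition \ref{specialIJT} and Proposition \ref{albvsar} lift only the variety (the Fermat cubic lifts to the Fermat cubic over $W(k)$), never an automorphism. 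Equivariant deformation theory produces a lift of the pair $(X,g)$ only when the order of $g$ is prime to $p=\textrm{char}(k)$: in that tame case the obstruction space is $\HH^2(X,T_X)^{\langle g \rangle}=0$. When $p$ divides the order of $g$, taking invariants is no longer exact, the obstruction theory acquires group-cohomology terms such as $\HH^1(\langle g\rangle,\HH^1(X,T_X))$ which have no reason to vanish, and there is no theorem asserting that wild automorphisms of cubic threefolds lift.

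Wild automorphisms are exactly what must be handled here. In characteristic $2$ the Hermitian cubic threefold has $\Aut(X_{k^{sep}})\cong \mathrm{PU}_5(2)$ (Proposition \ref{autfermat}), a group of order about $1.4\times 10^7$ containing many unipotent elements of order $2$, $4$ and $8$; by the classification of automorphism groups of complex cubic threefolds, this group is far too large to act on any smooth cubic threefold in characteristic $0$, so the group certainly does not lift, and an element-by-element lifting argument would have to prove, for each wild $g$, that the Hermitian cubic lies in the closure of the characteristic-$0$ locus of cubics admitting an automorphism specializing to $g$ --- a nontrivial claim for which you offer only the hope that finiteness and rigidity suffice. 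The paper sidesteps all of this: it quotes \cite[Theorem 1.6(2)]{pan2}, which asserts directly, in arbitrary characteristic, that $\Aut(X_{k^{sep}})$ acts faithfully on $\HH^3(X_{k^{sep}},\mathbb{Q}_{\ell}(2))$, and then concludes in two lines via Proposition \ref{AJcohomology2}. To repair your proof you would either need to establish the wild lifting statements, or invoke such a characteristic-free faithfulness result, at which point the lifting machinery becomes unnecessary.
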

\begin{proof}
By \cite[Theorem 1.6(2)]{pan2}, $\Aut(X_{k^{sep}})$ acts faithfully on its cohomology group $\HH^3(X_{k^{sep}},\mathbb{Q}_{\ell}(2))$. On the other hand, there is an isomorphism $f: \HH^3(X_{k^{sep}},\mathbb{Q}_{\ell}(2)) \to \HH^1(\Ab^2_{X_{k^{sep}}},\mathbb{Q}_{\ell})^{\vee}$ by Proposition \ref{AJcohomology2}, and this map is equivariant with respect to $\Aut(X_{k^{sep}})$ and $\Gal(k^{sep}/k)$. Therefore $\Aut(X_{k^{sep}})$ must act faithfully on $\Ab^2_{X_{k^{sep}}}$ as well. Therefore the map $\Aut(X_{k^{sep}}) \to \Aut(J(X_{k^{sep}}))$ is injective and Galois-equivariant.
\end{proof}

\begin{proposition}\label{autfermat}
Let $X$ be a Hermitian cubic threefold over a field $k$. Then the automorphism group $\Aut(X_{k^{sep}})$ over the separable closure is isomorphic to the projective unitary group $\textrm{PU}_5(2)$.
\end{proposition}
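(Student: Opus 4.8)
The plan is to reduce everything to an explicit matrix computation with the Fermat cubic and then recognise the resulting group of linear symmetries as a finite unitary group. By Proposition \ref{2bic}, after base change to $k^{sep}$ we may assume $X = \mathbb{V}_+(f)$ with $f = \sum_{i=0}^4 x_i^3$, since every smooth Hermitian threefold over $k^{sep}$ is isomorphic to the Fermat cubic. First I would establish that every automorphism of $X_{k^{sep}}$ is the restriction of a linear automorphism of $\mathbb{P}^4$, i.e. $\Aut(X_{k^{sep}}) = \{g \in \textrm{PGL}_5(k^{sep}) : g(X)=X\}$. This is the one place an external input is unavoidable: for smooth hypersurfaces of degree $\geq 3$ and dimension $\geq 2$ linearity of automorphisms holds, and for q-bic (Hermitian) hypersurfaces it is recorded in \cite{qbic}, which I would cite. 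I expect this to be the main obstacle, since in characteristic $2$ the automorphisms of a hypersurface are more delicate than in the classical Matsumura--Monsky setting, and the Hermitian cubics are precisely the ones with anomalously large automorphism groups.

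Granting linearity, I would lift an automorphism to $G = (g_{ij}) \in \GL_5(k^{sep})$ and impose $f(Gx) = \lambda f(x)$ for some $\lambda \in (k^{sep})^{\times}$. Writing $\phi$ for the Frobenius $a \mapsto a^2$ and $\phi(G) = (g_{ij}^2)$, the identity $\big(\sum_j g_{ij} x_j\big)^3 = \sum_{j,k} g_{ij}^2 g_{ik}\,x_j^2 x_k$ valid in characteristic $2$ gives
$$f(Gx) = \sum_{j,k}\Big(\sum_i g_{ij}^2 g_{ik}\Big) x_j^2 x_k = \sum_{j,k} \big(\phi(G)^{\mathsf{T}} G\big)_{jk}\, x_j^2 x_k.$$
Because the monomials $x_j^2 x_k$ are linearly independent, comparison with $\lambda f = \lambda \sum_j x_j^2 x_j$ forces exactly the unitary similitude relation $\phi(G)^{\mathsf{T}} G = \lambda I$. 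Thus $\Aut(X_{k^{sep}})$ is identified with the quotient of the group of such similitudes by the scalar matrices $(k^{sep})^{\times} I$, where the quotient is built into the passage from $\GL_5$ to $\textrm{PGL}_5$.

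The final step is to show that this projective similitude group is the finite group $\textrm{PU}_5(2)$. Given $G$ with $\phi(G)^{\mathsf{T}} G = \lambda I$, I would pick a cube root $c$ of $\lambda^{-1}$, which exists in $k^{sep}$ because $t^3 - \lambda^{-1}$ is separable in characteristic $2$; replacing $G$ by $cG$ normalises $\lambda$ to $1$ without changing the class in $\textrm{PGL}_5$. For a normalised $G$ the relation reads $\phi(G)^{\mathsf{T}} = G^{-1}$, hence $\phi(G) = (G^{\mathsf{T}})^{-1}$, and applying $\phi$ once more gives $\phi^2(G) = (\phi(G)^{\mathsf{T}})^{-1} = G$, so every entry satisfies $g_{ij}^4 = g_{ij}$ and therefore lies in $\mathbb{F}_4$. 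Consequently each class has a representative in $\textrm{GU}_5(\mathbb{F}_4) = \textrm{U}_5(2)$, and the resulting surjection $\textrm{U}_5(2) \to \Aut(X_{k^{sep}})$ has kernel the scalar matrices $cI$ with $c \in \mathbb{F}_4^{\times}$, which all satisfy $c^3 = 1$ and thus form $\mu_3$. Since $\mathbb{F}_4^{\times}$ has order $3$, this yields $\Aut(X_{k^{sep}}) \cong \textrm{U}_5(2)/\mu_3 = \textrm{PU}_5(2)$, as claimed.
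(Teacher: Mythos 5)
Your proposal is correct, but it takes a genuinely different route from the paper: the paper's entire proof of this proposition is the single citation \cite[5.6]{qbic}, deferring both the linearity of automorphisms and the identification of the linear automorphism group to Cheng's general theory of $q$-bic hypersurfaces, whereas you import only the linearity statement and do the rest by an explicit matrix computation. Your computation checks out: in characteristic $2$ the expansion $\bigl(\sum_j g_{ij}x_j\bigr)^3 = \sum_{j,k} g_{ij}^2 g_{ik}\, x_j^2 x_k$ together with the linear independence of the $25$ monomials $x_j^2 x_k$ does force $\phi(G)^{\mathsf{T}}G = \lambda I$; the cube-root normalisation is legitimate since $t^3 - \lambda^{-1}$ is separable in characteristic $2$; the identity $\phi^2(G)=G$ correctly places normalised representatives in $\mathrm{GU}_5(\mathbb{F}_4)$; and the kernel of $\mathrm{GU}_5(\mathbb{F}_4)\to\mathrm{PGL}_5(k^{sep})$ is exactly the scalars $\mu_3$, giving $\mathrm{PU}_5(2)$ (note $\gcd(5,3)=1$, so $\mathrm{PGU}_5(2)\cong\mathrm{PSU}_5(2)$ and the notation is unambiguous). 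What your route buys is transparency: it exhibits the unitary group concretely and explains \emph{why} Hermitian cubics have anomalously large automorphism groups; what the paper's citation buys is brevity and the backing of a general theory. Two minor points. First, your parenthetical statement of the linearity theorem (``smooth hypersurfaces of degree $\geq 3$ and dimension $\geq 2$'') should exclude quartic surfaces in $\mathbb{P}^3$, the classical exception; this is harmless here since cubic threefolds have dimension $3$, where linearity follows in any characteristic from Grothendieck--Lefschetz: $\Pic(X_{k^{sep}})=\mathbb{Z}\cdot\mathcal{O}_X(1)$, so any automorphism preserves $\mathcal{O}_X(1)$ and extends to $\mathbb{P}^4$. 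Second, when comparing classes of normalised matrices you implicitly use that if $G' = cG$ with both normalised then $c^3=1$, so the fibres of $\mathrm{GU}_5(\mathbb{F}_4)\to\Aut(X_{k^{sep}})$ are precisely $\mu_3$-cosets; this is true and worth stating explicitly to justify that the kernel is all of the obstruction.
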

\begin{proof}
This follows from \cite[5.6]{qbic}.
\end{proof}

\subsection{Construction of the arithmetic Torelli map over $\mathbb{Z}$}
In this subsection we construct the intermediate Jacobian of the universal family of cubic threefolds. This solves a conjecture of Deligne posed in \cite[3.3]{deligne}. We then construct a stack morphism from  smooth cubic threefolds to principally polarised abelian fivefolds which extends the morphism constructed in \cite[Corollary 3.5]{achter}.

\begin{definition}
    The \emph{moduli stack of smooth cubic threefolds} is the quotient $\mathcal{T} = U/\textrm{PGL}_5$, where $U \subset \mathbb{P}^{34}$ is the locus of smooth cubic forms in $5$ variables. This is a Deligne-Mumford stack by \cite[Theorem 1.6]{Benoist2012}. It is smooth over $\Spec(\mathbb{Z})$ due to \cite[Proposition 1.3.1]{benoistThesis}.
\end{definition}

Let $\mathcal{X} \to U$ be the universal cubic threefold. It has the following property. For any trivial family $g:X \to T$ of cubic threefolds over $T$, there is a morphism $f:T \to U$ such that $f^*\mathcal{X} \cong X$. This morphism is unique up to $\textrm{PGL}_5$-action. For the proof of this fact, see \cite[Proposition 28.3.6]{vakil}

 In order to define a morphism $\overline{J}: U \to \mathcal{A}_{5,1}$ of stacks over $\mathbb{Z}$, it suffices to construct the intermediate Jacobian $J(\mathcal{X}/U)$. The reason is as follows. Let $f:T \to U$ be any map. Then $\overline{J}(f)$ is just $J(\mathcal{X}/U) \times_U T$, the pullback across $f$, which is a p.p.a.s over $T$.

\begin{lemma}\label{universalprym}
Consider $W \subset \mathbb{P}^{34}$ the locus of smooth cubic forms in $5$ variables which are not Hermitian. Specifically, $W$ is the complement in $U$ of the vanishing locus of the homogeneous ideal $(2,a_{ijk} | i\neq j, j \neq k, k \neq i)$. Then the intermediate Jacobian $J(\mathcal{X}_W/W)$ exists. 
\end{lemma}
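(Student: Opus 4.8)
The plan is to obtain $J(\mathcal{X}_W/W)$ directly as the principally polarised abelian scheme furnished by Proposition \ref{attachprym}, so that the whole argument reduces to verifying that the base $W$ and the family $\mathcal{X}_W \to W$ satisfy the hypotheses of that proposition. First I would record the geometry of the base: since $\mathbb{P}^{34}_{\mathbb{Z}}$ is smooth over $\Spec(\mathbb{Z})$ and $U$ is the open locus of smooth cubic forms, $U$ is smooth over $\mathbb{Z}$; as $W$ is an open subscheme of $U$ it is again smooth over $\mathbb{Z}$, hence regular, hence normal, and being of finite type over the Noetherian ring $\mathbb{Z}$ it is Noetherian. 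Thus $W$ is a normal Noetherian scheme over $\Spec(\mathbb{Z})$. Moreover $\mathcal{X}_W \subset \mathbb{P}^4_W$ is a trivial family of smooth cubic threefolds over $W$, so it is a family of cubic threefolds in the sense required.

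Next I would check that no fibre of $\mathcal{X}_W \to W$ is Hermitian, which is where the explicit description of $W$ enters. For a point $w \in W$ with residue field $k(w)$, the defining property $w \notin V(2, a_{ijk} \mid i,j,k \textrm{ distinct})$ says that not all of the functions $2$ and the squarefree-monomial coefficients $a_{ijk}$ vanish at $w$. If $2$ does not vanish at $w$, then $\textrm{char}(k(w)) \neq 2$ and $\mathcal{X}_w$ cannot be Hermitian, since by definition that notion only occurs in characteristic $2$. Otherwise some squarefree coefficient $a_{ijk}(w)$ is nonzero, so the cubic form cutting out $\mathcal{X}_w$ contains a squarefree monomial and is therefore not Hermitian by definition. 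Hence every fibre of $\mathcal{X}_W \to W$ is non-Hermitian. I would remark that the literal and geometric notions in fact coincide here, since in characteristic $2$ the vanishing of all squarefree coefficients is equivalent to the identical vanishing of the Hessian appearing in the proof of Proposition \ref{nogoodline}; this is reassuring, but not logically needed for the present argument.

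With both hypotheses verified, Proposition \ref{attachprym} provides a principally polarised abelian scheme $(\Ab^2_{\mathcal{X}_W/W},\Theta)$ over $W$ whose fibre over each $s \in W$ is $(\Ab^2_{\mathcal{X}_s},\Theta_s)$ with $\Theta_s$ distinguished. By Definition \ref{IJfield} a distinguished class is precisely the polarisation of the intermediate Jacobian over a field, so fibrewise $(\Ab^2_{\mathcal{X}_W/W},\Theta)_s$ is the intermediate Jacobian of $\mathcal{X}_s$. Together with flatness of $\Theta$ over $W$, which holds because $\Theta$ is a principal polarisation and so $h^0$ of the associated line bundle is constantly $1$ on fibres (allowing cohomology and base change to apply), this is exactly the defining data of $J(\mathcal{X}_W/W)$.

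Since the substantive analytic and arithmetic content lives inside Proposition \ref{attachprym}, which we may assume, the lemma is essentially a bookkeeping step; the only genuinely delicate point is matching the explicit ideal-complement description of $W$ with the fibrewise non-Hermitian condition uniformly across residue fields of every characteristic, and this I expect to be the one place requiring care.
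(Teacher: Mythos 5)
Your proposal is correct and takes essentially the same approach as the paper: the paper's proof is a one-line invocation of Proposition \ref{attachprym} after observing that $W$ is normal and Noetherian. Your extra verifications (smoothness of $W$ over $\mathbb{Z}$, the fibrewise non-Hermitian check via the ideal $(2,a_{ijk})$, and the flatness of $\Theta$) merely spell out hypotheses that the paper leaves implicit.
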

\begin{proof}
Since $W$ is normal Noetherian, we can associate to $\mathcal{X}_W$ its Prym scheme by Proposition \ref{attachprym}, which is also the intermediate Jacobian $J(\mathcal{X}_W/W)$ of $\mathcal{X}_W$.
\end{proof}

\begin{customproof}{of Theorem B}
Let $U \subset \mathbb{P}^{34}_{\mathbb{Z}}$ be the locus of smooth cubic forms in $5$ variables, and let $\mathcal{X} \to U$ be the universal cubic threefold. Let $W \subset U$ be the open subset parametrising non-Hermitian cubic threefolds, which has a complement of codimension at least $2$. Then the algebraic representative $\Ab^2_{\mathcal{X}_W/W}$ exists by Lemma \ref{universalprym}. By Proposition \ref{codim2extend}, we can extend this to an abelian variety $A$ over $U$. Then we have $A \cong \Ab^2_{\mathcal{X}/U}$ from the proof of Proposition \ref{basechange}.

Let $D$ be a relative family of divisors on $\Ab^2_{\mathcal{X}_W/W}$ which induces the canonical principal polarisation from Proposition \ref{attachprym}. Let $L = \mathcal{O}(D)$ be the associated line bundle. Since  $\Ab^2_{\mathcal{X}/U}$ is regular, we can take the closure $\overline{D} \subset \Ab^2_{\mathcal{X}/U}$ which defines a line bundle $L_U = \mathcal{O}(\overline{D})$ extending $L$. This in turn gives us a morphism $\Lambda: \Ab^2_{\mathcal{X}/U} \to (\Ab^2_{\mathcal{X}/U})^{\vee}$ which extends the canonical principal polarisation we started off with. From \cite[Remark 1.10b]{Faltings}, the locus in $U$ where $L_U$ is fiberwise ample is closed and open, and so $L_U$ itself is fiberwise ample. Therefore $L_U$ defines a polarisation of abelian schemes, and the kernel $\Ker_U(\Lambda)$ is a finite flat group scheme. As a result the rank of $\Ker_U(\Lambda)$ is constant on the base, and since it is trivial over an open subset $W$, it must be trivial everywhere so $L_U$ in fact defines a principal polarisation.
\end{customproof}

\begin{proposition}\label{baseChangeClosedPoint}
We have an isomorphism $J(\mathcal{X}/U) \cong (\Ab^2_{\mathcal{X}/U}, L_U)$, with the same notation as in the proof of Theorem B. More generally, for any field $k$ and any morphism $f: \Spec(k) \to U$, we have $f^*(\Ab^2_{\mathcal{X}/U}, L_U) \cong J(f^*\mathcal{X})$.
\end{proposition}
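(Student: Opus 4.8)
The plan is to verify directly that $(\Ab^2_{\mathcal{X}/U}, L_U)$ satisfies the defining fibrewise property of the intermediate Jacobian of the family, since such an object is unique when it exists. By Theorem B we already know $(\Ab^2_{\mathcal{X}/U}, L_U)$ is a principally polarised abelian scheme over $U$, so it remains to show, for every point $s \in U$, that $(\Ab^2_{\mathcal{X}/U}, L_U)_s$ is the intermediate Jacobian of $\mathcal{X}_s$ over the residue field $k(s)$; that is, (i) $(\Ab^2_{\mathcal{X}/U})_s \cong \Ab^2_{\mathcal{X}_s}$ and (ii) $(L_U)_s$ is distinguished in the sense of Definition \ref{IJfield}. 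Granting this, the second assertion follows formally: a field point $f \colon \Spec(k) \to U$ factors through its image $s$, so that $f^*\mathcal{X} = (\mathcal{X}_s)_k$ and $f^*(\Ab^2_{\mathcal{X}/U},L_U) = J(\mathcal{X}_s)_k$; and since $(\Ab^2_{\mathcal{X}_s})_k \cong \Ab^2_{(\mathcal{X}_s)_k}$ by Proposition \ref{albvsar} (the Fano scheme and its Albanese commute with base change), Lemma \ref{extendPol} identifies $J(\mathcal{X}_s)_k$ with $J((\mathcal{X}_s)_k) = J(f^*\mathcal{X})$.

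For (i) I would argue uniformly in $s$, including the Hermitian fibres, using Proposition \ref{albvsar}. The universal line over $U$ induces a map $\Alb(F(\mathcal{X}/U)) \to \Ab^2_{\mathcal{X}/U}$ and, by the universal property of algebraic representatives, a natural comparison homomorphism $c_s \colon \Ab^2_{\mathcal{X}_s} \to (\Ab^2_{\mathcal{X}/U})_s$. Since the Fano scheme $F(\mathcal{X}/U)$ and the formation of its Albanese commute with base change, one has $\Alb(F(\mathcal{X}/U))_s \cong \Alb(F(\mathcal{X}_s))$, and Proposition \ref{albvsar} over $k(s)$ identifies the latter with $\Ab^2_{\mathcal{X}_s}$ via the fibre of $L_*$. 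Chasing these identifications shows that $c_s$ is compatible with the isomorphism of Proposition \ref{albvsar}; as all three abelian varieties have dimension $5$, $c_s$ is an isomorphism, giving $(\Ab^2_{\mathcal{X}/U})_s \cong \Ab^2_{\mathcal{X}_s}$.

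For (ii), over the non-Hermitian locus $W$ the claim is immediate: by Lemma \ref{universalprym} and Proposition \ref{attachprym} the restriction $(\Ab^2_{\mathcal{X}/U}, L_U)|_W$ is the Prym scheme $J(\mathcal{X}_W/W)$, whose fibres are distinguished by construction. For $s \in U \setminus W$ — necessarily a Hermitian cubic threefold, hence in characteristic $2$ — I would fix an odd prime $\ell$, so that $s \in U[1/\ell]$, and propagate the distinguished property from $W$ by $\ell$-adic continuity. The condition of Definition \ref{IJfield} compares $cl_1(-L_U)$ with the cup-product pairing on $\HH^3$ under the comparison isomorphism $\alpha$ of Proposition \ref{prereq}. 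The key point is that, over $U[1/\ell]$, both $cl_1(-L_U)$ and the transported cup product are global sections of the lisse $\mathbb{Z}_\ell$-sheaf $R^2 p_* \mathbb{Z}_\ell(1)$ attached to the abelian scheme $p \colon \Ab^2_{\mathcal{X}/U} \to U$ (here $\mathcal{X}/U$ is smooth proper and $\Ab^2_{\mathcal{X}/U}/U$ is an abelian scheme). They agree on the dense open $W[1/\ell]$, and since $U[1/\ell]$ is connected and normal, global sections of a locally constant sheaf inject into their restriction to a dense connected open; hence the two sections agree over all of $U[1/\ell]$, and in particular $(L_U)_s$ is distinguished.

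The main obstacle is the $\ell$-adic input in (ii): one must promote the pointwise comparison isomorphism of Proposition \ref{prereq} to a morphism of lisse sheaves over $U[1/\ell]$, compatible with $cl_1$, with the cup product, and across the characteristic-$2$ locus. This is where the relative $\ell$-adic Bloch map and the functoriality already recorded in Lemmas \ref{cohfunc} and \ref{bigCommDiagram}, together with smooth and proper base change, are needed to guarantee that the ``distinguished discrepancy'' is a genuine global section rather than merely a fibrewise invariant. Once this relative comparison is in place, Steps (i) and (ii) yield $J(\mathcal{X}/U) \cong (\Ab^2_{\mathcal{X}/U}, L_U)$, and the field-point statement follows as explained above.
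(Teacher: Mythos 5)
Your reduction to the fibrewise statement, the treatment of the non-Hermitian locus $W$, and the formal deduction of the field-point claim all match the paper's structure, but both of your key steps have genuine gaps, and they sit exactly where the difficulty of this proposition lies: specialization to the Hermitian, characteristic-$2$ fibres. In step (i), the comparison map $c_s\colon \Ab^2_{\mathcal{X}_s}\to(\Ab^2_{\mathcal{X}/U})_s$ is not produced by the universal property of algebraic representatives: the functor $\mathcal{A}^2_{\mathcal{X}/U}$ and its universal property live on $Sm_{\Lambda}/U$, whose objects are \emph{dominant} over $U$, and the fibre over a non-generic point $s$ is not such an object, so there is no a priori regular homomorphism $\mathcal{A}^2_{\mathcal{X}_s/k(s)}\to(\Ab^2_{\mathcal{X}/U})_s$ to factor; producing a specialization comparison is precisely the content of Proposition \ref{specializeAR}, which the paper proves only over a DVR. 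Your substitute---a relative Albanese $\Alb(F(\mathcal{X}/U))$ over all of $U$ commuting with base change at every point---is not available: the paper asserts stability of Albanese and Fano schemes under base change only ``in this case,'' i.e.\ over a DVR, and at the characteristic-$2$ Hermitian fibres the existence of $\Alb(F(\mathcal{X}/U))$ as an abelian scheme commuting with passage to fibres amounts to smoothness of the relative $\Pic^0$ of the Fano surfaces there, which is exactly the kind of statement that can fail in characteristic $2$ and is nowhere established.

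In step (ii), your $\ell$-adic continuity argument needs a relative comparison isomorphism of lisse sheaves over $U[1/\ell]$ whose fibre at each Hermitian point $s$ agrees with the pointwise Bloch-map isomorphism $\alpha_s$ of Proposition \ref{prereq}, since Definition \ref{IJfield} of ``distinguished'' is phrased in terms of $\alpha_s$. You correctly flag this as the main obstacle, but the tools you cite cannot fill it: Lemmas \ref{cohfunc} and \ref{bigCommDiagram} give compatibility of $\alpha$ under \emph{field extensions}, not under \emph{specialization} from characteristic $0$ to characteristic $2$, which is what crossing the locus $U\setminus W$ requires. The missing compatibility is essentially the content of the paper's own proof, which proceeds differently: at a Hermitian point it lifts the Fermat cubic over $s^{sep}$ to the Cohen--Witt ring $W_C(s^{sep})$, giving a map $\Spec(W_C(s^{sep}))\to U$ whose generic fibre lands in $W$; since an abelian scheme with polarisation over a DVR is determined by its generic fibre, the fibre of $(\Ab^2_{\mathcal{X}/U},L_U)$ at $s^{sep}$ must coincide with the intermediate Jacobian constructed in Proposition \ref{specialIJT} (which is defined by exactly such a spread), and distinguishedness over $k(s)$ then follows because it is by definition a condition after base change to $s^{sep}$, with the underlying abelian variety identified over $k(s)$ via Proposition \ref{specializeAR}. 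So your continuity route does not bypass the Cohen--Witt specialization mechanism; it presupposes it as an unproved input, and as written the proposal does not constitute a proof.
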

\begin{proof}
We must show, for every point $s \in U$, that $(\Ab^2_{\mathcal{X}/U}, L_U)_s \cong J(\mathcal{X}_s)$. For $s \in W$, this is immediate due to Proposition \ref{attachprym}. Suppose that $s \not \in W$, so that $\mathcal{X}_s$ is Hermitian. Let $s^{sep}$ denote the separable closure of the respective residue field. We first show that $(\Ab^2_{\mathcal{X}/U}, L_U)_{s^{sep}} \cong J(\mathcal{X}_{s^{sep}})$. Since $s^{sep}$ is separably closed, $\mathcal{X}_{s^{sep}}$ is isomorphic to the Fermat cubic threefold $X$. We can lift $X$ to the Fermat cubic threefold $\widetilde{X}$ over the Cohen-Witt ring $W_C(s^{sep})$, which is a DVR. This corresponds to a morphism $f: W_C(s^{sep}) \to U$ since it is a trivial family of cubic threefolds. Now $(f^*(\Ab^2_{\mathcal{X}/U},L_U))_{\eta} \cong J(\mathcal{X}_{\eta})$
if $\eta$ is the generic point of $W_C(s^{sep})$. It follows from the proof of Proposition \ref{specialIJT} that $(\Ab^2_{\mathcal{X}/U}, L_U)_{s^{sep}} \cong J(\mathcal{X}_{s^{sep}})$.

Now let $s$ be a general point and consider $(\Ab^2_{\mathcal{X}/U},L_U)_s$. By Proposition \ref{specializeAR}, We can already identify $J(\mathcal{X})_{s}$ with $(\Ab^2_{\mathcal{X}_s},\Xi)$ for some polarisation $\Xi$, and we have just shown that $\Xi_{s^{sep}}$ is distinguished. Thus $\Xi$ is distinguished, by definition.

The second part of the statement follows from a similar argument.
\end{proof}

\begin{proposition}\label{specialize}
Let $S$ be a DVR with special point $s$ and generic point $\eta$. Suppose that $X$ is a cubic threefold over $S$. Then the intermediate Jacobian $J(X_{\eta}) = (\Ab^2_{X_{\eta}},\Theta_{\eta})$ spreads out to a p.p.a.s. $(A,\Theta)$ over $S$ whose specialization satisfies $(A_s,\Theta_s) = J(X_s)$.
\end{proposition}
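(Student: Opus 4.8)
The plan is to set $A := \Ab^2_{X/S}$, which exists because a DVR is regular, integral and Noetherian (Proposition \ref{existenceAR2}), and then to produce the polarisation by extending the theta divisor of the generic fibre. First I would record, by the same argument as in Proposition \ref{specializeAR}, that the relative morphism $L_*\colon \Alb(F(X/S)) \to A$ is an isomorphism via Proposition \ref{albvsar}, and that Albanese varieties and Fano schemes of lines commute with base change along $\eta \to S$ and $s \to S$. This yields identifications $A_\eta \cong \Ab^2_{X_\eta}$ and $A_s \cong \Ab^2_{X_s}$. In particular the distinguished divisor $\Theta_\eta$ of $J(X_\eta)$ may be regarded as an effective divisor $D_\eta$ on the generic fibre $A_\eta$, which is an open dense subscheme of the integral regular scheme $A$ (smooth over the regular base $S$).

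Next I would extend the polarisation by mimicking the proof of Theorem B. Let $\Theta := \overline{D_\eta}$ be the scheme-theoretic closure of $D_\eta$ in $A$; since $A$ is regular and integral and $D_\eta$ is of codimension one, $\Theta$ is an effective Cartier divisor restricting to $D_\eta$ on $A_\eta$, and $L_S := \mathcal{O}_A(\Theta)$ is a line bundle extending $\mathcal{O}(D_\eta)$. It defines a homomorphism $\Lambda\colon A \to A^{\vee}$ over $S$ whose generic fibre is the principal polarisation $\Theta_\eta$. By \cite[Remark 1.10b]{Faltings} the locus of $S$ over which $L_S$ is fibrewise ample is open and closed; it contains $\eta$ and $S$ is connected, so $L_S$ is fibrewise ample and $\Lambda$ is a polarisation of abelian schemes. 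Then $\Ker(\Lambda)$ is finite flat over $S$ of constant rank, and this rank is one over $\eta$, hence one everywhere, so $(A,\Theta)$ is a principally polarised abelian scheme over $S$ with $(A,\Theta)_\eta = J(X_\eta)$.

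It remains to show $(A_s,\Theta_s) = J(X_s)$. Since $A_s \cong \Ab^2_{X_s}$ is already established, I only need $\Theta_s$ to be a distinguished polarisation, and by Definition \ref{IJfield} this may be checked after base change to $s^{sep}$. Here I would transport distinguishedness from the generic fibre by specialisation in $\ell$-adic cohomology, for a prime $\ell$ invertible on $S$. For the smooth proper families $A \to S$ and $X \to S$ the cospecialisation maps are isomorphisms over the strict henselisation of $S$, they are compatible with cup products and with the first Chern classes of global divisors such as $\Theta$, and the comparison isomorphism $\alpha$ of Proposition \ref{prereq} is compatible with them: this is precisely the kind of commutative diagram built in the proof of Proposition \ref{prereq} by lifting and specialising, and packaged for field extensions in Lemmas \ref{cohfunc} and \ref{bigCommDiagram}. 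Since over $\eta$ the class $cl_1(-\Theta_\eta)$ matches the cup product pairing under $\alpha$, the same holds over $s$, so $\Theta_s$ is distinguished and $(A,\Theta)_s = J(X_s)$.

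I expect the last step to be the main obstacle: establishing that the comparison isomorphism $\alpha$ is compatible with the $\ell$-adic specialisation maps along the DVR $S$, rather than along a field extension. The extension of the polarisation and the identification of the underlying abelian schemes are formal consequences of the earlier results, but verifying that distinguishedness survives specialisation requires adapting the $\ell$-adic functoriality of Proposition \ref{prereq} and Lemma \ref{bigCommDiagram} from separable field extensions to the specialisation morphism $\eta \rightsquigarrow s$, which is where the genuine cohomological input lies.
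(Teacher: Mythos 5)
Your construction of the underlying data is fine and essentially repeats the proof of Theorem B over the base $S$: taking $A=\Ab^2_{X/S}$, identifying $A_\eta\cong\Ab^2_{X_\eta}$ and $A_s\cong\Ab^2_{X_s}$ via Propositions \ref{albvsar} and \ref{specializeAR}, setting $\Theta=\overline{\Theta_\eta}$, and promoting $\Lambda$ to a principal polarisation with \cite[Remark 1.10b]{Faltings} all go through. The gap is exactly where you flag it, and it is a genuine missing idea rather than a routine verification: you need the comparison isomorphism $\alpha$ of Proposition \ref{prereq} to intertwine the $\ell$-adic cospecialisation isomorphisms along $\eta\rightsquigarrow s$, and neither the paper nor your argument supplies this. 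Lemmas \ref{cohfunc} and \ref{bigCommDiagram} are proved only for extensions of (separably closed) fields, and their proofs exploit invariance of \'etale cohomology and Tate modules under such extensions; a specialisation map is not of this type. Worse, the square you need involves the identification $(\Ab^2_X)_s\cong\Ab^2_{X_s}$ of Proposition \ref{specializeAR}, which is manufactured through the Albanese of the Fano surface; one would have to show that this particular isomorphism is compatible, on Tate modules, with the Bloch maps and with the specialisation isomorphisms on $\HH^1$ of the abelian scheme and $\HH^3$ of the family. That is a real statement about specialisation of algebraic representatives and cycle class maps, not a repackaging of the diagram in the proof of Proposition \ref{prereq} (which concerns one chosen lift, only relates $T_\ell\A^2$ to $\HH^3$, and is never combined there with the polarisation).

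The paper's own proof sidesteps all of this. Since $X$ is a trivial family over the DVR $S$, there is a classifying morphism $f\colon\Spec(S)\to U$ with $f^*\mathcal{X}\cong X$; one pulls back the universal intermediate Jacobian $J(\mathcal{X}/U)$ of Theorem B and quotes Proposition \ref{baseChangeClosedPoint}, which says that \emph{every} fibre of $J(\mathcal{X}/U)$ over a field-valued point of $U$ is the intermediate Jacobian of the corresponding cubic. The key conceptual difference is that distinguishedness of the special fibre is a property of the image point $f(s)\in U$ alone, and it was verified once and for all in Proposition \ref{baseChangeClosedPoint}: for non-Hermitian fibres via the Prym construction (Proposition \ref{attachprym}), and for Hermitian fibres via one well-chosen DVR, the Cohen--Witt lift of the Fermat cubic, where \cite{achter2020decomposition} applies (Proposition \ref{specialIJT}); no cohomological specialisation argument along your particular $S$ is ever needed. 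Your proof can be repaired by replacing the $\ell$-adic step with exactly this: observe that $(A,\Theta)$ and $f^*J(\mathcal{X}/U)$ both extend $J(X_\eta)$ over $S$, hence coincide (uniqueness of the abelian-scheme extension by Proposition \ref{extendRatMap}, and of the effective divisor extending $\Theta_\eta$ by flatness and Lemma \ref{dist}), and then apply Proposition \ref{baseChangeClosedPoint} to the fibre at $s$.
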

\begin{proof}
It is clear that we can take $A = \Ab^2_X$, and $\Theta = \overline{\Theta_{\eta}}$, so it remains to prove the specialization result. We know that $(\Ab^2_X)_s \cong \Ab^2_{X_s}$ by Proposition \ref{specializeAR}.

Since $X$ is a trivial family of cubic threefolds, there is a (non-unique) morphism $f:\Spec(S) \to U$ to the moduli space of cubic forms such that $f^*\mathcal{X} \cong X$. We may consider the principally polarised abelian scheme $f^*J(\mathcal{X})$. From Proposition \ref{baseChangeClosedPoint}, the fibres of $f^*J(\mathcal{X})$ are intermediate Jacobians, and the result follows immediately. 
\end{proof}

\begin{proposition}\label{del}
There is a morphism of stacks $\overline{J}: U \to \mathcal{A}_{5,1}$ which associates to a smooth cubic threefold $X$ over a field $k$ its intermediate Jacobian $J(X/k)$.
\end{proposition}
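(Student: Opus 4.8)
The plan is to read off the desired morphism directly from the universal intermediate Jacobian constructed in Theorem B, using the moduli interpretation of the target stack. Recall that $\mathcal{A}_{5,1}$ is the moduli stack of principally polarised abelian schemes of relative dimension $5$: for any scheme $S$, the objects of $\mathcal{A}_{5,1}(S)$ are pairs $(A,\lambda)$ with $A \to S$ an abelian scheme of relative dimension $5$ and $\lambda$ a principal polarisation. Hence specifying a morphism $\overline{J}: U \to \mathcal{A}_{5,1}$ is the same as exhibiting one such pair over $U$, and the text has already reduced the problem to constructing the intermediate Jacobian $J(\mathcal{X}/U)$ of the universal cubic threefold.

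First I would take the principally polarised abelian scheme $(\Ab^2_{\mathcal{X}/U}, L_U) = J(\mathcal{X}/U)$ produced in the proof of Theorem B, where $L_U$ was shown to be a fibrewise ample line bundle defining a principal polarisation. The one point needing verification is that $\Ab^2_{\mathcal{X}/U} \to U$ has relative dimension exactly $5$. Since the relative dimension of an abelian scheme agrees with the dimension of every fibre, it suffices to observe that each fibre is an abelian fivefold: over a non-Hermitian point the fibre is the Prym variety $\Prym(X,l)$, of dimension $g(C) - 1 = 5$, while over a Hermitian point the fibre is the intermediate Jacobian of Proposition \ref{specialIJT}, which is likewise five-dimensional (being, after lifting to characteristic $0$, the specialisation of a five-dimensional Prym). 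Thus $(\Ab^2_{\mathcal{X}/U}, L_U)$ is an object of $\mathcal{A}_{5,1}(U)$ and defines the morphism $\overline{J}$.

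It then remains to confirm the defining property, that $\overline{J}$ sends a cubic threefold $X$ over a field $k$ to its intermediate Jacobian $J(X/k)$. If $f:\Spec(k) \to U$ classifies $X$, so that $f^*\mathcal{X} \cong X$, then by construction $\overline{J} \circ f$ is represented by the pullback $f^*(\Ab^2_{\mathcal{X}/U}, L_U)$, and Proposition \ref{baseChangeClosedPoint} gives exactly $f^*(\Ab^2_{\mathcal{X}/U}, L_U) \cong J(f^*\mathcal{X}) = J(X/k)$.

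Since the genuine content — the construction of $L_U$ and the proof that it is a principal polarisation, together with the fibrewise identification with intermediate Jacobians even at Hermitian points and points with imperfect residue field — is carried out in Theorem B and Proposition \ref{baseChangeClosedPoint}, I expect the present statement to be essentially a formal repackaging via the moduli description of $\mathcal{A}_{5,1}$. The only subtlety worth flagging is that a morphism of stacks must respect all field-valued points and not merely geometric ones; this is precisely the content of the second assertion of Proposition \ref{baseChangeClosedPoint}, so I anticipate no real obstacle.
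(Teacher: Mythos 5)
Your proposal is correct and follows essentially the same route as the paper: both define $\overline{J}$ as the morphism induced by the principally polarised abelian scheme $J(\mathcal{X}/U)$ from Theorem B, via the moduli interpretation of $\mathcal{A}_{5,1}$, and both invoke Proposition \ref{baseChangeClosedPoint} to verify that field-valued points are sent to intermediate Jacobians. Your additional check that the relative dimension is $5$ is a sensible (if routine) point the paper leaves implicit.
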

\begin{proof}
Let $\overline{J}$ be the morphism induced by the principally polarised abelian scheme $J(\mathcal{X}/U)$. Due to Proposition \ref{baseChangeClosedPoint}, this morphism sends a field-valued point $f: \Spec(k) \to U$ to $J(f^*\mathcal{X})$. In particular it sends classes of smooth cubic threefolds over a field to their intermediate Jacobian. 
\end{proof}

\begin{corollary}\label{del2}
There is a morphism of stacks $\widetilde{J}: \mathcal{T} \to \mathcal{A}_{5,1}$ which fits into a commutative diagram
$$\begin{tikzcd}
U \arrow{r} \arrow{dr}{\overline{J}} &\mathcal{T} \arrow{d}{\widetilde{J}} \\
   &   \mathcal{A}_{5,1}
\end{tikzcd}$$
and associates to a smooth cubic threefold $X$ over a field $k$ its intermediate Jacobian $J(X/k)$.
\end{corollary}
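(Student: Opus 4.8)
The plan is to descend the morphism $\overline{J}\colon U \to \mathcal{A}_{5,1}$ of Proposition \ref{del} along the presentation $U \to \mathcal{T} = U/\mathrm{PGL}_5$. Since $\mathcal{T}$ is the quotient stack $[U/\mathrm{PGL}_5]$ and $\mathrm{PGL}_5$ acts trivially on $\mathcal{A}_{5,1}$, producing $\widetilde{J}$ together with a $2$-isomorphism $\widetilde{J}\circ(U\to\mathcal{T})\cong\overline{J}$ is the same as promoting $\overline{J}$ to a $\mathrm{PGL}_5$-equivariant morphism, i.e. exhibiting an effective descent datum for the principally polarised abelian scheme $J(\mathcal{X}/U) = (\Ab^2_{\mathcal{X}/U}, L_U)$ of Theorem B along the groupoid $\mathrm{PGL}_5 \times U \rightrightarrows U$. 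Write $a, p\colon \mathrm{PGL}_5 \times U \to U$ for the action and the projection.

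First I would record the key geometric input: the $\mathrm{PGL}_5$-action on $U$ is induced by the linear action on $\mathbb{P}^4$, so each $g \in \mathrm{PGL}_5$ restricts to an isomorphism $\mathbb{V}_+(f) \xrightarrow{\sim} \mathbb{V}_+(f \circ g^{-1})$; globalising gives a \emph{canonical} isomorphism of families of cubic threefolds $\phi\colon a^*\mathcal{X} \xrightarrow{\sim} p^*\mathcal{X}$ over $\mathrm{PGL}_5 \times U$. Applying the functoriality of the intermediate Jacobian from Remark \ref{defActIJ} with identity base map, $\phi$ induces an isomorphism of principally polarised abelian schemes $\phi'\colon (\Ab^2_{a^*\mathcal{X}}, L) \xrightarrow{\sim} (\Ab^2_{p^*\mathcal{X}}, L)$ over $\mathrm{PGL}_5 \times U$. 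Because $a$ and $p$ are smooth of finite type over $\mathbb{Z}$, Proposition \ref{basechange} identifies $\Ab^2_{a^*\mathcal{X}} \cong a^*\Ab^2_{\mathcal{X}/U}$ and $\Ab^2_{p^*\mathcal{X}} \cong p^*\Ab^2_{\mathcal{X}/U}$, and Proposition \ref{baseChangeIJ} upgrades these to identifications of polarised abelian schemes. Composing, I obtain the descent isomorphism
$$\Phi\colon a^*J(\mathcal{X}/U) \xrightarrow{\sim} p^*J(\mathcal{X}/U).$$

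Next I would verify the cocycle condition on $\mathrm{PGL}_5 \times \mathrm{PGL}_5 \times U$. The isomorphisms $\phi$ satisfy the cocycle identity coming from associativity of the group action on $\mathcal{X}$, and the functoriality of the assignment $g \mapsto g'$ recorded in Remark \ref{defActIJ} transports this identity, together with the base-change identifications above, to the corresponding cocycle identity for $\Phi$. Hence $(J(\mathcal{X}/U), \Phi)$ is an effective descent datum for a principally polarised abelian scheme along $\mathrm{PGL}_5 \times U \rightrightarrows U$, so by the universal property of the quotient stack it descends to a principally polarised abelian scheme over $\mathcal{T}$, equivalently to a morphism $\widetilde{J}\colon \mathcal{T} \to \mathcal{A}_{5,1}$ with $\widetilde{J}\circ(U\to\mathcal{T}) \cong \overline{J}$; this is the asserted commutative triangle. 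Finally, for a field-valued point $\Spec(k) \to \mathcal{T}$, after passing to an fppf (or field) extension it lifts to a point of $U$ representing a cubic threefold $X/k$, and $\widetilde{J}\circ(U\to\mathcal{T})\cong\overline{J}$ together with Proposition \ref{baseChangeClosedPoint} shows that $\widetilde{J}$ returns $J(X/k)$, as required.

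I expect the main obstacle to be the coherence of the descent datum: one must check that the identifications furnished by Propositions \ref{basechange} and \ref{baseChangeIJ} are mutually compatible over the double product $\mathrm{PGL}_5 \times \mathrm{PGL}_5 \times U$, so that $\Phi$ is a \emph{genuine} descent datum rather than merely a fibrewise isomorphism. This is precisely where the functoriality of the intermediate Jacobian in Remark \ref{defActIJ} is indispensable; once that compatibility is in place, the passage from descent datum to the morphism $\widetilde{J}$ is formal.
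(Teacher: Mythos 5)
Your proposal is correct, and it proves the statement by descending $J(\mathcal{X}/U)$ from $U$ to $\mathcal{T}=U/\mathrm{PGL}_5$, which is also what the paper does --- but the two implementations differ in a way worth recording. The paper works object-by-object: an $S$-point of $\mathcal{T}$ is a $\mathrm{PGL}_5$-torsor $\pi\colon T\to S$ with equivariant $f\colon T\to U$; the paper trivialises the torsor on an \'etale cover $\{S_i\to S\}$, forms $A_i:=p_i^*f^*J(\mathcal{X}/U)$ using the identity sections $p_i$, and glues the $A_i$ by cocycle data $\psi_{ij}$ into a p.p.a.s.\ over $S$, declared to be the image under $\widetilde{J}$. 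You instead construct a single descent datum $\Phi\colon a^*J(\mathcal{X}/U)\xrightarrow{\sim}p^*J(\mathcal{X}/U)$ along the groupoid $\mathrm{PGL}_5\times U\rightrightarrows U$ and invoke the universal property of the quotient stack. These are two renderings of the same idea, but yours is the more complete one precisely where the paper is silent: the paper never explains where the $\psi_{ij}$ come from, and since on overlaps the sections $p_i,p_j$ differ by a transition function into $\mathrm{PGL}_5$, producing $\psi_{ij}$ (and verifying the cocycle identity that makes the gluing legitimate) requires exactly your equivariance isomorphism $\Phi$ --- built from the canonical identification $a^*\mathcal{X}\cong p^*\mathcal{X}$, the functoriality of Remark \ref{defActIJ}, and the base-change results of Propositions \ref{basechange} and \ref{baseChangeIJ} --- together with its coherence over $\mathrm{PGL}_5\times\mathrm{PGL}_5\times U$, which you correctly flag as the real content. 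What the paper's rendering buys is an explicit description of the image of an arbitrary $S$-point; what yours buys is that functoriality in $S$ and independence of the choice of trivialisation come for free. One small correction to your final step: no fppf or field extension is needed there, since a smooth cubic threefold over $k$ is by definition a $k$-point of $U$, so the last claim follows directly from the $2$-commutativity of the triangle together with Proposition \ref{del} (equivalently Proposition \ref{baseChangeClosedPoint}).
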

\begin{proof}
Since $\mathcal{T} = U/\textrm{PGL}_5$, an $S$-valued object of this category is a $\textrm{PGL}_5$-torsor $\pi:T \to S$ together with an equivariant map $f:T \to U$.

There is some \'etale cover $S_i \to S$ which trivializes this torsor. We obtain trivial torsors $\pi_i: S_i \times \textrm{PGL}_5 \cong T \times_{S} S_i \to S_i$. Consider the intermediate Jacobian $J(\mathcal{X}/U)$ of the universal cubic threefold. We have natural sections $p_i:S_i \to T \times_{S} S_i $ for each $i$ given by the identity element $1 \in \textrm{PGL}_5$. This lets us define the p.p.a.s.'s 
$$A_i := p_i^*f^*J(\mathcal{X}/U).$$ 
These come together with cocyle data 
$$\psi_{ij}:(A_i)_{S_i \cap S_j} \to (A_j)_{S_i\cap S_j}$$
which define a p.p.a.s. $A$ over $S$. We set this as the image of our chosen object under $\widetilde{J}$. This defines a natural morphism of stacks extending the morphism $\overline{J}: U \to \mathcal{A}_{5,1}$.
\end{proof}

\subsection{Non-rationality}
Let $k$ be an algebraically closed field. In this subsection we show that cubic threefolds over $k$ are non-rational. By extension, this shows that any cubic threefold over an arbitrary field is non-rational.

\begin{proposition}\label{upf}
We have the following facts
\begin{enumerate}
    \item Let $A$ be a principally polarised abelian variety. Then $A$ has a unique factorisation into irreducible principally polarised abelian varieties (p.p.a.v), up to isomorphism of p.p.a.v's and re-ordering.
    \item Jacobians of curves are irreducible as p.p.a.v's.
\end{enumerate}
\end{proposition}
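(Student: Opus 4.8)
The plan is to handle the two parts separately, deducing both from how the theta divisor behaves under products, which is available over the algebraically closed ground field via Lemma \ref{dist}. Throughout, I call a nonzero p.p.a.v. \emph{irreducible} (indecomposable) if it admits no factorisation $(A_1,\lambda_1)\times(A_2,\lambda_2)$ with both factors nonzero.

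The key principle, which drives everything, is this: for a p.p.a.v. $(A,\lambda)$ the polarisation $\lambda=\lambda_\Theta$ is induced by an effective divisor $\Theta$ that is unique up to translation (Lemma \ref{dist}), and a decomposition $(A,\lambda)\cong(A_1,\lambda_1)\times(A_2,\lambda_2)$ forces $\mathcal{O}_A(\Theta)\cong p_1^*\mathcal{O}(\Theta_1)\otimes p_2^*\mathcal{O}(\Theta_2)$, hence $\Theta=(\Theta_1\times A_2)+(A_1\times\Theta_2)$ as divisors. When both factors are nonzero this writes $\Theta$ as a sum of two nonzero effective divisors with no common component, so $\Theta$ is reducible; contrapositively, an irreducible theta divisor forces $(A,\lambda)$ to be indecomposable. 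For part (2) this finishes the job: for a curve $C$ of genus $g\geq 1$ the theta divisor $\Theta\subset J_{g-1}C$ is the image of the irreducible variety $\Sym^{g-1}C$ under the Abel--Jacobi map, hence irreducible, so $JC$ is irreducible as a p.p.a.v.

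For part (1), existence of a factorisation into irreducibles follows by induction on $\dim A$: if $(A,\lambda)$ is decomposable, split off a nonzero factor and recurse, the process terminating as the dimension strictly drops. The substance is uniqueness, and my approach is to recover the factors canonically from $\Theta$. Given any decomposition $A\cong\prod_i A_i$ into irreducibles, iterating the computation above gives $\Theta=\sum_i D_i$ with $D_i=\Theta_i\times\prod_{j\neq i}A_j$; each $D_i$ is irreducible, since $\Theta_i$ is irreducible (as $A_i$ is indecomposable, by the principle just established) and a product of irreducible varieties over $k=\overline{k}$ is irreducible. Thus $\{D_i\}$ is exactly the set of irreducible components of $\Theta$, an object intrinsic to $(A,\lambda)$. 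To each irreducible component $D$ of $\Theta$ I attach its translation stabiliser $B_D:=\bigl(\{a\in A:a+D=D\}\bigr)^0_{\mathrm{red}}$, an abelian subvariety; for $D=D_i$ a direct check gives $B_{D_i}=\prod_{j\neq i}A_j$, so that $A/B_{D_i}\cong A_i$ inherits the restricted principal polarisation $\lambda_i$. Since both the components $D$ and the subvarieties $B_D$ depend only on $(A,\lambda)$, the multiset $\{(A/B_D,\lambda|)\}_D$ is an invariant of $(A,\lambda)$ equal to the multiset of factors in \emph{any} decomposition, which yields uniqueness up to isomorphism and reordering.

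The main obstacle is this last uniqueness step, and specifically the identification $B_{D_i}=\prod_{j\neq i}A_j$. The crux is that the translation stabiliser of $\Theta_i$ inside the factor $A_i$ is finite, which is a consequence of $\Theta_i$ being a principal polarisation (ample with $h^0=1$, so $K(\mathcal{O}(\Theta_i))$ is finite and its identity component is trivial); this is what collapses the $A_i$-direction and forces $B_{D_i}$ to be exactly the complementary factor, so that passing to $A/B_{D_i}$ returns the principally polarised factor $(A_i,\lambda_i)$ rather than a proper isogeny quotient. I expect verifying that $\lambda$ descends to a \emph{principal} polarisation on $A/B_{D_i}$ — via Poincaré complete reducibility together with the principality of $\lambda$ — to be the one place requiring genuine care. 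The remaining ingredients, namely existence and uniqueness up to translation of the effective divisor $\Theta$ (Lemma \ref{dist}) and geometric irreducibility of products over $k=\overline{k}$, are standard.
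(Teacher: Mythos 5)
Your part (2) is correct and is exactly the paper's argument: a nontrivial product decomposition forces the theta divisor to be a sum of two nonzero effective divisors, while the theta divisor of a Jacobian is the image of $\Sym^{g-1}C$ under the Abel--Jacobi map, hence irreducible. The existence half of part (1), by induction on dimension, is also fine. The paper itself does not attempt a proof of the uniqueness in part (1); it cites \cite[Corollary 2]{debarre}.

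The genuine gap is in your uniqueness argument, at the step where you assert that each $\Theta_i$ is irreducible ``as $A_i$ is indecomposable, by the principle just established.'' The principle you established is the implication: $(A,\lambda)$ decomposable $\Rightarrow$ $\Theta$ reducible, whose contrapositive is: $\Theta$ irreducible $\Rightarrow$ $(A,\lambda)$ indecomposable. What you need here is the \emph{converse}: $A_i$ indecomposable $\Rightarrow$ $\Theta_i$ irreducible, equivalently $\Theta$ reducible $\Rightarrow$ $(A,\lambda)$ decomposable. That converse is precisely the nontrivial content of the decomposition theorem for p.p.a.v.'s --- the statement the paper imports from Debarre --- and it does not follow from the easy direction; proving it requires showing that each irreducible component of $\Theta$ has a positive-dimensional translation stabiliser and that the resulting abelian subvarieties, with their induced polarisations, split $(A,\lambda)$, which uses $h^0(\Theta)=1$ and ampleness in an essential way. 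Without it, your identification of $\{D_i\}$ with the set of irreducible components of $\Theta$ fails: if some $\Theta_i$ were reducible, the intrinsic components of $\Theta$ would be strictly finer than the $D_i$, the stabiliser construction applied to those finer components would no longer return the factors $A_i$, and the ``intrinsic multiset'' argument collapses. Note also that uniqueness cannot be rescued by soft Krull--Schmidt reasoning on the underlying abelian varieties, since decompositions of abelian varieties into indecomposable factors are genuinely non-unique in the absence of the polarisation; some theta-divisor input of exactly this strength is unavoidable, so the circularity is not a removable blemish but the crux of the theorem.
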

\begin{proof}
See \cite[Corollary 2]{debarre} for $(1)$. For $(2)$, if the Jacobian of a curve $C$ is reducible as a p.p.a.v, then the theta divisor on $JC$ is reducible as a variety. But the theta divisor is the image of a symmetric product of $C$ so it is irreducible.
\end{proof}

\begin{proposition}\label{rat3fold}
Let $X$ be a rational threefold over $k$, and suppose that the intermediate Jacobian $J(X)$ exists. Then $J(X)$ is isomorphic, as a principally polarised abelian variety, to a product of Jacobians of curves.
\end{proposition}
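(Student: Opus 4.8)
The plan is to run the Clemens--Griffiths argument in the form that tracks the intermediate Jacobian through a factorisation of a birational map $X \dashrightarrow \mathbb{P}^3$ into elementary blow-ups and blow-downs along smooth centres. Since $\HH^3(\mathbb{P}^3,\mathbb{Q}_\ell(2)) = 0$, the intermediate Jacobian $J(\mathbb{P}^3)$ is trivial, so the whole argument rests on understanding how $J$ changes under a single smooth blow-up. The key invariant bookkeeping is then combined with the unique factorisation of principally polarised abelian varieties from Proposition \ref{upf}.

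First I would establish the \emph{blow-up formula}. For a smooth projective threefold $Y$ with $J(Y)$ existing, and a smooth centre which is either a point or a curve $C$, the $\ell$-adic projective bundle/blow-up formula gives an orthogonal decomposition $\HH^3(\Bl_C Y,\mathbb{Q}_\ell(2)) \cong \HH^3(Y,\mathbb{Q}_\ell(2)) \oplus \HH^1(C,\mathbb{Q}_\ell(1))$, while blowing up a point leaves $\HH^3$ unchanged (the exceptional $\mathbb{P}^2$ contributes nothing, as $\HH^1(\mathrm{pt}) = \HH^3(\mathbb{P}^2) = 0$). Using the functoriality of the algebraic representative together with the cohomological identifications of Proposition \ref{prereq} and the equivariance in Proposition \ref{AJcohomology2}, I would translate this into isomorphisms $J(\Bl_p Y) \cong J(Y)$ and $J(\Bl_C Y) \cong J(Y) \times JC$ of abelian varieties. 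Because the direct-sum decomposition of $\HH^3$ is orthogonal for the cup-product pairing that defines the distinguished polarisation, the polarisation splits as a product, so these are isomorphisms of \emph{principally polarised} abelian varieties.

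Next, using resolution of singularities and factorisation of birational maps for threefolds, I would write $X \dashrightarrow \mathbb{P}^3$ as a finite chain $X = X_0 \dashrightarrow X_1 \dashrightarrow \cdots \dashrightarrow X_n = \mathbb{P}^3$ of smooth projective threefolds in which each step is a blow-up or blow-down along a smooth centre. Applying the blow-up formula at each step and using $J(\mathbb{P}^3) = 0$ yields an isomorphism of principally polarised abelian varieties $J(X) \times \prod_i JC_i \cong \prod_j JC'_j$ for finite families of smooth curves $C_i, C'_j$. In particular $J(X)$ is a direct factor of a product of Jacobians. By the uniqueness of the decomposition into irreducible principally polarised abelian varieties (Proposition \ref{upf}(1)) and the irreducibility of Jacobians (Proposition \ref{upf}(2)), the factor $J(X)$ must coincide with a sub-product $\prod_{j\in S} JC'_j$, which is the desired product of Jacobians of curves.

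The main obstacle is entirely in the positive-characteristic input, and it is twofold. The cohomological point is to match the \emph{polarisation}, not merely the underlying abelian variety: one must check that the distinguished polarisation, defined via the cup product on $\HH^3$, respects the orthogonal blow-up decomposition, and this is exactly where Proposition \ref{prereq} and the $\Aut$- and Galois-equivariance of Proposition \ref{AJcohomology2} are needed. The more serious difficulty is the existence of a factorisation of $X \dashrightarrow \mathbb{P}^3$ into smooth blow-ups and blow-downs: over $\mathbb{C}$ this is Hironaka's resolution followed by weak factorisation, whereas over an arbitrary field one must instead appeal to resolution of singularities and monomialisation for threefolds valid in all characteristics. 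I expect confirming that an adequate factorisation is available in characteristic $2$ to be the crux of the proof.
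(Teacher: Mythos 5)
Your endgame is the right machinery and is exactly what underlies the paper's (one-line) proof: the blow-up formula for intermediate Jacobians, orthogonality of the cup-product polarisation, and unique factorisation of principally polarised abelian varieties via Proposition \ref{upf}. The paper itself simply cites \cite[Proposition 4.6]{beau2} and asserts that Beauville's proof is independent of the characteristic. The genuine gap in your proposal sits at the step you yourself call the crux: you route the argument through a \emph{weak factorisation} of $X \dashrightarrow \mathbb{P}^3$ into blow-ups \emph{and} blow-downs with smooth centres through smooth intermediate threefolds. That theorem (Abramovich--Karu--Matsuki--W{\l}odarczyk) is a characteristic-zero result; in positive characteristic it is open, even in dimension $3$. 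Your proposed substitutes do not fill the hole: Cossart--Piltant resolution of singularities for threefolds in all characteristics produces a smooth model of a given variety, but it gives neither principalization/elimination of indeterminacy by smooth-centre blow-ups nor any factorisation of a birational map into smooth blow-ups and blow-downs, and Cutkosky-style monomialisation is a local statement along valuations, again essentially characteristic-zero technology. Since characteristic $2$ is precisely the case for which this paper needs the proposition, the proposal cannot be completed as written.

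A second, related point is that your chain asks for more than the argument needs. The classical Clemens--Griffiths route (which is what the cited proposition packages) requires only \emph{one-sided} domination: a composite of smooth blow-ups $Y \to \mathbb{P}^3$ resolving the indeterminacy of $\mathbb{P}^3 \dashrightarrow X$, so that $g \colon Y \to X$ is a birational \emph{morphism}. Then $J(Y)$ is a product of Jacobians by your blow-up formula (starting from $J(\mathbb{P}^3)=0$), and $J(X)$ is a direct factor of $J(Y)$ as a principally polarised abelian variety: $g_*g^* = \mathrm{id}$ on $\A^2$ splits off $J(X)$ as an abelian subvariety, and compatibility of the incidence/distinguished polarisation with $g^*$ (the same cup-product orthogonality you invoke, together with a splitting argument as in Proposition \ref{algequiv}) upgrades this to a splitting of polarisations; Proposition \ref{upf} then finishes. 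This avoids blow-downs entirely, and it also avoids having to produce distinguished polarisations on every intermediate threefold $X_1,\dotsc,X_{n-1}$ of your chain --- an existence issue your write-up silently assumes, since the hypothesis of the proposition only grants you $J(X)$ itself, while intermediate Jacobians in this paper are only constructed for cubic threefolds. Even the one-sided elimination of indeterminacy is a nontrivial resolution-type input in characteristic $2$, which is exactly why the paper does not rerun the argument but instead relies on the asserted characteristic-independence of Beauville's proof; a proof routed through weak factorisation, however, is definitely not available with current technology.
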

\begin{proof}
This is \cite[Proposition 4.6]{beau2}, whose proof does not make use of the characteristic of the ground field.
\end{proof}

\begin{definition}
Let $D$ be an effective divisor on an abelian variety $A$ such that $\lambda_D$ is a principal polarisation. Then we define the \emph{dimension of the singular locus} $\dim(\Sing(\lambda_D))$ as $\dim(\Sing(D))$, which is well-defined by Lemma \ref{dist}.
\end{definition}

\begin{lemma}\label{dimsing}
Let $(A,\lambda_\theta)$ be a principally polarised abelian variety isomorphic to a product of Jacobians of curves with $\theta$ effective. Then the dimension of the singular locus satisfies $\dim(\Sing(\lambda_\theta)) \geq \dim(A)-4$.
\end{lemma}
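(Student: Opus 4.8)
The plan is to exploit that $\dim(\Sing(\lambda_\theta))$ is an isomorphism invariant, reducing to the case where $A = \prod_{i=1}^n JC_i$ literally, with the principal polarisation induced by the product theta divisor $\Theta = \sum_{i=1}^n \pi_i^*\theta_i$; here $\pi_i$ is the $i$-th projection and $\theta_i \subset JC_i$ is the canonical theta divisor, which is effective and ample. Set $g_i = g(C_i) \ge 1$ and $g = \sum_i g_i = \dim(A)$. By Lemma \ref{dist} it suffices to bound $\dim(\Sing(\Theta))$, and I would split according to whether $n \ge 2$ or $n = 1$.

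If $n \ge 2$, a short local computation shows that the intersection of two components of $\Theta$ is already large enough. Fix $i \ne j$ and a point $p \in \pi_i^{-1}(\theta_i) \cap \pi_j^{-1}(\theta_j)$. The divisor $\Theta$ has local equation $\prod_k f_k$ at $p$ with $f_i(p) = f_j(p) = 0$; since every summand of $d\!\left(\prod_k f_k\right)$ retains a factor $f_i$ or $f_j$, the differential vanishes at $p$, so $p \in \Sing(\Theta)$. Because each $\theta_i$ is nonempty, this intersection is isomorphic to $\theta_i \times \theta_j \times \prod_{k \ne i,j} JC_k$ and has dimension $g - 2 \ge g - 4$, which proves the bound in this case.

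If $n = 1$, so $A = JC$ with $g = g(C)$, I would argue via the Riemann--Kempf theorem. When $g \le 3$ the right-hand side $\dim(A) - 4$ is negative and there is nothing to prove. When $g \ge 4$, Proposition \ref{kempf} identifies $\Sing(\theta)$ with the Brill--Noether locus $W^1_{g-1}(C) = \{\,[D] : \deg D = g-1,\ h^0(D) \ge 2\,\}$, since $x \in \Sing(\theta)$ exactly when $\mult_\theta(x) = h^0(L_x) \ge 2$. The existence half of Brill--Noether theory (Kleiman--Laksov, which holds over any field) then gives that $W^1_{g-1}(C)$ is nonempty of dimension at least $\rho(g,1,g-1) = g - (1+1)\bigl(g-(g-1)+1\bigr) = g - 4$. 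Hence $\dim(\Sing(\theta)) \ge g - 4 = \dim(A) - 4$.

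The main obstacle is the single-Jacobian case: one needs not only that $\Sing(\theta)$ is cut out by the condition $h^0 \ge 2$ (which is characteristic-free via Proposition \ref{kempf}), but also a lower bound on the dimension of this locus valid for \emph{every} curve rather than a generic one. This is exactly the content of the characteristic-independent existence theorem in Brill--Noether theory; the case $n \ge 2$, by contrast, is an elementary transversality computation.
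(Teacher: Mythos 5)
Your proof is correct, and its skeleton is the same as the paper's: split into the case of at least two Jacobian factors, handled by the observation that pairwise intersections of components of the product theta divisor are singular, and the case of a single Jacobian. The genuine difference is in the single-Jacobian case. The paper simply cites Andreotti--Mayer \cite[Proposition 8]{AM} and \cite[page 212, remark d]{AM} for the bound $\dim(\Sing(\theta)) \geq g-4$, whereas you reprove it: Riemann--Kempf (Proposition \ref{kempf}) identifies $\Sing(\theta)$ with $W^1_{g-1}(C)$, and the Kleiman--Laksov existence theorem plus the determinantal dimension bound give $\dim W^1_{g-1}(C) \geq \rho(g,1,g-1) = g-4$ once $g \geq 4$. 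This is not mere pedantry: Andreotti--Mayer work over $\mathbb{C}$, while the paper applies this lemma (in Proposition \ref{nonrational}) over algebraically closed fields of arbitrary characteristic, including characteristic $2$; your route through Kleiman--Laksov, valid in all characteristics, is exactly what is needed to make that citation airtight, and it is consistent with the paper's general practice of flagging characteristic-independence. Two smaller points also favour your write-up: in the product case you obtain the sharper bound $g-2$ directly from $\theta_i \times \theta_j \times \prod_{k\neq i,j} JC_k$, whereas the paper intersects additionally with a theta divisor $\theta'$ on a third factor $B$ to get only $g-3$ (an unnecessary step which, as written, silently assumes $B \neq 0$); and your explicit check that every summand of $d\bigl(\prod_k f_k\bigr)$ vanishes on the pairwise intersection is the local verification the paper leaves implicit.
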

\begin{proof}
From \cite[Proposition 8]{AM} and \cite[page 212, remark d]{AM}, if $C$ is a connected curve then $\dim(\Sing(JC)) \geq \dim(JC) - 4$. In general, suppose $A \cong JC_1 \oplus JC_2 \oplus B$ where $B$ is an arbitrary p.p.a.v. Then $\theta(JC_1) \boxtimes \theta(JC_2)$ has singular locus of dimension at least $\dim(JC_1 \oplus JC_2) - 2$, given by the intersection of the two theta divisors. Consider the theta divisor $\theta'$ of the principal polarisation on $B$. Then $\Sing(\theta(JC_1) \boxtimes \theta(JC_2)) \times \theta' \subset A$ is singular, of dimension at least $\dim(JC_1 \oplus JC_2 \oplus B) - 3$. As a result $\dim(\Sing(\lambda_\theta)) \geq \dim(A) - 3$.
\end{proof}

\begin{proposition}\label{nonrational}
Let $X$ be a cubic threefold with a good line over an algebraically closed field. Then $X$ is not rational.
\end{proposition}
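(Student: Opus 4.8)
The plan is to argue by contradiction, feeding the explicit computation of the singular locus of the theta divisor into the structural constraint that rationality imposes on the intermediate Jacobian. First I would invoke the hypothesis that $X$ contains a good line $l$: by Corollary \ref{nHIJ} this guarantees that the intermediate Jacobian $J(X)$ exists and is precisely the principally polarised Prym variety $(\Prym(X,l), \Xi)$. Recall from the computation in the cubic threefolds section that $\dim \Prym(X,l) = \dim(P^+) = 5$, so $J(X)$ is a principally polarised abelian fivefold.

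Now suppose toward a contradiction that $X$ is rational. Since $J(X)$ exists, Proposition \ref{rat3fold} forces $J(X)$ to be isomorphic, as a principally polarised abelian variety, to a product of Jacobians of curves. Applying Lemma \ref{dimsing} to this product, whose canonical theta divisor is effective, the dimension of the singular locus of the principal polarisation satisfies
$$\dim(\Sing(\lambda_{\Xi})) \geq \dim(J(X)) - 4 = 5 - 4 = 1.$$
On the other hand, Proposition \ref{sing} asserts that the theta divisor $\Xi \subset P^+$ has a single singular point, the origin $0_{P^+}$, so that $\dim(\Sing(\Xi)) = 0$. To reconcile these two statements I would appeal to Lemma \ref{dist}: the effective divisor inducing a given principal polarisation is unique up to translation, and translation preserves the dimension of the singular locus, whence $\dim(\Sing(\lambda_{\Xi})) = \dim(\Sing(\Xi)) = 0$. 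This contradicts the inequality above, and therefore $X$ cannot be rational.

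I do not expect a genuine obstacle here, since all the difficult work is already carried out in the preceding results; the computation of the theta singularity in Proposition \ref{sing} and the structure theorem of Proposition \ref{rat3fold} are the substantive inputs, and the present argument is a dimension count. The only points deserving care are confirming that $J(X)$ really does exist in \emph{every} characteristic, which is exactly what the good-line hypothesis supplies through the Prym construction, and verifying via Lemma \ref{dist} that the two notions of ``dimension of the singular locus'' coincide, so that the value $0$ computed for $\Xi$ may legitimately be compared against the lower bound $\dim(A)-4$ of Lemma \ref{dimsing}.
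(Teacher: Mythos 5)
Your proposal is correct and follows the paper's own proof essentially step for step: existence of $J(X) = (\Prym(X,l),\Xi)$ via Corollary \ref{nHIJ}, the product-of-Jacobians structure forced by Proposition \ref{rat3fold}, the lower bound $\dim(\Sing(\lambda_{\Xi})) \geq 1$ from Lemma \ref{dimsing}, and the contradiction with the single singularity of Proposition \ref{sing}. Your explicit appeal to Lemma \ref{dist} to identify $\dim(\Sing(\lambda_{\Xi}))$ with $\dim(\Sing(\Xi))$ is just unpacking the paper's definition of the singular locus of a polarisation, so there is no substantive difference.
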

\begin{proof}
Consider the intermediate Jacobian $J(X) = (\Ab^2_X,\lambda_{\Xi})$ of $X$, which exists by Corollary \ref{nHIJ}. Suppose that $X$ is rational. Then $J(X)$ would be isomorphic to a product of Jacobians of curves by Proposition \ref{rat3fold}. By Lemma \ref{dimsing}, we obtain $\dim(\Sing(\lambda_{\Xi})) \geq 1$. However by Proposition \ref{sing} we have $\dim(\Sing(\lambda_{\Xi})) = 0$ which is a contradiction. 
\end{proof}

\begin{proposition}\label{nonrationalfermat}
The Fermat cubic threefold over a field $k$ is not rational.
\end{proposition}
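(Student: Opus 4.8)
The plan is to reduce to an algebraically closed field and then split on the characteristic. Since rationality of $X$ over $k$ forces rationality of $X_{\overline{k}}$, it suffices to show that $X_{\overline{k}}$ is not rational, so I would assume $k=\overline{k}$. When $\textrm{char}(k)\neq 2$ the Fermat cubic is smooth and not Hermitian, so by the contrapositive of Proposition~\ref{nogoodline} it contains a good line, and Proposition~\ref{nonrational} gives non-rationality immediately. The only remaining case is $\textrm{char}(k)=2$, where $X$ is Hermitian; here I would argue by contradiction, assuming $X$ rational. Its intermediate Jacobian $J(X)$ exists by Proposition~\ref{HIJ}, and Proposition~\ref{rat3fold} then forces an isomorphism $J(X)\cong\prod_{i=1}^r J(C_i)$ of principally polarised abelian varieties, with $\sum_i g(C_i)=\dim J(X)=5$ and each $g(C_i)\geq 1$, so in particular $r\leq 5$.

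The core of the argument is to exploit the large automorphism group. By Proposition~\ref{autfermat} and Lemma~\ref{autJ}, the group $G:=\Aut(X)\cong\textrm{PU}_5(2)$ acts faithfully on the principally polarised abelian variety $J(X)$, preserving the polarisation by Remark~\ref{defActIJ}; recall that $G$ is a non-abelian finite simple group of order $13685760$. Using the unique factorisation of principally polarised abelian varieties together with the irreducibility of Jacobians (Proposition~\ref{upf}), every polarisation-preserving automorphism of $J(X)$ permutes the factors $J(C_i)$, giving a homomorphism $\rho\colon G\to S_r$. Since $|S_r|\leq|S_5|=120<|G|$ there is no injection $G\hookrightarrow S_r$, so $\ker\rho\neq 1$, and simplicity forces $\rho$ to be trivial: $G$ fixes each factor. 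Thus $G$ maps to $\prod_j\Aut(J(C_j),\theta_j)$, and this map is injective because the action on $J(X)$ is faithful; as $G$ is simple, some projection $G\to\Aut(J(C_j),\theta_j)$ must already be injective. By the Torelli theorem $\Aut(J(C_j),\theta_j)$ differs from $\Aut(C_j)$ only through the involution $[-1]$, and since the simple group $G$ has no subgroup of index two I would conclude that $G\hookrightarrow\Aut(C_j)$. This makes $C_j$ a smooth curve of genus $g(C_j)\leq 5$ in characteristic $2$ with $|\Aut(C_j)|\geq 13685760$, contradicting the bound $|\Aut(C_j)|\leq 16\,g(C_j)^4\leq 10000$ for $g\geq 2$ (and the far smaller bound in the elliptic case $g=1$).

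I expect the main obstacle to be precisely this last step: justifying the automorphism bound in characteristic $2$, where the classical Hurwitz bound $84(g-1)$ fails and one must invoke its positive-characteristic analogue together with the classification of the exceptional curves, whose genera $q(q-1)/2$ (with $q$ a power of $2$) take the values $1,6,28,\dots$ and so avoid the range $\{2,3,4,5\}$, leaving $16g^4$ valid in our situation. I would also be careful about the $\ell$-adic input, but note that the faithfulness of the $G$-action on $J(X)$ is exactly the content of Lemma~\ref{autJ}, whose proof runs through the $\Aut$-equivariant comparison $\HH^1(\Ab^2_{X},\mathbb{Q}_{\ell})\cong\HH^3(X,\mathbb{Q}_{\ell}(2))^{\vee}$ of Proposition~\ref{AJcohomology2}; hence no further cohomological work is needed here beyond citing these results. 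Everything else, namely the permutation argument and the reduction through Torelli, is formal group theory once the simplicity and the order of $\textrm{PU}_5(2)$ are in hand.
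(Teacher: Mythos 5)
Your proof is correct, and in the essential case $\mathrm{char}(k)=2$ it takes a genuinely different route from the paper. The paper passes to the Klein model of the Fermat cubic and works with only two explicit automorphisms $\sigma,\delta$ of orders $5$ and $11$: the Lefschetz fixed point formula in $\ell$-adic cohomology gives $\Tr(\delta \,|\, \HH^3)=-1$ and $\Tr(\sigma \,|\, \HH^3)=0$, and the contradiction comes from a fixed-point count on a single curve (the three fixed points of $\delta_C$ would be permuted, hence fixed, by the order-$5$ automorphism $\sigma_C$, which has only two fixed points) in the one-Jacobian case, and from $\mathbb{Q}_{\ell}$-representation theory of $\mathbb{Z}/11\mathbb{Z}$ (no nontrivial representation in dimension $<10$) in the product case. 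You instead exploit the full group $G=\mathrm{PU}_5(2)$: simplicity together with $|G|>|S_5|$ kills the permutation action on the factors, the fact that a simple group embedded in a product must project injectively to some factor reduces you to a single Jacobian $JC_j$, Torelli plus the absence of index-two subgroups lands $G$ inside $\Aut(C_j)$, and Stichtenoth's characteristic-$p$ bound $|\Aut(C)|\leq 16g^4$ (whose Hermitian-curve exceptions have genera $1,6,28,\dots$ in characteristic $2$, hence avoid $2\leq g\leq 5$) finishes the job, with the elliptic case handled by the bound $24$ on automorphisms fixing the origin. Your argument buys uniformity: no explicit model, no trace computations, and one group-theoretic mechanism covering both the single-factor and multi-factor cases; the price is two external inputs the paper never needs, namely the order and simplicity of $\mathrm{PU}_5(2)$ and Stichtenoth's theorem with its classification of exceptions, whereas the paper's argument is self-contained (Lefschetz plus elementary representation theory) but tied to the Klein model. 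Both routes rest on the same foundational inputs from the paper: existence of $J(X)$ (Proposition \ref{HIJ}), the rationality constraint (Proposition \ref{rat3fold}), unique factorisation of principally polarised abelian varieties (Proposition \ref{upf}), and the faithful, polarisation-preserving action of $\Aut(X)$ on $J(X)$ (Lemma \ref{autJ} and Remark \ref{defActIJ}).
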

\begin{proof}
It suffices to consider the case where $k$ is algebraically closed. If $\textrm{char}(k) \neq 2$, then the Fermat cubic has a good line by Proposition \ref{nogoodline}, and so it is not rational by Proposition \ref{nonrational}. We now assume that $\textrm{char}(k) = 2$.

By Proposition \ref{2bic}, the Fermat cubic is isomorphic to the Klein cubic $X = \mathbb{V}_+(x_0x_1^2 + x_1x_2^2 + x_2x_3^2 + x_3x_4^2 + x_4x_0^2)$ over $k$, as the Klein cubic is Hermitian. Then the proof of \cite[Theorem 3]{Beauville} is also valid in characteristic $2$, if we were to use $\ell$-adic cohomology. We provide a rough sketch of this proof. The Klein cubic $X$ has automorphisms
\begin{align*}
& \sigma: [x_0:x_1:x_2:x_3:x_4] \mapsto [x_1:x_2:x_3:x_4:x_0], \\
& \delta: [x_0:x_1:x_2:x_3:x_4] \mapsto [x_0:\zeta x_1:\zeta^{-1} x_2:\zeta^3 x_3:\zeta^6 x_4]
\end{align*}
for some primitive $11^{th}$ root of unity $\zeta$. These satisfy the relations $\sigma^5 = \delta^{11} = \sigma\delta\sigma^{-1}\delta^2 = 1$.
Suppose that $X$ were rational. By Proposition \ref{HIJ} the intermediate Jacobian $JX = (\Ab^2_X,\Theta)$ exists. By Proposition \ref{rat3fold}, it is isomorphic to a product of Jacobians of curves.

Suppose first that $JX \cong (JC, \theta)$ is the Jacobian of a curve. Then $\delta$ and $\sigma$ induce actions on $JX$. Since they are of odd order, they induce further actions $\delta_C,\sigma_C$ on the curve $C$ due to the exact sequence
$$0 \to \mathbb{Z}/2\mathbb{Z} \to \Aut(JC) \to \Aut(C).$$
By Proposition \ref{prereq} we have $\HH^1(\Ab^2_{X},\mathbb{Q}_{\ell}) \cong \HH^3(X,\mathbb{Q}_{\ell}(2))^{\vee}$. By the Lefschetz fixed point theorem, we get $\Tr(\delta | \HH^3(X,\mathbb{Q}_{\ell})) = -1$ and $\Tr(\sigma | \HH^3(X,\mathbb{Q}_{\ell})) = 0$. We deduce that  $\Tr(\delta_C | \HH^1(C,\mathbb{Q}_{\ell})) = -1$ and $\Tr(\sigma_C | \HH^1(C,\mathbb{Q}_{\ell})) = 0$. This means that $\delta_C$ has $3$ fixed points, and $\sigma_C$ has $2$ fixed points. Since $\sigma$ normalizes $\langle \delta \rangle$, the set $C^{\delta_C}$ is permuted by $\sigma_C$. It follows that $C^{\delta_C}$ must be fixed by $\sigma_C$, giving us a contradiction. 

Now suppose that $JX \cong \oplus_i (JC_i ,\theta_i)$ is a direct sum of $2$ or more Jacobians of curves. By Proposition \ref{upf}, $\delta$ acts on the set $\{JC_i\}_i$. This permutation must be trivial since $\dim(JX) = 5 < \ord(\delta) = 11$. Therefore $\delta$ acts on each Jacobian $(JC_i, \theta_i)$, inducing actions on $\HH^1(JC_i,\mathbb{Q}_{\ell})$. However, $\dim(JC_i) < 5$ and so $\dim(\HH^1(JC_i,\mathbb{Q}_{\ell})) < 10$ for each $i$. As a $\mathbb{Q}_{\ell}\delta$-representation, $\HH^1(JC_i,\mathbb{Q}_{\ell})$ must be trivial. This contradicts $\Tr(\delta | \HH^3(X,\mathbb{Q}_{\ell})) = -1$.
\end{proof}

\begin{customproof}{of Theorem A}
We may assume that the ground field $k$ is algebraically closed. If $X$ contains a good line, then $X$ is non-rational by Proposition \ref{nonrational}. Otherwise by Proposition \ref{nogoodline}, $X$ is isomorphic to the Fermat cubic threefold. Then $X$ is non-rational by Proposition \ref{nonrationalfermat}.
\end{customproof}

\subsection{Torelli theorem}

In this section we prove a Torelli theorem for cubic threefolds over arbitrary fields. To be precise, we show that the stack morphism $\widetilde{J}$ constructed in Section $5$ is injective on field-valued points, so that a cubic threefold can be recovered from its intermediate Jacobian.

Let $X$ be a cubic threefold over a field $k$. After choosing a line over $k^{sep}$, we obtain an Albanese embedding $\alpha: F(X)_{k^{sep}} \to \Alb(F(X))_{k^{sep}}$. Consider the morphism $d_{k^{sep}}$ given by the composition
$$(F(X) \times F(X))_{k^{sep}} \xrightarrow{(\alpha,\alpha)} (\Alb(F(X)) \times \Alb(F(X)))_{k^{sep}} \xrightarrow{L_* \circ (\pi_1 - \pi_2)} (\Ab^2_X)_{k^{sep}}$$
over the separable closure $k^{sep}$. This morphism is Galois-equivariant. By Galois descent, we obtain a morphism $d: F(X) \times F(X) \to \Ab^2_X$ over $k$.

\begin{proposition}\label{torellipoint}
Let $X$ be a cubic threefold over a field $k$. The image of $d$ is the theta divisor $\Xi$ on $\Ab^2_X$. Let $T_0(\Xi / \Ab^2_X)$ be the tangent cone of $\Xi$ at $0$. Then $\mathbb{P}(T_0(\Xi / \Ab^2_X))$ is isomorphic to the cubic threefold $X$.
\end{proposition}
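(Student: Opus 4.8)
The plan is to reduce the statement to the classical result of Beauville (\cite[Proposition 6]{beau}) after identifying all objects over the separable closure, and then descend. First I would work over $k^{sep}$ and use Proposition \ref{albvsar} to identify $\Alb(F(X))$ with $\Ab^2_X$ via the isomorphism $L_*$, so that the morphism $d$ becomes, up to this identification, the difference-of-Albanese-points map $F(X) \times F(X) \to \Ab^2_X$ sending $(l, l') \mapsto [\alpha(l) - \alpha(l')]$. For non-Hermitian $X$ this is precisely the setup of \cite{beau}: the map $d$ parametrizes the differences of the effective divisors $D(l)$ attached to lines, and by the analysis in the proof of \cite[Proposition 3]{beau} together with Lemma \ref{imm}, the image of $d$ is exactly the theta divisor $\Xi$. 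The heart of the matter is the tangent cone computation, and I would invoke \cite[Proposition 6]{beau}, which shows that $\mathbb{P}(T_0(\Xi/\Ab^2_X)) \cong X$; the task is to verify that the geometric input to that proof is characteristic-free.

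The key steps, in order, are as follows. \emph{Step 1:} Reduce to $k = k^{sep}$ by Galois descent. Since $d$ is Galois-equivariant by construction, its image $\Xi$ descends, and the tangent cone at the Galois-fixed origin $0 \in \Ab^2_X$ descends together with its projectivization; an isomorphism $\mathbb{P}(T_0(\Xi/\Ab^2_X))_{k^{sep}} \cong X_{k^{sep}}$ that is Galois-equivariant descends to an isomorphism over $k$. \emph{Step 2:} Over $k^{sep}$, handle the non-Hermitian case by identifying $(\Ab^2_X, \Xi)$ with the principally polarized Prym $(\Prym(X,l_0), \Xi)$ via Corollary \ref{nHIJ} and Proposition \ref{albvsar}, so that $0 \in \Ab^2_X$ corresponds to the point $[\pi^*H] \in P_0$, which is the unique singularity of $\widetilde{\theta}|_{P_0}$ by Proposition \ref{secH} and Proposition \ref{sing}. \emph{Step 3:} Apply the Riemann–Kempf singularity theorem (Proposition \ref{kempf}) and the Gauss-map description of the tangent cone at a double point (the determinantal description $\det(W) = 0$ from \cite[Theorem 2]{gk}, already invoked in Lemma \ref{salvage}) to compute the tangent cone. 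The projectivized tangent cone to $\widetilde{\theta}$ at $[\pi^*H]$ is cut out by a quadric, and intersecting with the subspace $T_0 P^+$ and using the structure of $H^0(\pi^*H)$ recovers the cubic $X$, exactly as in \cite[Proposition 6]{beau}. \emph{Step 4:} For Hermitian $X$ over $k^{sep}$, $X$ is the Fermat cubic, and I would lift to characteristic $0$ over the Cohen–Witt ring $W_C(k)$ as in Proposition \ref{specialIJT} and Proposition \ref{albvsar}: the isomorphism $L_*$ and the distinguished $\Theta$ specialize correctly, so the tangent-cone statement specializes from the generic fibre, where it holds by the classical Torelli theorem of \cite{beau}, to the special fibre.

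The main obstacle I anticipate is the tangent-cone computation in characteristic $2$ (Step 3). Beauville's original argument in \cite[Proposition 6]{beau} uses the quadratic–form/symmetric–bilinear–form correspondence and the Riemann singularity theorem in a way that is sensitive to the characteristic; as the paper has already flagged (see the discussion around Lemma \ref{char2duality} and Lemma \ref{salvage}), this correspondence fails when $\mathrm{char}(k) = 2$. The remedy is to use the smooth-quadratic-form framework of Section 3 in place of symmetric bilinear forms: the pairing $\langle a, b \rangle = a \otimes \sigma^* b$ on $H^0(L_x)$ from Lemma \ref{salvage}, whose associated quadric is smooth, provides the correct determinantal description of the tangent cone, so that the quadric defining $\mathbb{P}(T_0\widetilde{\theta})$ and its intersection with $T_0 P^+$ still recover $X$. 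The Hermitian case (Step 4) carries the secondary difficulty that one must ensure the lift–and–specialize argument is compatible with the tangent cone, i.e. that the projectivized tangent cone commutes with specialization of the p.p.a.s.\ $(A, \Theta)$ over the DVR; this follows because $\Theta$ is a relative Cartier divisor flat over the base and $0$ is a section, so its tangent cone forms a flat family whose fibres are the tangent cones of the fibres.
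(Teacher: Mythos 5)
Your Step 3 is where the proposal breaks down, and the failure is concrete. The origin of $\Xi$ corresponds to $[\pi^*H]$, and by Proposition \ref{secH} we have $h^0(\pi^*H)=4$; so by Riemann--Kempf (Proposition \ref{kempf}) the point $[\pi^*H]$ has multiplicity $4$ on $\widetilde{\theta}$, and the Kempf determinantal description of the tangent cone there is a \emph{quartic} ($4\times 4$ determinant), not a quadric. The $2\times 2$ matrix $W$ and the pairing of Lemma \ref{salvage} apply only to the \emph{special} singularities, i.e.\ points with $h^0(L_x)=2$; the origin is a \emph{stable} singularity and that machinery says nothing about it. Worse, the strategy ``intersect the tangent cone of $\widetilde{\theta}$ with $T_0P^+$'' cannot produce the answer even in characteristic $0$: the statement being proved is that the tangent cone of $\Xi$ at $0$ is a \emph{cubic} hypersurface, and a linear section of a quadric (or of any hypersurface of degree $\neq 3$) is never a cubic. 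Indeed, since $\widetilde{\theta}|_{P_0}=2\Xi$ (Proposition \ref{theta}) and $\mult_0\Xi=3$, the restriction of $\widetilde{\theta}$ to $P_0$ has multiplicity $6>4$ at the origin, which forces the degree-$4$ initial form of $\widetilde{\theta}$ to vanish identically on $T_0P_0$; the naive restriction carries no information at all. This is precisely why Beauville's proof of \cite[Proposition 6]{beau} -- and the paper's proof -- is not a theta-function computation but a geometric one: using injectivity of the Albanese embedding (Lemmas \ref{imm} and \ref{albemb}) one gets $d^{-1}(0)=\Delta$, one blows up $\Delta\subset F\times F$ and $0\in\Ab^2_X$, identifies the exceptional divisor $\mathbb{P}(\mathcal{N}_{\Delta/F\times F})\cong\mathbb{P}(T_F)$ with the universal line $L$ via \cite[Proposition 2.2]{huy}, and shows the induced map $h$ to $\mathbb{P}^4$ has image $X$. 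That argument is characteristic-free and involves no quadratic forms whatsoever; the smooth-quadratic-form framework of Section 3 enters the paper only through Proposition \ref{sing} (uniqueness of the singularity), not through the tangent-cone identification.

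Your Step 4 has a secondary gap: the claim that the projectivized tangent cone commutes with specialization ``because $\Theta$ is a flat relative Cartier divisor and $0$ is a section'' is false in general. Multiplicity along a section is only upper semicontinuous, and the fibrewise tangent cones need not form a flat family (e.g.\ a flat family of plane cubics whose origin is a double point generically and a triple point specially). Making this step rigorous would require knowing that $\mult_0\Xi_s=3$ on the special (characteristic-$2$, Fermat) fibre, which is essentially the statement you are trying to prove and is exactly what is inaccessible without Prym theory in that case. The paper sidesteps this entirely: in the Hermitian case it specializes only the divisor identity $\im(d)=\Xi$ (divisors do specialize well, since $\im(d_s)$ is the specialization of $\Xi_\eta$), and then runs the characteristic-free blowup argument directly on the special fibre. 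This is why Lemma \ref{albemb} -- injectivity of the Albanese map for the Fermat cubic in characteristic $2$, proved via Cheng's results in \cite{cheng} -- is an essential ingredient of the paper's proof; your plan never invokes it, which is a symptom of the missing idea.
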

\begin{proof}
By Lemma \ref{albemb} and Lemma \ref{imm}, the Albanese embedding $\alpha$ is injective on $k^{sep}$-points. Thus the diagonal $\Delta \subset F \times F$ coincides with $d^{-1}(0)$. By blowing up, we get a diagram
$$\begin{tikzcd}
\mathbb{P}(\mathcal{N}_{\Delta / F \times F}) \arrow{r}{h} \arrow{d} & \mathbb{P}^4 \arrow{d} \\
Bl_{\Delta}(F \times F) \arrow{r} \arrow{d}  & Bl_{0}(\Ab^2_X) \arrow{d}  \\
F \times F \arrow{r}{d}  & \Ab^2_X
\end{tikzcd}
$$
where the top row consists of the exceptional divisors. We claim that the image of $h$ is $X$ up to a linear transformation.

It is known that $\mathbb{P}(\mathcal{N}_{\Delta / F \times F}) \cong \mathbb{P}(T_F)$ where $T_F$ is the tangent bundle of $F$. This is due to the following exact sequence which can be found on \cite[Page 182]{hartshorne}
$$0 \to T_{\Delta} \to T_{F\times F} \otimes \mathcal{O}_{\Delta} \to \mathcal{N}_{\Delta / F \times F} \to 0.$$
 By \cite[Proposition 2.2]{huy}, we have $\mathbb{P}(T_F) \cong L$, the universal line. The map $h$ is uniquely determined by $h^*\mathcal{O}(1)$, which is $\mathcal{O}_{\mathbb{P}(T_F)}(1)$ by basic properties of blowups. The isomorphism $\mathbb{P}(T_F) \cong L$ identifies $\mathcal{O}_{\mathbb{P}(T_F)}(1)$ with $\pi_X^{-1}(\mathcal{O}_X(1))$ which shows that $h$ is the projection map onto $X$. It follows that the image of $h$ is $X$.

Suppose that $X$ in non-Hermitian. Then we can identify $\Ab^2_X \cong P^+$, and from \cite[Proposition 5]{beau} we know that the image of $d_{\overline{k}}$ is the theta divisor $\Xi_{\overline{k}}$. Therefore $\im(d)$ is a divisor on $P^+$ which becomes distinguished upon base change to $\overline{k}$. Thus it follows that $\im(d)$ itself is distinguished, and by their uniqueness we obtain $\im(d) = \Xi$. Thus the image of $h$ is the tangent cone of $\Xi$ at $0$.

Suppose that $X$ is Hermitian. We only need to show that $\im(d) = \Xi$. For this, it suffices to show that $\im(d)_{k^{sep}} = \im(d_{k^{sep}}) = \Xi_{k^{sep}}$. Since $X_{k^{sep}}$ is isomorphic to the Fermat cubic, we may suppose that $k = k^{sep}$ and that $X$ is the Fermat cubic. We may lift $X$ to the Fermat cubic $\widetilde{X}$ over the Cohen-Witt ring $W_C(k^{sep})$. After choosing a line $\ell \subset \widetilde{X}$ flat over $W_C(k^{sep})$, which is possible, we obtain an Albanese embedding $\alpha: F(\widetilde{X}) \to \Alb(F(\widetilde{X}))$. We can use this to construct the map $d:F(\widetilde{X}) \times F(\widetilde{X}) \to \Ab^2_{\widetilde{X}}$. If $\eta$ is the generic point of $W_C(k^{sep})$, then we know that $\im(d_{\eta}) = \Xi_{\eta}$ is the theta divisor on $\Ab^2_{\widetilde{X}_{\eta}}$. We also know that $\im(d_s)$ is the specialization of $\Xi_{\eta}$, and so it is the theta divisor $\Xi_s$ on $\Ab^2_{X}$, and we are done.
\end{proof}

\begin{customproof}{of Theorem C}
This follows immediately from Proposition \ref{torellipoint}.
\end{customproof}

\begin{corollary}\label{Jseparated}
The morphism $\widetilde{J}$ is essentially injective and faithful.
\end{corollary}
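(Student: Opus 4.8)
The plan is to unwind the two adjectives into concrete injectivity statements and then quote the two main inputs: the Torelli statement Theorem C for injectivity on objects, and Lemma \ref{autJ} for injectivity on automorphisms. Recall that a $1$-morphism of stacks is \emph{essentially injective} if it is injective on isomorphism classes of field-valued points, and \emph{faithful} if for every base $T$ and every object $\xi$ over $T$ the induced homomorphism of automorphism group functors is a monomorphism. I would treat the two properties separately.

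For essential injectivity I would argue as follows. Let $k$ be a field and let $X,Y$ be objects of $\mathcal{T}(k)$, i.e. smooth cubic threefolds over $k$, with $\widetilde{J}(X) \cong \widetilde{J}(Y)$ in $\mathcal{A}_{5,1}(k)$. By the construction of $\widetilde{J}$ in Corollary \ref{del2}, such an isomorphism is precisely an isomorphism of the intermediate Jacobians $JX$ and $JY$ as principally polarised abelian varieties over $k$. Theorem C then yields $X \cong Y$ over $k$, which is exactly injectivity on isomorphism classes over an arbitrary field; as stressed in the introduction, this is strictly stronger than the universal injectivity established in \cite{loughran}.

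For faithfulness, fix a base $T$ and a family $X/T$ of cubic threefolds. By Remark \ref{defActIJ} the natural action of $\Aut(X/T)$ on $\Ab^2_{X/T}$ preserves the distinguished polarisation, so $\widetilde{J}$ induces a homomorphism of group schemes $\rho \colon \underline{\Aut}_T(X) \to \underline{\Aut}_T(JX)$ over $T$, and it suffices to prove that $K := \ker(\rho)$ is trivial. Since $\mathcal{T}$ is a Deligne--Mumford stack, $\underline{\Aut}_T(X) \to T$ is unramified, and hence so is its closed subscheme $K$. One then checks triviality fibrewise: over a geometric point $\overline{t}$ the group $K_{\overline{t}}$ is reduced, so it is determined by its points, and $K_{\overline{t}}(\overline{k(t)})$ is the kernel of $\Aut(X_{\overline{t}}) \to \Aut(JX_{\overline{t}})$, which vanishes by Lemma \ref{autJ}. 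Thus $K$ coincides with the unit section $T$ and $\rho$ is a monomorphism.

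The conceptual content is carried entirely by Theorem C and Lemma \ref{autJ}; the only genuine work lies in the faithfulness step, where the main obstacle is upgrading the pointwise injectivity of Lemma \ref{autJ} to the triviality of the kernel group scheme $K$ over an arbitrary base. This is resolved by invoking the Deligne--Mumford property of $\mathcal{T}$ to guarantee that $K$ is unramified, equivalently that the automorphism schemes carry no infinitesimal part, so that checking triviality on geometric points suffices.
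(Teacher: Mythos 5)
Your proposal is correct and takes the same route the paper intends: the paper states this corollary without any proof, immediately after Theorem C, and the implicit argument is exactly your combination of Theorem C (for injectivity on isomorphism classes of field-valued points) and Lemma \ref{autJ} (for injectivity on automorphisms). Your elaboration of the faithfulness step --- using the Deligne--Mumford property of $\mathcal{T}$ to make the kernel group scheme $K$ unramified over the base, so that its triviality on geometric points (Lemma \ref{autJ}) forces it to coincide with the unit section --- is sound and supplies detail the paper omits entirely.
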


\bibliography{Bibliography}{}

\providecommand{\bysame}{\leavevmode\hbox to3em{\hrulefill}\thinspace}
\providecommand{\MR}{\relax\ifhmode\unskip\space\fi MR }
\providecommand{\MRhref}[2]{%
  \href{http://www.ams.org/mathscinet-getitem?mr=#1}{#2}
}
\providecommand{\href}[2]{#2}
\begin{thebibliography}{ACMV23}

\bibitem[ABA77]{altman}
Steven L.~Kleiman Allen B.~Altman, \emph{Foundations of the theory of {F}ano
  schemes}, Compositio Mathematica \textbf{34} (1977), no.~1, 3--47 (eng).

\bibitem[Ach14]{achter}
Jeffrey~D. Achter, \emph{Arithmetic {T}orelli maps for cubic surfaces and
  threefolds}, Transactions of the American Mathematical Society \textbf{366}
  (2014), no.~11, 5749--5769.

\bibitem[ACM23]{pprym}
Jeff Achter and Sebastian Casalaina-Martin, \emph{Putting the p back in
  {P}rym}, arXiv:2312.13263 (2023).

\bibitem[ACMV17]{Achter_2017}
Jeffrey~D. Achter, Sebastian Casalaina-Martin, and Charles Vial, \emph{On
  descending cohomology geometrically}, Compositio Mathematica \textbf{153}
  (2017), no.~7, 1446–1478.

\bibitem[ACMV20]{achter2020decomposition}
Jeff Achter, Sebastian Casalaina-Martin, and Charles Vial, \emph{Decomposition
  of the diagonal, intermediate {J}acobians, and universal codimension-2 cycles
  in positive characteristic}, arXiv:2007.07470 (2020).

\bibitem[ACMV21]{achterBloch}
\bysame, \emph{On the image of the second l-adic {B}loch map}, Rationality of
  Varieties, pp.~15--73, Birkh\"auser Cham, 2021.

\bibitem[ACMV23]{achter2022functorial}
\bysame, \emph{A functorial approach to regular homomorphisms}, Algebraic
  Geometry \textbf{10} (2023), no.~1, 87 -- 129.

\bibitem[ACTP13]{colliotU}
Asher Auel, Jean-Louis Colliot-Thélène, and R.~Parimala, \emph{Universal
  unramified cohomology of cubic fourfolds containing a plane}, 2013.

\bibitem[AM67]{AM}
A.~Andreotti and A.~L. Mayer, \emph{On period relations for abelian integrals
  on algebraic curves}, Annali della Scuola Normale Superiore di Pisa - Scienze
  Fisiche e Matematiche \textbf{Ser. 3, 21} (1967), no.~2, 189--238 (en).
  \MR{220740}

\bibitem[Bea77]{beau2}
Arnaud Beauville, \emph{Vari\'et\'es de {Prym} et {J}acobiennes
  interm\'ediaires}, Annales scientifiques de l'\'Ecole Normale Sup\'erieure
  \textbf{4e s{\'e}rie, 10} (1977), no.~3, 309--391 (fr).

\bibitem[Bea82]{beau}
\bysame, \emph{Les singularit\'{e}s du diviseur {$\Theta $} de la {J}acobienne
  interm\'{e}diaire de l'hypersurface cubique dans {${\bf P}\sp{4}$}},
  Algebraic threefolds ({V}arenna, 1981), Lecture Notes in Math., vol. 947,
  Springer, Berlin-New York, 1982, pp.~190--208. \MR{672617}

\bibitem[Bea16]{Beauville}
\bysame, \emph{The {L}üroth problem}, Rationality Problems in Algebraic
  Geometry, Springer International Publishing, 2016, p.~1–27.

\bibitem[Ben12a]{benoistThesis}
O.~Benoist, \emph{Espaces de modules d’intersections compl\'etes lisses}, PhD
  thesis (2012).

\bibitem[Ben12b]{Benoist2012}
Olivier Benoist, \emph{Séparation et propriété de {D}eligne-{M}umford des
  champs de modules d’intersections complètes lisses}, Journal of the London
  Mathematical Society \textbf{87} (2012), no.~1, 138–156.

\bibitem[BLR12]{neron}
Siegfried Bosch, Werner L{\"u}tkebohmert, and Michel Raynaud, \emph{N{\'e}ron
  models}, vol.~21, Springer Science \& Business Media, 2012.

\bibitem[Bre67]{bredon}
Glen~E. Bredon, \emph{Sheaf theory}, McGraw-Hill Book Co., New York-Toronto,
  Ont.-London, 1967. \MR{0221500}

\bibitem[BW20]{Benoist}
Olivier Benoist and Olivier Wittenberg, \emph{The {C}lemens–{G}riffiths
  method over non-closed fields}, Algebraic Geometry (2020), 696–721.

\bibitem[CG72]{clem}
C.~Herbert Clemens and Phillip~A. Griffiths, \emph{The intermediate {J}acobian
  of the cubic threefold}, Annals of Mathematics \textbf{95} (1972), no.~2,
  281--356.

\bibitem[Che23a]{qbic}
Raymond Cheng, \emph{$q$-bic forms}, arXiv:2301.09929 (2023).

\bibitem[Che23b]{cheng}
\bysame, \emph{$q$-bic hypersurfaces and their {F}ano schemes},
  arXiv:2307.06160 (2023).

\bibitem[CHS22]{costa}
Edgar Costa, David Harvey, and Andrew Sutherland, \emph{Counting points on
  smooth plane quartics}, Research in Number Theory \textbf{9} (2022).

\bibitem[CPZ15]{pan2}
Xi~Chen, Xuanyu Pan, and Dingxin Zhang, \emph{Automorphism and cohomology {II}:
  Complete intersections}, International Journal of Mathematics \textbf{35}
  (2015).

\bibitem[CT17]{colliot}
Jean-Louis Colliot-Thélène, \emph{{$CH_{0}$}-trivialit\'e universelle
  d'hypersurfaces cubiques presque diagonales}, Algebraic Geometry (2017),
  597--602.

\bibitem[Deb96]{debarre}
Olivier Debarre, \emph{Polarisations sur les vari\'et\'es ab\'eliennes
  produits}, url:
  https://perso.imj-prg.fr/olivier-debarre/wp-content/uploads/debarre-pub/33.pdf
  (1996).

\bibitem[Del06]{deligneSGA}
P~Deligne, \emph{Sga 7 expose xii}, Groupes de Monodromie en G{\'e}om{\'e}trie
  Alg{\'e}brique: S{\'e}minaire de G{\'e}om{\'e}trie Alg{\'e}brique du
  Bois-Marie 1967-1969 (SGA 7 II) \textbf{340} (2006), 62.

\bibitem[Del72]{deligne}
Pierre Deligne, \emph{Les intersections comlètes de niveau de {H}odge un.},
  Inventiones mathematicae \textbf{15} (1971/72), 237--250.

\bibitem[FC13]{Faltings}
G.~Faltings and C.L. Chai, \emph{Degeneration of abelian varieties}, Ergebnisse
  der Mathematik und ihrer Grenzgebiete. 3. Folge / A Series of Modern Surveys
  in Mathematics, Springer Berlin Heidelberg, 2013.

\bibitem[Fur12]{furukawa}
Katsuhisa Furukawa, \emph{Cubic hypersurfaces admitting an embedding with
  {G}auss map of rank 0}, Advances in Mathematics \textbf{230} (2012), no.~3,
  1174--1183.

\bibitem[Har77]{hartshorne}
Robin Hartshorne, \emph{Algebraic geometry}, Graduate Texts in Mathematics, No.
  52, Springer-Verlag, New York-Heidelberg, 1977. \MR{0463157}

\bibitem[Huy23]{huy}
Daniel Huybrechts, \emph{The geometry of cubic hypersurfaces}, vol. 206,
  Cambridge University Press, 2023.

\bibitem[JL15]{loughran}
Ariyan Javanpeykar and Daniel Loughran, \emph{Complete intersections: Moduli,
  {T}orelli, and good reduction}, arXiv:1505.02249 (2015).

\bibitem[Kem73]{gk}
George Kempf, \emph{On the {Geometry} of a {Theorem} of {Riemann}}, Annals of
  Mathematics \textbf{98} (1973), no.~1, 178.

\bibitem[Mil86]{MilneJV}
J.~S. Milne, \emph{{J}acobian varieties}, pp.~167--212, Springer New York, New
  York, NY, 1986.

\bibitem[Mum70]{m-av}
David Mumford, \emph{Abelian varieties}, Tata Institute of Fundamental Research
  Studies in Mathematics, vol.~5, Published for the Tata Institute of
  Fundamental Research, Bombay by Oxford University Press, London, 1970.
  \MR{0282985}

\bibitem[Mum71]{m-tc}
\bysame, \emph{Theta characteristics of an algebraic curve}, Annales
  scientifiques de l'École Normale Supérieure \textbf{4} (1971), no.~2,
  181--192 (eng).

\bibitem[Mum74]{m-pv}
\bysame, \emph{{P}rym varieties. {I}}, Contributions to analysis (a collection
  of papers dedicated to {L}ipman {B}ers), 1974, pp.~325--350. \MR{0379510}

\bibitem[Mur72]{murre2}
J.~P. Murre, \emph{Algebraic equivalence modulo rational equivalence on a cubic
  threefold}, Compositio Mathematica \textbf{25} (1972), no.~2, 161--206 (eng).

\bibitem[Mur73]{murre}
\bysame, \emph{Reduction of the proof of the non-rationality of a non-singular
  cubic threefold to a result of {M}umford}, Compositio Mathematica \textbf{27}
  (1973), no.~1, 63--82 (eng).

\bibitem[Ros83]{rosen_1983}
Michael Rosen, \emph{The norm map on {J}acobians}, Proceedings of the American
  Mathematical Society \textbf{87} (1983), no.~1, 19–22.

\bibitem[Ser88]{serre}
Jean-Pierre Serre, \emph{Algebraic groups and class fields}, Graduate Texts in
  Mathematics, vol. 117, Springer-Verlag, New York, 1988, Translated from the
  French. \MR{918564}

\bibitem[{Sta}18]{sp}
The {Stacks Project Authors}, \emph{\textit{Stacks Project}},
  \url{https://stacks.math.columbia.edu}, 2018.

\bibitem[Vak]{vakil}
Ravi Vakil, \emph{{MATH 216: Foundations of Algebraic Geometry}},
  https://math.stanford.edu/\textasciitilde vakil/216blog/.

\bibitem[Vas04]{Vasiu}
Adrian Vasiu, \emph{A purity theorem for abelian schemes}, Michigan
  Mathematical Journal \textbf{52} (2004), no.~1.

\end{thebibliography}
\bibliographystyle{amsalpha}

\end{document}